\theoremstyle{plain}
\newtheorem{thm}{Theorem}[section]
\newtheorem{lem}[thm]{Lemma}
\newtheorem{prop}[thm]{Proposition}
\newtheorem{cor}[thm]{Corollary}
\newtheorem*{thm*}{Theorem}
\newtheorem*{prop*}{Proposition}
\newtheorem*{cor*}{Corollary}
\newtheorem{thmintro}{Theorem}
\newtheorem{corintro}[thmintro]{Corollary}
\theoremstyle{definition}
\newtheorem{defn}[thm]{Definition}
\newtheorem{ex}[thm]{Example}
\newtheorem{rmk}[thm]{Remark}
\newtheorem*{rmk*}{Remark}
\newtheorem*{quest*}{Question}
\newtheorem*{claim}{Claim}
\newtheorem*{ass}{Standing Assumptions}
\renewcommand{\o}{\circ}
\newcommand{\wt}{\widetilde}
\newcommand{\R}{\mathbb{R}}
\newcommand{\Z}{\mathbb{Z}}
\newcommand{\N}{\mathbb{N}}
\renewcommand{\H}{\mathbb{H}}
\newcommand{\s}{\sigma}
\newcommand{\ra}{\rightarrow}
\newcommand{\cu}{\subseteq}
\newcommand{\g}{\gamma}
\newcommand{\G}{\Gamma}
\newcommand{\mbb}{\mathbb}
\newcommand{\mc}{\mathcal}
\newcommand{\mf}{\mathfrak}
\newcommand{\x}{\times}
\newcommand{\Om}{\Omega}
\newcommand{\Aut}{\mathrm{Aut}}
\newcommand{\acts}{\curvearrowright}
\newcommand{\mscr}{\mathscr}
\newcommand{\Cr}{\mathrm{cr}}
\newcommand{\hull}{\mathrm{Hull}}
\newcommand{\CAT}{{\rm CAT(0)}}
\newcommand{\B}{\mbb{B}}
\title{Cross ratios and cubulations of hyperbolic groups}
\author{Jonas Beyrer and Elia Fioravanti}
\begin{document}

\begin{abstract}
Many geometric structures associated to surface groups can be encoded in terms of invariant cross ratios on their circle at infinity; examples include points of Teichm\"uller space, Hitchin representations and geodesic currents. We add to this picture by studying cubulations of \emph{arbitrary} Gromov hyperbolic groups $G$. Under weak assumptions, we show that the space of cubulations of $G$ naturally injects into the space of $G$--invariant cross ratios on the Gromov boundary $\partial_{\infty}G$.

A consequence of our results is that essential, hyperplane-essential cubulations of hyperbolic groups are length-spectrum rigid, i.e.\ they are fully determined by their length function. This is the optimal length-spectrum rigidity result for cubulations of hyperbolic groups, as we demonstrate with some examples. In the hyperbolic setting, this constitutes a strong improvement on our previous work \cite{BF1}.

Along the way, we describe the relationship between the Roller boundary of a $\CAT$ cube complex, its Gromov boundary and --- in the non-hyperbolic case --- the contracting boundary of Charney and Sultan.

All our results hold for cube complexes with variable edge lengths.
\end{abstract}

\maketitle

\tableofcontents

\section{Introduction.}

Let $G$ be a Gromov hyperbolic group. We denote by $\partial_{\infty}G^{(4)}$ the set of $4$--tuples of pairwise distinct points in the Gromov boundary $\partial_{\infty}G$. A map $\B\colon\partial_{\infty}G^{(4)}\ra\R$ is said to be a \emph{cross ratio} if the following are satisfied:
\begin{itemize}
\item[(i)] $\B(x,y,z,w)=-\B(y,x,z,w)$;
\item[(ii)] $\B(x,y,z,w)=\B(z,w,x,y)$;
\item[(iii)] $\B(x,y,z,w)=\B(x,y,z,t)+\B(x,y,t,w)$;
\item[(iv)] $\B(x,y,z,w)+\B(y,z,x,w)+\B(z,x,y,w)=0$.
\end{itemize}
We say that $\B$ is \emph{invariant} if it is preserved by the diagonal action of $G$ on $\partial_{\infty}G^{(4)}$. Similar notions appear e.g.\ in \cite{Otal-IberoAm,Hamenstaedt-ErgodTh,Labourie-IM,Labourie-IHES}.

Cross ratios provide a unified interpretation of many \emph{geometric structures}, thus proving a valuable tool to study various spaces of representations. 

For instance, when $S$ is a closed hyperbolic surface and $G=\pi_1S$, every point of Teichm\"uller space yields identifications ${\partial_{\infty}G\simeq\partial_{\infty}\H^2\simeq\R\mbb{P}^1}$ and the projective cross ratio on $\R\mbb{P}^1$ can be pulled back to an invariant cross ratio\footnote{To be precise, one has to take the \emph{logarithm} of the \emph{absolute value} of the projective cross ratio if this is to satisfy conditions~(i)--(iv).} on $\partial_{\infty}G$. The latter uniquely determines the original point of Teichm\"uller space \cite{Bonahon}. More generally, the space of all negatively curved Riemannian metrics on $S$ embeds into the space of invariant cross ratios on $\partial_{\infty}G$ \cite{Otal-Ann}.

Another setting where cross ratios play a central role is the study of representations of surface groups into higher-rank Lie groups. A striking result of Labourie identifies the space of Hitchin representations ${\rho\colon G\ra PSL_n\R}$ with a space of H\"older-continuous invariant cross ratios on $\partial_{\infty}G$ \cite{Labourie-IHES}. 

\medskip
In this paper, we consider yet another significant geometric structure that groups can be endowed with. More precisely, we study the \emph{space of cubulations} of a non-elementary hyperbolic group $G$. Our main result is that the space of cubulations of $G$ naturally injects\footnote{Some mild and inevitable assumptions are required; cf.\ Theorem~\ref{hyp CR} below.} into the space of invariant $\Z$--valued cross ratios on $\partial_{\infty}G$ (Theorem~\ref{hyp CR}). An important consequence is that most cubulations of $G$ are \emph{length-spectrum rigid} (Corollary~\ref{hyp MLSR}). 

Recall that a \emph{cubulation} is a proper cocompact action of $G$ on a \emph{$\CAT$ cube complex} $X$. A group is said to be \emph{cubulated} if it admits a cubulation. Cubulated hyperbolic groups are ubiquitous in geometric group theory: they include surface groups, hyperbolic $3$--manifold groups \cite{Bergeron-Wise}, hyperbolic free-by-cyclic groups \cite{Hagen-Wise1,Hagen-Wise2}, hyperbolic Coxeter groups \cite{Niblo-Reeves}, finitely presented small cancellation groups \cite{Wise-GAFA}, random groups at low density \cite{Ollivier-Wise} and many arithmetic lattices in $SO(n,1)$ \cite{Bergeron-Haglund-Wise,Haglund-Wise-Ann}. Cubulated hyperbolic groups are also particularly significant due to recent advances in low-dimensional topology \cite{Haglund-Wise-GAFA,Wise-ICM,Agol-ICM}.

Among cubulations of a group $G$, a subclass is especially relevant for us: that of \emph{essential}, \emph{hyperplane-essential} cubulations\footnote{We refer the reader to \cite{CS,Hagen-Touikan} or Section~\ref{CCC prelims} below for definitions.}. Indeed, due to the extreme flexibility of cube complexes, it is all too easy to perturb any cubulation by adding ``insignificant noise'' (say, a few loose edges around the space). Essential, hyperplane-essential cube complexes are those from which all ``noise'' has been removed. A simple procedure for this removal is provided by Section~3 in \cite{CS} and Theorem~A in \cite{Hagen-Touikan}. 

More precisely, \emph{every} cocompact group action on a $\CAT$ cube complex $X$ can be collapsed to an action on an essential, hyperplane-essential $\CAT$ cube complex $X_{\bullet}$. This procedure will preserve most additional properties of the original action. In particular, the collapsing map ${X\twoheadrightarrow X_{\bullet}}$ has uniformly bounded fibres and it is an equivariant quasi-isometry.

Essential hyperplane-essential cube complexes are the appropriate setting to study cross ratios. Our first result is the following:

\begin{thmintro}\label{hyp CR}
Let $G$ be a non-elementary Gromov hyperbolic group.
\begin{enumerate}
\item Every proper cocompact action of $G$ on an essential $\CAT$ cube complex $X$ canonically determines an invariant cross ratio
\[\Cr_X\colon\partial_{\infty}G^{(4)}\ra\Z.\] 
There exists a co-meagre\footnote{A set is \emph{co-meagre} if its complement is a countable union of sets whose closures have empty interior. By Baire's theorem, co-meagre subsets of $\partial_{\infty}G$ are dense.} subset $\mc{C}\cu\partial_{\infty}G$ such that $\Cr_X$ is continuous at all points of $\mc{C}^{(4)}\cu\partial_{\infty}G^{(4)}$.
\item Let in addition $X$ be hyperplane-essential and consider another action $G\acts Y$ satisfying the same hypotheses. If there exists a co-meagre subset $\mc{D}\cu\partial_{\infty}G$ such that the cross ratios $\Cr_X$ and $\Cr_Y$ coincide on $\mc{D}^{(4)}\cu\partial_{\infty}G^{(4)}$, then $X$ and $Y$ are $G$--equivariantly isomorphic.
\end{enumerate}
\end{thmintro}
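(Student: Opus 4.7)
My plan for part (1) is to construct $\Cr_X$ as a signed count of hyperplanes separating boundary points. By the \v{S}varc--Milnor lemma, the cube complex $X$ is Gromov hyperbolic and quasi-isometric to $G$, so there is a canonical $G$-equivariant map $\pi\colon\partial_{\infty}G\to\partial X$ valued in a suitable quotient of the Roller boundary. For four distinct points $x,y,z,w\in\partial_{\infty}G$ I would set
\[\Cr_X(x,y,z,w)=\#\mathcal{W}(x,z\mid y,w)-\#\mathcal{W}(x,w\mid y,z),\]
where $\mathcal{W}(a,b\mid c,d)$ denotes the collection of hyperplanes of $X$ separating $\{\pi(a),\pi(b)\}$ from $\{\pi(c),\pi(d)\}$. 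Hyperbolicity of $X$ guarantees finiteness of these collections, and axioms (i)--(iv) reduce to elementary Boolean identities for separating hyperplane sets. For the continuity statement, I would take $\mathcal{C}\subseteq\partial_{\infty}G$ to be the co-meagre set of ``regular'' points whose preimage in the Roller boundary consists of a single principal ultrafilter; then on $\mathcal{C}^{(4)}$ the separating hyperplane sets are locally constant, yielding continuity.

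For part (2), the plan is to reconstruct $X$ from $\Cr_X$ via its halfspace pocset and then invoke Sageev's duality between pocsets and CAT(0) cube complexes. Each hyperplane $\mathfrak{h}$ of $X$ induces a partition $\partial_{\infty}G=\partial^+\mathfrak{h}\sqcup\partial^-\mathfrak{h}$ (up to a negligible set), and I would recover this partition combinatorially from the cross ratio: fixing $a,b$ on opposite sides of $\mathfrak{h}$, for generic $c,d\in\mathcal{D}$ the sign of $\Cr_X(a,b,c,d)$ encodes whether $c,d$ lie on the same side of $\mathfrak{h}$. The hyperplane-essential assumption is what makes this detection faithful: it prevents distinct hyperplanes from inducing the same boundary partition and ensures each side contains enough boundary mass to be witnessed by $\mathcal{D}$. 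Equality of $\Cr_X$ and $\Cr_Y$ on $\mathcal{D}^{(4)}$ then produces a $G$-equivariant isomorphism of halfspace pocsets, from which Sageev's construction recovers the desired cubical isomorphism $X\cong Y$.

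The main obstacle, I expect, is this final reconstruction step: showing that the halfspace pocset of $X$ embeds faithfully into the combinatorial data of $\Cr_X|_{\mathcal{D}^{(4)}}$ and that the resulting pocset is isomorphic to the genuine halfspace pocset, with no spurious elements. Two ingredients will be delicate: first, that each hyperplane is recognizable from finitely many cross-ratio evaluations at carefully chosen quadruples in $\mathcal{D}$, which relies on each side of a hyperplane having co-meagrely many boundary traces; second, that the nesting/transversality relations among hyperplanes — the pocset order — can themselves be read off from $\Cr_X$ via sign patterns on triples of hyperplanes tested through boundary quadruples. Finally, compatibility of the co-meagre sets $\mathcal{C}$ and $\mathcal{D}$ across different cubulations will require intersecting them with a universal co-meagre set of fully regular boundary points common to $X$ and $Y$.
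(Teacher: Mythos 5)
Your plan for part~(1) is on the right track --- the paper also defines $\Cr_X$ as a signed hyperplane count using a $G$-equivariant section of the quotient map $\Phi\colon\partial X\to\partial_\infty X$ --- but you have slid past the single non-trivial step, namely the construction of a \emph{canonical} such section. The fibres of $\Phi$ are the $\sim$-components of the Roller boundary, which are finite sets but need not be singletons; to pick one point from each fibre $G$-equivariantly and canonically, the paper passes to the cubical subdivision $X'$ and selects the median barycentre of each finite component (Section~\ref{median barycentres}, Remark~\ref{components are finite}). The phrase ``valued in a suitable quotient of the Roller boundary'' is going the wrong way: you need a section, not a quotient, and producing one is exactly where the work is. Your identification of $\mathcal{C}$ with the non-terminating locus $\partial_{\rm nt}X$ is correct, although ``the separating hyperplane sets are locally constant'' is imprecise --- what one actually shows is that the topology inherited from $\partial X$ agrees with the visual topology on $\partial_{\rm cnt}X$ (Remark~\ref{topologies on cnt 2}), so continuity of $\Cr$ on $\overline{X}$ transfers over.

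For part~(2) there is a genuine gap in the proposed mechanism. You want to recover the side of a hyperplane $\mathfrak{h}$ containing a boundary point from the \emph{sign} of a cross ratio. But $\Cr_X(a,b,c,d)=\#\mathcal{W}(a,c\,|\,b,d)-\#\mathcal{W}(a,d\,|\,b,c)$ is a difference of two cardinalities, aggregated over \emph{all} hyperplanes of $X$, and in dimension $\ge 3$ both terms can be simultaneously nonzero (Lemma~\ref{three pwt} only gives pairwise transversality, not vanishing). There is no way to isolate a single hyperplane's contribution from sign patterns, and no way to read off the individual cardinalities from their differences; this is precisely the 3-dimensional obstruction the paper emphasizes and circumvents with the notion of \emph{trustworthy} 4-tuples (Section~\ref{trust sect}) and good/strongly-contracting group elements (Proposition~\ref{simplification}). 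Two further difficulties, which you partially anticipate but whose severity you underestimate, are: (a) distinct hyperplanes can have identical traces at infinity, so they must be counted with multiplicity --- hence the ``well-pairing'' of $\mathcal{W}(X)$ and $\mathcal{W}(Y)$ in Theorem~\ref{all WP} rather than a bijection of hyperplanes --- and (b) even once $\partial_\infty\mathfrak{w}$ is recovered, not every $G_\mathfrak{w}$-invariant partition of the components of $\partial_\infty X\setminus\partial_\infty\mathfrak{w}$ is realized by a halfspace, so the passage from hyperplane traces to halfspace traces requires the dual-tree comparison of Proposition~\ref{delta preserved} and Corollary~\ref{key corollary}, not topology alone. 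Your final appeal to pocset duality is correct and matches the paper's last step, but the reconstruction of the pocset itself is far more delicate than your sketch suggests.
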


In particular, two essential hyperplane-essential cubulations yield the same boundary cross ratio if and only if they are $G$--equivariantly isomorphic. 

Essentiality and hyperplane-essentiality are absolutely crucial to part~(2) of Theorem~\ref{hyp CR}. Examples~\ref{need essential} and~\ref{need hyperplane-essential} show that --- in a very strong sense --- neither of these assumptions can be dropped.

It is not surprising that $\Cr_X$ takes \emph{integer} values in Theorem~\ref{hyp CR}, after all cube complexes are fundamentally discrete objects. Our cross ratio can be regarded as an \emph{exact} discretisation of Paulin's \emph{coarse} cross ratio on Gromov boundaries of arbitrary Gromov hyperbolic spaces \cite{Paulin-cr}.

\medskip
The main ingredient in the proof of Theorem~\ref{hyp CR} is Theorem~\ref{ext Moeb intro} below. We will discuss this result at length later in the introduction, but let us first describe one more of its applications. 

Let us endow our $\CAT$ cube complexes with their $\ell^1$ (aka \emph{combinatorial}) metric $d$ and let us associate to each action $G\acts X$ the function $\ell_X\colon G\ra\N$ given by:
\[\ell_X(g)=\inf_{x\in X} d(x,gx).\]
This is normally known as \emph{length function}, or \emph{marked length spectrum} by analogy with the corresponding notion in the setting of Riemannian manifolds. Theorem~\ref{ext Moeb intro} below will also have the following consequence.

\begin{corintro}\label{hyp MLSR}
Let a Gromov hyperbolic group $G$ act properly and cocompactly on essential, hyperplane-essential $\CAT$ cube complexes $X$ and $Y$. The two actions have the same $\ell^1$ length function if and only if $X$ and $Y$ are $G$--equivariantly isomorphic.
\end{corintro}

The same result is conjectured to hold for actions of $G$ on Hadamard manifolds (Problems~3.1 and~3.7 in \cite{Burns-Katok}). This is known as the \emph{``marked length-spectrum rigidity conjecture''} and it is a notorious open problem. Progress on the conjecture has been remarkably limited, with most results only handling $2$--dimensional spaces \cite{Otal-Ann,Croke,Croke-Fathi-Feldman}, or extremely rigid settings such as symmetric spaces \cite{Hamenstaedt-GAFA,Dal'Bo-Kim}. 

In this perspective, Corollary~\ref{hyp MLSR} is particularly interesting as --- along with our previous work in \cite{BF1} --- it is the first length-spectrum rigidity result to cover such a broad family of non-positively curved spaces. The proof of Corollary~\ref{hyp MLSR} relies on a reduction --- obtained in \cite{BF1} --- to the problem of extending certain boundary maps to isomorphisms of cube complexes. However, we stress that the core argument in the proof of Corollary~\ref{hyp MLSR} lies in the ensuing extension procedure, and this requires completely different techniques from those in \cite{BF1}. See the statement of Theorem~\ref{ext Moeb intro} below and the subsequent discussion for a detailed description.

When $X$ and $Y$ have no free faces (i.e.\ when their $\CAT$ metrics are geodesically complete), Corollary~\ref{hyp MLSR} follows from Theorem~A in \cite{BF1}. Having no free faces, however, is an extremely strong restriction when studying cubulations of \emph{hyperbolic} groups, as most known cubulating procedures will not yield spaces satisfying this requirement. As an example, consider the case when $G$ is the fundamental group of a closed, oriented surface $S$ of genus $\geq 2$. It is well-known that every finite filling collection of closed curves on $S$ gives rise to an essential, hyperplane-essential cubulation of $G$ \cite{Sageev,Sag97,Bergeron-Wise}. On the other hand, most cube complexes resulting from this construction will have dimension $\geq 3$, which forces the existence of free faces\footnote{More generally, given a $\CAT$ cube complex $X$ with no free faces and \emph{any} group $G$ acting on $X$ cocompactly and with virtually cyclic hyperplane-stabilisers, we necessarily have $\dim X\leq 2$. This can be shown by noticing that $\R$ is the only cube complex with no free faces that admits a cocompact action of the group $\Z$.}.

Examples~\ref{need essential} and~\ref{need hyperplane-essential} show that essentiality and hyperplane-essentiality are \emph{necessary} assumptions on the $\CAT$ cube complex $X$ for any form of length-spectrum rigidity to hold. Thus, Corollary~\ref{hyp MLSR} is the \emph{optimal} result of this type for cubulations of hyperbolic groups. In addition, note that any cubulation can be made essential and hyperplane-essential by means of the collapsing procedure of \cite{CS,Hagen-Touikan}. In many settings, however, the two assumptions are automatically satisfied, even without resorting to any collapsing: for instance, this is the case for any cubulation of a hyperbolic $3$--manifold group arising from Sageev's construction applied to quasi-Fuchsian immersed surfaces \cite{Kahn-Markovic}.

\medskip \noindent
{\bf Remark.}
Although we have preferred to state Theorem~\ref{hyp CR} and Corollary~\ref{hyp MLSR} for \emph{cube} complexes, they more generally hold for \emph{cuboid} complexes. In such complexes, edges can have arbitrary (positive) real lengths, so the cross ratio $\Cr_X$ and the length function $\ell_X$ will take arbitrary real values. The price to pay is that, in both results, we can only conclude that actions with the same cross ratio/length function are $G$--equivariantly \emph{isometric}, i.e.\ said isometries will in general not take vertices to vertices. 

All results in this paper equally apply to $\CAT$ cuboid complexes, without requiring any significant changes to proofs --- although of course all cubical isomorphisms need to be replaced with mere isometries. The reader can consult Section~\ref{cuboids sect} for a brief discussion of this. 

\medskip \noindent
{\bf On the proofs of Theorem~\ref{hyp CR} and Corollary~\ref{hyp MLSR}.}
As mentioned, the core result of this paper is an extension procedure for certain partially-defined, cross-ratio preserving boundary maps (Theorem~\ref{ext Moeb intro} below). In order to make things precise, let us introduce some terminology.

The horofunction boundary of the cube complex $(X,d)$ is known as the \emph{Roller boundary} $\partial X$. In our setting, this space is always compact and totally disconnected --- unlike the Gromov/visual\footnote{For the visual boundary of a $\CAT$ space and Gromov boundary of a Gromov hyperbolic space, we refer the reader, respectively, to Chapters~II.8 and~III.H.3 in \cite{BH}. We denote both boundaries by $\partial_{\infty}X$, as these coincide whenever both defined.} boundary $\partial_{\infty}X$. As we observed in \cite{BFI-new,BF1}, the Roller boundary is naturally endowed with a continuous, $\Z$--valued cross ratio:
\[\Cr(x,y,z,w)=\#\mscr{W}(x,z|y,w)-\#\mscr{W}(x,w|y,z).\]
Here, the notation $\mscr{W}(x,z|y,w)$ refers to the collection of hyperplanes of $X$ that separate $x,z\in\partial X$ from $y,w\in\partial X$.

When $X$ is Gromov hyperbolic, the two boundaries $\partial X$ and $\partial_{\infty}X$ share a ``large'' subset. More precisely, a co-meagre subset of $\partial_{\infty}X$ is naturally identified with a subset of $\partial X$ and therefore inherits the cross ratio of $\partial X$. We will denote this common subset by $\partial_{\rm nt}X$, as it coincides with the collection of \emph{non-terminating ultrafilters} introduced in \cite{Nevo-Sageev}. Equivalently, we can describe $\partial_{\rm nt}X\cu\partial_{\infty}X$ as the subset of points that do not lie in the Gromov boundary of any hyperplane of $X$ (Lemma~\ref{nt vs hyp}). 

The following is the crucial ingredient in the proofs of part~(2) of Theorem~\ref{hyp CR} and of Corollary~\ref{hyp MLSR}.

\begin{thmintro}\label{ext Moeb intro}
Let a non-elementary Gromov hyperbolic group $G$ act properly and cocompactly on essential, hyperplane-essential $\CAT$ cube complexes $X$ and $Y$. Let $f\colon\partial_{\infty}X\ra\partial_{\infty}Y$ be the unique $G$--equivariant homeomorphism. Suppose that there exists a nonempty, $G$--invariant subset $\Omega\cu\partial_{\rm nt}X$ such that $f(\Omega)\cu\partial_{\rm nt}Y$ and such that cross ratios of elements of $\Omega^4$ are preserved by $f$. Then, there exists a unique $G$--equivariant isomorphism $F\colon X\ra Y$ extending $f$. 
\end{thmintro}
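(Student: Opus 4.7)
My overall strategy is to distil from $f$ a $G$-equivariant isomorphism of the halfspace pocsets of $X$ and $Y$ and then to invoke the Sageev--Chatterji--Niblo reconstruction of a $\CAT$ cube complex from its discrete pocset of halfspaces. Since $G$ is non-elementary hyperbolic, it acts minimally on $\partial_\infty G$, so the $G$-invariant nonempty set $\Omega$ is dense in $\partial_\infty X$; moreover $\partial_{\rm nt}X$ embeds canonically in the Roller boundary $\partial X$ (via Lemma~\ref{nt vs hyp}), which allows us to interpret elements of $\Omega$ as non-terminating ultrafilters and speak of the hyperplane counts $\#\mscr{W}(x,z|y,w)$, related to the cross ratio by the formula from the introduction.

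The core of the proof is the construction of a $G$-equivariant bijection $\Psi\colon\mscr{W}(X)\to\mscr{W}(Y)$ that is compatible with the pocset structure. For a hyperplane $\mathfrak{h}\subset X$, I consider the partition it induces on $\Omega$. Essentiality of $X$, together with minimality of the $G$-action on $\partial_\infty G$ and cocompactness, ensures that both sides of $\mathfrak{h}$ meet $\Omega$, so $\mathfrak{h}$ is determined by this partition. Transporting via $f$ yields a partition of $f(\Omega)\cu\partial_{\rm nt}Y$ into two $G$-invariantly-detected pieces; the task is to realise this as the trace of a unique hyperplane of $Y$. The hypothesis that cross ratios agree on $\Omega^4$ forces ${\#\mscr{W}_X(a,c\mid b,d)=\#\mscr{W}_Y(f(a),f(c)\mid f(b),f(d))}$, and as both are integers a finite-difference argument---varying one of the four arguments across $\Omega$---translates jumps of $\Cr_X$ by $\pm 1$ into genuine hyperplane crossings, thereby isolating individual hyperplanes. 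Hyperplane-essentiality of $Y$ ensures that every hyperplane of $Y$ is detected by some 4-tuple in $f(\Omega)$, giving surjectivity of $\Psi$.

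Once $\Psi$ is in hand, preservation of the pocset structure (inclusion of halfspaces; transversality versus nesting) is read off from the sign patterns of the $\Z$-valued cross ratio on 4-tuples in $\Omega$, which are preserved by $f$ by assumption. The Sageev--Chatterji--Niblo duality then produces a unique $G$-equivariant cubical isomorphism $F\colon X\to Y$ realising $\Psi$. By construction, each point of $\Omega$, regarded as a non-terminating ultrafilter, is sent by the boundary extension of $F$ to $f$ of that point; since $\Omega$ is dense in $\partial_\infty X$ and both $F|_{\partial_\infty X}$ and $f$ are continuous, they coincide on all of $\partial_\infty X$. Uniqueness of $F$ is immediate: any other equivariant extension would agree with $F$ on the dense orbit of any vertex.

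The main obstacle is the hyperplane detection step: cross ratios encode only \emph{signed differences} of hyperplane counts, not hyperplanes themselves, so we must extract an actual combinatorial bijection from purely numerical data on a possibly sparse set $\Omega$. This is where the $\Z$-valued (as opposed to merely real) nature of $\Cr_X$ is indispensable: because $\Cr_X$ cannot be continuously deformed, discrete unit jumps as the arguments vary within $\Omega$ can be used as tests to separate individual hyperplanes. The essentiality and hyperplane-essentiality hypotheses enter precisely here, guaranteeing that no hyperplane of $X$ or $Y$ is ``hidden'' at infinity on one side, and this discreteness is ultimately what makes the argument succeed in the cubical context while its Riemannian analogue remains the notoriously open marked length-spectrum rigidity conjecture.
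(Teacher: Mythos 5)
Your high-level strategy is the same as the paper's: build a $G$-equivariant bijection $f_*\colon\mscr{H}(X)\ra\mscr{H}(Y)$ compatible with the pocset structure and then invoke Roller/Sageev--Chatterji--Niblo duality. However, two of your intermediate claims have genuine gaps which are precisely the difficulties the paper devotes most of Section~\ref{hyp sect} to overcoming.

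First, you assert that equality of cross ratios on $\Omega^4$ ``forces'' the equality of absolute counts $\#\mscr{W}_X(a,c\mid b,d)=\#\mscr{W}_Y(f(a),f(c)\mid f(b),f(d))$. This is false in general: $\Cr$ only records the \emph{differences} of the three pairwise-transverse counts $\#\mscr{W}(x,y|z,w)$, $\#\mscr{W}(x,z|y,w)$, $\#\mscr{W}(x,w|y,z)$, and when $\dim X\geq 3$ all three can be nonzero simultaneously, so the differences do not determine the counts. The paper overcomes this with the notion of $\mc{U}$-trustworthy $4$-tuples (Section~\ref{trust sect}) --- tuples where one of the three sets is provably empty --- and uses carefully chosen strongly contracting elements and Lemma~\ref{trust lemma} to produce them; your ``finite-difference argument'' as stated does not substitute for this machinery, since a jump of $\Cr_X$ by $\pm 1$ as one argument is moved within $\Omega$ can arise from several hyperplane crossings whose contributions cancel, and $\Omega$ is not assumed to be any larger than a single $G$-orbit.

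Second, you claim that the partition of $\Omega$ induced by a hyperplane $\mf{h}$ ``determines'' $\mf{h}$, and later that the transported partition should be realised by a \emph{unique} hyperplane of $Y$. Neither uniqueness holds: distinct (necessarily nested) halfspaces $\mf{j}_1\subsetneq\mf{j}_2$ can cut $\Omega$ in exactly the same way, which happens precisely when the corresponding hyperplanes have the same trace at infinity. The paper explicitly handles this by showing that the set $\mc{H}(\mf{h})=\{\mf{j}\in\mscr{H}(X):\mf{j}\cap\Omega=\mf{h}\cap\Omega\}$ is a finite chain, and then proving the ``well-pairing'' statement (Theorem~\ref{all WP}, built from Proposition~\ref{delta preserved} and Corollary~\ref{key corollary} via the dual trees $T(\mf{w})$ and an induction over boundary-maximal hyperplanes) that $\mc{H}(\mf{h})$ and its image have the same \emph{cardinality}, so that the order-preserving bijection between them is well defined. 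Without an argument for this multiplicity matching, your proposed $f_*$ is not even well defined. These are not cosmetic issues: they are the two points the authors single out in the introduction as the serious obstructions beyond the $2$-dimensional / Fuchsian-building case, and the bulk of the paper's proof (Sections~\ref{counting sect}--\ref{concluding}) exists to address them.
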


It is possible that Theorem~\ref{ext Moeb intro} will find further application in the proof of rigidity results for certain classes of cubulated hyperbolic groups. Indeed, since $\ell^1$ metrics on cube complexes fall in the setting of Section~5.1 of \cite{Haissinsky-Bourbaki}, cross-ratio preserving boundary maps should arise naturally from commensurations or quasi-isometries between cubulated hyperbolic groups with suitable properties. At present, however, a major obstruction to pursuing approaches of this kind is that not much is known on conformal dimension and Loewner property for boundaries of cubulated hyperbolic groups. A notable exception are Bourdon groups \cite{Bourdon-GAFA}; also see \cite{Bourdon-Kleiner1,Bourdon-Kleiner2}.

\medskip
It is interesting to compare Theorem~\ref{ext Moeb intro} with two old results obtained in different settings. The first is Paulin's classical theorem that homeomorphisms of Gromov boundaries arise from \emph{quasi-isometries} if and only if they \emph{almost preserve} the boundary cross ratio (Theorem~1.2 in \cite{Paulin-cr}). Paulin's techniques are of no help in our context, as we want our extensions to be genuine \emph{isometries}.

A more fitting comparison is with Proposition~2.4.7 in \cite{Bourdon-GAFA}, whose statement strikingly resembles that of Theorem~\ref{ext Moeb intro}. This is not a coincidence, as Bourdon's buildings $I_{p,q}$ --- and, more generally, all Fuchsian buildings without triangular chambers --- can be given a natural structure of $\CAT$ square complex; see e.g.\ Section~2.2 in \cite{Genevois-Martin}. The hyperplanes of the $\CAT$ square complex correspond to Bourdon's \emph{arbre-murs}, along with a choice of a preferred side. With these observations in mind, Bourdon's \emph{birapport combinatoire} on $\partial_{\infty}I_{p,q}\simeq\partial_{\infty}\G_{p,q}$ becomes a special case of part~(1) of our Theorem~\ref{hyp CR}. Proposition~2.4.7 in \cite{Bourdon-GAFA} and part of Theorem~1.5 in \cite{Xie} become a special case of Theorem~\ref{ext Moeb intro} above.

It is important to remark that, unlike the $2$--dimensional setting of Fuchsian buildings, the cube complexes in Theorem~\ref{ext Moeb intro} can have arbitrarily high dimension. This will seriously complicate proofs due to a strictly $3$--dimensional phenomenon which we now describe. 

Given points $x,y,z,w\in\partial X$, the three sets of hyperplanes $\mscr{W}(x,y|z,w)$, $\mscr{W}(x,z|y,w)$ and $\mscr{W}(x,w|y,z)$ are pairwise transverse. If $\dim X\leq 2$, one of these sets must be empty and their three cardinalities can be deduced from their respective differences, i.e.\ $\Cr(x,y,z,w)$, $\Cr(y,z,x,w)$ and $\Cr(z,x,y,w)$. On the other hand, when $\dim X\geq 3$, it may be impossible to recover all three cardinalities just from cross ratios of $4$--tuples involving only the points $x,y,z,w$ (see e.g.\ Figure~1 in \cite{BF1} and the related discussion). 

In order to resolve part of this issue, we will be led to consider \emph{trustworthy} $4$--tuples $(x,y,z,w)\in(\partial_{\rm nt}X)^4$, i.e.\ those $4$--tuples for which one of the three sets $\mscr{W}(x,y|z,w)$, $\mscr{W}(x,z|y,w)$ and $\mscr{W}(x,w|y,z)$ is empty. A key point will be that, even in boundaries of high-dimensional cube complexes, it is always possible to find several trustworthy $4$--tuples (Lemma~\ref{trust lemma}).

\medskip
We now briefly sketch the proof of Theorem~\ref{ext Moeb intro}, denoting by $\mscr{W}(X)$ and $\mscr{H}(X)$, respectively, the collections of all hyperplanes and all halfspaces of the cube complex $X$. The rough idea is that it should be possible to reconstruct the structure of the \emph{halfspace pocset} $(\mscr{H}(X),\cu,\ast)$ simply by looking at the Gromov boundary $\partial_{\infty}X$ and the cross ratio (where defined). 

Overlooking various complications, there are two (bipartite) steps.
\begin{enumerate}
\item[(Ia)] For every $\mf{h}\in\mscr{H}(X)$, we have $\partial_{\infty}\mf{h}\setminus\partial_{\infty}\mf{h}^*\neq\emptyset$. 
\item[(Ib)] Given $\mf{h},\mf{k}\in\mscr{H}(X)$, we have $\mf{h}\cu\mf{k}$ if and only if $\partial_{\infty}\mf{h}\cu\partial_{\infty}\mf{k}$. \footnote{This is not true in general, but it is how one should think about things. It only fails when $\partial_{\infty}\mf{h}=\partial_{\infty}\mf{k}$ and $\mf{k}\subsetneq\mf{h}$, in which case $\mf{k}$ and $\mf{h}$ are at finite Hausdorff distance anyway.}
\item[(IIa)] For every $\mf{w}\in\mscr{W}(X)$, there exists $\mf{w}'\in\mscr{W}(Y)$ with $f(\partial_{\infty}\mf{w})=\partial_{\infty}\mf{w}'$.
\item[(IIb)] For every $\mf{h}\in\mscr{H}(X)$, there exists $\mf{h}'\in\mscr{H}(Y)$ with $f(\partial_{\infty}\mf{h})=\partial_{\infty}\mf{h}'$.
\end{enumerate}
The boundary homeomorphism $f$ then induces a $G$--equivariant bijection $f_*\colon\mscr{H}(X)\ra\mscr{H}(Y)$ by Steps~(Ia) and~(IIb). Step~(Ib) shows that $f_*$ preserves inclusion relations and, by general theory of $\CAT$ cube complexes, $f_*$ must be induced by a $G$--equivariant isomorphism $F\colon X\ra Y$.

Steps~(Ia) and~(Ib) are the key points where, respectively, \emph{essentiality} and \emph{hyperplane-essentiality} come into play. Example~\ref{need essential} shows that, if $X$ is not essential, some halfspaces may be invisible in $\partial_{\infty}X$, i.e.\ Step~(Ia) fails. Without hyperplane-essentiality, instead, $\partial_{\infty}X$ may not be able to tell whether two halfspaces are nested or not. This is exactly the problem in Example~\ref{need hyperplane-essential}, where \emph{transverse} halfspaces $\mf{h}$ and $\mf{k}$ have $\partial_{\infty}\mf{h}=\partial_{\infty}\mf{k}$.

Regarding Step~(IIa), it is not hard to use the cross ratio to characterise which pairs of points $\xi,\eta\in\partial_{\infty}X$ lie in the Gromov boundary of a common hyperplane (Proposition~\ref{pts lie in hyp}). This property is then preserved by $f$, which is all one needs if no two hyperplanes share asymptotic directions (e.g.\ in Fuchsian buildings). In general, we will require more elaborate arguments (Lemma~\ref{hyp pairs are preserved} and Proposition~\ref{maximal to maximal}) based on the fact that $\partial_{\infty}G$ cannot be covered by limit sets of infinite-index quasi-convex subgroups.

Finally, there is a deceiving similarity between the statements of Steps~(IIa) and~(IIb), but the proof of the latter is significantly more involved. Given $\mf{w}\in\mscr{W}(X)$ bounding $\mf{h}\in\mscr{H}(X)$, the set $\partial_{\infty}\mf{h}\setminus\partial_{\infty}\mf{w}$ is a union of connected components of $\partial_{\infty}X\setminus\partial_{\infty}\mf{w}$. However, ${\partial_{\infty}X\setminus\partial_{\infty}\mf{w}}$ will in general have many more components than there are halfspaces bounded by $\mf{w}$. 

The case to keep in mind is when $G=\pi_1S$, for a closed oriented surface $S$, and the hyperplane-stabiliser $G_{\mf{w}}<G$ is the fundamental group of a subsurface of $S$ with at least $3$ boundary components. Not all $G_{\mf{w}}$--invariant partitions of the set of connected components of ${\partial_{\infty}X\setminus\partial_{\infty}\mf{w}}$ arise from a halfspace of $X$. Thus, one cannot recover $\partial_{\infty}\mf{h}$ from the knowledge of $\partial_{\infty}\mf{w}$ purely through topological and dynamical arguments. 

We will instead rely again on the cross ratio in order to circumvent these issues. Step~(IIb) will finally be completed in Theorem~\ref{all WP}.

\medskip \noindent
{\bf On the relationship between $\partial X$ and $\partial_{\infty}X$.} 
We still have not discussed the first half of Theorem~\ref{hyp CR}, which is mostly based on transferring the cross ratio from $\partial X$ to $\partial_{\infty}X$. As the required techniques are quite similar, we do not assume hyperbolicity of the $\CAT$ cube complex $X$ and we more generally describe the relationship between the Roller boundary $\partial X$ and the \emph{contracting boundary} $\partial_cX$. The latter was introduced in \cite{Charney-Sultan}.

Fixing a basepoint $p\in X$, every point of $\partial X$ is represented by a combinatorial ray based at $p$. We denote by $\partial_{\rm cu}X\cu\partial X$ the subset of points that are represented by \emph{contracting} combinatorial rays. We endow $\partial_{\rm cu}X$ with the restriction of the (totally disconnected) topology of $\partial X$. We moreover denote by $\partial_c^{\rm vis}X$ the space obtained by endowing the contracting boundary $\partial_cX$ with the restriction of the visual topology on the visual boundary $\partial_{\infty}X$. 

Responding to a suggestion in the introduction of \cite{Charney-Sultan}, we prove:

\begin{thmintro}\label{R vs c intro}
Let $X$ be a uniformly locally finite $\CAT$ cube complex.
\begin{enumerate}
\item There exists a natural continuous surjection $\Phi\colon\partial_{\rm cu}X\longrightarrow\partial_c^{\rm vis}X$ with finite fibres. Collapsing its fibres, $\Phi$ descends to a homeomorphism.
\item If $X$ is hyperbolic, we have $\partial_{\rm cu}X=\partial X$ and $\partial_c^{\rm vis}X=\partial_{\infty}X$.
\end{enumerate}
\end{thmintro}

The reader will find additional details on the map $\Phi$ in Section~\ref{d_c vs d_R}, especially in Remark~\ref{components are finite} and Theorem~\ref{properties of Phi}. We stress that --- whenever flats are present --- it is not possible to represent the \emph{entire} visual boundary $\partial_{\infty}X$ as a quotient of a subset of the Roller boundary.

Now, part~(1) of Theorem~\ref{hyp CR} is obtained by considering a canonical section to the map $\Phi$. The latter is built through a new construction of barycentres for bounded cube complexes, which we describe in Section~\ref{median barycentres}.

Along with our previous work in \cite{BF1}, Theorem~\ref{R vs c intro} also allows us to extend Theorem~\ref{hyp CR} to the context of non-hyperbolic groups acting on $\CAT$ cube complexes with \emph{no free faces}. 

Recall that the \emph{Morse boundary} of an arbitrary finitely generated group $G$ was introduced in \cite{Cordes}. In accordance with \cite{Cashen-Mackay}, we prefer to refer to it as the \emph{contracting boundary}\footnote{This is justified by the fact that a quasi-geodesic is \emph{Morse} if and only if it is \emph{sublinearly contracting} \cite{ACGH}. However, we acknowledge that the word ``contracting'' is normally taken to mean ``strongly contracting'' (as we also do in Section~\ref{Morse section}) and these quasi-geodesics would not provide a satisfactory notion of boundary for a general group $G$.} of $G$ (denoted $\partial_cG$), as this simplifies notation and terminology (the topology of $\partial_cG$ will not be relevant to us).

\begin{corintro}\label{non-hyp CR}
Let $G$ be a finitely generated, non-virtually-cyclic group.
\begin{enumerate}
\item Every proper cocompact action of $G$ on an irreducible $\CAT$ cube complex with no free faces $X$ canonically determines an invariant cross ratio:
\[\Cr_X\colon\partial_cG^{(4)}\ra\Z.\] 
\item Given another action $G\acts Y$ as above, the cross ratios $\Cr_X$ and $\Cr_Y$ coincide if and only if $X$ and $Y$ are $G$--equivariantly isomorphic.
\end{enumerate}
\end{corintro}

It is worth pointing out that, under the hypotheses of Corollary~\ref{non-hyp CR}, the contracting boundary $\partial_cG$ is always nonempty. When $G$ is hyperbolic, $\partial_cG$ is naturally identified with the Gromov boundary $\partial_{\infty}G$.

The cross ratio provided by Corollary~\ref{non-hyp CR} is again continuous\footnote{Both endowing $\partial_cG$ with the visual topology and with the topologies of \cite{Cordes,Cashen-Mackay}.} at a ``large'' subset of $\partial_cG^{(4)}$, but it does not make sense to speak of \emph{meagre} subsets in this context. Indeed, the entire contracting boundary is often itself meagre, even in the Cashen--Mackay topology. In fact, $\partial_cG$ is likely to be a Baire space if and only if $G$ is hyperbolic. The latter observation follows from Theorem~7.6 in \cite{Cashen-Mackay} when $G$ is a toral relatively hyperbolic group. We thank Chris Cashen for pointing this out to us.

\medskip
We conclude the introduction with the following question, which is naturally brought to mind by Theorem~\ref{hyp CR}.

\begin{quest*}
Let $G$ be a non-elementary hyperbolic group. Is it possible to provide a set of conditions completely characterising which invariant, $\Z$--valued cross ratios on $\partial_{\infty}G$ arise from cubulations of $G$?
\end{quest*}

Theorem~1.1 in \cite{Labourie-IHES} is a result of this type in the context of Hitchin representations. A complete answer to the above question might provide a new procedure to cubulate groups. 

In this regard, note that $\partial_{\infty}G$ is endowed with a continuous, invariant, \emph{$\R$--valued} cross ratio whenever $G$ acts properly and cocompactly on a ${\rm CAT}(-1)$ space. So it would also be interesting to determine under what circumstances an invariant $\R$--valued cross ratio can be discretised to an invariant $\Z$--valued cross ratio. Of course, one should be very careful when making speculations, as, for instance, uniform lattices in $SU(n,1)$ and $Sp(n,1)$ are not cubulable \cite{Delzant-Py,Niblo-Reeves-T}.

\medskip
{\bf Acknowledgements.} We are indebted to Anthony Genevois for bringing Bourdon's paper \cite{Bourdon-GAFA} to our attention and for his many interesting comments on an earlier version of this preprint. We are grateful to Mark Hagen and Alessandro Sisto for contributing some of the ideas in Sections~\ref{CAT vs l^1 geod} and~\ref{median barycentres}, respectively. We also thank Pierre-Emmanuel Caprace, Christopher Cashen, Matt Cordes, Ruth Charney, Cornelia Dru\c{t}u, Talia Fern\'os, Ilya Gekhtman, Tobias Hartnick, John Mackay, Gabriel Pallier, Beatrice Poz\-zet\-ti, Michah Sageev, Viktor Schroeder and Ric Wade for helpful conversations. Finally, we thank Maria Martini for her hedgehog-drawing skills.

We thank the organisers of the following conferences, where part of this work was carried out: \emph{Third GEAR (Junior) Retreat}, Stanford, 2017; \emph{Moduli Spaces}, Ventotene, 2017; \emph{Young Geometric Group Theory VII}, Les Diablerets, 2018; \emph{Topological and Homological Methods in Group Theory}, Bielefeld, 2018; \emph{3--Manifolds and Geometric Group Theory}, Luminy, 2018; \emph{Representation varieties and geometric structures in low dimensions}, Warwick, 2018. EF also thanks M.\ Incerti-Medici, Viktor Schroeder and Anna Wienhard for his visits at UZH and Universit\"at Heidelberg, respectively. 

JB was supported by the Swiss National Science Foundation under Grant 200020$\setminus$175567. EF was supported by the Clarendon Fund and the Merton Moussouris Scholarship.

\addtocontents{toc}{\protect\setcounter{tocdepth}{1}}
\section{Preliminaries.}

\subsection{$\CAT$ cube complexes.}\label{CCC prelims}

We will assume a certain familiarity with basic properties of cube complexes. The reader can consult for instance \cite{Sageev-notes} and the first sections of \cite{Chatterji-Niblo,CS,Nevo-Sageev,CFI} for an introduction to the subject. This subsection is mainly meant to fix notation and recall a few facts that we shall rely on.

Let $X$ be a simply connected cube complex satisfying Gromov's no-$\triangle$-condition; see 4.2.C in \cite{Gromov-HypGps} and Chapter~II.5 in \cite{BH}. The Euclidean metrics on its cubes fit together to yield a $\CAT$ metric on $X$. We can also endow each cube $[0,1]^k\cu X$ with the restriction of the $\ell^1$ metric of $\R^k$ and consider the induced path metric $d(-,-)$. We refer to $d$ as the \emph{combinatorial metric} (or \emph{$\ell^1$ metric}). In finite dimensional cube complexes, the $\CAT$ and combinatorial metrics are bi-Lipschitz equivalent and complete.

The metric space $(X,d)$ is a \emph{median space}. This means that, given any three points $p_1,p_2,p_3\in X$, there exists a unique point ${m=m(p_1,p_2,p_3)\in X}$ such that $d(p_i,p_j)=d(p_i,m)+d(m,p_j)$ for all $1\leq i<j\leq 3$. We refer to $m(p_1,p_2,p_3)$ as the \emph{median} of $p_1$, $p_2$ and $p_3$; if the three points are vertices of $X$, so is $m(p_1,p_2,p_3)$. The map $m\colon X^3\ra X$ endows $X$ (and its $0$--skeleton) with a structure of \emph{median algebra}. We refer the reader to \cite{Roller,CDH,Fio1} for a definition of the latter and more on median geometry.

We will use the more familiar and concise expression ``$\CAT$ cube complex'' with the meaning of ``simply connected cube complex satisfying Gromov's no-$\triangle$-con\-di\-tion''. However, unless specified otherwise, all our cube complexes $X$ will be endowed with the combinatorial metric, all points of $X$ will be implicitly assumed to be vertices and all geodesics will be combinatorial geodesics contained in the $1$--skeleton. In some situations, especially in Sections~\ref{CAT vs l^1 geod} and~\ref{d_c vs d_R}, we will also need to consider geodesics with respect to the $\CAT$ metric. In this case, we will use the terminology ``combinatorial geodesic/segment/ray/line'' and ``$\CAT$ geodesic/\allowbreak segment/\allowbreak ray/\allowbreak line''.

We denote by $X'$ the \emph{cubical subdivision} of $X$. This is the $\CAT$ cube complex obtained by adding a vertex $v(c)$ at the centre of each cube $c\cu X$; we then join the vertices $v(c)$ and $v(c')$ by an edge if $c$ is a codimension-one face of $c'$ or vice versa. Each $k$--cube of $X$ gives rise to $2^k$ $k$--cubes of $X'$.

We write $\mscr{W}(X)$ and $\mscr{H}(X)$, respectively, for the sets of hyperplanes and halfspaces of $X$. Given a halfspace $\mf{h}\in\mscr{H}(X)$, we denote its complement by $\mf{h}^*$. Endowing $\mscr{H}(X)$ with the order relation given by inclusions, the involution $*$ is order-reversing. The triple $(\mscr{H}(X),\cu,*)$ is thus a \emph{po{\bf c}set}, in the sense of \cite{Sageev-notes}. 

We say that two distinct hyperplanes are \emph{transverse} if they cross. Similarly, we say that two halfspaces --- or a halfspace and hyperplane --- are transverse if the corresponding hyperplanes are. Halfspaces $\mf{h}$ and $\mf{k}$ are transverse if and only if all four intersections $\mf{h}\cap\mf{k}$, $\mf{h}^*\cap\mf{k}$, $\mf{h}\cap\mf{k}^*$ and $\mf{h}^*\cap\mf{k}^*$ are nonempty. We say that subsets $A,B\cu\mscr{W}(X)$ are transverse if every element of $A$ is transverse to every element of $B$. 

Every hyperplane $\mf{w}$ can itself be regarded as a $\CAT$ cube complex; its cells are precisely the intersections $\mf{w}\cap c$, where $c\cu X$ is a cube. The set of hyperplanes of the cube complex $\mf{w}$ is naturally identified with the set of hyperplanes of $X$ that are transverse to $\mf{w}$. We thus denote by $\mscr{W}(\mf{w})$ this subset of $\mscr{W}(X)$. We also denote by $C(\mf{w})$ the \emph{carrier} of $\mf{w}$, i.e.\ its neighbourhood of radius $\frac{1}{2}$ in $X$.

Three hyperplanes $\mf{w}_1$, $\mf{w}_2$ and $\mf{w}_3$ form a \emph{facing triple} if we can choose pairwise disjoint sides $\mf{h}_1$, $\mf{h}_2$ and $\mf{h}_3$; the three halfspaces are then also said to form a \emph{facing triple}. Halfspaces $\mf{h}$ and $\mf{k}$ are \emph{nested} if either $\mf{h}\cu\mf{k}$ or $\mf{k}\cu\mf{h}$.

Given a vertex $p\in X$, we denote by $\s_p\cu\mscr{H}(X)$ the set of all halfspaces containing $p$. It satisfies the following properties:
\begin{enumerate}
\item any two halfspaces in $\s_p$ intersect non-trivially;
\item for every hyperplane $\mf{w}\in\mscr{W}(X)$, a side of $\mf{w}$ lies in $\s_p$;
\item every descending chain of halfspaces in $\s_p$ is finite. 
\end{enumerate}
Subsets $\s\cu\mscr{H}(X)$ satisfying $(1)$--$(3)$ are known as \emph{DCC ultrafilters}. If a set $\s\cu\mscr{H}(X)$ only satisfies $(1)$ and $(2)$, we refer to it simply as an \emph{ultrafilter}. 

Let $\iota\colon X\ra 2^{\mscr{H}(X)}$ denote the map that takes each vertex $p$ to the set $\s_p$. Its image $\iota (X)$ coincides with the collection of all DCC ultrafilters. Endowing $2^{\mscr{H}(X)}$ with the product topology, we can consider the closure $\overline{\iota(X)}$, which happens to coincide with the set of all ultrafilters. Equipped with the subspace topology, this is a totally disconnected, compact, Hausdorff space known as the \emph{Roller compactification} of $X$ \cite{Nevo-Sageev}; we denote it by $\overline X$. 

The \emph{Roller boundary} $\partial X$ is defined as the difference $\overline X\setminus X$. The inclusion $\iota\colon X\ra\overline X$ is always continuous\footnote{Note that $\iota$ is only defined on the $0$--skeleton, which has the discrete topology.}. If, moreover, $X$ is locally finite, then $\iota$ is a topological embedding, $X$ is open in $\overline X$ and $\partial X$ is compact. Even though elements of $\partial X$ are technically just sets of halfspaces, we will rather think of them as points at infinity. In analogy with vertices of $X$, we will then write $x\in\partial X$ and reserve the notation $\s_x$ for the ultrafilter representing $x$.

According to an unpublished result of U.\ Bader and D.\ Guralnik, the identity of $X$ extends to a homeomorphism between $\partial X$ and the horofunction boundary of $(X,d)$; also see \cite{Caprace-Lecureux,Fernos-Lecureux-Matheus}. However, the characterisation of $\overline X$ in terms of ultrafilters additionally provides a natural structure of median algebra on $\overline X$, corresponding to the map
\[m(\s_x,\s_y,\s_z)=(\s_x\cap\s_y)\cup(\s_y\cap\s_z)\cup(\s_z\cap\s_x).\]
Under the identification of $p\in X$ and $\s_p\cu\mscr{H}(X)$, this map $m\colon{\overline{X}}^3\ra\overline X$ restricts to the usual median-algebra structure on $X$. Given $x,y\in\overline X$, the \emph{interval} between $x$ and $y$ is the set $I(x,y)=\{z\in\overline X\mid m(x,y,z)=z\}$. If $x,y,z\in\overline X$, the median $m(x,y,z)$ is the only point of $\overline X$ that lies in all three intervals $I(x,y)$, $I(y,z)$ and $I(z,x)$. Observe that $I(p,q)\cu X$ if $p,q\in X$.

In some instances, we will also have to consider the visual boundary of $X$ associated to the $\CAT$ metric. To avoid confusion with the Roller boundary $\partial X$, we will denote the visual boundary by $\partial_{\infty}X$; note that $\partial_{\infty}X$ is the horofunction boundary of $X$ with respect to the $\CAT$ metric. When $X$ is Gromov hyperbolic, $\partial_{\infty}X$ is also naturally identified with the Gromov boundary of $X$, for which we will adopt the same notation.

Given $p\in X$ and $\mf{h}\in\mscr{H}(X)$, we have $p\in\mf{h}$ if and only if $\mf{h}\in\s_p$. By analogy, we say that a point $x\in\overline X$ lies in a halfspace $\mf{h}\in\mscr{H}(X)$ (written $x\in\mf{h}$), if the halfspace $\mf{h}$ is an element of the ultrafilter $\s_x$. This should be regarded as a way of extending halfspaces into the boundary, yielding a partition $\overline X=\mf{h}\sqcup\mf{h}^*$ for every $\mf{h}\in\mscr{H}(X)$. 

Given subsets $A,B\cu\overline X$, we adopt the notation:
\begin{align*}
&\mscr{H}(A|B)=\{\mf{h}\in\mscr{H}(X)\mid B\cu\mf{h},~A\cu\mf{h}^*\}, \\
&\mscr{W}(A|B)=\{\mf{w}\in\mscr{W}(X)\mid\text{one side of }\mf{w}\text{ lies in } \mscr{H}(A|B)\}.
\end{align*}
If $\mf{w}\in\mscr{W}(A|B)$, we say that $\mf{w}$ \emph{separates} $A$ and $B$. We denote by $\mscr{W}(A)$ the set of all hyperplanes separating two points of $A$. To avoid possible ambiguities, we adopt the convention that hyperplanes $\mf{w}$ are not contained in either of their sides; in particular, $\mf{w}\not\in\mscr{W}(\mf{w}|A)$ for every $A\cu\overline X$. 

\begin{lem}\label{three pwt}
Given points $x,y,z,w\in\overline X$, the sets $\mscr{W}(x,y|z,w)$, $\mscr{W}(x,z|y,w)$ and $\mscr{W}(x,w|y,z)$ are pairwise transverse.
\end{lem}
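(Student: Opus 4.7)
The plan is to reduce the statement to checking, for any two halfspaces chosen from two different tripartite families, that all four quadrants they determine are nonempty; this is where property~(1) in the definition of an ultrafilter will do all the work.

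By the $S_3$-symmetry of the three families, it suffices to pick $\mf{w}_1\in\mscr{W}(x,y|z,w)$ and $\mf{w}_2\in\mscr{W}(x,z|y,w)$ and show that they are transverse. Orient them: let $\mf{h}_1$ be the side of $\mf{w}_1$ containing $z,w$ and $\mf{h}_2$ the side of $\mf{w}_2$ containing $y,w$. Using the convention that $p\in\mf{h}$ means $\mf{h}\in\s_p$, the four points then distribute into the four quadrants as
\[
x\in\mf{h}_1^*\cap\mf{h}_2^*,\quad y\in\mf{h}_1^*\cap\mf{h}_2,\quad z\in\mf{h}_1\cap\mf{h}_2^*,\quad w\in\mf{h}_1\cap\mf{h}_2.
\]
In particular, each of the four pairs $\{\mf{h}_1^{\pm},\mf{h}_2^{\pm}\}$ is contained in some common ultrafilter $\s_x,\s_y,\s_z,\s_w$. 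Ultrafilter property~(1) then forces each of the four intersections $\mf{h}_1^{\pm}\cap\mf{h}_2^{\pm}$ to be a nonempty subset of $X$, which is precisely the defining condition for $\mf{h}_1$ and $\mf{h}_2$ to be transverse halfspaces.

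Finally, one needs only verify $\mf{w}_1\neq\mf{w}_2$, so that transversality of the halfspaces upgrades to transversality of the hyperplanes. Both potential identifications $\mf{h}_1=\mf{h}_2$ and $\mf{h}_1=\mf{h}_2^*$ are immediately excluded by the distribution of $x,y,z,w$ above (each would place one of the four points on both sides of $\mf{w}_1$). The main subtle point — that nonemptiness of the quadrants really follows from just four boundary points — is handled entirely by the ultrafilter axiom and does not require any separate argument about how halfspaces extend to $\overline X$.
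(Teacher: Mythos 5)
Your proof is correct and follows essentially the same route as the paper's: pick one halfspace from each of two of the three families, observe that the four points $x,y,z,w$ distribute into the four quadrants, and conclude transversality. The paper leaves implicit the step you spell out at the end — that nonemptiness of each quadrant as a subset of $X$ (not just of $\overline X$) follows from ultrafilter property~(1) applied to $\s_x,\s_y,\s_z,\s_w$ — so your version is a touch more careful but not different in substance.
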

\begin{proof}
Consider $\mf{h}\in\mscr{H}(x,y|z,w)$ and $\mf{k}\in\mscr{H}(x,z|y,w)$. Since we have $x\in\mf{h}^*\cap\mf{k}^*$, $y\in\mf{h}^*\cap\mf{k}$, $z\in\mf{h}\cap\mf{k}^*$ and $w\in\mf{h}\cap\mf{k}$, we conclude that $\mf{h}$ and $\mf{k}$ are transverse. Hence $\mscr{W}(x,y|z,w)$ and $\mscr{W}(x,z|y,w)$ are transverse; the same argument shows that they are also transverse to $\mscr{W}(x,w|y,z)$.
\end{proof}

We will generally conflate all geodesics (and quasi-geodesics) with their images in $X$. Every geodesic $\g\cu X$ can be viewed as a collection of edges; distinct edges $e,e'\cu\g$ must cross distinct hyperplanes. We write $\mscr{W}(\g)$ for the collection of hyperplanes crossed by (the edges of) $\g$. If two geodesics $\g$ and $\g'$ share an endpoint $p\in X$, their union $\g\cup\g'$ is again a geodesic if and only if $\mscr{W}(\g)\cap\mscr{W}(\g')=\emptyset$.

Given a ray $r\cu X$, we denote by $r(0)$ its origin and, given $n\in\N$, by $r(n)$ the only point of $r$ with $d(r(0),r(n))=n$. Given a hyperplane $\mf{w}\in\mscr{W}(X)$, there is exactly one side of $\mf{w}$ that has unbounded intersection with $r$. The collection of all these halfspaces is an ultrafilter on $\mscr{H}(X)$ and it therefore determines a point $r(+\infty)\in\partial X$.

Fixing a basepoint $p\in X$, every point of $\partial X$ is of the form $r(+\infty)$ for a ray $r$ with $r(0)=p$. We obtain a bijection between points of $\partial X$ and rays based at $p$, where we need to identify the rays $r_1$ and $r_2$ if $\mscr{W}(r_1)=\mscr{W}(r_2)$. See Proposition~A.2 in \cite{Genevois} for details.

Given two vertices $p,q\in X$, we have $d(p,q)=\#\mscr{W}(p|q)$. By analogy, we can define $d(x,y)=\#\mscr{W}(x|y)$ for all points $x,y\in\overline X$. The resulting function ${d\colon\overline X\x\overline X\ra\N\cup\{+\infty\}}$ satisfies all axioms of a metric, except that it can indeed take the value $+\infty$. We write $x\sim y$ if $x$ and $y$ satisfy $d(x,y)<+\infty$. This is an equivalence relation on $\overline X$; we refer to its equivalence classes as \emph{components}.

Given $x\in\overline X$, we denote by $Z(x)$ the only component of $\overline X$ that contains the point $x$. When $x\in X$, we have $Z(x)=X$. For every component $Z\cu\overline X$, the pair $(Z,d)$ is a metric space. Joining points of $Z$ by an edge whenever they are at distance $1$ and adding $k$--cubes whenever we see their $1$--skeleta, we can give $(Z,d)$ a structure of $\CAT$ cube complex.

We obtain here a couple of simple results which will be needed later on.

\begin{lem}\label{components vs separation}
Suppose that $\dim X<+\infty$. Let $\mf{h}$ and $\mf{k}$ be disjoint halfspaces with $\mf{h}\neq\mf{k}^*$. Suppose that there exist points $x\in\mf{h}\cap\partial X$ and $y\in\mf{k}\cap\partial X$ with $x\sim y$. There exists an infinite chain $\mf{j}_0\supsetneq\mf{j}_1\supsetneq ...$ of halfspaces of $X$ transverse to both $\mf{h}$ and $\mf{k}$.
\end{lem}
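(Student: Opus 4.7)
\medskip
\noindent\textbf{Proof plan.} My strategy is to produce the required chain inside the non-DCC ultrafilter $\s_x$, then use a cofinality argument to force every sufficiently late halfspace in the chain to become transverse to both $\mf{h}$ and $\mf{k}$.

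First I set up two finiteness observations. The hypotheses $\mf{h}\cap\mf{k}=\emptyset$ and $\mf{h}\neq\mf{k}^*$ combine to give the strict inclusion $\mf{h}\subsetneq\mf{k}^*$, so $\mf{h}^*\cap\mf{k}^*\neq\emptyset$ and one can pick a vertex $p\in\mf{h}^*\cap\mf{k}^*$ of $X$. Fixing any vertex $q\in\mf{h}$, every hyperplane in $\mscr{W}(\mf{h}|p)$ separates $q$ from $p$, whence $\#\mscr{W}(\mf{h}|p)\leq d(q,p)<\infty$; symmetrically $\#\mscr{W}(\mf{k}|p)<\infty$.

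Since $x\in\partial X$, the ultrafilter $\s_x$ fails the DCC, so there exists an infinite strictly descending chain $\mf{j}_0\supsetneq\mf{j}_1\supsetneq\ldots$ in $\s_x$; let $\mf{w}_i$ denote the hyperplane bounding $\mf{j}_i$. After discarding an initial segment, I may assume that each remaining $\mf{j}_i$ satisfies: (a) $y\in\mf{j}_i$, since $\#\mscr{W}(x|y)<\infty$ allows only finitely many $\mf{w}_i$ to separate $x$ from $y$; (b) $p\in\mf{j}_i^*$, since the DCC property of $\s_p$ forbids infinitely many $\mf{j}_i$ from lying in $\s_p$; (c) $\mf{w}_i\notin\mscr{W}(\mf{h}|p)$ and $\mf{w}_i\notin\mscr{W}(\mf{k}|p)$, since both sets are finite.

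It remains to check that each such $\mf{j}_i$ is transverse to $\mf{h}$ and to $\mf{k}$; the two arguments are identical, so I describe only the former. The intersections $\mf{j}_i\cap\mf{h}\ni x$ and $\mf{j}_i\cap\mf{h}^*\ni y$ are nonempty by (a), using $y\in\mf{k}\cu\mf{h}^*$; and $\mf{j}_i^*\cap\mf{h}^*\ni p$ is nonempty by (b). The only delicate intersection is $\mf{j}_i^*\cap\mf{h}$: if this were empty, then $\mf{h}\cu\mf{j}_i$, and combined with $p\in\mf{j}_i^*$ one would deduce $\mf{w}_i\in\mscr{W}(\mf{h}|p)$, contradicting (c). Thus the tail of the chain is the desired infinite descending sequence. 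The only genuinely conceptual point is recognising that one has to avoid the finite ``forbidden'' sets $\mscr{W}(\mf{h}|p)$ and $\mscr{W}(\mf{k}|p)$ in order to prevent $\mf{j}_i$ from swallowing $\mf{h}$ or $\mf{k}$ wholesale; everything else is a routine cofinality argument.
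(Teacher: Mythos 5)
Your proof is correct, but it follows a genuinely different route from the paper's. The paper's argument works with the large set $\mscr{W}(p|x,y)$ (infinite because $\mscr{W}(p|x)$ is infinite and $\mscr{W}(x|y)$ is finite), deletes the finitely many members of $\mscr{W}(p|\mf{h})\cup\mscr{W}(p|\mf{k})$, and then applies Ramsey's theorem to the remaining infinite family: any two of its members are either transverse or nested, and finite dimensionality forbids an infinite pairwise-transverse subfamily, so an infinite chain must appear. You instead observe that $x\in\partial X$ forces $\s_x$ to fail the DCC, which hands you an infinite strictly descending chain for free; you then only need to discard an initial segment to dodge the three finite obstruction sets and check transversality directly via the four-quadrant criterion, exhibiting the points $x$, $y$, $p$ and ruling out $\mf{h}\cu\mf{j}_i$ by the choice in (c). The paper's approach buys a chain from an a priori unstructured infinite set at the cost of invoking Ramsey and finite dimensionality; yours sidesteps both by sourcing the chain from the non-DCC property of a boundary ultrafilter, which is arguably more elementary. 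One might note, as a small bonus, that the hypothesis $\dim X<+\infty$ appears to play no role in your argument, whereas it is essential to the Ramsey step in the paper's proof.
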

\begin{proof}
Since $\mf{h}\neq\mf{k}^*$, there exists a point $p\in X$ lying in $\mf{h}^*\cap\mf{k}^*$; see Figure~\ref{lemma picture}. The set $\mscr{W}(p|x,y)$ is infinite, as $\mscr{W}(p|x)$ is infinite and $\mscr{W}(x|y)$ is finite. Since $\s_p$ is a DCC ultrafilter, the sets $\mscr{W}(p|\mf{h})$ and $\mscr{W}(p|\mf{k})$ are finite and the set $A=\mscr{H}(p|x,y)\setminus(\mscr{H}(p|\mf{h})\cup\mscr{H}(p|\mf{k}))$ is infinite. Any halfspace in $A$ is transverse to both $\mf{h}$ and $\mf{k}$. Any two elements of $\mscr{H}(p|x,y)$ are either transverse or nested, and any subset of pairwise transverse halfspaces has cardinality at most $\dim X$. The required chain is thus obtained by applying Ramsey's theorem to $A$.
\end{proof}

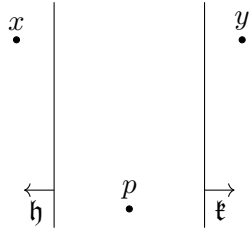
\begin{figure} 
\begin{tikzpicture}
\draw [fill] (-1,-0.5) -- (-1,2.5);
\draw [fill] (1,-0.5) -- (1,2.5);
\draw [->] (-1,0) -- (-1.4,0);
\draw [->] (1,0) -- (1.4,0);
\node [below left] at (-1,0) {$\mf{h}$};
\node [below right] at (1,0) {$\mf{k}$};
\draw[fill] (0,-0.25) circle [radius=0.04cm];
\draw[fill] (-1.5,2) circle [radius=0.04cm];
\draw[fill] (1.5,2) circle [radius=0.04cm];
\node [above] at (-1.5,2) {$x$};
\node [above] at (1.5,2) {$y$};
\node [above] at (0,-0.25) {$p$};
\end{tikzpicture}
\caption{The setup in the proof of Lemma~\ref{components vs separation}.}
\label{lemma picture} 
\end{figure}

\begin{lem}\label{rays to nearby points}
Let $r\cu X$ be a ray and set $x=r(+\infty)$. Given a point $x'\in\partial X$ with $x'\sim x$, there exists a ray $r'$ satisfying $r'(0)=r(0)$, $r'(+\infty)=x'$ and such that the Hausdorff distance $d_{\rm Haus}(r,r')$ is at most $d(x,x')$.
\end{lem}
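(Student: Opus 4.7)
The plan is to build $r'$ as a combinatorial ray from $p$ by re-ordering the hyperplanes of $\mscr{W}(p|x')$ so that those shared with $r$ are crossed in the same order, with the remaining ones (at most $N:=d(x,x')$ of them) inserted at positions dictated by nesting. Let $C:=\mscr{W}(p|x)\cap\mscr{W}(p|x')$, $A:=\mscr{W}(p|x)\setminus C$ and $B:=\mscr{W}(p|x')\setminus C$. Every hyperplane of $A\sqcup B$ separates $x$ from $x'$, so $|A|+|B|=N$. Let $\mf{v}_1,\mf{v}_2,\ldots$ be the enumeration of $C$ induced by the order of $r$. For $\mf{k}\in B$ let $\mf{K}$ denote the side of $\mf{k}$ containing $x'$, and for each $\mf{v}_j$ let $\mf{H}_j$ denote the side containing $x$ (which also contains $x'$, since $\mf{v}_j\in C$). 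The crucial finiteness input is $J_{\mf{k}}:=\max\{j:\mf{H}_j\supsetneq\mf{K}\}<\infty$, which holds because $\{j:\mf{H}_j\supsetneq\mf{K}\}\cu\mscr{W}(p|\mf{k})$ and the latter is finite, $p$ being at finite combinatorial distance from the carrier of $\mf{k}$.

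I would then define the enumeration $\sigma$ of $\mscr{W}(p|x')$ by inserting each $\mf{k}\in B$ immediately after $\mf{v}_{J_{\mf{k}}}$, breaking ties among $B$-elements using a prechosen linear extension of the nesting on $B$. A short check shows that $\sigma$ is a linear extension of the nesting partial order on $\mscr{H}(p|x')$: the non-trivial case is $\mf{K}\supsetneq\mf{H}_j$, where any $\mf{H}_{j'}\supsetneq\mf{K}$ also satisfies $\mf{H}_{j'}\supsetneq\mf{H}_j$ and hence $j'<j$ by the validity of the $r$-order, so $J_{\mf{k}}<j$ and $\mf{k}$ is placed before $\mf{v}_j$. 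Setting $r'(0)=p$ and inductively obtaining $r'(n)$ from $r'(n-1)$ by crossing $\sigma(n-1)$ then yields a genuine combinatorial ray from $p$ to $x'$: at each step, the linear-extension-of-nesting condition guarantees that $\sigma(n-1)$ is actually adjacent to $r'(n-1)$, as any halfspace $\mf{h}'\in\s_{r'(n-1)}$ strictly inside the target halfspace would force a contradiction with either the ultrafilter property of $\s_{x'}$ (when $\mf{h}'$ would contain $x'$) or the ordering of $\sigma$ (when the bounding hyperplane of $\mf{h}'$ lies in $\mscr{W}(p|x')$).

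For the Hausdorff estimate, write the first $n$ hyperplanes of $r$ as $A_n\sqcup\{\mf{v}_1,\ldots,\mf{v}_{b_n}\}$ with $|A_n|+b_n=n$, and the first $m$ of $\sigma$ as $B^*_m\sqcup\{\mf{v}_1,\ldots,\mf{v}_{V_m}\}$ with $|B^*_m|+V_m=m$. A direct computation of symmetric differences yields
\[\mscr{W}(r(n)\mid r'(m))=A_n\sqcup B^*_m\sqcup\{\mf{v}_j:\min(b_n,V_m)<j\leq\max(b_n,V_m)\}.\]
Given $n$, choosing $m$ so that $V_m=b_n$ eliminates the third term and yields $d(r(n),r'(m))=|A_n|+|B^*_m|\leq|A|+|B|=N$; the reverse direction is symmetric, so $d_{\rm Haus}(r,r')\leq N$. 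The main obstacle I anticipate is the careful verification that $\sigma$ produces a valid combinatorial ray rather than merely an abstract ordering of hyperplanes --- the finiteness of each $J_{\mf{k}}$, together with the nesting compatibility, is what makes this possible.
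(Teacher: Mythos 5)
Your proposal is correct, but it takes a genuinely different route from the paper's. The paper reduces immediately to the case $d(x,x')=1$ (the general case following by iterating along a geodesic in $I(x,x')$), and then handles that case by a local cut-and-paste of the ray $r$ around the carrier of the single separating hyperplane $\mf{w}$, treating the two cases $r(0)\in\mf{h}$ and $r(0)\in\mf{h}^*$ separately. Your argument instead works directly with general $N=d(x,x')$: you decompose $\mscr{W}(p|x)$ and $\mscr{W}(p|x')$ into $A\sqcup C$ and $B\sqcup C$, build an admissible enumeration $\sigma$ of $\mscr{W}(p|x')$ by inserting the (finitely many) elements of $B$ into the $r$-order on $C$ at positions governed by the $J_{\mf{k}}$, verify the ray-order condition of Lemma~\ref{hyperplane orders}, and then read off the Hausdorff bound from the exact symmetric-difference formula for $\mscr{W}(r(n)|r'(m))$. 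The essential inputs are the same (finiteness of $\mscr{W}(p|\mf{k})$ and the interplay between nesting and ray orders), but your version is more uniform, avoids the induction, and in fact gives a slightly stronger conclusion --- the explicit pairing $V_m=b_n$ shows that for each $n$ there is an $m$ with $d(r(n),r'(m))\le N$ and vice versa, with the cross-distance controlled by $|A_n|+|B_m^*|$ rather than by iterating $1$-step bounds. The price is more bookkeeping, most notably the verification (which you compress into ``a short check'') that $\sigma$ really is a valid crossing sequence; this needs the full statement of Lemma~\ref{hyperplane orders}, including the containment $\mscr{W}(p|\sigma(n))\subseteq\{\sigma(0),\ldots,\sigma(n-1)\}$, which you implicitly use but should cite or spell out. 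Two small points to tidy up: handle $J_{\mf{k}}$ when $\{j:\mf{H}_j\supsetneq\mf{K}\}=\emptyset$ (insert at the start), and note that $C$ is infinite because $A,B$ are finite while $\mscr{W}(p|x)$ is not, so that $b_n$ and $V_m$ both exhaust $\mathbb{N}$ and the matching indices in the Hausdorff estimate always exist.
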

\begin{proof}
It suffices to consider the case when $d(x,x')=1$. Let $\mf{w}$ be the only hyperplane separating $x$ and $x'$; let $\mf{h}$ denote the side of $\mf{w}$ containing $x$. Note that $\mf{w}$ must be transverse to all but finitely many hyperplanes in $\mscr{W}(r)$. Thus, $r$ intersects the carrier $C(\mf{w})$ in a sub-ray $\g\cu r$.

If $r(0)\in\mf{h}$, the ray $\g$ does not cross $\mf{w}$. Let $\g'$ be the ray such that $\mscr{W}(\g(n)|\g'(n))=\{\mf{w}\}$ for all $n\geq 0$. In this case, the ray $r'$ is obtained by following $r$ up to $\g(0)$, crossing $\mf{w}$, and finally following $\g'$ all the way to $x'$. 

If instead $r(0)\in\mf{h}^*$, there exists $k\geq 0$ such that $\mscr{W}(\g(k)|\g(k+1))=\{\mf{w}\}$. Let $\g''$ be the ray with $\mscr{W}(\g''(n)|\g(n+k+1))=\{\mf{w}\}$ for all $n\geq 0$; in particular, $\g''(0)=\g(k)$ and $\g''(+\infty)=x'$. We construct $r'$ by following $r$ up to $\g(k)$ and then, rather than crossing $\mf{w}$, following $\g''$ until $x'$.
\end{proof}

We say that a subset $C\cu X$ is \emph{convex} if every geodesic with endpoints in $C$ is entirely contained in $C$; equivalently, $I(x,y)\cu C$ for every ${x,y\in C}$. Halfspaces are precisely those nonempty convex subsets of $X$ whose complement is convex and nonempty. Given any subset $A\cu X$, we denote by $\hull(A)$ the smallest convex subset of $X$ that contains $A$. This coincides with the intersection of all halfspaces containing $A$. The subcomplex $\hull(A)$ is itself a $\CAT$ cube complex and its hyperplane set is identified with $\mscr{W}(A)$.

Given pairwise-intersecting convex subsets $C_1,...,C_k\cu X$, we always have ${C_1\cap ...\cap C_k\neq\emptyset}$. This is known as Helly's lemma (Theorem~2.2 in \cite{Roller}).

Every convex subset $C\cu X$ comes equipped with a $1$--Lipschitz projection $\pi_C\colon X\ra C$, with the property that $\pi_C(x)\in I(x,y)$ for every $x\in X$ and every $y\in C$. We refer to $\pi_C$ as the \emph{gate-projection} to $C$. For all $x,y\in X$, we have $\mscr{W}(x|\pi_C(x))=\mscr{W}(x|C)$ and $\mscr{W}(x|y)\cap\mscr{W}(C)=\mscr{W}(\pi_C(x)|\pi_C(y))$, so $\pi_C$ is the nearest-point projection with respect to the combinatorial metric.

Consider now two disjoint halfspaces $\mf{h}$ and $\mf{k}$. We set
\[M(\mf{h},\mf{k})=\{(x,y)\in\mf{h}\x\mf{k}\mid d(x,y)=d(\mf{h},\mf{k})\}\]
and denote by $B(\mf{h},\mf{k})$ the union of all intervals $I(x,y)$ with $(x,y)\in M(\mf{h},\mf{k})$. The set $B=B(\mf{h},\mf{k})$ is usually known as the \emph{bridge} and it is a convex subcomplex of $X$. Let $S_1$ and $S_2$ denote the projections of $M(\mf{h},\mf{k})\cu\mf{h}\x\mf{k}$ to the factors $\mf{h}$ and $\mf{k}$, respectively. We refer to $S_1$ and $S_2$ as the \emph{shores}; note that $S_1=B\cap\mf{h}$ and $S_2=B\cap\mf{k}$, so shores are also convex subcomplexes.

The restrictions $\pi_{S_1}|_{S_2}$ and $\pi_{S_1}|_{S_2}$ define cubical isomorphisms between $S_1$ and $S_2$. In fact, the intervals $I(x,y)$ associated to pairs $(x,y)\in M(\mf{h},\mf{k})$ are pairwise disjoint and all isomorphic to each other, giving rise to isometric splittings $B\simeq I(x,y)\x S$. Here $S_1$ corresponds to $\{x\}\x S$ and $S_2$ corresponds to $\{y\}\x S$.

We refer to the cube complex $S=S(\mf{h},\mf{k})$ simply as the \emph{abstract shore} when we do not want to identify it with any specific subcomplex of $X$. It is precisely the restriction quotient of $X$ (in the sense of p.\ 860 of \cite{CS}) associated to the set of hyperplanes transverse to both $\mf{h}$ and $\mf{k}$. Finally, we remark that, for every $x\in\mf{h}$ and every $y\in\mf{k}$, we have 
\[\tag{$\ast$}\label{distance formula} d(x,y)=d(x,S_1)+d(\pi_{S_1}(x),\pi_{S_1}(y))+d(\mf{h},\mf{k})+d(y,S_2).\]
The reader can consult for instance Section~2.G of \cite{CFI} or Section~2.2 of \cite{Fio3} for a more detailed treatment of bridges and shores.

We say that two disjoint halfspaces $\mf{h}$ and $\mf{k}$ are \emph{strongly separated} if the corresponding shores are singletons. Equivalently, no hyperplane of $X$ is transverse to both $\mf{h}$ and $\mf{k}$. Similarly, we say that two hyperplanes are strongly separated if they bound strongly separated halfspaces.

The cube complex $X$ is \emph{irreducible} if it cannot be split as a product of lower-dimensional cube complexes. Every finite dimensional cube complex admits a canonical decomposition as product of irreducible cube complexes (Proposition~2.6 in \cite{CS}); we refer to it as the \emph{De Rham decomposition}.

Throughout the paper, all groups will be implicitly assumed to be finitely generated. When a group $G$ acts on a $\CAT$ cube complex $X$, we will assume that the action is by \emph{cubical automorphisms}, i.e.\ by isometries taking vertices to vertices. We say that $G\acts X$ is \emph{essential} if no $G$--orbit is contained in a metric neighbourhood of a halfspace. Similarly, we say that $X$ is \emph{essential} if no halfspace is contained in a metric neighbourhood of the corresponding hyperplane. If $G$ acts cocompactly, $X$ is essential if and only if the action $G\acts X$ is essential.

\begin{rmk}\label{hyp vs irr}
Every essential, Gromov hyperbolic $\CAT$ cube complex is irreducible. Indeed, essentiality guarantees that $X$ has no bounded factors in its De Rham decomposition, whereas hyperbolicity implies that there is at most one unbounded factor.
\end{rmk} 

The action $G\acts X$ is \emph{hyperplane-essential} if each hyperplane-stabiliser acts essentially on the corresponding hyperplane. Similarly, $X$ is \emph{hyperplane-essential} if all its hyperplanes are essential cube complexes. Again, if $G$ acts cocompactly, $X$ is hyperplane-essential if and only if the action $G\acts X$ is hyperplane-essential. This follows from Exercise~1.6 in \cite{Sageev-notes}, which we record here for later use:

\begin{lem}\label{hyp cocpt}
Let $\mf{w}\in\mscr{W}(X)$ be a hyperplane and $G_{\mf{w}}<G$ its stabiliser. If $G\acts X$ is cocompact, the action $G_{\mf{w}}\acts\mf{w}$ also is.
\end{lem}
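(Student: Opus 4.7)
The plan is a straightforward pigeonhole-on-cubes argument: cocompactness of $G\acts X$ gives finitely many $G$-orbits of cubes of $X$, hence finitely many $G$-orbits of midcubes of $\mf{w}$; one then converts this to finitely many $G_{\mf{w}}$-orbits by exploiting the fact that each cube of $X$ is crossed by only finitely many hyperplanes (at most its dimension).

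Concretely, I would first list representatives $c_1,\dots,c_m$ of those $G$-orbits of cubes of $X$ whose members meet $\mf{w}$; these exist in finite supply by cocompactness of $G\acts X$. For each $c_i$, I would enumerate the finitely many pairs $(c_i,\mf{w}_{i,j})$ with $\mf{w}_{i,j}\in\mscr{W}(c_i)$. For those pairs whose second coordinate lies in $G\cdot\mf{w}$, I would fix once and for all an element $h_{i,j}\in G$ with $h_{i,j}\mf{w}_{i,j}=\mf{w}$; this is where we set up a finite system of coset representatives for $G_{\mf{w}}\backslash G$ restricted to the hyperplanes visible through the orbit representatives.

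Now, given an arbitrary cube $c\cu X$ meeting $\mf{w}$, I would pick $g\in G$ with $gc=c_i$ for some $i$. Then $g\mf{w}$ is a hyperplane crossing $c_i$, so $g\mf{w}=\mf{w}_{i,j}$ for some $j$, and the composite $h_{i,j}g$ sends $\mf{w}$ to itself and therefore lies in $G_{\mf{w}}$. By construction, it carries the midcube $c\cap\mf{w}$ to the midcube $h_{i,j}c_i\cap\mf{w}$. As $(i,j)$ ranges over our finite list, these target midcubes form a finite collection, so $G_{\mf{w}}$ admits only finitely many orbits of midcubes of $\mf{w}$, which is precisely cocompactness of $G_{\mf{w}}\acts\mf{w}$.

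The only real subtlety is purely bookkeeping: correctly tracking how post-composing $g$ with the coset representative $h_{i,j}$ returns us to the stabiliser of $\mf{w}$, rather than of $g\mf{w}$. No dynamical hypothesis beyond cocompactness is needed, and the identical argument, applied to cubes contained in the carrier $C(\mf{w})$ rather than to midcubes of $\mf{w}$, also shows that $G_{\mf{w}}\acts C(\mf{w})$ is cocompact.
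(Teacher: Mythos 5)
Your argument is correct and needs no repair. Note that the paper does not actually prove this lemma --- it simply cites it as Exercise~1.6 in \cite{Sageev-notes} --- so there is no in-paper proof to compare against. The pigeonhole-on-cubes reasoning you give is the standard one, and the bookkeeping you flag is handled correctly: if $gc=c_i$ and $g\mf{w}=\mf{w}_{i,j}$, then $h_{i,j}g$ stabilises $\mf{w}$ and sends the midcube $c\cap\mf{w}$ onto one of the finitely many midcubes $h_{i,j}c_i\cap\mf{w}$. A marginally slicker packaging of the same idea, which avoids choosing the correction elements $h_{i,j}$ explicitly: $G$ acts with finitely many orbits on the set of pairs $(c,\mf{u})$ where $c$ is a cube of $X$ and $\mf{u}$ a hyperplane crossing $c$ (finitely many orbits of cubes, and each $k$-cube meets only $k$ hyperplanes); any $G$-equivalence between two such pairs that both have second coordinate $\mf{w}$ is automatically realised by an element of $G_{\mf{w}}$, so the pairs with second coordinate $\mf{w}$ --- equivalently, the cells of $\mf{w}$ --- fall into finitely many $G_{\mf{w}}$-orbits. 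Your closing remark that the same argument, applied to cubes of $C(\mf{w})$, gives cocompactness of $G_{\mf{w}}\acts C(\mf{w})$ is also correct, and that is the form in which the lemma is most often used.
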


The following notion appeared in Definition~7.3 in \cite{Fernos} and Definition~5.8 in \cite{Fernos-Lecureux-Matheus}; also see Proposition~7.5 in \cite{Fernos}.

\begin{defn}\label{regular defn}
Let $X$ be irreducible. A point $x\in\partial X$ is \emph{regular} if, for every $\mf{h}\in\s_x$, there exists $\mf{k}\in\s_x$ such that $\mf{k}$ and $\mf{h}^*$ are strongly separated. Equivalently, $\s_x$ contains a chain $\mf{h}_0\supsetneq\mf{h}_1\supsetneq ...$ such that $\mf{h}_n^*$ and $\mf{h}_{n+1}$ are strongly separated for every $n\geq 0$. We refer to the latter as a \emph{strongly separated chain} and denote by $\partial_{\rm reg}X\cu\partial X$ the subset of regular points.
\end{defn}

With reference to the proof sketch of Theorem~\ref{ext Moeb intro} in the introduction, the following is the formulation of Steps~(Ia) and~(Ib) that we will actually use. Note that hyperbolicity is not required here. 

\begin{prop}\label{ample}
Let $X$ be irreducible, essential and endowed with a proper cocompact action $G\acts X$ of a non-virtually-cyclic group. Consider two halfspaces $\mf{h}_1,\mf{h}_2$ and a nonempty $G$--invariant subset $\mc{A}\cu\partial_{\rm reg}X$.
\begin{enumerate}
\item The intersections $\mf{h}_i\cap\mc{A}$ and $\mf{h}_i^*\cap\mc{A}$ are always nonempty.
\end{enumerate}
If moreover $X$ is hyperplane-essential, the following also hold.
\begin{enumerate}
\setcounter{enumi}{1}
\item The halfspaces $\mf{h}_1$ and $\mf{h}_2$ are transverse if and only if the set $\mc{A}$ intersects each of the four sectors $\mf{h}_1\cap\mf{h}_2$, $\mf{h}_1^*\cap\mf{h}_2$, $\mf{h}_1\cap\mf{h}_2^*$ and $\mf{h}_1^*\cap\mf{h}_2^*$.
\item If we have $\mf{h}_1\cap\mc{A}\subsetneq\mf{h}_2\cap\mc{A}$, then $\mf{h}_1\subsetneq\mf{h}_2$.
\end{enumerate}
\end{prop}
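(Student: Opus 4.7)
The plan hinges on the dynamics of contracting isometries of $X$. Under the hypotheses of~(1), the Caprace--Sageev Rank Rigidity Theorem provides plentiful contracting elements in $G$: every halfspace $\mf{h}\in\mscr{H}(X)$ admits a \emph{skewering element}, i.e.\ a contracting $g\in G$ with $g\mf{h}\subsetneq\mf{h}$. The chain $(g^n\mf{h})_{n\geq 0}$ is then strongly separated, so $x_g^+=\bigcap_n g^n\mf{h}$ is a single point of $\mf{h}\cap\partial_{\rm reg}X$, and $g$ acts on $\partial X$ with North--South dynamics. Moreover, $G$ contains many independent pairs of such elements (with four disjoint fixed points) by non-virtual-cyclicity. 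For part~(1), fix any $y_0\in\mc{A}$ and, given a halfspace $\mf{h}$, choose a contracting $g$ skewering $\mf{h}$. By North--South dynamics, $g^ny_0\to x_g^+\in\mf{h}$ unless $y_0=x_g^-$, in which case we replace $g$ by an independent contracting element with a different repelling fixed point. In either case $g^ny_0\in\mf{h}$ eventually, and this point lies in $\mc{A}$ by $G$-invariance; the same argument for $\mf{h}^*$ finishes~(1).

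For part~(2), the reverse implication is immediate: $\mc{A}$ only meets non-empty sectors, and $\mf{h}_1,\mf{h}_2$ are transverse iff all four sectors are non-empty. For the forward direction, assume transverseness; by symmetry we need only show $\mf{h}_1\cap\mf{h}_2\cap\mc{A}\neq\emptyset$, and by~(1) it further suffices to exhibit a halfspace $\mf{k}\subseteq\mf{h}_1\cap\mf{h}_2$. Here \emph{hyperplane-essentiality} enters decisively. By Lemma~\ref{hyp cocpt} and hyperplane-essentiality, the action $G_{\mf{w}_1}\acts\mf{w}_1$ is proper, cocompact and essential, and combined with essentiality of $\mf{h}_2$ in $X$ this yields vertices of $\mf{h}_1\cap\mf{h}_2$ whose gate-projections to $\mf{w}_1$ and $\mf{w}_2$ lie arbitrarily far from $\mf{w}_1\cap\mf{w}_2$ inside $\mf{w}_1\cap\mf{h}_2$ and $\mf{w}_2\cap\mf{h}_1$, respectively. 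A standard combinatorial argument then locates a hyperplane $\mf{w}$ lying entirely inside $\mf{h}_1\cap\mf{h}_2$ and transverse to neither $\mf{w}_1$ nor $\mf{w}_2$; the side of $\mf{w}$ on which such a deep vertex lies is the desired halfspace $\mf{k}$. This construction of $\mf{k}$ is the main obstacle: essentiality alone does not suffice, as Example~\ref{need hyperplane-essential} demonstrates, since without hyperplane-essentiality the two hyperplanes may share all asymptotic behaviour and leave the sector too shallow to contain any strictly smaller halfspace.

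Part~(3) reduces to~(1) and~(2) by case analysis on the pocset relation between $\mf{h}_1$ and $\mf{h}_2$. The hypothesis $\mf{h}_1\cap\mc{A}\subsetneq\mf{h}_2\cap\mc{A}$ immediately rules out $\mf{h}_1=\mf{h}_2$ and $\mf{h}_2\subsetneq\mf{h}_1$; the facing configurations $\mf{h}_1\cap\mf{h}_2=\emptyset$ (i.e.\ $\mf{h}_1\subseteq\mf{h}_2^*$) and $\mf{h}_1\cup\mf{h}_2=X$ (i.e.\ $\mf{h}_1^*\subseteq\mf{h}_2$) force respectively $\mf{h}_1\cap\mc{A}=\emptyset$ and $\mf{h}_2^*\cap\mc{A}=\emptyset$, both contradicting~(1); and transverseness is excluded by~(2) applied to the sector $\mf{h}_1\cap\mf{h}_2^*$, which would contain a point of $\mc{A}$ lying in $\mf{h}_1\setminus\mf{h}_2$, violating the assumed inclusion. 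Only $\mf{h}_1\subsetneq\mf{h}_2$ survives.
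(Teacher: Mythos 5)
Your overall route matches the paper's: parts~(1) and~(2) reduce to lemmas proved in~\cite{BF1} (Lemmas~2.9, 2.17 and Proposition~2.10, which the paper simply cites), and part~(3) is a straightforward case analysis on the nesting of $\mf{h}_1,\mf{h}_2$ that uses~(1) and~(2) to rule out every option except $\mf{h}_1\subsetneq\mf{h}_2$. Your part~(3) is essentially identical to the paper's, just with the transverse case handled last instead of first, and it is correct.

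Where you diverge is in attempting to reconstruct the cited~\cite{BF1} results, and two points there deserve care. For part~(1), the claim that the chain $(g^n\mf{h})_{n\geq 0}$ is \emph{strongly separated} for an arbitrary skewering element $g$ is not automatic: double-skewering gives $g\mf{h}\subsetneq\mf{h}$, but strong separation requires more (it is a consequence of $g$ being a rank-one/contracting isometry, after possibly passing to a power). This is a fixable imprecision. Also, when $y_0=x_g^-$, it is cleaner to replace $y_0$ by a translate $hy_0\in\mc{A}$ with $hy_0\neq x_g^-$ (which exists since $G\cdot y_0$ cannot be a single point for a non-elementary action) than to hunt for a second contracting element that still skewers $\mf{h}$ --- your phrasing leaves it unclear whether the replacement element has $x^+$ in $\mf{h}$ at all.

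The genuine gap is in the forward direction of part~(2). Your reduction --- exhibit a halfspace $\mf{k}\cu\mf{h}_1\cap\mf{h}_2$ and invoke~(1) --- is exactly the right strategy and mirrors the paper's appeal to~\cite[Prop.~2.10]{BF1}. But the construction of $\mf{k}$ \emph{is} the content of that proposition, and the paragraph beginning ``By Lemma~\ref{hyp cocpt} and hyperplane-essentiality\ldots\ a standard combinatorial argument then locates a hyperplane\ldots'' is too vague to evaluate. It is not clear how ``gate-projections lying arbitrarily far from $\mf{w}_1\cap\mf{w}_2$ inside $\mf{w}_1\cap\mf{h}_2$ and $\mf{w}_2\cap\mf{h}_1$'' yields a hyperplane entirely contained in the sector $\mf{h}_1\cap\mf{h}_2$; one must combine hyperplane-essentiality with a deep-vertex argument in the sector itself and rule out the hyperplane escaping through $\mf{w}_1$ or $\mf{w}_2$, and that requires an actual argument rather than an appeal to a ``standard'' one. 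As it stands, this step is asserted, not proved.
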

\begin{proof} 
Part~(1) follows from Lemmas~2.9 and 2.18 in \cite{BF1}. If $\mf{h}_1$ and $\mf{h}_2$ are not transverse, one of the four intersections $\mf{h}_1\cap\mf{h}_2$, $\mf{h}_1^*\cap\mf{h}_2$, $\mf{h}_1\cap\mf{h}_2^*$, $\mf{h}_1^*\cap\mf{h}_2^*$ is empty by definition; in particular, it cannot contain any point of $\mc{A}$. This proves one implication of part~(2), while the other follows from part~(1) and Proposition~2.11 in \cite{BF1}. We conclude by proving part~(3).

If $\mf{h}_1\cap\mc{A}\subsetneq\mf{h}_2\cap\mc{A}$, part~(2) shows that $\mf{h}_1$ and $\mf{h}_2$ cannot be transverse. We then have either $\mf{h}_1\subsetneq\mf{h}_2$, or $\mf{h}_2\cu\mf{h}_1$, or $\mf{h}_2^*\cu\mf{h}_1$ or $\mf{h}_2\cu\mf{h}_1^*$. In the second case we would have $\mf{h}_1\cap\mc{A}\subsetneq\mf{h}_2\cap\mc{A}\cu\mf{h}_1\cap\mc{A}$, and in the third case $\mf{h}_1\cap\mf{h}_2^*\cap\mc{A}\supseteq\mf{h}_2^*\cap\mc{A}\neq\emptyset$, which both lead to contradictions. In the fourth case, taking complements we obtain $\mf{h}_1\cu\mf{h}_2^*$, hence $\mf{h}_1\cap\mf{h}_2^*\cap\mc{A}\supseteq\mf{h}_1\cap\mc{A}\neq\emptyset$, which is also a contradiction. We conclude that $\mf{h}_1\subsetneq\mf{h}_2$.
\end{proof}

In relation to part~(3) of Proposition~\ref{ample}, note however that $\mf{h}_1\cap\mc{A}\cu\mf{h}_2\cap\mc{A}$ \emph{does not} imply $\mf{h}_1\cu\mf{h}_2$, as we might actually have $\mf{h}_2\subsetneq\mf{h}_1$ in this case.

\subsection{Combinatorial geodesics vs $\CAT$ geodesics.}\label{CAT vs l^1 geod}

The next result is probably well-known to experts, but a proof does not seem to appear in the literature. We provide it in this subsection for completeness.

We will always specify whether geodesics are meant with respect to the $\CAT$ metric on $X$, or rather with respect to the combinatorial metric $d$. We stress that Hausdorff distances, however, will always be calculated with respect to the combinatorial metric.

\begin{prop}\label{CAT near l1}
Let $X$ be a $D$--dimensional $\CAT$ cube complex. Every $\CAT$ ray based at a vertex of $X$ is at Hausdorff distance at most $D$ from a combinatorial ray with the same origin. 
\end{prop}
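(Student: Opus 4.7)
The strategy is to construct the combinatorial ray $r$ by enumerating the hyperplanes that $\gamma$ crosses, in the order it crosses them, and flipping sides of the ultrafilter $\s_p$ one at a time. Since halfspaces are $\CAT$-convex, the ray $\gamma$ crosses each hyperplane at most once; also, $\mscr{W}(\gamma)$ is necessarily infinite, since otherwise $\gamma$ would eventually remain inside a single (bounded) cube. Enumerate $\mscr{W}(\gamma)=\{\mf{w}_1,\mf{w}_2,\dots\}$ by crossing time, breaking ties arbitrarily when $\gamma$ simultaneously crosses several (necessarily pairwise transverse) hyperplanes. Let $\mf{h}_i$ denote the side of $\mf{w}_i$ that does not contain $p$, and for every $n\geq 0$ set
\[
\s_n \;=\; \bigl(\s_p\setminus\{\mf{h}_1^*,\dots,\mf{h}_n^*\}\bigr)\cup\{\mf{h}_1,\dots,\mf{h}_n\}\cu\mscr{H}(X).
\]

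The first step is to show that $\s_n$ is a DCC ultrafilter, and hence corresponds to a unique vertex $p_n\in X$ (with $p_0=p$). For pairwise intersection of halfspaces in $\s_n$: two halfspaces contain a common point, either $p$ itself (if both lie in $\s_p$), a sufficiently large $\gamma(t)$ (if both lie in $\{\mf{h}_1,\dots,\mf{h}_n\}$), or $\gamma(t)$ for $t$ in some intermediate time window (mixed case); when that intermediate window is empty because two hyperplanes are crossed simultaneously, transversality of the two hyperplanes guarantees the required sector is nonempty. The DCC condition follows because the symmetric difference $\s_n\triangle\s_p=\{\mf{h}_1,\dots,\mf{h}_n,\mf{h}_1^*,\dots,\mf{h}_n^*\}$ is finite, so any infinite descending chain in $\s_n$ would have an infinite tail contained in $\s_p$, contradicting DCC of $\s_p$. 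By construction $\mscr{W}(p\,|\,p_n)=\{\mf{w}_1,\dots,\mf{w}_n\}$ and $\mscr{W}(p_{n-1}\,|\,p_n)=\{\mf{w}_n\}$, so $d(p,p_n)=n$ and consecutive $p_n$'s are joined by an edge. The map $n\mapsto p_n$ therefore defines a combinatorial ray $r$ based at $p$.

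Next I would bound the Hausdorff distance between $\gamma$ and $r$. For any $t\geq 0$, let $c_t$ denote the minimal (closed) cube of $X$ containing $\gamma(t)$, and let $n(t)$ be the number of hyperplanes of $\mscr{W}(\gamma)$ strictly separating $p$ from $c_t$. A direct check shows that $p_{n(t)}$ lies on the same side as $c_t$ of every hyperplane not transverse to $c_t$: this is clear outside $\mscr{W}(\gamma)$, true for $\mf{w}_i$ with $i\leq n(t)$ by construction of $\s_{n(t)}$, and true for $\mf{w}_i$ with $i>n(t)$ since both $p_{n(t)}$ and $c_t$ remain on the $p$-side of these hyperplanes. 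Because $p_{n(t)}$ is a vertex and agrees with $c_t$ on all non-transverse hyperplanes, it must be a vertex of $\overline{c_t}$. The $\ell^1$-diameter of $\overline{c_t}$ is at most $\dim c_t\leq D$, so $d(\gamma(t),p_{n(t)})\leq D$. A symmetric choice (picking $t$ whose minimal cube realises each index $n$) yields the reverse bound, so $d_{\mathrm{Haus}}(\gamma,r)\leq D$.

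The main obstacle is the dual verification that $\s_n$ is a DCC ultrafilter and that the resulting vertex $p_{n(t)}$ actually lies in $\overline{c_t}$. The ultrafilter property is subtle chiefly when several hyperplanes are crossed simultaneously, where one replaces the time-based argument with transversality of the co-crossed hyperplanes; the cube-containment step, while geometrically natural, relies crucially on the precise combinatorial definition of $\s_n$ agreeing with the half-space data of $c_t$ on every non-transverse hyperplane. Once both points are in place, the dimension bound $D$ drops out from the $\ell^1$-diameter of a top-dimensional cube.
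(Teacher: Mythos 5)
Your construction of the combinatorial ray $r$ is sound and agrees with the paper's: the paper isolates the necessary-and-sufficient condition on the ordering in Lemma~\ref{hyperplane orders} and then observes that the crossing-time order satisfies it, but the resulting ray is the same as your sequence $(p_n)$, and your verification that each $\s_n$ is a DCC ultrafilter is correct. The gap is in the Hausdorff-distance step. The quantity $n(t)$ you define --- the number of hyperplanes \emph{strictly separating} $p$ from $c_t$ --- does not in general select an initial segment $\{\mf{w}_1,\dots,\mf{w}_{n(t)}\}$ of your enumeration, because a hyperplane dual to $c_t$ can be crossed by $\gamma$ \emph{before} a hyperplane separating $p$ from $c_t$. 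For instance, in $X=[0,\infty)^2$ with $p=(0,0)$ and $\gamma(t)=(t,4t)$, take $t$ slightly below $1$, so that $c_t=[0,1]\times[3,4]$: the dual hyperplane $\{x=\tfrac12\}$ is crossed at time $\tfrac12$, strictly before the separating hyperplane $\{y=\tfrac52\}$ at time $\tfrac58$. Then $n(t)=3$ but $p_3=(1,2)\notin\overline{c_t}$, so the assertion that ``$p_{n(t)}$ lies on the same side as $c_t$ of every hyperplane not transverse to $c_t$'' fails (it fails at $\mf{w}_4=\{y=\tfrac52\}$, which separates but has index $>n(t)$). With a simultaneous crossing the arbitrary tie-break can even push the distance strictly above $D$: for $\gamma(t)=(t,5t)$ the dual $\{x=\tfrac12\}$ and the separating $\{y=\tfrac52\}$ are both crossed at time $\tfrac12$, and listing the dual one first gives $d(\gamma(0.79),p_{n(0.79)})=d\bigl((0.79,3.95),(1,2)\bigr)=2.16>2=D$.

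The correct index is $m(t)$, the number of hyperplanes crossed by $\gamma$ at times strictly less than $t$. By the very definition of the enumeration these form the prefix $\{\mf{w}_1,\dots,\mf{w}_{m(t)}\}$, and one checks directly that $p_{m(t)}$ \emph{is} a vertex of $\overline{c_t}$: for every $\mf{w}$ not dual to $c_t$, both $p_{m(t)}$ and $c_t$ lie on the $p$-side if $\mf{w}$ is crossed at time $\geq t$ or not at all, and on the far side if $\mf{w}$ is crossed at time $<t$. This gives $d(\gamma(t),p_{m(t)})\leq\dim c_t\leq D$. For the reverse Hausdorff inclusion you must also handle the indices skipped by $m(\cdot)$ when several pairwise transverse hyperplanes are crossed simultaneously at some $t_0$; this is fine because those hyperplanes are all dual to $c_{t_0}$, so the intermediate $p_n$ still lie in $\overline{c_{t_0}}$. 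For comparison, the paper sidesteps this bookkeeping entirely by embedding $I(p,x)\cap X$ $\ell^1$-isometrically into $\R^D$ and observing that two points not separated by any hyperplane lie in a common translate of a unit cube of $\R^D$.
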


Given a combinatorial geodesic $\g$, the hyperplanes of $\mscr{W}(\g)$ can be arranged in a sequence $(\mf{w}_n)_{n\geq 0}$ according to the order in which they are crossed by $\g$ after $\g(0)$. We denote this sequence by $\mf{s}(\g)$.

\begin{lem}\label{hyperplane orders}
Let $(\mf{w}_n)_{n\geq 0}$ be a (finite or infinite) sequence of pairwise distinct hyperplanes of $X$ and let $p\in X$ be a vertex. There exists a combinatorial geodesic $\g$ based at $p$ such that $\mf{s}(\g)=(\mf{w}_n)_{n\geq 0}$ if and only if, for every $n\geq 0$, we have $\mscr{W}(p|\mf{w}_n)=\{\mf{w}_0,...,\mf{w}_{n-1}\}\setminus\mscr{W}(\mf{w}_n)$.
\end{lem}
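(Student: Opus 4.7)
\medskip\noindent
\textbf{Plan for Lemma~\ref{hyperplane orders}.}
The forward implication is essentially bookkeeping. If $\g$ is a combinatorial geodesic with $\mf{s}(\g)=(\mf{w}_n)$, then to reach an edge dual to $\mf{w}_n$ the geodesic must cross exactly $\mf{w}_0,\dots,\mf{w}_{n-1}$ beforehand. A hyperplane $\mf{v}$ lies in $\mscr{W}(p|\mf{w}_n)$ iff $\mf{v}$ separates $p$ from (the carrier of) $\mf{w}_n$, which happens precisely when $\mf{v}$ is crossed by $\g$ before $\mf{w}_n$ \emph{and} is not transverse to $\mf{w}_n$. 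This yields the identity $\mscr{W}(p|\mf{w}_n)=\{\mf{w}_0,\dots,\mf{w}_{n-1}\}\setminus\mscr{W}(\mf{w}_n)$.

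For the converse, I would construct the geodesic inductively, producing vertices $p_0=p,\,p_1,\,p_2,\dots$ with the following two properties for each $n$:
\[
d(p,p_n)=n,\qquad \mscr{W}(p|p_n)=\{\mf{w}_0,\dots,\mf{w}_{n-1}\},
\]
and with $p_n,p_{n+1}$ joined by an edge dual to $\mf{w}_n$. Concatenating these edges yields a combinatorial ray/segment $\g$ based at $p$ with $\mf{s}(\g)=(\mf{w}_n)$. The key inductive step is to verify that $p_n$ lies on the carrier $C(\mf{w}_n)$, for then there is an edge at $p_n$ crossing $\mf{w}_n$ whose other endpoint we take as $p_{n+1}$; the update of $\mscr{W}(p|p_{n+1})$ is then automatic since the $\mf{w}_i$ are pairwise distinct.

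The main obstacle is precisely showing $p_n\in C(\mf{w}_n)$, equivalently $\mscr{W}(p_n|\mf{w}_n)=\emptyset$. I plan to prove this by a case analysis on how an alleged $\mf{v}\in\mscr{W}(p_n|\mf{w}_n)$ relates to the two sets $\mscr{W}(p|p_n)=\{\mf{w}_0,\dots,\mf{w}_{n-1}\}$ and $\mscr{W}(p|\mf{w}_n)=\{\mf{w}_0,\dots,\mf{w}_{n-1}\}\setminus\mscr{W}(\mf{w}_n)$ given by hypothesis. If $\mf{v}$ lies in both, then $p$ and $\mf{w}_n$ are on one side of $\mf{v}$ and $p_n$ on the same side as $p$, so $\mf{v}$ does not separate $p_n$ from $\mf{w}_n$. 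If $\mf{v}\in\mscr{W}(p|p_n)\setminus\mscr{W}(p|\mf{w}_n)$, then by hypothesis $\mf{v}$ is transverse to $\mf{w}_n$, hence excluded from $\mscr{W}(p_n|\mf{w}_n)$ by convention. If $\mf{v}\in\mscr{W}(p|\mf{w}_n)\setminus\mscr{W}(p|p_n)$, then $\mf{v}\in\{\mf{w}_0,\dots,\mf{w}_{n-1}\}$ yet $\mf{v}\notin\mscr{W}(p|p_n)$, contradicting the inductive identity. The remaining case, where $\mf{v}$ separates neither $p$ from $p_n$ nor $p$ from $\mf{w}_n$, contradicts $\mf{v}\in\mscr{W}(p_n|\mf{w}_n)$ by a direct sides-of-$\mf{v}$ check. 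This rules out all possibilities and concludes the induction.
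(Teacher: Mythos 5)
Your proof is correct and matches the paper's approach: both directions proceed the same way, and in particular the converse is proved by the same inductive construction of vertices $p_n\in C(\mf{w}_n)$, with the key step being a case analysis on how a putative $\mf{v}\in\mscr{W}(p_n|\mf{w}_n)$ sits relative to $\mscr{W}(p|p_n)$ and $\mscr{W}(p|\mf{w}_n)$; your cases 2 and 3 are exactly the two cases the paper distinguishes, while your cases 1 and 4 are vacuous for any $\mf{v}$ separating $p_n$ from $\mf{w}_n$. One small wording slip in case 1: since $\mf{v}$ separates $p$ from each of $p_n$ and $\mf{w}_n$, it is $p_n$ and $\mf{w}_n$ (not $p$ and $\mf{w}_n$) that land on the same side of $\mf{v}$, which is what contradicts $\mf{v}\in\mscr{W}(p_n|\mf{w}_n)$.
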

\begin{proof}
If there exists a geodesic $\g$ such that $\g(0)=p$ and $\mf{s}(\g)=(\mf{w}_n)_{n\geq 0}$, each point $\g(n)$ lies in the carrier $C(\mf{w}_n)$. Thus $\mscr{W}(p|\mf{w}_n)\cu\mscr{W}(p|\g(n))$ and every element of $\mscr{W}(p|\g(n))$ either crosses $\mf{w}_n$ or lies in $\mscr{W}(p|\mf{w}_n)$. We conclude that ${\mscr{W}(p|\mf{w}_n)=\{\mf{w}_0,...,\mf{w}_{n-1}\}\setminus\mscr{W}(\mf{w}_n)}$ for all $n\geq 0$.

Assuming instead that the sequence $(\mf{w}_n)_{n\geq 0}$ satisfies the latter condition, we are going to construct points $p_n\in C(\mf{w}_n)$ with $\mscr{W}(p|p_n)=\{\mf{w}_0,...,\mf{w}_{n-1}\}$. We then obtain $\g$ by setting $\g(n)=p_n$. We proceed by induction on $n\geq 0$, observing that the case $n=0$ immediately follows from $\mscr{W}(p|\mf{w}_0)=\emptyset$.

Given $p_n\in C(\mf{w}_n)$ with $\mscr{W}(p|p_n)=\{\mf{w}_0,...,\mf{w}_{n-1}\}$, let $p_{n+1}\in X$ be the only point with $\mscr{W}(p_n|p_{n+1})=\{\mf{w}_n\}$. As $\mscr{W}(p|p_{n+1})=\{\mf{w}_0,...,\mf{w}_n\}$, we only need to show that $p_{n+1}$ lies in the carrier $C(\mf{w}_{n+1})$. If this failed, there would exist a hyperplane $\mf{u}\in\mscr{W}(p_{n+1}|\mf{w}_{n+1})$ and we would have either $\mf{u}\in\mscr{W}(p,p_{n+1}|\mf{w}_{n+1})$ or $\mf{u}\in\mscr{W}(p_{n+1}|p,\mf{w}_{n+1})$. The former is forbidden by $\mscr{W}(p|\mf{w}_{n+1})\cu\{\mf{w}_0,...,\mf{w}_n\}$, whereas the latter would clash with the fact that, for $0\leq i\leq n$, each $\mf{w}_i$ either crosses $\mf{w}_{n+1}$ or lies in $\mscr{W}(p|\mf{w}_{n+1})$.
\end{proof} 

\begin{proof}[Proof of Proposition~\ref{CAT near l1}]
Let $\rho$ be a $\CAT$ ray based at a vertex $p\in X$. For every hyperplane $\mf{w}\in\mscr{W}(X)$ there exists exactly one side of $\mf{w}$ that has unbounded intersection with $\rho$. The collection of these halfspaces forms an ultrafilter $\s\cu\mscr{H}(X)$ representing a point $x\in\partial X$. 

A hyperplane is crossed by $\rho$ if and only if it lies in the set $\mscr{W}(p|x)$; thus, $\rho$ is entirely contained in the subcomplex $I(p,x)\cap X$. Let $(\mf{w}_n)_{n\geq 0}$ be an ordering of the elements of $\mscr{W}(p|x)$, so that $m<n$ if $\mf{w}_m$ is crossed by $\rho$ before $\mf{w}_n$. By Lemma~\ref{hyperplane orders}, there exists a combinatorial ray $r$ from $p$ to $x$ such that $\mf{s}(\g)=(\mf{w}_n)_{n\geq 0}$. It remains to prove that $d_{\rm Haus}(r,\rho)\leq D$. 

Let $u\in r$ and $v\in\rho$ be points (not necessarily vertices) that are not separated by any hyperplane of $X$. Note that for every point $u\in r$ there exists such a point $v\in\rho$ and vice versa. Let $\iota\colon I(p,x)\cap X\ra\R^D$ be an $\ell^1$--isometric cubical embedding; it exists for instance by Theorem~1.14 in \cite{BCGNW}. Under the map $\iota$, preimages of convex sets are convex and, therefore, preimages of halfspaces are halfspaces. It follows that the points $\iota(u)$ and $\iota(v)$ are not separated by any hyperplane of $\R^D$, hence they lie in a translate of a unit cube of $\R^D$. Thus $d(u,v)=d(\iota(u),\iota(v))\leq D$. 
\end{proof}

\subsection{Median barycentres.}\label{median barycentres}

Let $S$ be a bounded $\CAT$ cube complex. 

Considering the $\CAT$ metric on $S$, there exists a unique barycentre $c_S\in S$. This is the centre of the unique smallest closed ball containing $S$; see e.g.\ Proposition~II.2.7 in \cite{BH} or Proposition~3.73 in \cite{DK}. However, the point $c_S$ is in general not a vertex of $S$, nor a vertex of any iterated cubical subdivision. This is illustrated in Figure~\ref{barycentre example}.

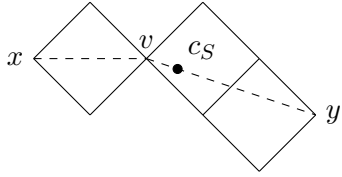
\begin{figure} 
\begin{tikzpicture}[scale=1.50]
\draw [fill] (0,0) -- (-0.5,-0.5);
\draw [fill] (-0.5,-0.5) -- (-1,0);
\draw [fill] (-1,0) -- (-0.5,0.5);
\draw [fill] (0,0) -- (-0.5,0.5);
\draw [fill] (0,0) -- (0.5,-0.5);
\draw [fill] (0.5,-0.5) -- (1,0);
\draw [fill] (1,0) -- (0.5,0.5);
\draw [fill] (0,0) -- (0.5,0.5);
\draw [fill] (0.5,-0.5) -- (1,-1);
\draw [fill] (1,-1) -- (1.5,-0.5);
\draw [fill] (1.5,-0.5) -- (1,0);
\node[right] at (1.5,-0.5) {$y$};
\node[left] at (-1,0) {$x$};
\draw[dashed] (-1,0) -- (0,0);
\draw[dashed] (0,0) -- (1.5,-0.5);
\node[above] at (0,0) {$v$};
\draw[fill] (0.276,-0.092) circle [radius=0.04cm];
\node[above right] at (0.276,-0.092) {$c_S$};
\end{tikzpicture}
\caption{The pictured cube complex $S$ consists of three squares. The $\CAT$ geodesic from $x$ to $y$ is the unique longest geodesic in $S$. The $\CAT$ distance between $v$ and the barycentre $c_S$ is $\frac{1}{2}(\sqrt{5}-\sqrt{2})$. This number is not of the form $\frac{1}{2^n}\sqrt{a^2+b^2}$ for any $a,b,n\in\N$, so $c_S$ is not a vertex of any iterated cubical subdivision of $S$.}
\label{barycentre example} 
\end{figure}

In Section~\ref{main section} we will need a different notion of barycentre, which we now introduce. It will always be a vertex of the first cubical subdivision $S'$. 

In the discussion below, points of $S$ are assumed to be vertices and we only consider the combinatorial metric on $X$, as in the rest of the paper. 

Let $\mf{h}\in\mscr{H}(S)$ be a side of the hyperplane $\mf{w}$ and let $x\in\mf{h}$ and $y\in\mf{h}^*$ be vertices maximising the distance from $\mf{w}$. We say that $\mf{w}$ is \emph{balanced} if $d(x,\mf{w})=d(y,\mf{w})$ and \emph{unbalanced} otherwise. If $d(x,\mf{w})>d(y,\mf{w})$, we call $\mf{h}$ \emph{heavy} and $\mf{h}^*$ \emph{light}. 

\begin{lem}\label{one light}
Given halfspaces $\mf{h},\mf{k}\in\mscr{H}(S)$ with $\mf{h}\cap\mf{k}=\emptyset$ and $\mf{h}\neq\mf{k}^*$, at least one of them is light.
\end{lem}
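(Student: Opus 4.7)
The plan is to argue by contradiction, assuming neither $\mf{h}$ nor $\mf{k}$ is light. Write $\mf{u}$ and $\mf{v}$ for the hyperplanes bounding $\mf{h}$ and $\mf{k}$, and for a halfspace $\mf{j}$ with bounding hyperplane $\mf{w}$ set $D(\mf{j}):=\max_{x\in\mf{j}}d(x,\mf{w})$. The ``not light'' hypothesis then reads $D(\mf{h})\geq D(\mf{h}^*)$ and $D(\mf{k})\geq D(\mf{k}^*)$; the contradiction will be produced by deriving $D(\mf{h})+1\leq D(\mf{k})$ and, symmetrically, $D(\mf{k})+1\leq D(\mf{h})$.

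The first task will be to pin down the configuration. The hypotheses $\mf{h}\cap\mf{k}=\emptyset$ and $\mf{h}\neq\mf{k}^*$ force the strict containments $\mf{h}\subsetneq\mf{k}^*$ and $\mf{k}\subsetneq\mf{h}^*$, so in particular $\mf{u}$ and $\mf{v}$ are distinct non-transverse hyperplanes. I plan to upgrade this to $\mf{v}\cu\mf{h}^*$ (and, symmetrically, $\mf{u}\cu\mf{k}^*$). Since $\mf{v}$ lies on a single side of $\mf{u}$, the only alternative would be $\mf{v}\cu\mf{h}$; but then any edge $e$ crossed by $\mf{v}$ would have its midpoint in $\mf{h}$, and since $e$ is not crossed by $\mf{u}$, both endpoints of $e$ would lie in $\mf{h}$. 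One of these endpoints however lies in $\mf{k}$, contradicting $\mf{h}\cap\mf{k}=\emptyset$.

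The heart of the proof will be the estimate $d(x,\mf{v})\geq d(x,\mf{u})+1$ for every vertex $x\in\mf{h}$, which I plan to obtain from the inclusion $\mscr{W}(x|\mf{u})\cup\{\mf{u}\}\cu\mscr{W}(x|\mf{v})$ (noting $\mf{u}\notin\mscr{W}(x|\mf{u})$ by convention). The inclusion $\{\mf{u}\}\cu\mscr{W}(x|\mf{v})$ follows immediately from $\mf{v}\cu\mf{h}^*$ and $x\in\mf{h}$. For the inclusion $\mscr{W}(x|\mf{u})\cu\mscr{W}(x|\mf{v})$, any $\mf{w}\in\mscr{W}(x|\mf{u})$ is non-transverse to $\mf{u}$, so exactly one of $\mf{h},\mf{h}^*$ lies on each side of $\mf{w}$; since $x\in\mf{h}$ sits on the opposite side of $\mf{w}$ from the hyperplane $\mf{u}$, the halfspace $\mf{h}^*$ must lie on the $\mf{u}$-side of $\mf{w}$, and hence so does $\mf{v}\cu\mf{h}^*$, giving $\mf{w}\in\mscr{W}(x|\mf{v})$. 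Exchanging the roles of $(\mf{h},\mf{u})$ and $(\mf{k},\mf{v})$ yields $d(y,\mf{u})\geq d(y,\mf{v})+1$ for every vertex $y\in\mf{k}$.

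To finish, one picks a vertex $x\in\mf{h}$ attaining $D(\mf{h})=d(x,\mf{u})$; since $\mf{h}\cu\mf{k}^*$, the key estimate yields $D(\mf{h})+1\leq d(x,\mf{v})\leq D(\mf{k}^*)\leq D(\mf{k})$, with the final step invoking the assumption that $\mf{k}$ is not light. Combining with the symmetric inequality $D(\mf{k})+1\leq D(\mf{h})$ gives $0\leq -2$, the desired contradiction. The only delicate point in the plan is the configuration statement $\mf{v}\cu\mf{h}^*$, whose proof crucially exploits $\mf{h}\neq\mf{k}^*$ via the edge-endpoint argument above; once it is in hand, the remainder is a short bookkeeping of separating hyperplanes.
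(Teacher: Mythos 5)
Your proof is correct and follows essentially the same strategy as the paper's: assume both halfspaces are heavy or balanced, pick a vertex deep in $\mf{h}$ and one deep in $\mf{k}$, and use the nesting of the bounding hyperplanes to derive a strict inequality between the two maximal distances in each direction, giving a contradiction. The paper streamlines the chain of inequalities into a single line ($d(x,\mf{w})\geq d(y,\mf{w})\geq d(y,\mf{u})+d(\mf{u},\mf{w})>d(y,\mf{u})$), whereas you spell out the containment $\mf{v}\cu\mf{h}^*$ via the edge-endpoint argument and make the $+1$ explicit through the inclusion $\mscr{W}(x|\mf{u})\cup\{\mf{u}\}\cu\mscr{W}(x|\mf{v})$ — this is a bit more verbose but also a bit more careful about where the strictness comes from.
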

\begin{proof}
Let $\mf{w}$ and $\mf{u}$ be the hyperplanes associated to $\mf{h}$ and $\mf{k}$, respectively. Pick $x\in\mf{h}$ and $y\in\mf{k}$ maximising the distance from $\mf{w}$ and $\mf{u}$, respectively. Note that $\mf{k}\cu\mf{h}^*$ and $\mf{h}\cu\mf{k}^*$. If $\mf{h}$ and $\mf{k}$ were both not light, we would have 
\[d(x,\mf{w})\geq d(y,\mf{w})\geq d(y,\mf{u})+d(\mf{u},\mf{w})>d(y,\mf{u})\]
and similarly $d(y,\mf{u})>d(x,\mf{w})$. This is a contradiction.
\end{proof}

In particular, any two heavy halfspaces intersect and any two balanced hyperplanes are transverse. By Helly's Lemma, the intersection of all heavy halfspaces is nonempty. It is a cube $c\cu S$ cut by all balanced hyperplanes.

The centre of $c$ is a vertex $m_S$ of the cubical subdivision $S'$. We refer to it as the \emph{median barycentre} of $S$. Note that $m_S$ is a vertex of $S$ if and only if every hyperplane is unbalanced. For instance, $m_S=v$ in Figure~\ref{barycentre example}.

We remark that, given bounded $\CAT$ cube complexes $S_1$ and $S_2$ and an isomorphism $F\colon S_1\ra S_2$, we have $F(m_{S_1})=m_{S_2}$.

\subsection{Cross ratios on cube complexes.}\label{cross ratio prelims}

Let $X$ be a $\CAT$ cube complex. Fixing a vertex $p\in X$, the \emph{Gromov product} of two points $x,y\in\overline X$ is:
\[(x\cdot y)_p=\#\mscr{W}(p|x,y)=d(p,m(p,x,y))\in\N\cup\{+\infty\}.\]
The following is Lemma~2.3 in \cite{BF1}.

\begin{lem}\label{infinite Gromov product}
Consider $x,y,z\in\overline X$ and $p\in X$.
\begin{enumerate}
\item We have $m(x,y,z)\in X$ if and only if each of the three intervals $I(x,y)$, $I(y,z)$, $I(z,x)$ intersects $X$.
\item We have $(x\cdot y)_p<+\infty$ if and only if $I(x,y)$ intersects $X$.
\end{enumerate}
\end{lem}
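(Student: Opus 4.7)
My plan is to prove part~(1) directly from the ultrafilter description of the median on $\overline X$, and then reduce part~(2) to part~(1). For (1), the forward implication is immediate: $m(x,y,z)$ itself lies in all three intervals and, by hypothesis, in $X$.

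For the reverse implication of (1), I would pick vertices $p_{xy}\in I(x,y)\cap X$, $p_{yz}\in I(y,z)\cap X$ and $p_{zx}\in I(z,x)\cap X$. The key observation is that $p_{xy}\in I(x,y)$ is equivalent to the inclusion $\s_x\cap\s_y\cu\s_{p_{xy}}$: every halfspace containing both $x$ and $y$ contains $p_{xy}$. Since $p_{xy}\in X$ means $\s_{p_{xy}}$ is DCC, the subset $\s_x\cap\s_y$ admits no infinite strictly descending chain, and the same holds for $\s_y\cap\s_z$ and $\s_z\cap\s_x$. The median formula
\[\s_{m(x,y,z)}=(\s_x\cap\s_y)\cup(\s_y\cap\s_z)\cup(\s_z\cap\s_x)\]
then forces $\s_{m(x,y,z)}$ to be DCC: an infinite strictly descending chain in a union of three sets must, by pigeonhole, admit an infinite subchain in one of them, which is impossible. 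Hence $m(x,y,z)\in X$.

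For part~(2), I would apply part~(1) to the triple $(p,x,y)$. Since $p\in X$ lies in $I(p,x)\cap I(p,y)$, the hypothesis of (1) reduces to $I(x,y)\cap X\neq\emptyset$. It remains to check that $(x\cdot y)_p<+\infty$ is equivalent to $m(p,x,y)\in X$. One direction is immediate from the defining identity $(x\cdot y)_p=d(p,m(p,x,y))$; for the other, I would argue that $X$ is itself a component of $\overline X$ with respect to $\sim$, via a short DCC argument: for any $x\in\partial X$, an infinite descending chain in $\s_x$ is eventually outside $\s_p$ (else $\s_p$ would fail DCC), and those tail halfspaces separate $p$ from $x$, forcing $d(p,x)=+\infty$. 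Hence $d(p,m(p,x,y))<+\infty$ does force $m(p,x,y)\in X$.

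I do not anticipate a major conceptual obstacle; the main step is the routine bookkeeping translating the interval condition ``$p\in I(x,y)$'' into the ultrafilter inclusion $\s_x\cap\s_y\cu\s_p$, after which both parts reduce to the stability of the DCC property under taking subsets and finite unions.
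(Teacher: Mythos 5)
Your proof is correct. The paper itself gives no proof of this statement, citing it directly as Lemma~2.3 of \cite{BF1}; your argument fills that gap cleanly via the ultrafilter description. For part~(1), the converse is the only nontrivial step, and your reduction is exactly right: $p_{xy}\in I(x,y)$ gives $\s_x\cap\s_y\cu\s_{p_{xy}}$, so $\s_x\cap\s_y$ inherits DCC from the vertex ultrafilter $\s_{p_{xy}}$, and the median formula plus pigeonhole makes $\s_{m(x,y,z)}$ a DCC ultrafilter, i.e.\ a vertex. For part~(2), specialising part~(1) to $(p,x,y)$ and noting $p\in I(p,x)\cap I(p,y)$ gives the equivalence $m(p,x,y)\in X\Leftrightarrow I(x,y)\cap X\neq\emptyset$, and your DCC argument that $X$ is a single $\sim$-component (any $x\in\partial X$ has an infinite descending chain in $\s_x$ whose tail lies outside $\s_p$, giving infinitely many separating hyperplanes) is exactly what is needed to upgrade $(x\cdot y)_p=d(p,m(p,x,y))<+\infty$ to $m(p,x,y)\in X$. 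The bookkeeping you flagged as routine is indeed the whole content, and you have carried it out carefully.
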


Let $\mscr{A}\cu(\overline X)^4$ be the subset of $4$--tuples $(x,y,z,w)$ such that at most one of the values $(x\cdot y)_p+(z\cdot w)_p$, $(x\cdot z)_p+(y\cdot w)_p$ and $(x\cdot w)_p+(y\cdot z)_p$ is infinite; by Lemma~\ref{infinite Gromov product}, the set $\mscr{A}$ is independent of the choice of $p$.

In our previous work with Incerti-Medici \cite{BFI-new}, we introduced a cross ratio $\Cr\colon\mscr{A}\ra\Z\cup\{\pm\infty\}$, which admits the following equivalent characterisations:
\begin{enumerate}
\item $\Cr(x,y,z,w)=\#\mscr{W}(x,z|y,w)-\#\mscr{W}(x,w|y,z)$;
\item $\Cr(x,y,z,w)=(x\cdot z)_p+(y\cdot w)_p-(x\cdot w)_p-(y\cdot z)_p$;
\item $\Cr(x,y,z,w)=d(x,w)+d(y,z)-d(x,z)-d(y,w)$, if $x,y,z,w\in X$.
\end{enumerate}
In particular, the second characterisation does not depend on the choice of $p\in X$. Note that $\Cr$ satisfies symmetries (i)--(iv) from the introduction, as long as all involved $4$--tuples lie in $\mscr{A}$. We will sometimes write $\Cr_X$ when we wish to specify the cube complex under consideration.

Endowing $\mscr{A}\cu (\overline X)^4$ with the subspace topology, the following is Proposition~3.3 in \cite{BF1}. Note that $\overline X$ and $\mscr{A}$ are totally disconnected.

\begin{prop}\label{cr continuous}
If $X$ is locally finite, the cross ratio $\Cr$ is continuous.
\end{prop}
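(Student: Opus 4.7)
The strategy is to use the identity $\Cr(x,y,z,w) = (x\cdot z)_p + (y\cdot w)_p - (x\cdot w)_p - (y\cdot z)_p$ from the preceding subsection, which reduces the problem to continuity of the Gromov product $(x\cdot y)_p$ as a function $\overline X \x \overline X \to \N\cup\{+\infty\}$ (with its one-point compactification topology). Because $(x\cdot y)_p = d(p, m(p,x,y))$ by Lemma~\ref{infinite Gromov product}, it suffices to check that the median map $m\colon \overline X^3 \to \overline X$ and the distance $d(p,\cdot)\colon \overline X \to \N\cup\{+\infty\}$ are separately continuous.

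Continuity of $m$ is essentially built in: the topology on $\overline X$ is the restriction of the product topology on $2^{\mscr{H}(X)}$, and on each coordinate $\mf{h}$ the value $\s_{m(x,y,z)}(\mf{h})$ is a majority function of $\s_x(\mf{h})$, $\s_y(\mf{h})$, $\s_z(\mf{h})$, hence continuous in the three factors.

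The main obstacle is continuity of $d(p,\cdot)$, and this is where local finiteness enters critically. I would argue via three observations: (i) by local finiteness, each ball $B(p,N)$ contains only finitely many vertices; (ii) because $X$ is open in $\overline X$ and embeds discretely (as stated in Section~\ref{CCC prelims}), every vertex of $X$ is an isolated point of $\overline X$; and (iii) $\overline X$ is Hausdorff, so finite subsets are closed. Combining these, $B(p,N)$ is clopen for every $N$, and hence so are each finite level set $d(p,\cdot)^{-1}(N)$ and each tail $d(p,\cdot)^{-1}(\{M, M+1, \dots, +\infty\}) = \overline X \setminus B(p, M-1)$. These sets generate the topology on $\N\cup\{+\infty\}$, so $d(p,\cdot)$ is continuous.

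Given continuity of the Gromov product, continuity of $\Cr$ on $\mscr{A}$ is a short case analysis. Fix $(x,y,z,w)\in\mscr{A}$ and a sequence $(x_n,y_n,z_n,w_n)\to(x,y,z,w)$. Since $\N$ is discrete in $\N\cup\{+\infty\}$, each finite Gromov product of the limit is eventually attained exactly, while each infinite one forces the corresponding sequence to diverge to $+\infty$. By definition of $\mscr{A}$, at most one of the two sums $(x\cdot z)_p + (y\cdot w)_p$ and $(x\cdot w)_p + (y\cdot z)_p$ is infinite; this property persists along the tail, so $(x_n,y_n,z_n,w_n)\in\mscr{A}$ eventually. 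If both sums are finite in the limit, $\Cr(x_n,y_n,z_n,w_n)$ is eventually equal to $\Cr(x,y,z,w)\in\Z$; if one sum is infinite, the other remains finite along the tail and $\Cr(x_n,y_n,z_n,w_n)$ diverges to the appropriate element of $\{\pm\infty\}$, matching $\Cr(x,y,z,w)$.
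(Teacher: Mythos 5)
Your proof is correct. A small remark first: the paper itself does not prove Proposition~\ref{cr continuous}; it defers to Proposition~3.3 in \cite{BF1}, so there is no in-text proof to compare against. Your self-contained argument — reduce via the Gromov-product formula to continuity of $(x,y)\mapsto d\bigl(p,m(p,x,y)\bigr)$, observe that the median map is coordinatewise a majority function and hence continuous for the product topology on $2^{\mscr{H}(X)}$ (no local finiteness needed), and then isolate local finiteness as the ingredient making every metric ball $B(p,N)$ a finite, hence clopen, subset of $\overline X$ — is the natural and correct route, and it matches how one would expect the cited result to be proved.

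One sentence in your last paragraph is phrased imprecisely. Membership in $\mscr{A}$ is defined via \emph{three} sums of Gromov products, not just the two appearing in the formula for $\Cr$, so your remark that ``at most one of the \emph{two} sums is infinite \dots this property persists \dots so $(x_n,y_n,z_n,w_n)\in\mscr{A}$ eventually'' does not quite establish what it claims as stated. Fortunately this is harmless: since you are proving continuity of $\Cr$ as a map on $\mscr{A}$ with its subspace topology, you may simply take the approximating $4$-tuples to lie in $\mscr{A}$ from the start. (Alternatively, your persistence claim is in fact true: once both finite sums among the three have stabilised, the only sum that can be infinite along the tail is the one that is infinite in the limit — but spelling this out isn't necessary.) With that adjustment, the case analysis at the end goes through exactly as you wrote it.
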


\subsection{$\CAT$ cuboid complexes.}\label{cuboids sect}

As mentioned in the introduction, all results in this paper equally hold for cube complexes with variable edge lengths: \emph{cuboid complexes} in our terminology. 

For the sake of simplicity and clarity, we will only treat $\CAT$ cube complexes in most of the paper. Only very minor changes are required in order to adapt our arguments to general $\CAT$ cuboid complexes. We briefly describe them here, along with the relevant definitions.

Consider for a moment a genuine $\CAT$ cube complex $X$. Every function $\mu\colon\mscr{W}(X)\ra\R_{>0}$ determines a \emph{weighted combinatorial metric} $d_\mu$ on $X$. For vertices $v,w\in X$, this is given by:
\[d_{\mu}(v,w)=\sum_{\mf{w}\in\mscr{W}(v|w)}\mu(\mf{w}).\]
For instance, the usual combinatorial metric $d$ arises from the function that assigns value $+1$ to each hyperplane.

\begin{defn}
A \emph{$\CAT$ cuboid complex} $\mbb{X}$ is any metric cell complex $(X,d_{\mu})$ arising from this construction.
\end{defn}

Two cuboid complexes $\mbb{X}=(X,d_{\mu})$ and $\mbb{Y}=(Y,d_{\nu})$ are \emph{isomorphic} if there exists an isometric cellular isomorphism $f\colon\mbb{X}\ra\mbb{Y}$. In other words, $f\colon X\ra Y$ is an isomorphism of $\CAT$ cube complexes inducing a map $f_*\colon\mscr{W}(X)\ra\mscr{W}(Y)$ such that $\mu=\nu\o f_*$. 

Note, however, that there can be isometries $\mbb{X}\ra\mbb{Y}$ that do not preserve the cellular structures. For instance, consider the cuboid complex $\mbb{X}'$ arising from the cubical subdivision $X'$. If we assign each edge of $\mbb{X}'$ half the length of the corresponding edge of $\mbb{X}$, the identity map $\mbb{X}\ra\mbb{X}'$ is a (surjective) isometry, but never an isomorphism.

When dealing with $\CAT$ cuboid complexes, rather than $\CAT$ cube complexes, the following adaptations and conventions are required.
\begin{enumerate}
\item All group actions on cuboid complexes will be assumed to be by \emph{automorphisms} (i.e.\ self-isomorphisms).
\item We consider two actions to be the same when they are equivariantly \emph{isometric}. In Theorems~\ref{hyp CR},~\ref{ext Moeb intro} and in Corollaries~\ref{hyp MLSR},~\ref{non-hyp CR}, equivariant \emph{isomorphisms} of cube complexes need to be replaced with equivariant \emph{isometries} of cuboid complexes. It is easy to see that it is not possible to map vertices to vertices in general.
\item We define hyperplanes and halfspaces of $\mbb{X}=(X,d_{\mu})$ to coincide with hyperplanes and halfspaces of the underlying cube complex $X$. This also explains how to interpret notations like $\mscr{W}(A|B)$ in this context.
\item Given a subset $\mc{U}\cu\mscr{W}(\mbb{X})=\mscr{W}(X)$, the \emph{cardinality} $\#\mc{U}$ should always be replaced by the \emph{weight} $\sum_{\mf{w}\in\mc{U}}\mu(\mf{w})$. Nonempty subsets are still precisely those that have positive weight.
\item The cross ratio $\Cr$ will no longer take values in $\Z\cup\{\pm\infty\}$, but rather in $M\cup\{\pm\infty\}$, where $M$ is the $\Z$--module generated by the image of the map $\mu$. A similar observation applies to length functions.
\end{enumerate}

\addtocontents{toc}{\protect\setcounter{tocdepth}{2}}
\section{The Morse property in cube complexes.}\label{Morse section}

Other than the proof of Theorem~\ref{R vs c intro} (consisting of Theorem~\ref{properties of Phi} and Remark~\ref{components are finite}), most of this section will be devoted to collecting more or less well-known facts from the literature. Throughout:

\begin{ass}
Let the $\CAT$ cube complex $X$ be finite dimensional and locally finite.
\end{ass}

\subsection{Contracting geodesics.}

Recall that we only endow $X$ with its combinatorial metric. All geodesics will be combinatorial in this subsection.

\begin{defn}\label{Morse defn}
Let $Y$ be a proper metric space. Given a closed subset $A\cu Y$, we denote by $\pi_A\colon Y\ra 2^A$ the nearest-point projection to $A$. If $B\cu Y$, we write $\pi_A(B)$ instead of $\bigcup_{b\in B}\pi_A(b)$.

A closed subset $A\cu Y$ is \emph{(strongly) contracting} if there exists $D>0$ such that every metric ball $B$ disjoint from $A$ satisfies $\text{diam}(\pi_{A}(B))\leq D$.

Given a function $M\colon [1,+\infty)\ra [0,+\infty)$, a quasi-geodesic $\g\cu Y$ is \emph{$M$--Morse} if, for every $C>0$ and every $(C,C)$--quasi-geodesic $\eta$ with endpoints on $\g$, the entire $\eta$ is contained in the (open) $M(C)$--neighbourhood of $\g$. We say that $\g$ is \emph{Morse} if it is $M$--Morse for some function $M$.
\end{defn} 

We refer the reader to \cite{ACGH} for a detailed discussion of contracting subsets and the Morse property in general metric spaces.

\begin{defn}
We say that a geodesic $\g\cu X$ is \emph{$C$--lean} if there do not exist transverse subsets $\mc{U}\cu\mscr{W}(\g)$ and $\mc{V}\cu\mscr{W}(X)$ such that $\min\{\#\mc{U},\#\mc{V}\}>C$ and such that $\mc{U}\sqcup\mc{V}$ does not contain facing triples. We say that $\g$ is \emph{lean} if it is $C$--lean for some $C\geq 0$.
\end{defn}

The following is due to A.\ Genevois; see Corollary~3.7 in \cite{Genevois} and Lemma~4.6 in \cite{Genevois-surv}.

\begin{thm}\label{lean vs contracting vs Morse}
For a ray $\g\cu X$, we have: $\text{lean} \Leftrightarrow \text{contracting} \Leftrightarrow \text{Morse}$.
\end{thm}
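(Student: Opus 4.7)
The plan is to split the theorem into three implications: contracting $\Leftrightarrow$ Morse, lean $\Rightarrow$ contracting, and contracting $\Rightarrow$ lean. The first equivalence is a general metric-space phenomenon for quasi-geodesics in proper geodesic spaces, established for instance in \cite{ACGH}; since the combinatorial metric makes $(X,d)$ proper and geodesic, I would simply invoke this classical result. All cube-complex-specific content then lives in tying both notions to leanness.

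For lean $\Rightarrow$ contracting I would argue by contraposition. Suppose $\g$ fails to be contracting, so that for arbitrarily large $N$ there is a ball $B$ disjoint from $\g$ with $\mathrm{diam}(\pi_\g(B))\geq N$. Pick $x,y\in B$ whose nearest points $p=\pi_\g(x)$, $q=\pi_\g(y)$ satisfy $d(p,q)\geq N$, and set $\mc{U}:=\mscr{W}(p|q)\subseteq\mscr{W}(\g)$, so that $|\mc{U}|=N$. Routing through the gate projection to the convex subcomplex $\hull(\g)$, the nearest-point property forces $\mscr{W}(x|p)$ and $\mscr{W}(y|q)$ to avoid $\mscr{W}(\g)$; using the distance formula $(\ast)$ and Helly's lemma, each such hyperplane is transverse to all but boundedly many members of $\mc{U}$. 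A counting argument pitting $d(x,y)\leq\mathrm{diam}(B)$ against $N$ then yields a candidate family $\mc{V}\subseteq\mscr{W}(x|p)\cup\mscr{W}(y|q)$ of size comparable to $N$. Finally I would Ramsey-extract long mutually transverse sub-families $\mc{U}_0,\mc{V}_0$ in which $\mc{V}_0$ has no facing triples, using that $\dim X<\infty$ bounds pairwise-transverse antichains. For $N$ large enough this contradicts $C$-leanness.

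For contracting $\Rightarrow$ lean I would again argue by contraposition. Fix witnesses $\mc{U}\subseteq\mscr{W}(\g)$ and $\mc{V}\subseteq\mscr{W}(X)$ with $|\mc{U}|,|\mc{V}|>C$, mutually transverse and with no facing triples in $\mc{U}\sqcup\mc{V}$. Any facing triple is automatically monochromatic (by transversality two elements from different families cannot even face), and an embedded combinatorial geodesic cannot cross three pairwise facing hyperplanes, so $\mscr{W}(\g)$ is facing-triple-free. Hence $\mc{V}$ alone has no facing triples. A two-step Ramsey argument---first eliminating facing pairs, then extracting a nested sub-chain from a set of pairwise nested-or-transverse hyperplanes (using $\dim X<\infty$ to bound the pairwise-transverse alternative)---produces long nested sub-chains $\mc{U}_0\subseteq\mc{U}$ and $\mc{V}_0\subseteq\mc{V}$ that remain mutually transverse. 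The transversality gives an isometric combinatorial embedding of a grid $[0,N]\x[0,N]\hookrightarrow X$ adjacent to $\g$ (with $\g$ running along the $\mc{U}_0$-edge up to minor adjustments for any elements of $\mc{V}_0$ crossed by $\g$). A ball of radius $\sim N/4$ at the far edge of this grid is disjoint from $\g$ and projects onto an arc of $\g$ of diameter $\sim N/2$, contradicting the contracting constant for large $N$.

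The main obstacle I foresee is in the lean $\Rightarrow$ contracting direction: the ray $\g$ is not convex, so nearest-point projections are multi-valued and do not enjoy the clean gate-projection calculus. The remedy I plan to use is to factor every projection through $\hull(\g)$, which is convex and median-convex, and then transport estimates back to $\g$ via the formula $(\ast)$. A secondary subtlety is preserving the ``no facing triples'' condition across the successive Ramsey reductions; this should be manageable because $\mscr{W}(\g)$ contributes the property automatically on the $\mc{U}$-side, and the finite dimension keeps the cost of extracting nested sub-chains bounded.
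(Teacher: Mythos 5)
The paper offers no proof of this theorem; it is cited to Genevois (Corollary~3.7 of \cite{Genevois} and Lemma~4.6 of \cite{Genevois-surv}), so what you have written is entirely your own, and it has a genuine gap. The equivalence ``Morse $\Leftrightarrow$ contracting'' is \emph{not} a general metric-space fact, and \cite{ACGH} does not establish it for the notion of (strong) contraction used in Definition~\ref{Morse defn}, where the constant $D$ is uniform over all balls disjoint from the set. In proper geodesic spaces one only has contracting $\Rightarrow$ Morse in general; the converse can fail, and what \cite{ACGH} actually proves is that Morse is equivalent to \emph{sublinear} contraction, a strictly weaker property. The implication Morse $\Rightarrow$ strongly contracting is a theorem about spaces with additional convexity structure (for CAT(0) geodesics it is due to Charney--Sultan and Cashen), and the version needed here --- for combinatorial rays in the $\ell^1$-metric on a finite-dimensional cube complex --- is precisely part of what Genevois proves, and his route goes through leanness. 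Your proposal thus outsources the hardest implication to a citation that does not cover it, while the cube-complex arguments you do supply (lean $\Leftrightarrow$ contracting) are exactly the machinery one needs in order to \emph{obtain} that missing implication. To repair this you would have to prove Morse $\Rightarrow$ lean (or Morse $\Rightarrow$ contracting) by hand in the cube-complex setting, not quote it.

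Your explicit directions are in the right spirit but also incomplete. In lean $\Rightarrow$ contracting, the sizes $\#\mscr{W}(x|p)=d(x,\g)$ and $\#\mscr{W}(y|q)=d(y,\g)$ are \emph{not} automatically of order $N$: a ball disjoint from $\g$ may contain points very close to $\g$, and the radius of the ball is not a priori controlled by $N$, so the counting does not obviously produce a $\mc{V}$ of size comparable to $N$. You would need an a priori estimate (itself using leanness) relating the projection diameter, the ball radius, and $d(x,\g)$, $d(y,\g)$ before the contradiction can bite. The contracting $\Rightarrow$ lean sketch --- Ramsey-extracting long nested sub-chains of $\mc{U}$ and $\mc{V}$ and building a large grid whose far side furnishes a disjoint ball with projection diameter comparable to $N$ --- is a workable plan, modulo bookkeeping around which elements of $\mc{V}$ are crossed by $\g$ and the verification that the ball is disjoint from all of $\g$, not merely from the near edge of the grid.
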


Given rays $\g$ and $\g'$ at finite Hausdorff distance, it is clear from definitions that $\g$ satisfies the above conditions if and only if $\g'$ does.

\begin{lem}\label{rays to cu}
Let $\alpha$ and $\g$ be rays in $X$ with $\alpha(+\infty)\sim\g(+\infty)$. If $\g$ is contracting, then $\alpha$ is at finite Hausdorff distance from $\g$, hence contracting.
\end{lem}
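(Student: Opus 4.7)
My plan is to reduce to the case where $\alpha$ and $\g$ share both their starting vertex and their endpoint at infinity, then exploit the lean characterisation of contracting rays from Theorem~\ref{lean vs contracting vs Morse}. To align the boundary endpoints I would invoke Lemma~\ref{rays to nearby points}, replacing $\alpha$ by a ray $\alpha'$ based at $\alpha(0)$ with $\alpha'(+\infty)=\g(+\infty)=:\xi$ and $d_{\rm Haus}(\alpha,\alpha')\leq d(\alpha(+\infty),\g(+\infty))<\infty$. To then align the basepoints I would use the median: Lemma~\ref{infinite Gromov product} guarantees that $m:=m(\g(0),\alpha'(0),\xi)\in X$, and concatenating any combinatorial ray $\rho$ from $m$ to $\xi$ with combinatorial geodesics $\g(0)\to m$ and $\alpha'(0)\to m$ yields geodesic rays $\rho_\g,\rho_\alpha$ sharing the common tail $\rho$, both ending at $\xi$ and based respectively at $\g(0)$ and $\alpha'(0)$, with $d_{\rm Haus}(\rho_\g,\rho_\alpha)\leq 2\cdot d(\g(0),\alpha'(0))<\infty$.

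For the core case $\alpha(0)=\g(0)=p$ and $\alpha(+\infty)=\g(+\infty)=\xi$, note that $\mscr{W}(\alpha|_{[0,n]})\cu\mscr{W}(p|\xi)=\mscr{W}(\g)$, which forces the gate projection to satisfy $\pi_\g(\alpha(n))=\g(n)$; in particular $R_n:=d(\alpha(n),\g(n))=d(\alpha(n),\g)$. By Theorem~\ref{lean vs contracting vs Morse}, $\g$ is $C$-lean for some $C\geq 0$, and I would show $R_n\leq 2C$ by contradiction. Setting $\mc{A}=\mscr{W}(\alpha|_{[0,n]})\setminus\mscr{W}(\g|_{[0,n]})$ and $\mc{B}=\mscr{W}(\g|_{[0,n]})\setminus\mscr{W}(\alpha|_{[0,n]})$, a quick count yields $|\mc{A}|=|\mc{B}|=R_n/2>C$. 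Examining which side of $\mf{a}\in\mc{A}$ and $\mf{b}\in\mc{B}$ each of $p,\alpha(n),\g(n),\xi$ lies on, one finds $p\in\mf{a}^*\cap\mf{b}^*$, $\g(n)\in\mf{a}^*\cap\mf{b}$, $\alpha(n)\in\mf{a}\cap\mf{b}^*$, and $\alpha(m)\in\mf{a}\cap\mf{b}$ for all sufficiently large $m$ (since $\alpha(m)\to\xi$ in $\overline X$), so $\mc{A}$ and $\mc{B}$ are transverse. Moreover $\mc{A}\sqcup\mc{B}\cu\mscr{W}(\g)$ admits no facing triple, because for any three hyperplanes of the combinatorial ray $\g$ and any choice of sides, at least two of the resulting halfspaces intersect $\g$ in overlapping prefixes or overlapping suffixes (a pigeonhole over two possible ``types''). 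This contradicts $C$-leanness, yielding $R_n\leq 2C$ and hence $d_{\rm Haus}(\alpha,\g)\leq 2C$.

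Chaining the pieces: the core bound applied to $(\rho_\g,\g)$ shows $\rho_\g$ is at finite Hausdorff distance from $\g$, hence contracting; $\rho_\alpha$ is then contracting as well, and the core bound applied to $(\alpha',\rho_\alpha)$ (with $\rho_\alpha$ now serving as the contracting reference) closes the triangle via the triangle inequality for Hausdorff distance. The ``hence contracting'' conclusion of the lemma follows from the invariance of the contracting/Morse properties under finite Hausdorff distance noted immediately before the lemma statement. The main obstacle I anticipate is concentrated in the core case: both the transversality of $\mc{A}$ and $\mc{B}$, which critically uses the identity $\pi_\g(\alpha(n))=\g(n)$ available only in the common-basepoint setting, and the fact that the hyperplane set of any geodesic ray admits no facing triples, must be verified to invoke leanness. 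The basepoint-alignment step is technically routine but requires care to propagate the contracting property through the intermediate rays $\rho_\g$ and $\rho_\alpha$ before the core bound can be reapplied.
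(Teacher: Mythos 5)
Your proof is correct, and the core computation is the same as the paper's, but the overall decomposition differs enough to be worth comparing. Both proofs reduce via Lemma~\ref{rays to nearby points} to the case $\alpha(+\infty)=\gamma(+\infty)=:\xi$, and both then bound distances by producing two transverse, facing-triple-free subsets of $\mscr{W}(\gamma)$ and invoking $C$-leanness. Where you diverge is in handling the mismatch of basepoints. You introduce the median $m=m(\gamma(0),\alpha'(0),\xi)$ to build two auxiliary rays $\rho_\gamma,\rho_\alpha$ sharing a common tail, then iterate a common-basepoint ``core bound'' three times ($(\rho_\gamma,\gamma)$, then $(\rho_\gamma,\rho_\alpha)$, then $(\alpha',\rho_\alpha)$), each time propagating the contracting property across finite Hausdorff distance. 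The paper instead notices that the original $\alpha$ already enters the interval $I=I(\xi,\gamma(0))$ in a sub-ray (because $\mscr{W}(\alpha(0)\mid I)$ is finite), and then for any $u\in I$ picks $v\in\gamma$ with $d(u,\gamma(0))=d(v,\gamma(0))$ and applies the transversality/leanness count directly to $\mscr{W}(m\mid u)$ and $\mscr{W}(m\mid v)$ with $m=m(\gamma(0),u,v)$. This avoids the auxiliary rays, the iteration, and the need to re-derive leanness constants for intermediate objects: it is a single application of the same median-plus-leanness estimate. Your version is a bit more elementary conceptually (everything is reduced to two rays sharing both endpoints) at the cost of more bookkeeping.

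One small flaw in the exposition: the assertion that ``$\pi_\gamma(\alpha(n))=\gamma(n)$, in particular $R_n=d(\alpha(n),\gamma)$'' is not justified and is in fact false in general --- a combinatorial ray is not a convex subcomplex, so there is no gate-projection $\pi_\gamma$ in the sense of the paper, and $\alpha(n)$ can be strictly closer to some $\gamma(m)$ with $m\neq n$ than it is to $\gamma(n)$. Fortunately this claim is never used: the transversality of $\mc{A}$ and $\mc{B}$ is established via the four-quadrant argument with $p,\alpha(n),\gamma(n),\xi$, which does not rely on it, and the conclusion $d_{\rm Haus}(\alpha,\gamma)\leq 2C$ only needs $d(\alpha(n),\gamma(n))\leq 2C$ for all $n$, not the optimality of $\gamma(n)$. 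You should simply drop that parenthetical; the definition $R_n:=d(\alpha(n),\gamma(n))$ is all you need. The facing-triple argument (pigeonhole on whether the chosen side of a hyperplane contains the prefix or the suffix of $\gamma$) is correct and matches the implicit reasoning in the paper.
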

\begin{proof}
By Lemma~\ref{rays to nearby points}, it suffices to consider the case when $\alpha(+\infty)=\g(+\infty)$. By Theorem~\ref{lean vs contracting vs Morse}, there exists $C>0$ such that $\g$ is $C$--lean. Set $p=\g(0)$, $q=\alpha(0)$, $x=\alpha(+\infty)=\g(+\infty)$ and $I=I(x,p)$. Since $\mscr{W}(q|I)=\mscr{W}(q|x,p)$ is a finite subset of $\mscr{W}(q|x)=\mscr{W}(\alpha)$, the intersection $\alpha\cap I$ is a sub-ray of $\alpha$. We conclude by showing that ${d(u,\g)\leq 2C}$ for every point $u\in I$. 

Pick a point $v\in\g$ with $d(u,p)=d(v,p)$ and set $m=m(p,u,v)$. Note that $\mscr{W}(m|u)$ and $\mscr{W}(m|v)$ are contained in $\mscr{W}(x|p)=\mscr{W}(\g)$. Every halfspace $\mf{h}\in\mscr{H}(m|u)$ is transverse to every halfspace $\mf{k}\in\mscr{H}(m|v)$; indeed, we have $m\in\mf{h}^*\cap\mf{k}^*$, $u\in\mf{h}\cap\mf{k}^*$, $v\in\mf{h}^*\cap\mf{k}$ and $x\in\mf{h}\cap\mf{k}$. Moreover, the sets $\mscr{W}(m|u)$ and $\mscr{W}(m|v)$ have the same size and contain no facing triples. Since $\g$ is $C$--lean, we conclude that $\#\mscr{W}(m|u)=\#\mscr{W}(m|v)\leq C$. This shows that $d(u,v)\leq 2C$.
\end{proof}

\subsection{Roller boundaries vs contracting boundaries.}\label{d_c vs d_R}

Unlike the rest of the paper, this subsection employs both the combinatorial and $\CAT$ metrics on $X$; we will specify each time whether geodesics are meant with respect to the former or latter. Still, the notation $d(\cdot,\cdot)$ will always refer to the combinatorial metric

The \emph{contracting boundary} $\partial_cX$ was introduced in \cite{Charney-Sultan}. Disregarding topologies for the moment, $\partial_cX$ is the subset of the visual boundary $\partial_{\infty}X$ that consists of points represented by contracting $\CAT$ rays.

In order to relate the contracting boundary $\partial_cX$ and the Roller boundary $\partial X$, we introduce the following (see Lemma~\ref{rays to cu} for the equivalence in the definition):

\begin{defn}
We say that a point $x\in\partial X$ is \emph{contracting} (or a \emph{contracting ultrafilter}) if one (equivalently, each) combinatorial ray representing $x$ is contracting. We denote the set of contracting ultrafilters by $\partial_{\rm cu}X\cu\partial X$.
\end{defn}

We stress that our definition of contracting point is not equivalent to the one in Remark~6.7 of \cite{Fernos-Lecureux-Matheus}; in fact, our notion is weaker. 

In general, the inclusion $\partial_{\rm cu}X\cu\partial X$ is strict. If however $X$ is Gromov hyperbolic, every combinatorial ray in $X$ is contracting (see e.g.\ Theorem~3.3 in \cite{Genevois2}) and we have $\partial_{\rm cu}X=\partial X$.

Lemma~\ref{rays to cu} shows that the set $\partial_{\rm cu}X$ is a union of $\sim$--equivalence classes. The following result provides more information.

\begin{lem}\label{bounded components of cu}
\begin{enumerate}
\item Every component of $\partial_{\rm cu}X$ is bounded.
\item Points $x\in\partial_{\rm cu}X$ and $y\in\partial X$ lie in the same component if and only if they satisfy $I(x,y)\cap X=\emptyset$.
\end{enumerate}
\end{lem}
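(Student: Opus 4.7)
I plan to prove both parts using the $C$-leanness (Theorem~\ref{lean vs contracting vs Morse}) of a combinatorial ray $\g$, based at a vertex $p$, representing $x\in\partial_{\rm cu}X$.

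For part~(2), direction $(\Rightarrow)$ is elementary and does not use contracting: a vertex $z\in I(x,y)\cap X$ would give $d(x,y)\geq d(x,z)$, but $d(x,z)=\infty$ since an ultrafilter at finite $d$-distance from the DCC ultrafilter $\s_z$ is itself DCC, contradicting $x\in\partial X$. For $(\Leftarrow)$, assume $d(x,y)=\infty$ and set $m:=m(p,x,y)\in\overline X$. By Lemma~\ref{infinite Gromov product}(1), $I(x,y)\cap X\neq\emptyset$ is equivalent to $m\in X$, since $I(p,x)$ and $I(p,y)$ already meet $X$ at $p$. The formula $\s_m=(\s_p\cap\s_x)\cup(\s_x\cap\s_y)\cup(\s_y\cap\s_p)$, together with the DCC of $\s_p$, reduces the task to forbidding infinite descending chains in $\s_x\cap\s_y$.

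Suppose $\mf{h}_0\supsetneq\mf{h}_1\supsetneq\cdots$ is such a chain, with $\mf{w}_i=\partial\mf{h}_i$; then $x,y\in\mf{h}_i$ and, since $\s_p$ is DCC, eventually $p\in\mf{h}_i^*$, forcing $\mf{w}_i\in\mscr{W}(\g)$. Meanwhile $\mscr{W}(x|y)$ is infinite with antichains bounded by $\dim X$, so Dilworth furnishes a second infinite chain $\mf{k}_0\supsetneq\mf{k}_1\supsetneq\cdots$ in $\mscr{W}(x|y)$. Each $\mf{k}_j$ cuts every $\mf{h}_i$ (as $x,y\in\mf{h}_i$ lie on opposite sides of $\mf{k}_j$); once $i$ is large enough both halfspaces of $\mf{k}_j$ also meet $\mf{h}_i^*$, yielding transversality. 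A diagonal choice then produces arbitrarily large tails $\mc{U}\cu\{\mf{w}_i\}\cu\mscr{W}(\g)$ and $\mc{V}\cu\{\mf{k}_j\}$ that are pairwise transverse. Both being chains, they carry no facing triples individually; a mixed triple with two elements of $\mc{U}$ and one of $\mc{V}$ is ruled out because any $\mf{k}$-halfspace disjoint from both $\mf{h}_{i_1}^*$ and $\mf{h}_{i_2}$ (the only disjoint pair in the two-element subchain of $\mc{U}$) would lie in $\mf{h}_{i_1}\setminus\mf{h}_{i_2}\cu\mf{h}_{i_1}$, contradicting the transversality of $\mf{k}$ with $\mf{w}_{i_1}$; the symmetric mixed case is analogous. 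Choosing $|\mc{U}|,|\mc{V}|>C$ then contradicts the $C$-leanness of $\g$.

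Part~(1) follows the same blueprint. $Z(x)$ is bounded iff $\mscr{W}(Z(x))=\bigcup_{y\sim x}\mscr{W}(x|y)$ is finite; if infinite, Dilworth yields an infinite chain of $x$-sides in $\mscr{W}(Z(x))$. A \emph{descending} such chain is killed without using contracting: picking $y_m\sim x$ with $\mf{u}_m\in\mscr{W}(x|y_m)$, the set $\{m':y_m\notin\mf{h}'_{m'}\}$ is finite (since $y_m\sim x$) and upward-closed (by the nesting), hence empty; yet $m$ lies in it by construction. An \emph{ascending} chain is handled by the transversality argument of the previous paragraph, pairing it against a tail of the infinite chain $\mscr{W}(\g)$ to contradict $C$-leanness. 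The main technical obstacle throughout is the simultaneous transversality: a fixed $\mf{k}_j$ is transverse to $\mf{w}_i$ only for $i$ sufficiently large depending on $j$, so the extraction of matched tails of both chains of size exceeding $C$ will require a careful diagonal argument.
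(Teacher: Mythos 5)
Your Part~(2) argument is essentially sound (the $(\Rightarrow)$ direction is correct and elementary; the $(\Leftarrow)$ direction works), though it is considerably more elaborate than necessary: the paper simply fixes \emph{any} finite subset $\mc{U}\cu\mscr{W}(x|y)$ with $\#\mc{U}>C$, observes that each $\mf{w}\in\mc{U}$ fails to be transverse to at most $d(p,\mf{w})$ elements of $\mscr{H}(p|x,y)$, and concludes that a tail of the infinite set $\mscr{W}(p|x,y)$ is transverse to all of $\mc{U}$ — no Dilworth chain on either side, no diagonalisation. Your instinct that the ``simultaneous transversality'' requires care is right; the paper dissolves the issue by only ever extracting one chain.

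The real gap is in Part~(1). Two problems:

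First, the Dilworth step does not apply. Two incomparable $x$-sides $\mf{h}_1,\mf{h}_2\in\s_x$ belonging to $\mscr{W}(Z(x))$ are \emph{not} forced to be transverse: the remaining possibility is $\mf{h}_1^*\cap\mf{h}_2^*=\emptyset$, i.e.\ $\mf{h}_1\cup\mf{h}_2=X$. So the poset of $x$-sides has unbounded antichains in general (think of halfspaces sharing a vertex at a high-degree corner), and you cannot extract an infinite chain.

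Second, even granting a chain, the ascending case is not ``handled by the transversality argument of the previous paragraph''. For an ascending chain $\mf{h}_0\subsetneq\mf{h}_1\subsetneq\cdots$ of $x$-sides, $p$ eventually lies in $\mf{h}_m$, and a descending chain $\mf{k}_j$ in $\mscr{H}(p|x)$ eventually contains no $y_m$. So while $\mf{h}_m\cap\mf{k}_j\ni x$, $\mf{h}_m\cap\mf{k}_j^*\ni p$, $\mf{h}_m^*\cap\mf{k}_j\ni y_m$, the fourth sector $\mf{h}_m^*\cap\mf{k}_j^*$ has no supplied witness and can genuinely be empty. In that case $\mf{u}_m$ and $\mf{w}(\mf{k}_j)$ are not transverse at all, so the leanness hypotheses are not met.

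You also miss the observation that collapses the whole lemma to one implication. The argument you run for Part~(2)$(\Leftarrow)$ actually proves, if you carry the constant, that $(x\cdot y)_p=+\infty$ forces $d(x,y)\leq C$. Combine this with your own elementary Part~(2)$(\Rightarrow)$: if $y\sim x$ with $y\in\partial X$, then $I(x,y)\cap X=\emptyset$, so $(x\cdot y)_p=+\infty$, so $d(x,y)\leq C$. Hence $\operatorname{diam}Z(x)\leq 2C$, and Part~(1) is free — no separate Dilworth argument on $\mscr{W}(Z(x))$ is needed, and the ascending/descending case split disappears. This is exactly what the paper does.
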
 
\begin{proof}
Consider a point $x\in\partial_{\rm cu}X$ and a $C$--lean combinatorial ray $r$ with $x=r(+\infty)$; set $p=r(0)$. We simultaneously prove both parts of the lemma by showing that, for $y\in\partial X$, the condition $(x\cdot y)_p=+\infty$ implies $d(x,y)\leq C$.

Suppose for the sake of contradiction that $\mscr{W}(p|x,y)$ is infinite and $\mscr{W}(x|y)$ contains a finite subset $\mc{U}$ with $\#\mc{U}>C$. Given $\mf{w}\in\mscr{W}(x|y)$ and a halfspace $\mf{h}\in\mscr{H}(p|x,y)$, either $\mf{w}\cu\mf{h}$ or $\mf{w}$ and $\mf{h}$ are transverse. Fixing $\mf{w}$, there are at most $d(p,\mf{w})<+\infty$ halfspaces $\mf{h}\in\mscr{H}(p|\mf{w})$. Thus, all but finitely many hyperplanes in $\mscr{W}(p|x,y)$ are transverse to all elements of $\mc{U}$. As $\mscr{W}(p|x,y)$ and $\mc{U}$ contain no facing triples, this violates $C$--leanness of $r$.
\end{proof}

\begin{lem}\label{properties of cu}
Consider a point $x\in\partial_{\rm cu}X$. There exists an infinite descending chain of halfspaces $\mf{h}_0\supsetneq\mf{h}_1\supsetneq ...$ such that $\bigcap\mf{h}_n=Z(x)$ and such that the shores $S(\mf{h}_n^*,\mf{h}_{n+1})$ are finite cube complexes of uniformly bounded diameter.
\end{lem}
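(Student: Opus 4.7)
The plan is to extract the descending chain from a contracting combinatorial ray representing $x$, arrange for consecutive hyperplanes to be \emph{strongly separated}, and verify the identity $\bigcap_n\mf{h}_n=Z(x)$ via a sector analysis combined with the DCC property of ultrafilters at vertices of $X$.

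Fix a combinatorial ray $r$ representing $x$ with $p=r(0)$; by Theorem~\ref{lean vs contracting vs Morse}, $r$ is $C$-lean for some $C\geq 0$. Ramsey combined with $\dim X<+\infty$ yields an infinite pairwise disjoint subset of $\mscr{W}(r)$; ordering its elements by their crossings in $r$ and taking $x$-sides produces a descending chain. Refining further via leanness and the dimension bound gives a subsequence whose consecutive hyperplanes are strongly separated (no hyperplane is transverse to both), reducing each shore $S(\mf{h}_n^*,\mf{h}_{n+1})$ to a single vertex. To ensure $Z(x)\subseteq\mf{h}_n$ for every $n$, I would show $|\mscr{W}(r)\cap\mscr{W}(Z(x))|<+\infty$ by combining Lemma~\ref{bounded components of cu} (diameter of $Z(x)$ at most $C$) with Lemma~\ref{rays to nearby points} (so that $|\mscr{W}(r)\cap\mscr{W}(x|y)|\leq C$ for each $y\in Z(x)$) and local finiteness of $X$ to control $|Z(x)|$; the finitely many offending terms are then dropped, which preserves strong separation since any hyperplane transverse to two of the remaining $\mf{w}$'s would have to cross an intermediate dropped one, contradicting its strong separation with a neighbour.

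For the reverse inclusion $\bigcap_n\mf{h}_n\subseteq Z(x)$, assume $z\in\bigcap_n\mf{h}_n$ satisfies $|\mscr{W}(x|z)|=+\infty$, and extract an infinite pairwise-disjoint family $\{\mf{u}_j\}\subseteq\mscr{W}(x|z)$ via Ramsey. Strong separation of consecutive $\mf{w}_n$'s forbids any $\mf{u}_j$ from being transverse to both $\mf{w}_n$ and $\mf{w}_{n+1}$; moreover, if $\mf{u}_j$ is disjoint from $\mf{w}_k$, then since $x,z\in\mf{h}_k$ lie on opposite sides of $\mf{u}_j$, a sector analysis forces $\mf{u}_j$ to be disjoint from every $\mf{w}_n$ with $n\leq k$. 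Together these constraints leave $\mf{u}_j$ transverse to at most one $\mf{w}_n$, hence disjoint from $\mf{w}_n$ for cofinitely many $n$. A further sector analysis pins down a single fixed halfspace-side of $\mf{u}_j$ as being contained in every such $\mf{h}_n$; any vertex $v\in X$ in that side then satisfies $\mf{h}_n\in\s_v$ for cofinitely many $n$, contradicting the DCC property of $\s_v$. The main obstacle in this plan is the finiteness step $|\mscr{W}(r)\cap\mscr{W}(Z(x))|<+\infty$, which in full generality requires a careful combinatorial treatment of $Z(x)$ using local finiteness of $X$ and the contracting property of $r$.
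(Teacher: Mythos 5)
Your proposal diverges from the paper's proof in several ways, and has a genuine gap that the paper's approach sidesteps entirely.

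The paper is much more economical: it cites Theorem~3.9 of \cite{Genevois} directly, which already produces an infinite chain $\mf{h}_0\supsetneq\mf{h}_1\supsetneq\cdots$ of halfspaces containing $x$ with shores of uniformly bounded diameter (finiteness then follows since shores embed as bounded subcomplexes of a locally finite complex). You instead try to rebuild such a chain from Ramsey plus $C$-leanness, and moreover overshoot by aiming for \emph{strongly separated} consecutive terms (singleton shores). That strengthening is not what the lemma asserts and is genuinely harder to secure: it is essentially the statement that $x$ is a regular point, which the paper only obtains later (Lemma~\ref{regular vs NT}) \emph{using} the present lemma as input. So your route, if completed, would reverse the paper's logical order.

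The main gap, which you yourself flag, is the finiteness of $\mscr{W}(r)\cap\mscr{W}(Z(x))$. The quantity you want to invoke --- that $Z(x)$ is finite --- is not available here: the paper only establishes it in Remark~\ref{components are finite}, and that remark is proved \emph{as a consequence of} Lemma~\ref{properties of cu}. Local finiteness alone does not bound $\#Z(x)$. More importantly, the whole ``drop finitely many offending terms'' framework is unnecessary. For \emph{any} infinite strictly descending chain $\mf{h}_0\supsetneq\mf{h}_1\supsetneq\cdots$ with $x\in\bigcap\mf{h}_n$, the inclusion $Z(x)\cu\mf{h}_n$ is automatic: if some $y\in Z(x)$ lay in $\mf{h}_k^*$, then $y\in\mf{h}_n^*$ for all $n\geq k$, forcing infinitely many hyperplanes to separate $x$ from $y$ and contradicting $d(x,y)<+\infty$. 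This is exactly the paper's one-line argument, and it applies verbatim to whatever chain you construct. Your careful count of $\mscr{W}(r)\cap\mscr{W}(Z(x))$ is solving a problem that does not exist.

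Your treatment of the reverse inclusion $\bigcap\mf{h}_n\cu Z(x)$ is correct but roundabout. The sector analysis on the family $\{\mf{u}_j\}$ does eventually locate a vertex $v\in X$ with $\mf{h}_n\in\s_v$ for cofinitely many $n$, contradicting DCC. But there is a shorter path: DCC of $\s_p$ for vertices $p$ immediately gives $\bigcap\mf{h}_n\cap X=\emptyset$; since $I(x,z)\cu\bigcap\mf{h}_n$ whenever $z\in\bigcap\mf{h}_n$, it follows that $I(x,z)\cap X=\emptyset$, and part~(2) of Lemma~\ref{bounded components of cu} then puts $z$ in $Z(x)$. This is the paper's argument, and it avoids Ramsey and the sector casework entirely.

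In short: the chain construction should be outsourced to Genevois' Theorem~3.9 rather than rebuilt with a strong-separation refinement you do not need; the forward inclusion follows immediately from the descending chain being infinite; and the reverse inclusion is cleaner via the interval and Lemma~\ref{bounded components of cu}(2).
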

\begin{proof}
Let $r$ be a contracting combinatorial ray with $x=r(+\infty)$. Theorem~3.9 in \cite{Genevois} yields an infinite chain of halfspaces $\mf{h}_0\supsetneq\mf{h}_1\supsetneq ...$ such that the shores $S_n=S(\mf{h}_n^*,\mf{h}_{n+1})$ have uniformly bounded diameter and such that $x\in\mf{h}_n$ for every $n\geq 0$. Since shores embed as subcomplexes of $X$, they are locally finite. Boundedness then implies that each $S_n$ is finite.

Now, observe that $\mf{h}_n^*\cap Z(x)=\emptyset$ for all $n\geq 0$. Otherwise, there would exist an integer $k\geq 0$ and a point $y\in\mf{h}_k^*\cap Z(x)$. We would then have $y\in\mf{h}_n^*\cap Z(x)$ for all $n\geq k$, violating the fact that $d(x,y)<+\infty$.

This shows that $Z(x)$ is contained in each $\mf{h}_n$. Given a point $z\in\bigcap\mf{h}_n$, we have $I(x,z)\cu\bigcap\mf{h}_n$ and hence $I(x,z)\cap X=\emptyset$. Lemma~\ref{bounded components of cu} then shows that $z\in Z(x)$. This proves that $\bigcap\mf{h}_n=Z(x)$ and concludes the proof.
\end{proof}

\begin{rmk}\label{components are finite}
If $X$ is \emph{uniformly} locally finite, part~(1) of Lemma~\ref{bounded components of cu} can actually be promoted to say that components of $\partial_{\rm cu}X$ are finite.

Indeed, let $x$ and $\mf{h}_0\supsetneq\mf{h}_1\supsetneq ...$ be as in the statement of Lemma~\ref{properties of cu}. Given $y\in Z(x)$, every $\mf{w}\in\mscr{W}(x|y)$ must be transverse to all but finitely many of the $\mf{h}_n$. In particular, $\mf{w}$ must be a hyperplane of almost all shores $S(\mf{h}_n^*,\mf{h}_{n+1})$. The latter are uniformly finite, as they embed as uniformly bounded subcomplexes of the uniformly locally finite cube complex $X$. We conclude that only finitely many hyperplanes of $X$ can separate two points of $Z(x)$, i.e.\ that $Z(x)$ is finite.
\end{rmk}

Since $X$ is finite dimensional, its combinatorial and $\CAT$ metrics are quasi-isometric. In particular, the notion of Morse quasi-geodesic is independent of our choice of one of the two metrics. 

We remark that Morse quasi-geodesics in complete $\CAT$ spaces always stay within bounded distance of contracting $\CAT$ geodesics (see e.g.\ Lemma~2.5, Theorem~2.9 and the proof of Corollary~2.10 in \cite{Charney-Sultan}). Along with Theorem~\ref{lean vs contracting vs Morse} and Proposition~\ref{CAT near l1}, this observation yields the following.

\begin{cor}\label{both ways}
Every contracting combinatorial ray is at finite Hausdorff distance from a contracting $\CAT$ ray. Every contracting $\CAT$ ray is at finite Hausdorff distance from a contracting combinatorial ray.
\end{cor}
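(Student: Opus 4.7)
The plan is to leverage the ``lean $\Leftrightarrow$ contracting $\Leftrightarrow$ Morse'' equivalence of Theorem~\ref{lean vs contracting vs Morse} for combinatorial rays, together with its analogue for $\CAT$ geodesics (which is exactly the content of the paragraph immediately preceding the statement), passing back and forth between the combinatorial and $\CAT$ metrics on $X$. The two transfer tools are the bi-Lipschitz equivalence of these metrics (available since $\dim X<+\infty$) and Proposition~\ref{CAT near l1}.

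For the first claim, let $r$ be a contracting combinatorial ray. By Theorem~\ref{lean vs contracting vs Morse}, $r$ is Morse as a combinatorial quasi-geodesic, and bi-Lipschitz equivalence makes $r$ a Morse quasi-geodesic with respect to the $\CAT$ metric as well. The results recalled from \cite{Charney-Sultan} then produce a contracting $\CAT$ ray $\rho$ at bounded $\CAT$-distance from $r$, hence at finite combinatorial Hausdorff distance, which is the desired conclusion.

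For the converse, let $\rho$ be a contracting $\CAT$ ray. Pick a vertex $p$ at bounded distance from $\rho(0)$ (any vertex of a cube containing $\rho(0)$ works, giving distance at most $D=\dim X$). By $\CAT$-convexity of the distance function in the proper $\CAT$ space $X$, the unique $\CAT$ ray $\rho'$ starting at $p$ and asymptotic to $\rho(+\infty)$ stays at bounded $\CAT$-distance from $\rho$ and is again contracting. Proposition~\ref{CAT near l1} then furnishes a combinatorial ray $r$ based at $p$ with $d_{\rm Haus}(r,\rho')\leq D$. The ray $r$ tracks the contracting (hence Morse) $\CAT$ ray $\rho'$ at bounded distance, so $r$ is itself Morse with respect to the $\CAT$ metric, and therefore, by bi-Lipschitz equivalence, also with respect to the combinatorial metric. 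A final application of Theorem~\ref{lean vs contracting vs Morse} upgrades ``Morse'' to ``contracting'' for $r$.

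The only mildly delicate point is the routine verification that a combinatorial ray tracking a Morse $\CAT$ ray at bounded distance is itself Morse: any combinatorial $(C,C)$-quasi-geodesic with endpoints on $r$ is, after a bounded additive perturbation, a $\CAT$ quasi-geodesic with endpoints on $\rho'$, which must stay close to $\rho'$ by the Morse property and hence close to $r$. This is the main obstacle in principle, but given the Charney--Sultan machinery already cited it is essentially a bookkeeping check.
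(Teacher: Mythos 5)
Your proposal is correct and follows essentially the same route the paper intends: the corollary is stated in the paper right after a short passage that records exactly the three ingredients you use (combinatorial Morse $\Leftrightarrow$ $\CAT$ Morse via bi-Lipschitz equivalence of the two metrics, lean $\Leftrightarrow$ contracting $\Leftrightarrow$ Morse for combinatorial rays, and Proposition~\ref{CAT near l1} to pass between $\CAT$ and combinatorial rays), together with the Charney--Sultan fact that Morse quasi-geodesics in complete $\CAT$ spaces track contracting $\CAT$ geodesics. You merely spell out the details the paper leaves implicit, in particular the minor basepoint adjustment needed because Proposition~\ref{CAT near l1} requires the $\CAT$ ray to start at a vertex.
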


Every point $x\in\partial_{\rm cu}X$ is represented by combinatorial rays in $X$. These rays are all contracting by Lemma~\ref{rays to cu} and Corollary~\ref{both ways} shows that they are at finite Hausdorff distance from a unique family of pairwise-asymptotic contracting $\CAT$ rays. This yields a map
\[\Phi\colon\partial_{\rm cu}X\longrightarrow\partial_cX.\]
We endow $\partial_{\rm cu}X$ with the restriction of the topology of $\partial X$. We write $\partial_c^{\rm vis}X$ to refer to the contracting boundary $\partial_cX$ endowed with the restriction of the visual topology on $\partial_{\infty}X$. Although this is not one of the standard topologies on $\partial_cX$ (\cite{Charney-Sultan,Cashen-Mackay}), it is all that we will need in most of the paper.

The next result describes a few properties of the map $\Phi$. Recall the standing assumption that $X$ be finite dimensional and locally finite.

\begin{thm}\label{properties of Phi}
The map $\Phi\colon\partial_{\rm cu}X\ra\partial_c^{\rm vis}X$ is a continuous surjection, whose fibres are precisely the $\sim$--equivalence classes in $\partial_{\rm cu}X$. Moreover, $\Phi$ descends to a homeomorphism $\overline\Phi\colon\partial_{\rm cu} X/\mathord\sim\ra\partial_c^{\rm vis}X$.
\end{thm}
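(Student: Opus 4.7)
The plan is to verify, in turn, surjectivity of $\Phi$, the identification of its fibres with the $\sim$-classes, continuity of $\Phi$, and finally continuity of the induced inverse. Surjectivity is immediate from Corollary~\ref{both ways}: any contracting $\CAT$ ray representing $\xi\in\partial_cX$ is at finite combinatorial Hausdorff distance from a contracting combinatorial ray $r$, so $\Phi(r(+\infty))=\xi$ by construction. For the fibre description, if $x\sim y$ in $\partial_{\rm cu}X$ I would iterate Lemma~\ref{rays to nearby points} exactly $d(x,y)$ times to produce combinatorial rays $r_x,r_y$ from a common basepoint with $d_{\rm Haus}(r_x,r_y)\le d(x,y)$; feeding these through Corollary~\ref{both ways} and using the bi-Lipschitz equivalence of $d$ and $d_{\CAT}$ in finite dimension, the associated contracting $\CAT$ rays lie at bounded distance from one another and are therefore asymptotic, so $\Phi(x)=\Phi(y)$. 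Conversely, if $\Phi(x)=\Phi(y)$ then $r_x$ and $r_y$ are both at bounded Hausdorff distance from $\CAT$ rays asymptotic to $\Phi(x)$, hence at some finite combinatorial Hausdorff distance $C$ from one another; for each $\mf{w}\in\mscr{W}(x|y)$ the nearest-$r_y$ point of $r_x(t)$ lies eventually on the opposite side of $\mf{w}$, so applying this observation to any finite subfamily of $\mscr{W}(x|y)$ forces $\#\mscr{W}(x|y)\le C$ and hence $x\sim y$.

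The heart of the argument is continuity. Given $x_n\to x$ in the Roller topology on $\partial_{\rm cu}X$, I would fix $p\in X$ and denote by $\rho,\rho_n$ the unique $\CAT$ rays from $p$ to $\Phi(x),\Phi(x_n)$. By properness of $X$ and Arzel\`a--Ascoli, every subsequence of $(\rho_n)$ has a further subsequence converging uniformly on compacta to a $\CAT$ ray $\rho'$ based at $p$, and it is enough to prove $\rho'=\rho$. I would apply Lemma~\ref{properties of cu} to $x$, producing a chain $\mf{h}_0\supsetneq\mf{h}_1\supsetneq\dots$ with $\bigcap_k\mf{h}_k=Z(x)$ and shores of uniformly bounded diameter. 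Convergence $x_n\to x$ in $\overline X$ then yields, for each $k$, $\mf{h}_k\in\s_{x_n}$ for all large $n$, whence $\rho_n$ eventually enters $\mf{h}_k$. The crucial technical step is that the bounded-shore conclusion forces the crossing locus of $\rho_n$ with $\mf{w}_k$ to lie in a $k$-dependent bounded region of $X$, uniformly in $n$, so the crossing time of $\rho_n$ with $\mf{w}_k$ stays bounded; since $\CAT$ geodesics cross each hyperplane at most once, uniform convergence on the corresponding compact interval passes the eventually-in-$\mf{h}_k$ property to $\rho'$. Consequently $\s_{\rho'}\supseteq\{\mf{h}_k\}_{k\ge 0}$, so $\s_{\rho'}\in Z(x)$ and $\s_{\rho'}\sim x$. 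Lemma~\ref{rays to cu} then places $\s_{\rho'}$ in $\partial_{\rm cu}X$, and combining with the already-established fibre description yields $\rho'(+\infty)=\Phi(\s_{\rho'})=\Phi(x)=\rho(+\infty)$, forcing $\rho'=\rho$ by uniqueness of $\CAT$ rays from a common basepoint.

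For the homeomorphism, I would verify that $\overline\Phi^{-1}$ is continuous. Given $\xi_n\to\xi$ in $\partial_c^{\rm vis}X$, I would take $\CAT$ rays $\rho_n\to\rho$ uniformly on compacta and, via Proposition~\ref{CAT near l1}, combinatorial rays $\tilde r_n,\tilde r$ from $p$ at combinatorial Hausdorff distance $\le\dim X$ from $\rho_n,\rho$ respectively, so that $\tilde r_n(+\infty)$ represents $\overline\Phi^{-1}(\xi_n)$. Using compactness of $\partial X$ (which holds as $X$ is locally finite), every subsequence of $\tilde r_n(+\infty)$ subsubconverges in the Roller topology to some $y\in\partial X$, and it is enough to show $y\sim\tilde r(+\infty)$: indeed, any saturated open neighbourhood of $[\tilde r(+\infty)]$ in the quotient must then contain $[\tilde r_n(+\infty)]$ eventually, for otherwise one could extract a subsequential Roller limit lying outside the neighbourhood yet $\sim$-equivalent to $\tilde r(+\infty)$. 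That $y\sim\tilde r(+\infty)$ is a direct mirror of the continuity step: applying Lemma~\ref{properties of cu} to $\tilde r(+\infty)$ yields a chain $\mf{h}_0\supsetneq\dots$ with bounded shores, and the same bounded-crossing-time argument shows $\mf{h}_k\in\s_{\rho_n}=\s_{\tilde r_n(+\infty)}$ for $n$ large, hence $\mf{h}_k\in\s_y$ for every $k$, so $y\in Z(\tilde r(+\infty))$. I expect the main obstacle throughout to be precisely this uniform control of crossing loci: in its absence the $\CAT$ rays $\rho_n$ could accumulate onto a ray $\rho'$ whose ultrafilter differs from $\s_x$ at a hyperplane whose crossing time blows up (an ``escape-to-infinity'' pathology), and the bounded-shore conclusion of Lemma~\ref{properties of cu} is the only tool that rules this out outside the Gromov hyperbolic regime.
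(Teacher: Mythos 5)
Your skeleton (surjectivity via Corollary~\ref{both ways}, fibre identification, continuity, then inverse continuity) matches the paper's, and the surjectivity and fibre steps are fine. The real divergence is in the proof of continuity, where the paper takes a quantitative route you do not: it constructs explicit basic neighbourhoods $U_n(x)$ cut out by three nested halfspaces $\mf{h}_3\cu\mf{h}_2\cu\mf{h}_1$ chosen deep in $\s_x$, with $d(p,\mf{h}_1)>n$ and at least $C+1$ hyperplanes between consecutive ones, and then uses the $C$-leanness of the combinatorial ray $r_x$ (Theorem~\ref{lean vs contracting vs Morse}) in a careful halfspace count to prove the concrete estimate $d(r_x(n),r_y(n))\le 5C$ for every $y\in U_n(x)$. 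Your route --- Arzel\`a--Ascoli on the $\CAT$ rays $\rho_n$, then pinning down the subsequential limit $\rho'$ by showing it eventually enters each $\mf{h}_k$ from the chain of Lemma~\ref{properties of cu} --- is a genuine alternative and it does work, but its pivotal assertion (that bounded shores force a uniform bound on the crossing time of $\rho_n$ with $\mf{w}_k$) is flagged rather than proven, and that is exactly where the content lies. The lemma is true: if $c$ is the entry point of a combinatorial geodesic from $p\in\mf{h}_{k-1}^*$ into $\mf{h}_{k-1}$, headed towards a point $z\in\mf{h}_k\cap\partial X$, then every $\mf{u}\in\mscr{W}(p\,|\,c)$ lies in $\mscr{W}(p\,|\,\mf{w}_{k-1})\cup\mscr{W}\big(S(\mf{h}_{k-1}^*,\mf{h}_k)\big)\cup\mscr{W}(p\,|\,\mf{h}_k)$, a finite set depending only on $p$ and the chain; one then transfers to $\rho_n$ via the $(\dim X)$-Hausdorff-closeness of Proposition~\ref{CAT near l1}. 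You should write this out.

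There is a second elision. From ``$\mf{h}_k\in\s_{x_n}$'' you conclude that ``$\rho_n$ eventually enters $\mf{h}_k$'', but the Roller endpoint $z_n$ of the combinatorial ray shadowing $\rho_n$ is only $\sim$-equivalent to $x_n$, not equal to it, and so $z_n\in\mf{h}_k$ is not automatic. The paper handles exactly this point using Lemma~\ref{components vs separation}: finiteness of the shores $S(\mf{h}_j^*,\mf{h}_{j+1})$ rules out an infinite chain of hyperplanes transverse to both $\mf{h}_j^*$ and $\mf{h}_{j+1}$, forcing $z_n\in\mf{h}_{k-1}$ whenever $x_n\in\mf{h}_k$; an extra shift by $\dim X$ in the index then passes the halfspace membership from $r_n$ to $\rho_n$. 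With both of these points supplied, your compactness argument does yield continuity. Your treatment of $\overline\Phi^{-1}$ via extraction of a Roller subsequential limit $y$, showing $y\in Z(x)$, and using saturation of preimages of opens in the quotient is essentially the paper's argument, so no concerns there.
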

\begin{proof}
Surjectivity is immediate from Corollary~\ref{both ways}. Lemma~\ref{rays to cu} shows that $\Phi$ is constant on $\sim$--equivalence classes. On the other hand, if $\g$ and $\g'$ are combinatorial rays with $\g(+\infty)\not\sim\g'(\infty)$, it is clear that the distance $d(\g(n),\g'(n))$ diverges as $n$ goes to infinity. We conclude that the fibres of $\Phi$ are precisely the components of $\partial_{\rm cu}X$.

We now prove continuity of $\Phi$. Fix a vertex $p\in X$ and let $D=\dim X$. Given points $x,y\in\partial_{\rm cu}X$, let $\rho_x$ and $\rho_y$ be the $\CAT$ rays from $p$ to $\Phi(x)$ and $\Phi(y)$, respectively. Let $r_x$ and $r_y$ be combinatorial rays based at $p$ and satisfying $d_{\rm Haus}(r_x,\rho_x)\leq D$, $d_{\rm Haus}(r_y,\rho_y)\leq D$, as provided by Proposition~\ref{CAT near l1}. The points $x'=r_x(+\infty)$ and $y'=r_y(+\infty)$ lie in the components $Z(x)$ and $Z(y)$, respectively. Theorem~\ref{lean vs contracting vs Morse} moreover shows that $r_x$ and $r_y$ are $C$--lean for some constant $C>0$. 

Given an integer $n\geq 0$, we define an open neighbourhood $U_n(x)$ of $x$ in $\partial_{\rm cu}X$ as follows. Pick halfspaces $\mf{h}_3\cu\mf{h}_2\cu\mf{h}_1$ with $Z(x)\cu\mf{h}_3$, ${d(p,\mf{h}_1)>n}$, $\#\mscr{W}(\mf{h}_1^*|\mf{h}_2)>C$ and $\#\mscr{W}(\mf{h}_2^*|\mf{h}_3)>C$. Their existence is ensured by Lemma~\ref{properties of cu}. The neighbourhood $U_n(x)$ is then the subset of $\partial_{\rm cu}X$ consisting of points that lie within $\mf{h}_3$. 

\begin{claim}
For all $y\in U_n(x)$, we have $d(r_x(n),r_y(n))\leq 5C$.
\end{claim}

As $d_{\rm Haus}(r_x,\rho_x)\leq D$ and $d_{\rm Haus}(r_y,\rho_y)\leq D$, the claim shows that, given any open neighbourhood $V$ of $\Phi(x)$ in $\partial_c^{\rm vis}X$, we must have ${\Phi(U_n(x))\cu V}$ for all sufficiently large $n$. We thus complete the proof of continuity of $\Phi$ by proving the claim.

\emph{Proof of Claim.} Let the halfspaces $\mf{h}_1$, $\mf{h}_2$ and $\mf{h}_3$ be as above. Since $y\in\mf{h}_3$, Lemma~\ref{components vs separation} implies that $x',y'\in\mf{h}_2$. Indeed, as $\#\mscr{W}(\mf{h}_2^*|\mf{h}_3)>C$ and $r_x$ is $C$--lean, no infinite chain of halfspaces can be transverse to both $\mf{h}_2$ and $\mf{h}_3$. 

We set $q_x=r_x(n)$, $q_y=r_y(n)$ and $m=m(p,q_x,q_y)$. Note that the points $q_x$, $q_y$ and $m$ lie in $\mf{h}_1^*$ as $d(p,\mf{h}_1)>n$. Let $A_x\cu\mscr{H}(m|q_x)$ and $A_y\cu\mscr{H}(m|q_y)$ be the subsets of halfspaces containing $\mf{h}_2$; set $a_x=\#A_x$ and $a_y=\#A_y$. 

Each $\mf{k}\in\mscr{H}(m|q_x)\setminus A_x$ satisfies $\mf{k}^*\cap\mf{h}_2\neq\emptyset$, but also $m\in\mf{k}^*\cap\mf{h}_2^*$, $q_x\in\mf{k}\cap\mf{h}_2^*$ and $x'\in\mf{k}\cap\mf{h}_2$. We conclude that each halfspace in $\mscr{H}(m|q_x)\setminus A_x$ is transverse to $\mf{h}_2$ and, similarly, to $\mf{h}_1$. Since $\mscr{W}(\mf{h}_1^*|\mf{h}_2)\cu\mscr{W}(r_x)$ and $\#\mscr{W}(\mf{h}_1^*|\mf{h}_2)>C$, the fact that $r_x$ is $C$--lean implies that $d(m,q_x)-a_x\leq C$. Similarly, we obtain $d(m,q_y)-a_y\leq C$.

The sets $A_x$ and $A_y$ are transverse. As $A_x\cu\mscr{W}(r_x)$, leanness of $r_x$ implies that $\min\{a_x,a_y\}\leq C$. Since $d(m,q_x)=d(m,q_y)$ by construction, we have
\[|a_x-a_y|=\left|(d(m,q_x)-a_x)-(d(m,q_y)-a_y)\right|\leq C.\]
Hence $\max\{a_x,a_y\}\leq 2C$ and 
\[d(q_x,q_y)=d(m,q_x)+d(m,q_y)\leq (a_x+C)+(a_y+C)\leq 5C.~~~\square\]

In order to prove that $\overline\Phi$ is a homeomorphism, let us first obtain the following property: Given $x,x_n\in\partial_{\rm cu}X$ and $y\in\partial X$ with $\Phi(x_n)\ra\Phi(x)$ and $x_n\ra y$, we must have $y\in Z(x)$. 

Fix a basepoint $p\in X$. Let $\rho_n$ and $\rho$ be $\CAT$ rays from $p$ to $x_n$ and $x$, respectively. Proposition~\ref{CAT near l1} yields combinatorial rays $r_n$ and $r$ based at $p$, with $d_{\rm Haus}(r_n,\rho_n)\leq D$ and $d_{\rm Haus}(r,\rho)\leq D$. By Theorem~\ref{lean vs contracting vs Morse}, there exists $C\geq 0$ such that $r$ is $C$--lean. Let $\mf{h}_0\supsetneq\mf{h}_1\supsetneq ...$ be a chain in ${\s_x\setminus\s_p}$ with $\bigcap\mf{h}_n=Z(x)$, as provided by Lemma~\ref{properties of cu}. Up to passing to a subchain, we can assume that $\#\mscr{W}(\mf{h}_k^*|\mf{h}_{k+1})>C$ for all $k\geq 0$

Given any $k\geq 0$, there exists $N(k)\geq 0$ such that $r_n$ enters $\mf{h}_k$ for all ${n\geq N(k)}$. Indeed, the rays $\rho_n$ converge to $\rho$ uniformly on compact sets and $r_n,r$ are uniformly Hausdorff-close to $\rho_n,\rho$. We conclude that the points $z_n:=r_n(+\infty)$ lie in $\mf{h}_k$ for all $n\geq N(k)$. Since $x_n\in Z(z_n)$, we must also have $x_n\in\mf{h}_{k-1}$. This follows from Lemma~\ref{components vs separation}, as, by leanness of $r$, there is no infinite chain of halfspaces transverse to both $\mf{h}_k$ and $\mf{h}_{k-1}$. We conclude that $y=\lim x_n$ lies in $\bigcap\mf{h}_n=Z(x)$, as required. 

Now, since $\partial_{\infty}X$ is metrisable, so is $\partial_c^{\rm vis}X$. The fact that $\overline\Phi$ is a homeomorphism will thus follow if we prove that $\overline{\Phi}^{-1}$ is sequentially continuous (see e.g.\ Theorem~30.1(b) in \cite{Munkres}). Denote by $p\colon\partial_{\rm cu}X\ra\partial_{\rm cu} X/\mathord\sim$ the quotient projection. Suppose for the sake of contradiction that points $x,x_n\in\partial_{\rm cu}X$ are given so that $\Phi(x_n)\ra\Phi(x)$, but $p(x_n)\not\ra p(x)$. Possibly passing to a subsequence, there exists an open neighbourhood $V$ of $p(x)$ in $\partial_{\rm cu} X/\mathord\sim$ such that no $p(x_n)$ lies in $V$; hence no $x_n$ lies in $p^{-1}(V)$. Since $\partial X$ is compact and metrisable, a subsequence $x_{n_k}$ converges to a point $y\in\partial X$; as $p^{-1}(V)$ is open, we have $y\not\in p^{-1}(V)$. However, the set $p^{-1}(V)$ is a union of $\sim$--equivalence classes and we have shown above that $y\sim x\in p^{-1}(V)$, a contradiction.
\end{proof}

\begin{rmk}\label{topologies on cnt 1}
Given points $x,x_n\in\partial_{\rm cu}X$ satisfying $\Phi(x_n)\ra\Phi(x)$ and $Z(x)=\{x\}$, we always have $x_n\ra x$. Otherwise, compactness of $\partial X$ would yield a subsequence $x_{n_k}$ converging to a point $y\in\partial X$ different from $x$. However, we have shown during the proof of Theorem~\ref{properties of Phi} (right after the claim) that $y\in Z(x)=\{x\}$.
\end{rmk}

When $X$ is Gromov hyperbolic, we have $\partial_{\rm cu}X=\partial X$ and $\partial_c^{\rm vis}X$ coincides with the Gromov boundary $\partial_{\infty}X$. This case is worth highlighting:

\begin{cor}\label{properties of Phi hyp}
If $X$ is Gromov hyperbolic, the map $\Phi\colon\partial X\ra\partial_{\infty}X$ descends to a homeomorphism $\overline\Phi\colon\partial X/\mathord\sim\ra\partial_{\infty}X$.
\end{cor}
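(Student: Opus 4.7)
The plan is to reduce this corollary directly to Theorem~\ref{properties of Phi} by verifying the two identifications $\partial_{\rm cu}X=\partial X$ and $\partial_c^{\rm vis}X=\partial_{\infty}X$ in the hyperbolic setting.

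First I would recall (as already noted in the excerpt right after the definition of $\partial_{\rm cu}X$, citing Theorem~3.3 of \cite{Genevois2}) that when $X$ is Gromov hyperbolic every combinatorial ray in $X$ is contracting. Therefore every combinatorial ray representing a point $x\in\partial X$ is contracting, which gives $\partial_{\rm cu}X=\partial X$. This is the first ingredient.

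Second, I would observe that in a Gromov hyperbolic $\CAT$ space every $\CAT$ geodesic ray is $M$-Morse for some universal function $M$, so the contracting boundary $\partial_c X$ coincides with the full visual boundary $\partial_{\infty}X$. Since, by definition, $\partial_c^{\rm vis}X$ is just $\partial_c X$ carrying the subspace topology from the visual topology on $\partial_{\infty}X$, we get $\partial_c^{\rm vis}X=\partial_{\infty}X$ with its usual topology (which, by hyperbolicity, is also the Gromov boundary topology).

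With these two identifications in hand, Theorem~\ref{properties of Phi} applied to the hyperbolic $\CAT$ cube complex $X$ gives exactly that $\Phi\colon\partial X\ra\partial_{\infty}X$ is a continuous surjection whose fibres are the $\sim$-equivalence classes in $\partial X$, and that it descends to a homeomorphism $\overline\Phi\colon\partial X/\mathord\sim\ra\partial_{\infty}X$, as desired. There is no real obstacle here beyond assembling the two identifications; the content of the corollary is packaged inside Theorem~\ref{properties of Phi}, and local finiteness and finite dimensionality of $X$ (which are standing hypotheses of the section) are inherited by any Gromov hyperbolic $X$ of interest.
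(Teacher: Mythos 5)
Your argument is exactly the one the paper intends: the paper itself disposes of this corollary with the single sentence preceding the statement, namely that hyperbolicity forces $\partial_{\rm cu}X=\partial X$ (via Theorem~3.3 of \cite{Genevois2}) and $\partial_c^{\rm vis}X=\partial_{\infty}X$, after which Theorem~\ref{properties of Phi} gives the result verbatim. Your proposal supplies the same two identifications and invokes the same theorem, so it is correct and essentially identical to the paper's reasoning.
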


\subsection{Contracting non-terminating ultrafilters.}\label{cnt sect}

In this subsection:

\begin{ass}
Let, in addition, $X$ be irreducible. 
\end{ass}

The following definition is originally due to Nevo and Sageev (cf.\ Section~3.1 in \cite{Nevo-Sageev}).

\begin{defn}\label{nt defn}
A point $x\in\partial X$ is \emph{non-terminating} if $Z(x)=\{x\}$. Equivalently\footnote{This is because, given an ultrafilter $\s$ and a halfspace $\mf{h}\in\s$, the set $(\s\setminus\{\mf{h}\})\cup\{\mf{h}^*\}$ is an ultrafilter (at distance $1$ from $\s$) if and only if $\mf{h}$ is minimal in $(\s,\cu)$.}, the poset $(\s_x,\cu)$ does not have minimal elements. 
\end{defn}

We denote by $\partial_{\rm nt}X\cu\partial X$ the subset of non-terminating ultrafilters and set $\partial_{\rm cnt}X=\partial_{\rm cu}X\cap\partial_{\rm nt}X$. By Theorem~\ref{properties of Phi}, the restriction to $\partial_{\rm cnt}X$ of the map $\Phi\colon\partial_{\rm cu}X\ra\partial_cX$ is injective. Therefore, we will generally identify the sets $\partial_{\rm cnt}X\cu\partial_{\rm nt}X$ and $\Phi(\partial_{\rm cnt}X)\cu\partial_cX$, even writing simply $\partial_{\rm cnt}X\cu\partial_cX$.

\begin{rmk}\label{topologies on cnt 2}
It follows from Theorem~\ref{properties of Phi} and Remark~\ref{topologies on cnt 1} that $\partial_{\rm cnt}X$ inherits the same topology from $\partial_{\rm cu}X\cu\partial X$ and $\partial_c^{\rm vis}X$.
\end{rmk}

It will be useful to make the following observation.

\begin{lem}\label{regular vs NT} 
We have $\partial_{\rm cnt}X=\partial_{\rm cu}X\cap\partial_{\rm reg}X$.
\end{lem}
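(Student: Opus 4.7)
The plan is to establish the two inclusions separately, relying on completely different tools.

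For $\partial_{\rm cu}X \cap \partial_{\rm reg}X \subseteq \partial_{\rm cnt}X$, I would show that every regular point is automatically non-terminating, making no use of the contracting assumption. Fix $x \in \partial_{\rm reg}X$ and assume for contradiction that some $y \in Z(x)$ satisfies $y \neq x$; in particular, $\mscr{W}(x|y)$ is finite and non-empty. Pick $\mathfrak{u}_0 \in \mscr{W}(x|y)$ and let $\mathfrak{k}_0 \in \s_x$ be its side containing $x$. Iterating the regularity property (in the strict form provided by the equivalent chain characterisation in Definition~\ref{regular defn}) I produce a strict chain $\mathfrak{k}_0 \supsetneq \mathfrak{k}_1 \supsetneq \mathfrak{k}_2 \supsetneq \ldots$ in $\s_x$ with each $\mathfrak{k}_{i+1}$ strongly separated from $\mathfrak{k}_i^*$. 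Since $y \in \mathfrak{k}_0^* \subsetneq \mathfrak{k}_i^*$ for every $i$, each bounding hyperplane is a distinct element of $\mscr{W}(x|y)$, contradicting the finiteness of this set.

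For the reverse inclusion $\partial_{\rm cnt}X \subseteq \partial_{\rm cu}X \cap \partial_{\rm reg}X$, I would take $x \in \partial_{\rm cnt}X$ and apply Lemma~\ref{properties of cu} (using that $x$ is contracting) to obtain a chain $\mathfrak{h}_0 \supsetneq \mathfrak{h}_1 \supsetneq \ldots$ in $\s_x$ whose shores $S_n := S(\mathfrak{h}_n^*, \mathfrak{h}_{n+1})$ are finite cube complexes of uniformly bounded diameter. The non-termination hypothesis upgrades the conclusion of that lemma to $\bigcap_n \mathfrak{h}_n = Z(x) = \{x\}$. The aim is then to extract a sub-chain $\mathfrak{h}_{n_0} \supsetneq \mathfrak{h}_{n_1} \supsetneq \ldots$ whose consecutive pairs are strongly separated, exhibiting $x$ as regular via Definition~\ref{regular defn}.

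The technical core of the extraction is the claim that no hyperplane $\mathfrak{u} \in \mscr{W}(X)$ can be transverse to infinitely many $\mathfrak{w}_n$. I would first verify that the index set $A_{\mathfrak{u}} := \{n : \mathfrak{u} \text{ is transverse to } \mathfrak{w}_n\}$ is an interval in $\N$: given $n<m$ in $A_{\mathfrak{u}}$ and an intermediate index $k$, the inclusions $\mathfrak{h}_m \subseteq \mathfrak{h}_k \subseteq \mathfrak{h}_n$ and $\mathfrak{h}_n^* \subseteq \mathfrak{h}_k^* \subseteq \mathfrak{h}_m^*$ propagate the non-emptiness of all four intersections between $\mathfrak{u}$ and $\mathfrak{w}_k$. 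If $A_{\mathfrak{u}}$ were infinite, both sides of $\mathfrak{u}$ would meet every $\mathfrak{h}_n$ for $n$ large; since halfspaces are clopen in the compact space $\overline X$, the finite intersection property would force each side to intersect $\bigcap_n \mathfrak{h}_n = \{x\}$, which is absurd.

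With this observation in hand the sub-chain is built greedily: given $n_i$, suppose no $n_{i+1}>n_i$ makes $\mathfrak{h}_{n_i}^*$ strongly separated from $\mathfrak{h}_{n_{i+1}}$. Then for every $N>n_i$ there exists a hyperplane $\mathfrak{u}_N$ transverse to both $\mathfrak{w}_{n_i}$ and $\mathfrak{w}_N$; the interval property places every $\mathfrak{u}_N$ inside the finite set $\mscr{W}(S_{n_i})$, and a pigeon-hole argument produces a single $\mathfrak{u}$ transverse to arbitrarily large $\mathfrak{w}_N$, contradicting the previous paragraph. The main obstacle I anticipate is precisely this final interplay: the interval argument, the uniform finiteness of the shores, and the compactness of $\overline X$ must be combined cleanly to rule out a persistent transverse hyperplane without circularity.
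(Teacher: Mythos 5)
Your proposal is correct and follows essentially the same strategy as the paper: the first inclusion is the easy observation that regularity forces $\s_x$ to have no minimal elements, and the reverse inclusion invokes Lemma~\ref{properties of cu}, pigeonholes on the finite hyperplane set of a shore, and derives a contradiction from compactness of $\overline X$ and $\bigcap_n\mf{h}_n=\{x\}$. The only difference is cosmetic — you isolate the ``$A_{\mf{u}}$ is an interval'' observation as a named step, whereas the paper uses it implicitly when passing from transversality with $\mf{h}_k$ and $\mf{h}_n$ to transversality with $\mf{h}_k$ and $\mf{h}_{k+1}$.
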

\begin{proof}
We have $\partial_{\rm cnt}X\supseteq\partial_{\rm cu}X\cap\partial_{\rm reg}X$ since it is clear from Definition~\ref{regular defn} that $\partial_{\rm reg}X\cu\partial_{\rm nt}X$. For the other inclusion, consider a point $x\in\partial_{\rm cnt}X$. Lemma~\ref{properties of cu} yields an infinite descending chain of halfspaces $\mf{h}_0\supsetneq\mf{h}_1\supsetneq ...$ such that the shores $S(\mf{h}_n^*,\mf{h}_{n+1})$ are finite and $\bigcap\mf{h}_n=\{x\}$. In particular, for every $n\geq 0$, only finitely many hyperplanes are transverse to both $\mf{h}_n$ and $\mf{h}_{n+1}$. We are going to show that, for every $k\geq 0$, there exists $N>k$ such that the halfspaces $\mf{h}_k^*$ and $\mf{h}_N$ are strongly separated. This implies that $\mf{h}_k^*$ and $\mf{h}_n$ are strongly separated for every $n\geq N$, providing a strongly separated chain in $\s_x$ and hence showing that $x$ is regular.

Suppose for the sake of contradiction that, for some $k\geq 0$ and every $n>k$, there exists a hyperplane $\mf{w}_n$ transverse to both $\mf{h}_k$ and $\mf{h}_n$. Note that each $\mf{w}_n$ is in particular transverse to $\mf{h}_k$ and $\mf{h}_{k+1}$ and that there are only finitely many such hyperplanes. We conclude that $\mf{w}_n=\mf{w}$ for a hyperplane $\mf{w}$ and infinitely many values of $n$. In particular, $\mf{w}$ is transverse to all $\mf{h}_n$ with $n\geq k$. It follows that there exists a point $y\in\partial X$ such that $y\in\mf{h}_n$ for all $n\geq 0$ and $\mf{w}\in\mscr{W}(x|y)$, violating the fact that $\bigcap\mf{h}_n=\{x\}$.
\end{proof}

Contracting non-terminating ultrafilters are plentiful, as they for instance arise in relation to the following notion (see Propositions~\ref{cnt +-} and~\ref{sc exist}). 

\begin{defn}[Definition~4.1 in \cite{BF1}]\label{neatly contracting defn}
We say that $g\in\Aut(X)$ is \emph{neatly contracting} if there exist halfspaces $\mf{h}_1$ and $\mf{h}_2$ such that ${g\mf{h}_1\cu\mf{h}_2\cu\mf{h}_1}$ and both pairs $(\mf{h}_2,\mf{h}_1^*)$ and $(g\mf{h}_1,\mf{h}_2^*)$ are strongly separated.
\end{defn}

We collect here various facts on neatly contracting automorphisms that will be needed later on.

\begin{prop}\label{cnt +-}
Every neatly contracting automorphism $g\in\Aut(X)$ has exactly two fixed points $g^{\pm}\in\partial_{\rm cnt}X$. Given any $x\in\overline{X}\setminus\{g^{\pm}\}$, we have $g^nx\ra g^+$ and $g^{-n}x\ra g^-$ for $n\ra+\infty$.
\end{prop}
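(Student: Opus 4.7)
My plan is to construct $g^+$ as the limit, in the Roller compactification $\overline X$, of the strictly descending chain of halfspaces $(g^n\mf{h}_1)_{n\geq 0}$ supplied by Definition~\ref{strongly contracting defn}. By $g$-equivariance of the two strong separations in that definition, consecutive halfspaces in the extended chain $\mf{h}_1\supsetneq\mf{h}_2\supsetneq g\mf{h}_1\supsetneq g\mf{h}_2\supsetneq\ldots$ are strongly separated, and a short transitivity-along-a-chain argument upgrades this to pairwise strong separation. Consequently, every hyperplane $\mf{w}\in\mscr{W}(X)$ is transverse to at most one $g^n\mf{h}_1$; moreover, if $\mf{w}$ is transverse to $g^{n_0}\mf{h}_1$, strong separation with $g^{n_0}\mf{h}_2$ forces $\mf{w}\cu(g^{n_0}\mf{h}_2)^*$, whence $\mf{w}\cap g^n\mf{h}_1=\emptyset$ for $n>n_0$. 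I also need to rule out the possibility that $\mf{w}\cu g^n\mf{h}_1$ for all large $n$: fixing $p\in\mf{h}_1^*$, the distinct hyperplanes $\partial g^k\mf{h}_1$ for $0\le k\le n-1$ all lie in $\mscr{W}(p|g^n\mf{h}_1)$, so $d(p,g^n\mf{h}_1)\ge n$, contradicting $d(p,\mf{w})<+\infty$. Thus for each $\mf{w}$ a unique side eventually contains $g^n\mf{h}_1$, and declaring these sides to form $\s_{g^+}$ yields a well-defined ultrafilter.

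Next I would verify that $g^+\in\partial_{\rm cnt}X$ is fixed by $g$. The defining property of $\s_{g^+}$ is manifestly $g$-invariant; the infinite strictly descending chain $(g^n\mf{h}_1)$ lies in $\s_{g^+}$, so $g^+\in\partial X$; and since this chain is pairwise strongly separated, $g^+\in\partial_{\rm reg}X$. To show $g^+\in\partial_{\rm cu}X$, I use Lemma~\ref{hyperplane orders} to build a combinatorial ray $r$ from any vertex $p$ to $g^+$ whose hyperplane sequence includes the strongly separated subchain $(\partial g^n\mf{h}_1)_{n\ge n_0}$; a standard leanness argument based on the fact that no hyperplane is transverse to two elements of this subchain, together with Theorem~\ref{lean vs contracting vs Morse}, shows $r$ is contracting. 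Lemma~\ref{regular vs NT} then gives $g^+\in\partial_{\rm cnt}X$. Applying the same construction to $g^{-1}$ (which is strongly contracting via the halfspaces $\mf{h}_1^*\supsetneq\mf{h}_2^*\supsetneq g^{-1}\mf{h}_1^*$ with the analogous strong separations) produces $g^-\in\partial_{\rm cnt}X$.

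For uniqueness and the convergence statement, suppose $z\in\overline X$ is $g$-fixed; then $\s_z$ is $g$-invariant, so either $\mf{h}_1\in\s_z$, giving $g^n\mf{h}_1\in\s_z$ for every $n$ and hence $\s_z=\s_{g^+}$, or $\mf{h}_1^*\in\s_z$ and symmetrically $z=g^-$. For convergence, first take $x\in\mf{h}_1$: then $g^nx\in g^n\mf{h}_1$ for all $n\ge 0$, and since every $\mf{k}\in\s_{g^+}$ eventually contains $g^n\mf{h}_1$, it eventually contains $g^nx$, so $g^nx\to g^+$ in $\overline X$. For $x\in\mf{h}_1^*$ with $x\neq g^-$, pick a hyperplane $\mf{w}$ where $\s_x$ and $\s_{g^-}$ disagree, with $\mf{l}\in\s_{g^-}$ and $\mf{l}^*\in\s_x$; the construction of $g^-$ gives $\mf{l}^*\cu g^{-m}\mf{h}_1$ for some $m\ge 0$, so $g^mx\in\mf{h}_1$ and the previous case applies to $g^mx$. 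The statement $g^{-n}x\to g^-$ follows by replacing $g$ with $g^{-1}$. The main obstacle is ruling out hyperplanes trapped inside every $g^n\mf{h}_1$, which is exactly what the distance estimate $d(p,g^n\mf{h}_1)\ge n$ handles.
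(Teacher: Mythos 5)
Your proof is essentially correct, but it takes a genuinely different route from the paper's. The paper's proof is a three-line citation argument: it invokes Proposition~4.3 of \cite{BF1} for existence, uniqueness, and north--south dynamics of $g^{\pm}\in\partial_{\rm nt}X$, then Propositions~2.7 and~4.4 of \cite{BF1} to produce a $\langle g\rangle$-invariant combinatorial line through $g^{\pm}$, and finally Theorem~3.9 of \cite{Genevois} to see that this line is contracting because it crosses the strongly separated chain $\partial g^n\mf{h}_1$. You instead reconstruct everything from scratch: you build $\s_{g^+}$ directly as the set of halfspaces eventually containing $g^n\mf{h}_1$, verify regularity via pairwise strong separation of the chain, and prove uniqueness and north--south dynamics by bare-hands ultrafilter manipulations. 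What your approach buys is self-containment and transparency; what the paper's approach buys is brevity and the avoidance of re-proving known machinery.

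Two points deserve attention. First, a small slip: the chain you propose for $g^{-1}$, namely $\mf{h}_1^*\supsetneq\mf{h}_2^*\supsetneq g^{-1}\mf{h}_1^*$, has the middle term wrong, since $\mf{h}_2\subsetneq\mf{h}_1$ gives $\mf{h}_1^*\subsetneq\mf{h}_2^*$. The correct witnesses for the strong contraction of $g^{-1}$ are $\mf{h}_1^*\supsetneq g^{-1}\mf{h}_2^*\supsetneq g^{-1}\mf{h}_1^*$, with strong separations $(g^{-1}\mf{h}_2^*,\mf{h}_1)$ and $(g^{-1}\mf{h}_1^*,g^{-1}\mf{h}_2)$, which are $g^{-1}$-translates of the given pairs. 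Since your convergence argument only uses $\mf{k}_1=\mf{h}_1^*$, this does not propagate, but it should be fixed. Second, your ``standard leanness argument'' for $g^+\in\partial_{\rm cu}X$ is the real content of the contracting direction of Theorem~3.9 in \cite{Genevois} and is not as immediate as your phrasing suggests: one needs not only that no hyperplane is transverse to two of the $\partial g^n\mf{h}_1$, but also that the ray crosses them with uniformly bounded gaps, which in this situation is supplied by $\langle g\rangle$-equivariance of the chain. You should either carry out this estimate explicitly or, as the paper does, cite Theorem~3.9 of \cite{Genevois} directly. The rest of the argument — well-definedness of the ultrafilter via the distance estimate $d(p,g^n\mf{h}_1)\ge n$, the transitivity-along-a-chain upgrade to pairwise strong separation, the maximality argument for uniqueness, and the reduction of convergence to the case $x\in\mf{h}_1$ — is sound.
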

\begin{proof}
This is essentially Proposition~4.3 in \cite{BF1}; we only need to prove that the points $g^{\pm}$ are represented by contracting rays. By Propositions~2.7 and~4.4 in \cite{BF1}, there exists a $\langle g\rangle$--invariant line $\g\cu X$ with endpoints $g^{\pm}$. As $\g$ must cross all hyperplanes bounding the halfspaces $g^n\mf{h}_1$, Theorem~3.9 in \cite{Genevois} shows that $\g$ is contracting. This concludes the proof.
\end{proof}

We will write $g^{\pm}_X\in\partial_{\rm cnt}X$ when it is necessary to specify the cube complex. The next proposition follows from Lemmas~2.9 and~4.7 in \cite{BF1}, although the main ingredients are actually from \cite{Fernos-Lecureux-Matheus}. The same result holds for any finite collection of (irreducible, essential) cubulations.

\begin{prop}\label{sc exist}
Let a non-virtually-cyclic group $G$ act properly and cocompactly on irreducible, essential $\CAT$ cube complexes $X$ and $Y$. There exists $g\in G$ simultaneously acting as a neatly contracting automorphism on $X$ and $Y$.
\end{prop}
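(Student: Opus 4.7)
The plan is to produce a single element of $G$ whose dynamics on both Roller boundaries are ``regular enough'' to force the strongly contracting property on both cube complexes simultaneously. The approach divides into two parts: a conversion between regular fixed points and strong contraction, and an existence statement for elements with simultaneously regular dynamics.

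For the conversion, by Proposition~\ref{cnt +-} and Lemma~4.7 of \cite{BF1}, any element $g\in G$ that acts loxodromically on an irreducible essential $\CAT$ cube complex $Z$ with fixed points in $\partial_{\rm reg}Z$ is automatically strongly contracting on $Z$. In particular, to prove the proposition it suffices to exhibit a single $g\in G$ whose attracting and repelling fixed points on $\overline{X}$ both lie in $\partial_{\rm reg}X$ and whose attracting and repelling fixed points on $\overline{Y}$ both lie in $\partial_{\rm reg}Y$. Note that the two fixed-point pairs (on $X$ and on $Y$) are, a priori, unrelated, since the two cube complexes have no canonical identification of their boundaries.

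For the existence statement, I would invoke the random-walk framework of \cite{Fernos-Lecureux-Matheus}, packaged as Lemma~2.9 of \cite{BF1}. Since $G$ is not virtually cyclic and acts properly, cocompactly, irreducibly, and essentially on both $X$ and $Y$, any admissible probability measure on $G$ with finite support generating $G$ gives rise to a random walk whose sample paths almost surely converge both to a regular point in $\partial X$ and to a regular point in $\partial Y$. Pairing the walk with its reversal and applying the usual north-south dynamics argument (cf.\ Proposition~\ref{cnt +-}) on the joint Roller compactification $\overline{X}\times\overline{Y}$, one extracts a deterministic element $g\in G$ that is loxodromic on both $X$ and $Y$ and whose four fixed points (two on each factor) are all regular. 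By the first part of the argument, this $g$ is strongly contracting on both $X$ and $Y$.

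The main obstacle is precisely the simultaneity: naively choosing $a\in G$ strongly contracting on $X$ and $b\in G$ strongly contracting on $Y$ and attempting a ping-pong construction fails, because $a$ need not exhibit any controlled behaviour on $Y$ (and similarly for $b$ on $X$), so products of the form $a^n b^m$ need not have regular fixed points on either factor. The random-walk viewpoint of \cite{Fernos-Lecureux-Matheus} bypasses this issue by producing trajectories that are simultaneously well-behaved on \emph{both} Roller boundaries, which is exactly what is required. The same argument goes through verbatim for any finite collection of irreducible essential cubulations of $G$, which explains the parenthetical remark at the end of the statement.
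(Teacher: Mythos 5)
Your proposal is correct and follows essentially the same route as the paper, which simply records that the proposition follows from Lemmas~2.9 and~4.7 of~\cite{BF1}, themselves packaged from the random-walk machinery of~\cite{Fernos-Lecureux-Matheus}. The one small citation slip is that Proposition~\ref{cnt +-} gives the \emph{converse} implication (strongly contracting $\Rightarrow$ fixed points in $\partial_{\rm cnt}Z\subseteq\partial_{\rm reg}Z$), so the conversion ``loxodromic with regular fixed points $\Rightarrow$ strongly contracting'' rests on Lemma~4.7 of~\cite{BF1} alone.
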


Given a finitely generated group $G$, its contracting boundary\footnote{This is also known as \emph{Morse boundary}. See the discussion in the introduction for a justification of our terminology.} $\partial_cG$ was introduced in \cite{Cordes,Cashen-Mackay}; we will not endow $\partial_cG$ with any topology.

Fixing a word metric on $G$, we say that $g\in G$ is \emph{Morse} if $n\mapsto g^n$ is a Morse quasi-geodesic in $G$. This notion is independent of the chosen word metric. Every Morse element $g\in G$ fixes exactly two points $g^{\pm\infty}\in\partial_cG$ (see e.g.\ Definition~9.1, Theorem~2.2 and Theorem~9.4 in \cite{Cashen-Mackay}). 

Given an action $G\acts X$ as in the statement of Proposition~\ref{sc exist}, the Milnor--Schwarz lemma shows that orbit maps $G\ra X$ are $G$--equivariant quasi-isometries. They are at finite distance from each other and all yield the same $G$--equivariant bijection $o_X\colon\partial_cG\ra\partial_cX$. This follows from either Proposition~4.2 in \cite{Cordes} or Corollary~6.2 in \cite{Cashen-Mackay}. 

Now, if $g\in G$ acts on $X$ as a neatly contracting automorphism, we can consider the points $g^{\pm}\in\partial_{\rm cnt}X\cu\partial_cX$ introduced in Proposition~\ref{cnt +-}. Note that, in this case, $g$ is also a Morse element in $G$ by Theorem~\ref{lean vs contracting vs Morse}.

\begin{lem}\label{north south}
Let $G\acts X$ be as in Proposition~\ref{sc exist}. For every neatly contracting element $g\in G$, we have $o_X(g^{+\infty})=g^+$ and $o_X(g^{-\infty})=g^-$.
\end{lem}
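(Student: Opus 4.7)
The plan is to combine $G$-equivariance of $o_X$ with a simple dynamical tracking argument to match the two pairs of fixed points correctly. First, because $o_X$ is $G$-equivariant, it sends $g$-fixed points to $g$-fixed points. Since $g$ is Morse in $G$, it has exactly the two fixed points $g^{\pm\infty}$ in $\partial_cG$; as $o_X$ is a bijection, $g$ therefore has exactly two fixed points in $\partial_cX$. Proposition~\ref{cnt +-} already exhibits two such points, namely $g^+, g^- \in \partial_{\rm cnt}X \cu \partial_cX$, so the set equality $\{o_X(g^{+\infty}), o_X(g^{-\infty})\} = \{g^+, g^-\}$ holds and the remaining task is only to decide which arrow goes to which.

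Next, I would fix a basepoint $p \in X$ and form the orbit map $\phi \colon G \ra X$, $h \mapsto hp$, which is a $G$-equivariant quasi-isometry by the Milnor-Schwarz lemma. In the construction of $o_X$ recalled from \cite{Cordes,Cashen-Mackay}, points of $\partial_cG$ correspond to equivalence classes of Morse quasi-geodesic rays, and $o_X$ is induced by composing such rays with $\phi$. Since $n \mapsto g^n$ is a Morse quasi-geodesic ray in $G$ with forward endpoint $g^{+\infty}$, its image $n \mapsto g^np$ is a Morse quasi-geodesic in $X$ whose forward endpoint in $\partial_cX$ is by definition $o_X(g^{+\infty})$.

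To identify that forward endpoint with $g^+$, I would invoke the $\langle g\rangle$-invariant combinatorial line $\g \cu X$ with endpoints $g^\pm$ produced in the proof of Proposition~\ref{cnt +-}. Since $g$ acts on $\g$ by translation, the orbit $(g^np)_{n \in \Z}$ stays at the constant distance $d(p, \g)$ from $\g$, so $g^np$ converges as $n \ra +\infty$ to the forward endpoint of $\g$, namely $g^+$. Combined with the previous step, this yields $o_X(g^{+\infty}) = g^+$, and $o_X(g^{-\infty}) = g^-$ then follows by elimination (or by the same argument with $n \ra -\infty$).

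The main obstacle is Step~2: ensuring that the construction of $o_X$ in the references really does identify the forward endpoint of $n\mapsto g^n$ with the forward endpoint of its image in $X$, and that ``forward endpoint'' is taken in a topology compatible with the tracking of $\g$. If one wishes to avoid bookkeeping about the various topologies on $\partial_cX$, an equivalent route is via Proposition~\ref{cnt +-}: the sequence $g^np$ converges to $g^+$ in the Roller compactification $\overline X$, and the distance-bounded tracking of $\g$ also gives convergence in $\partial_c^{\rm vis}X$, so all natural topologies agree at this non-terminating point (cf.\ Remark~\ref{topologies on cnt 2}).
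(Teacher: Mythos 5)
Your proposal is correct, and it follows a genuinely different route from the paper's proof after the common first step. Both you and the paper establish the set equality $\{o_X(g^{\pm\infty})\}=\{g^\pm\}$ by the same fixed-point count via equivariance of $o_X$. The paper then resolves the remaining binary ambiguity dynamically: it invokes north-south dynamics of $g$ on $\partial_c^{\mc{FQ}}X$ (Theorem~9.4 and Corollary~6.2 of Cashen--Mackay), north-south dynamics on $\partial_c^{\rm vis}X$ (from Proposition~\ref{cnt +-} and Theorem~\ref{properties of Phi}), and continuity of the identity map $\partial_c^{\mc{FQ}}X\ra\partial_c^{\rm vis}X$, concluding that the attracting/repelling fixed points agree in the two topologies. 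You instead unwind the construction of $o_X$: since $o_X$ comes from an orbit map, $o_X(g^{+\infty})$ is the class of the Morse quasi-geodesic $n\mapsto g^np$, which by $\langle g\rangle$-equivariance stays at constant distance from the axis $\g$ and is pushed towards $g^+$, so its endpoint is $g^+$. Your route is more concrete and uses less of the Cashen--Mackay apparatus (only the definition of $o_X$ via orbit maps, plus the axis built in the proof of Proposition~\ref{cnt +-}); the paper's route is more black-box and short once the relevant results of \cite{Cashen-Mackay} are granted. Two small remarks: your worry in the ``main obstacle'' paragraph is somewhat overcautious --- since $o_X$ and the labelling of endpoints are set-theoretic (equivalence classes of Morse quasi-geodesic rays up to bounded Hausdorff distance), once $n\mapsto g^np$ is shown to be at bounded Hausdorff distance from a sub-ray of $\g$ with endpoint $g^+$, the matching follows without any topological bookkeeping. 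Also, the citation of Remark~\ref{topologies on cnt 2} in your alternative route is not quite on target, as that remark compares subspace topologies on $\partial_{\rm cnt}X$ rather than speaking to convergence of a sequence from the interior of $X$; but this is harmless since you independently produce convergence in $\partial_c^{\rm vis}X$ by tracking $\g$.
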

\begin{proof}
The discussion above already shows that $o_X(\{g^{\pm\infty}\})=\{g^{\pm}\}$ as these are the only two $G$--fixed points in $\partial_cG$ and $\partial_cX$, respectively. Let $\partial_c^{\mc{FQ}}X$ denote the contracting boundary of $X$, endowed with the Cashen--Mackay topology (Definition~5.4 in \cite{Cashen-Mackay}). By Theorem~9.4 and Corollary~6.2 in \emph{op.\ cit.}, the element $g$ acts on $\partial_c^{\mc{FQ}}X$ with north-south dynamics. Note that $g$ also acts on $\partial_c^{\rm vis}X$ with north-south dynamics, by Proposition~\ref{cnt +-} and Theorem~\ref{properties of Phi}. Since the identity map $\partial_c^{\mc{FQ}}X\ra\partial_c^{\rm vis}X$ is continuous (Section~7 in \cite{Cashen-Mackay}), we conclude that the attracting/repelling fixed points of $g$ are the same for the two topologies.
\end{proof}

If $G$ is Gromov hyperbolic, every infinite-order element $g\in G$ is Morse and acts on the Gromov boundary $\partial_{\infty}G$ with north-south dynamics.

\section{Cubulations of hyperbolic groups.}\label{hyp sect}

This section is devoted to the proof of Theorem~\ref{ext Moeb intro}. Referring to the sketch given in the introduction, we only need to carry out Steps~(IIa) and (IIb), as Step~(I) is provided by Proposition~\ref{ample}. The crucial results are thus Proposition~\ref{maximal to maximal} in Section~\ref{traces vs cross ratios} and Theorem~\ref{all WP} in Section~\ref{concluding}. Before that, Sections~\ref{traces sect},~\ref{counting sect} and~\ref{trust sect} develop all necessary ingredients.

For the convenience of the reader, we recall here a couple of standard lemmas which do not require any action on a cube complex.

\begin{lem}\label{nowhere-dense}
Let $G$ be Gromov hyperbolic and let $H<G$ be an infinite-index quasi-convex subgroup. The limit set $\partial_{\infty}H\cu\partial_{\infty}G$ is nowhere-dense.
\end{lem}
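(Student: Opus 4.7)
The plan is to argue by contradiction: I would assume that $\partial_{\infty}H$ contains a non-empty open subset $U$ of $\partial_{\infty}G$ and aim to contradict $[G:H]=\infty$. The case where $G$ is elementary (virtually cyclic or finite) can be disposed of immediately, since any infinite-index subgroup must then be finite and its limit set is empty; so the interesting case is $G$ non-elementary. Quasi-convexity of $H$ ensures that $\partial_{\infty}H$ is closed in $\partial_{\infty}G$, so nowhere-density is equivalent to having empty interior, which is what I would rule out.

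The first main step would be to exploit the minimality of the boundary action $G\curvearrowright\partial_{\infty}G$ for non-elementary $G$: every $G$-orbit is dense, so the family $\{gU\}_{g\in G}$ forms an open cover of the compact space $\partial_{\infty}G$. Extracting a finite subcover yields elements $g_1,\ldots,g_n\in G$ with
\[
\partial_{\infty}G \;=\; g_1 U\cup\cdots\cup g_n U \;\subseteq\; \bigcup_{i=1}^{n} g_i\,\partial_{\infty}H \;=\; \bigcup_{i=1}^{n}\partial_{\infty}(g_i H g_i^{-1}).
\]
Since both quasi-convexity and infiniteness of the index are invariant under conjugation, each $g_i H g_i^{-1}$ remains an infinite-index quasi-convex subgroup of $G$, so the display would exhibit $\partial_{\infty}G$ as a finite union of limit sets of such subgroups.

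To close the argument, I would invoke the classical theorem (essentially due to Gromov, with detailed treatments e.g.\ by Arzhantseva and by Kapovich--Short) that the Gromov boundary of a non-elementary hyperbolic group is \emph{not} covered by finitely many limit sets of infinite-index quasi-convex subgroups; equivalently, $G$ itself is not a finite union of cosets of such subgroups. This directly contradicts the display above and finishes the proof.

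The main obstacle is this non-covering theorem, which is itself a non-trivial result. A more self-contained route would bypass it by using the convergence dynamics of $G\curvearrowright\partial_{\infty}G$ directly: density of loxodromic fixed-point pairs combined with north--south dynamics should let one upgrade the open set $U\subseteq\partial_{\infty}H$ into an infinite family of distinct cosets of $H$ whose translates of $U$ stabilise inside $\partial_{\infty}H$, forcing $[G:H]<\infty$. I expect this alternative to be technically at least as demanding as citing the non-covering theorem, so in practice I would opt for the first approach.
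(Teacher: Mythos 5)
Your reduction via minimality of $G\curvearrowright\partial_{\infty}G$ and compactness of the boundary is correct, but it defers all the content to the ``non-covering theorem,'' and that is where the gap lies. Given exactly the reduction you set up, the finite non-covering statement is logically \emph{equivalent} to the lemma: if limit sets of infinite-index quasi-convex subgroups are nowhere-dense, then a finite union of these closed sets still has empty interior and cannot be all of $\partial_{\infty}G$; conversely, your argument shows non-covering implies nowhere-density. So invoking it as a black box only works if the non-covering statement has a genuinely independent proof, and the references you give do not obviously supply one. Kapovich--Short (and Lemma~2.9 of \cite{GMRS}) prove that $\partial_{\infty}H\subsetneq\partial_{\infty}G$, i.e.\ \emph{properness} of the limit set, which is strictly weaker --- a closed proper subset of $\partial_{\infty}G$ can easily have nonempty interior. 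I believe you are conflating properness with nowhere-density; I am not aware of the non-covering theorem appearing in that form in Gromov, Arzhantseva, or Kapovich--Short. (A route that does work is the Patterson--Sullivan measure argument, where $\partial_{\infty}H$ has measure zero against a fully supported measure, but that is a different and in fact stronger result, and you would need a precise reference for it.)

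The paper's proof is direct and short, and much closer to the alternative you sketch in your last paragraph, so you should be encouraged that your instincts were right there. One takes $\xi\in\partial_{\infty}G\setminus\partial_{\infty}H$ (this is exactly the properness statement, Lemma~2.9 of \cite{GMRS}), disposes of elementary $H$ immediately, picks an infinite-order $h\in H$, and notes $h^n\xi\to h^{+\infty}\in\partial_{\infty}H$ by north--south dynamics, valid because $\xi\neq h^{-\infty}$ since $h^{-\infty}\in\partial_{\infty}H$. Minimality of $H\curvearrowright\partial_{\infty}H$ then gives $H\cdot h^{+\infty}$ dense in $\partial_{\infty}H$, so $\partial_{\infty}H\subseteq\overline{H\cdot\xi}$; but $H\cdot\xi$ is disjoint from the $H$-invariant set $\partial_{\infty}H$, so the closed set $\partial_{\infty}H$ has empty interior. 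Your own second sketch (push an open $U\subseteq\partial_{\infty}H$ by some $h\in H$ with $h^{+\infty}\in U$ and conclude $\partial_{\infty}H=\partial_{\infty}G$, then cite properness for the contradiction) is essentially this argument and is not appreciably harder than the route you actually chose; I'd carry that out rather than lean on a mis-attributed citation.
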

\begin{proof}
By Lemma~2.9 in \cite{GMRS}, there exists a point $\xi\in\partial_{\infty}G\setminus\partial_{\infty}H$. We can assume that $H$ is non-elementary, otherwise the lemma is trivial. Given any infinite-order element $h\in H$, the orbit $\langle h\rangle\cdot\xi$ accumulates on the point $h^{+\infty}\in\partial_{\infty}H$. Since the action $H\acts\partial_{\infty}H$ is minimal, the subset $H\cdot h^{+\infty}$ is dense in $\partial_{\infty}H$. Thus $\partial_{\infty}H\cu\overline{H\cdot\xi}$, while $H\cdot\xi\cu\partial_{\infty}G\setminus\partial_{\infty}H$. Along with the fact that $\partial_{\infty}H$ is closed in $\partial_{\infty}G$, this concludes the proof.
\end{proof}

\begin{lem}\label{limit set commensurable}
Let $G$ be Gromov hyperbolic and let $H$ and $K$ be quasi-convex subgroups. If $H$ and $K$ have the same limit set, they are commensurable.
\end{lem}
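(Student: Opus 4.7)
The plan is to reduce to a contradiction via the previous lemma. If either of $H$ or $K$ is finite, then both are (since they share the empty limit set), and any two finite subgroups of $G$ are trivially commensurable. So we may assume that $H$ and $K$ are both infinite, with nonempty common limit set $\Lambda := \partial_{\infty}H = \partial_{\infty}K \subseteq \partial_{\infty}G$.

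Set $L = H \cap K$. The first key ingredient is a classical theorem of Short asserting that the intersection of two quasi-convex subgroups of a word-hyperbolic group is again quasi-convex. Thus $L$ is quasi-convex in $G$, hence quasi-convex in $H$ (and in $K$) with respect to the restriction of the metric. The second ingredient is the theorem of Gitik--Mitra--Rips--Sageev, which states that for quasi-convex subgroups $H, K$ of a hyperbolic group one has
\[
\partial_{\infty}(H \cap K) \;=\; \partial_{\infty}H \cap \partial_{\infty}K.
\]
Combining these with our hypothesis gives $\partial_{\infty}L = \Lambda = \partial_{\infty}H$.

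Now suppose for contradiction that $L$ has infinite index in $H$. Since $L$ is quasi-convex in $H$, Lemma~\ref{nowhere-dense} applied to the inclusion $L < H$ shows that $\partial_{\infty}L$ is nowhere-dense in $\partial_{\infty}H$. But $\partial_{\infty}L = \partial_{\infty}H$ and $\partial_{\infty}H$ is nonempty (as $H$ is infinite), so it is certainly not nowhere-dense in itself -- a contradiction. Hence $[H : L] < \infty$, and the same argument with the roles of $H$ and $K$ swapped yields $[K : L] < \infty$, so $H$ and $K$ are commensurable.

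The only non-obvious inputs are the two external results cited above (quasi-convexity of the intersection, and the identification of boundaries); these are standard in the hyperbolic-groups literature, and after they are invoked, the argument reduces directly to the nowhere-density statement already proved as Lemma~\ref{nowhere-dense}. There is no serious obstacle; the only thing to keep in mind is to dispose of the trivial finite case separately so that $\partial_{\infty}H \neq \emptyset$ when Lemma~\ref{nowhere-dense} is applied.
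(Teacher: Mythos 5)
Your proof is correct, but it takes a genuinely different route from the paper's. The paper takes $L$ to be the \emph{stabiliser in $G$} of the common limit set $\Lambda$, notes $H,K\leq L$, shows $\partial_\infty L=\Lambda$ by a short dynamical argument (if $g_n\in L$ converge to $\xi\notin\Lambda$, then $g_n^{+\infty}\to\xi$, and north-south dynamics of $g_n$ would push $\Lambda$ off itself), and then invokes Lemma~2.9 of \cite{GMRS} to conclude $H$ and $K$ have finite index in $L$. You instead take $L=H\cap K$, import Short's theorem (quasi-convexity of intersections) together with the identification $\partial_\infty(H\cap K)=\partial_\infty H\cap\partial_\infty K$ for quasi-convex subgroups, to conclude $\partial_\infty L=\Lambda=\partial_\infty H$, and then reuse Lemma~\ref{nowhere-dense} internally to $H$ (and to $K$) to force $[H:L]<\infty$. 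Both routes go through a group with boundary equal to $\Lambda$, but yours relies on two extra external black boxes while the paper's relies only on the already-cited GMRS lemma plus elementary dynamics. Your reuse of Lemma~\ref{nowhere-dense} is clean, and the supporting observation that $L$ quasi-convex in $G$ and contained in the quasi-convex $H$ implies $L$ is quasi-convex (undistorted) in $H$ is correct and necessary for the application; handling the finite case separately so that $\partial_\infty H\neq\emptyset$ is also the right care to take. No gaps.
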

\begin{proof}
Let $L\leq G$ denote the stabiliser of the limit set $\Lambda=\partial_{\infty}H=\partial_{\infty}K$. By Lemma~3.8 in \cite{Kapovich-Short}, the limit set of $L$ coincides with $\Lambda$. Since $H\leq L$ and $H$ is quasi-convex in $G$, Proposition~3.4 in \emph{loc.\ cit.}\ shows that $L$ is quasi-convex in $G$. Finally, Lemma~2.9 in \cite{GMRS} implies that $H$ and $K$ have finite index in $L$, hence $H\cap K$ has finite index in both $H$ and $K$.
\end{proof}

\subsection{Traces at infinity.}\label{traces sect}

Throughout this subsection:

\begin{ass}
Let $G$ be a Gromov hyperbolic group (not necessarily non-elementary) with a proper cocompact action on a $\CAT$ cube complex $X$. We denote by $o_X\colon\partial_{\infty}G\ra\partial_{\infty}X$ the only $G$--equivariant homeomorphism and by $\Phi\colon\partial X\ra\partial_{\infty}X$ the map from Corollary~\ref{properties of Phi hyp}. As usual, we will only endow $X$ with its \emph{combinatorial} metric. 
\end{ass}

Note that every (combinatorial) ray $r\cu X$ determines a point of $\partial X$, but, by hyperbolicity, it also determines a point of $\partial_{\infty}X$.

The carrier $C(\mf{w})$ of each hyperplane $\mf{w}\in\mscr{W}(X)$ is a convex subcomplex of $X$; in particular, it is itself hyperbolic and we can consider its boundary $\partial_{\infty}\mf{w}\cu\partial_{\infty}X$. A point $\xi\in\partial_{\infty}X$ lies in $\partial_{\infty}\mf{w}$ if and only if one/all rays representing $\xi$ are at finite Hausdorff distance from $\mf{w}$. 

\begin{defn}
We refer to $\partial_{\infty}\mf{w}\cu\partial_{\infty}X$ as the \emph{trace at infinity} of $\mf{w}$.
\end{defn}

Denoting by $G_{\mf{w}}\leq G$ the stabiliser of $\mf{w}$, Lemma~\ref{hyp cocpt} guarantees that $G_{\mf{w}}$ acts properly and cocompactly on $C(\mf{w})$. In particular, the homeomorphism $o_X^{-1}\colon\partial_{\infty}X\ra\partial_{\infty}G$ takes $\partial_{\infty}\mf{w}$ to the limit set $\partial_{\infty}G_{\mf{w}}\cu\partial_{\infty}G$.

We now make a few simple observations on traces at infinity.

\begin{lem}\label{traces of halfspaces}
Consider a hyperplane $\mf{w}\in\mscr{W}(X)$ and its two sides $\mf{h}$ and $\mf{h}^*$.
\begin{enumerate}
\item The subsets $\partial_{\infty}\mf{h}$ and $\partial_{\infty}\mf{h}^*$ are closed in $\partial_{\infty}X$. Moreover, we have $\partial_{\infty}X=\partial_{\infty}\mf{h}\cup\partial_{\infty}\mf{h}^*$ and $\partial_{\infty}\mf{h}\cap\partial_{\infty}\mf{h}^*=\partial_{\infty}\mf{w}$. 
\item If $X$ is essential, the sets $\partial_{\infty}\mf{h}\setminus\partial_{\infty}\mf{w}$ and $\partial_{\infty}\mf{h}^*\setminus\partial_{\infty}\mf{w}$ are nonempty.
\end{enumerate}
\end{lem}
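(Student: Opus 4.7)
For part~(1), closedness of $\partial_\infty\mf{h}$ and $\partial_\infty\mf{h}^*$ is the standard fact that the visual boundary of a closed convex subset of a proper $\CAT$ space is closed. For the union equality, Proposition~\ref{CAT near l1} lets us represent any $\xi\in\partial_\infty X$ by a combinatorial ray, and such a ray crosses $\mf{w}$ at most once, hence is eventually entirely on one side. For $\partial_\infty\mf{w}\cu\partial_\infty\mf{h}\cap\partial_\infty\mf{h}^*$, any sequence of vertices $p_n\in C(\mf{w})$ with $p_n\ra\xi$ can be pigeonholed to lie on one side of $\mf{w}$, and pairing each $p_n$ with its $1$-neighbour on the opposite side yields a parallel sequence that also converges to $\xi$. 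Conversely, given $p_n\in\mf{h}$ and $q_n\in\mf{h}^*$ both converging to $\xi$, the combinatorial geodesic $[p_n,q_n]$ contains a vertex $m_n\in C(\mf{w})$; since $(p_n\cdot q_n)_o\ra+\infty$ for any fixed basepoint $o$, $\delta$-thinness of triangles forces $[p_n,q_n]$, and hence $m_n$, to leave every bounded neighbourhood of $o$ and to converge to $\xi$.

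For part~(2) I separate cases. If $G$ is virtually cyclic, then by Remark~\ref{hyp vs irr} the complex $X$ is irreducible, and combined with cocompactness and hyperbolicity it is a quasi-line with $\partial_\infty X$ consisting of two points; every hyperplane-stabiliser has infinite index (otherwise $\mf{w}$ would be quasi-dense, contradicting essentiality), hence is finite, so $\mf{w}$ is bounded and $\partial_\infty\mf{w}=\emptyset$, while $\partial_\infty\mf{h}$ contains one of the two boundary points. Henceforth assume $G$ is non-elementary. Then $G\acts\partial_\infty G\simeq\partial_\infty X$ is minimal with no global fixed point, and together with essentiality, cocompactness, and irreducibility (Remark~\ref{hyp vs irr}), the single-skewering lemma of Caprace--Sageev~\cite{CS} yields some $g\in G$ with $g\mf{h}\subsetneq\mf{h}$. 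Since $g^n\mf{h}$ is strictly descending, $g$ has no bounded orbit, so it is a hyperbolic isometry of the proper $\CAT$ space $X$ admitting a $\CAT$ translation axis $\gamma\colon\R\ra X$ with endpoints $g^{\pm\infty}$.

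The problem now reduces to showing $\xi:=g^{+\infty}$ lies in $\partial_\infty\mf{h}\setminus\partial_\infty\mf{w}$; the corresponding statement for $\mf{h}^*$ follows by applying the same argument to $g^{-1}$. Membership $\xi\in\partial_\infty\mf{h}$ is clear from $g^no\in g^n\mf{h}\cu\mf{h}$ and $g^no\ra\xi$ with $o\in\mf{h}$. The axis $\gamma$ must cross $\mf{w}$, since otherwise $\gamma\cu\bigcap_{n\geq 0}g^n\mf{h}$ would force every point of $\gamma$ to be separated from $C(\mf{w})$ by the disjoint nested hyperplanes $g\mf{w},g^2\mf{w},\dots,g^n\mf{w}$ for every $n$, contradicting finiteness of distances. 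Parametrising so that $\gamma(0)\in C(\mf{w})$, the same separation argument gives $d(\gamma(t),\mf{w})\ra+\infty$ as $t\ra+\infty$. Now if $\xi\in\partial_\infty\mf{w}=\partial_\infty C(\mf{w})$, closedness and convexity of $C(\mf{w})$ in the proper $\CAT$ space $X$ imply that the $\CAT$ ray from $\gamma(0)$ to $\xi$, obtained as a limit of segments $[\gamma(0),q_n]\cu C(\mf{w})$ for $q_n\in C(\mf{w})$ with $q_n\ra\xi$, lies entirely in $C(\mf{w})$; by uniqueness of $\CAT$ rays in proper $\CAT$ spaces this ray must coincide with $\gamma|_{[0,\infty)}$, contradicting $d(\gamma(t),\mf{w})\ra\infty$. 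The main obstacle is precisely this last step: $d(\gamma(t),\mf{w})\ra\infty$ alone does not exclude $\xi\in\partial_\infty\mf{w}$ (witness horocyclic approaches in $\H^2$), and the argument genuinely needs $\CAT$-ray uniqueness together with the skewering structure supplied by Caprace--Sageev.
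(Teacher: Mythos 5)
Your part~(1) is correct and essentially matches the paper's argument, though you phrase the inclusion $\partial_\infty\mf{h}\cap\partial_\infty\mf{h}^*\cu\partial_\infty\mf{w}$ with sequences and Gromov products where the paper uses a cleaner pair of parallel rays $r\cu\mf{h}$, $r^*\cu\mf{h}^*$ and the inequality $d(r(t),C(\mf{w}))<d(r(t),\mf{h}^*)\leq d(r(t),r^*(t))$.

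Your part~(2) is correct but takes a genuinely different route. The paper's proof is a direct compactness argument: essentiality yields $x_n\in\mf{h}$ with $d(x_n,\mf{h}^*)\ra\infty$; cocompactness of $G_{\mf{w}}\acts C(\mf{w})$ (Lemma~\ref{hyp cocpt}) lets one arrange the gate-projections $\overline x_n\in C(\mf{w})$ to lie in a compact set; Arzel\`a--Ascoli then extracts a limit ray contained in $\mf{h}$ but leaving every neighbourhood of $\mf{h}^*$, whose endpoint lies in $\partial_\infty\mf{h}\setminus\partial_\infty\mf{w}$. That argument needs no skewering, no irreducibility, and no case split on whether $G$ is elementary. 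Your approach is instead dynamical: single-skewering (Caprace--Sageev) produces $g$ with $g\mf{h}\subsetneq\mf{h}$, and you show $g^{+\infty}\in\partial_\infty\mf{h}\setminus\partial_\infty\mf{w}$ via $\CAT$ geodesic uniqueness applied to the axis. This works, and the step you flag as the crux (uniqueness of the $\CAT$ ray to $\xi$ forcing it into $C(\mf{w})$) is indeed the right way to finish that argument; but it buys you extra hypotheses the paper avoids — skewering forces your virtually-cyclic case split, whereas the Arzel\`a--Ascoli argument is uniform. One small expository gap: when arguing the axis must cross $\mf{w}$, you only treat $\gamma\cu\mf{h}$; the case $\gamma\cu\mf{h}^*$ is handled symmetrically via $g^{-1}\mf{h}^*\subsetneq\mf{h}^*$, and should be mentioned.
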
 
\begin{proof}
Since $\mf{h}$ and $\mf{h}^*$ are closed convex subsets of $X$, their boundaries are well-defined closed subsets of $\partial_{\infty}X$. Every ray in $X$ intersects either $\mf{h}$ or $\mf{h}^*$ in a sub-ray, so every point of $\partial_{\infty}X$ lies in either $\partial_{\infty}\mf{h}$ or $\partial_{\infty}\mf{h}^*$. 

It is clear that $\partial_{\infty}\mf{w}\cu\partial_{\infty}\mf{h}\cap\partial_{\infty}\mf{h}^*$. Conversely, if $\xi\in\partial_{\infty}\mf{h}\cap\partial_{\infty}\mf{h}^*$, we can consider rays $r$ and $r^*$ representing $\xi$ and entirely contained, respectively, in $\mf{h}$ and $\mf{h}^*$. The function $t\mapsto d(r(t),r^*(t))$ must be uniformly bounded and, observing that $d(r(t),C(\mf{w}))<d(r(t),\mf{h}^*)\leq d(r(t),r^*(t))$, we see that $r$ is contained in a metric neighbourhood of $C(\mf{w})$. Hence $\xi\in\partial_{\infty}\mf{w}$.

Finally, regarding part~(2), consider a sequence of points $x_n\in\mf{h}$ whose distances from $\mf{h}^*$ diverge. Lemma~\ref{hyp cocpt} ensures that $x_n$ can be chosen so that their gate-projections $\overline x_n$ to $C(\mf{w})$ lie in a compact set. The Arzel\`a--Ascoli theorem guarantees that, up to passing to a subsequence, the geodesics joining $\overline x_n$ and $x_n$ converge to a ray that is contained in $\mf{h}$ but in no metric neighbourhood of $\mf{h}^*$. 
The corresponding point of $\partial_{\infty}X$ lies in $\partial_{\infty}\mf{h}\setminus\partial_{\infty}\mf{w}$. Similarly, $\partial_{\infty}\mf{h}^*\setminus\partial_{\infty}\mf{w}$ is also nonempty.
\end{proof}

\begin{rmk}
In the setting of Lemma~\ref{traces of halfspaces}, it is interesting to observe that $\partial_{\infty}\mf{h}\setminus\partial_{\infty}\mf{w}$ is a union of connected components of $\partial_{\infty}X\setminus\partial_{\infty}\mf{w}$. This is because the partition $\partial_{\infty}X\setminus\partial_{\infty}\mf{w}=(\partial_{\infty}\mf{h}\setminus\partial_{\infty}\mf{w})\sqcup(\partial_{\infty}\mf{h}^*\setminus\partial_{\infty}\mf{w})$ consists of two disjoint closed subsets.
\end{rmk}

\begin{rmk}\label{trivial boundary}
By Lemma~\ref{hyp cocpt}, we have $\partial_{\infty}\mf{w}=\emptyset$ if and only if $\mf{w}$ is compact. If $X$ is hyperplane-essential, this means that $\mf{w}$ consists of a single point, i.e.\ $\mf{w}$ is dual to an edge that disconnects $X$.
\end{rmk}

We remind the reader that, due to Corollary~\ref{properties of Phi hyp} and Remark~\ref{topologies on cnt 2}, we feel entitled to implicitly identify the sets $\partial_{\rm nt}X\cu\partial X$ and $\Phi(\partial_{\rm nt}X)\cu\partial_{\infty}X$.

\begin{lem}\label{nt vs hyp}
A point $\xi\in\partial_{\infty}X$ lies in $\partial_{\rm nt}X$ if and only if there does not exist any $\mf{w}\in\mscr{W}(X)$ with $\xi\in\partial_{\infty}\mf{w}$. 
\end{lem}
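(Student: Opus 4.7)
I would prove both implications via contrapositive, exploiting the identification $\partial_{\mathrm{nt}}X \hookrightarrow \partial_\infty X$ induced by $\Phi$. Recall from Corollary~\ref{properties of Phi hyp} (combined with $\partial_{\rm cu}X=\partial X$ in the hyperbolic case) that the fibres of $\Phi\colon\partial X\ra\partial_\infty X$ are precisely the $\sim$-components. Hence a point $\xi\in\partial_\infty X$ lies in $\partial_{\mathrm{nt}}X$ (i.e.\ in $\Phi(\partial_{\mathrm{nt}}X)$) if and only if $\Phi^{-1}(\xi)$ is a singleton. Thus the claim reduces to showing that $|\Phi^{-1}(\xi)|\geq 2$ exactly when $\xi$ belongs to some trace $\partial_\infty\mf{w}$.

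\emph{If $\xi$ lies in a trace, then the fibre is not a singleton.} Suppose $\xi\in\partial_\infty\mf{w}$ for some $\mf{w}\in\mscr{W}(X)$ and let $\mf{h},\mf{h}^*$ be its two sides. By part~(1) of Lemma~\ref{traces of halfspaces} we have $\xi\in\partial_\infty\mf{h}\cap\partial_\infty\mf{h}^*$. Since $\mf{h}$ and $\mf{h}^*$ are convex subcomplexes, hence themselves $\CAT$ cube complexes, applying Proposition~\ref{CAT near l1} within each of them yields combinatorial rays $r\cu\mf{h}$ and $r^*\cu\mf{h}^*$ representing $\xi$ in $\partial_\infty X$. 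Their Roller limits $x=r(+\infty)$ and $y=r^*(+\infty)$ satisfy $\mf{h}\in\s_x$ and $\mf{h}^*\in\s_y$, so $x\neq y$; both project to $\xi$ under $\Phi$, so $|\Phi^{-1}(\xi)|\geq 2$.

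\emph{If the fibre is not a singleton, then $\xi$ lies in a trace.} Suppose $x,y\in\Phi^{-1}(\xi)$ are distinct. Then $x\sim y$, so $\mscr{W}(x|y)$ is nonempty and finite; pick any $\mf{w}\in\mscr{W}(x|y)$, with sides $\mf{h}\ni x$ and $\mf{h}^*\ni y$. By Lemma~\ref{rays to nearby points} there exist combinatorial rays $r,r'$ with the same origin, $r(+\infty)=x$, $r'(+\infty)=y$ and $d_{\rm Haus}(r,r')\leq d(x,y)=:C$. For all sufficiently large $n$ the tail $r'(n)$ lies in $\mf{h}^*$ (since $\mf{h}^*\in\s_y$), while $r(n)$ lies in $\mf{h}$; as $d(r(n),r'(\mathbb{N}))\leq C$ and points far from $r(0)$ on $r'$ are needed to approximate $r(n)$ for large $n$, we conclude $d(r(n),\mf{h}^*)\leq C$, and therefore $d(r(n),C(\mf{w}))\leq C$. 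Thus $r$ lies in a bounded neighbourhood of $C(\mf{w})$, giving $\xi=r(+\infty)\in\partial_\infty\mf{w}$.

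\emph{Where the work is.} No single step is genuinely hard here; the content is essentially bookkeeping on top of the machinery already developed. The only subtlety is the first implication, where one must produce \emph{combinatorial} rays on each side of $\mf{w}$ from the mere $\CAT$ information $\xi\in\partial_\infty\mf{h}\cap\partial_\infty\mf{h}^*$; invoking Proposition~\ref{CAT near l1} inside each halfspace handles this cleanly. In the reverse direction, the only thing to verify carefully is that the Hausdorff-closeness of $r$ and $r'$ translates into $r$ staying within a bounded neighbourhood of the \emph{carrier} of $\mf{w}$, which follows immediately since a point in $\mf{h}$ within distance $C$ of a point in $\mf{h}^*$ is automatically within distance $C$ of $C(\mf{w})$.
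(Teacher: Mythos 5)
Your proof is correct and follows essentially the same route as the paper's: both reduce the claim to the equivalence "$\Phi^{-1}(\xi)$ is a singleton $\iff$ $\xi$ lies in no trace at infinity," using part~(1) of Lemma~\ref{traces of halfspaces} and the fact that distinct points of $\Phi^{-1}(\xi)$ correspond to combinatorial rays entering opposite sides of a hyperplane. You simply spell out in more detail (via Proposition~\ref{CAT near l1} and Lemma~\ref{rays to nearby points}) the passage between $\CAT$ and combinatorial rays that the paper treats as immediate.
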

\begin{proof}
By part~(1) of Lemma~\ref{traces of halfspaces}, a point $\xi\in\partial_{\infty}X$ lies in some trace at infinity $\partial_{\infty}\mf{w}$ if and only if there exists a halfspace $\mf{h}\in\mscr{H}(X)$ such that $\xi\in\partial_{\infty}\mf{h}\cap\partial_{\infty}\mf{h}^*$. Equivalently, there exist two rays representing $\xi$, one contained in $\mf{h}$ and the other contained in $\mf{h}^*$. Thus, $\xi$ lies in a trace at infinity if and only if there exist distinct points $x,y\in\partial X$ with $\Phi(x)=\Phi(y)=\xi$. On the other hand, a point $\xi\in\partial_{\infty}X$ is non-terminating if and only if $\Phi^{-1}(\xi)$ is a singleton (cf.\ Theorem~\ref{properties of Phi} and Definition~\ref{nt defn}).
\end{proof}

In the rest of the subsection, we collect various facts that will be needed in the proof of Step~(IIa). They key ingredient is Proposition~\ref{generics exist} below.

Given hyperplanes $\mf{u},\mf{w}\in\mscr{W}(X)$, we denote by $B(\mf{u},\mf{w})\cu X$ the bridge associated to their carriers. When $\mf{u}$ and $\mf{w}$ are transverse, $B(\mf{u},\mf{w})$ is simply the intersection of the two carriers. 

\begin{prop}\label{intersections of traces}
Given $\mf{u},\mf{w}\in\mscr{W}(X)$, the group $G_{\mf{u}}\cap G_{\mf{w}}$ acts properly and cocompactly on $B(\mf{u},\mf{w})$. Furthermore, $\partial_{\infty}B(\mf{u},\mf{w})=\partial_{\infty}\mf{u}\cap\partial_{\infty}\mf{w}$.
\end{prop}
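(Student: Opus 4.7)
My plan is to treat properness and the inclusion $\partial_{\infty}B\subset\partial_{\infty}\mf{u}\cap\partial_{\infty}\mf{w}$ as routine and to handle cocompactness case by case, extracting the reverse inclusion $\partial_{\infty}\mf{u}\cap\partial_{\infty}\mf{w}\subset\partial_{\infty}B$ from a limit-set identity for quasi-convex intersections.

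Properness is immediate since $B:=B(\mf{u},\mf{w})$ is a subcomplex of $X$; the group $G_{\mf{u}}\cap G_{\mf{w}}$ preserves both carriers, hence their distance-minimising pairs and therefore $B$. The inclusion $\partial_{\infty}B\subset\partial_{\infty}\mf{u}\cap\partial_{\infty}\mf{w}$ follows from the product decomposition $B\simeq I(x,y)\x S$ with $I$ bounded: any combinatorial ray in $B$ stays within uniform distance of both $C(\mf{u})$ and $C(\mf{w})$. Since the carriers are convex subcomplexes, they are quasi-convex in $X$, so $G_{\mf{u}}$ and $G_{\mf{w}}$ are quasi-convex subgroups of the hyperbolic group $G$. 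By Short's theorem, $H:=G_{\mf{u}}\cap G_{\mf{w}}$ is itself quasi-convex and $\partial_{\infty}H=\partial_{\infty}G_{\mf{u}}\cap\partial_{\infty}G_{\mf{w}}$, which transfers via $o_X$ to $\Lambda:=\partial_{\infty}\mf{u}\cap\partial_{\infty}\mf{w}$.

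For the transverse case (including $\mf{u}=\mf{w}$) I would iterate Lemma~\ref{hyp cocpt}. Viewing $\mf{u}$ as its own $\CAT$ cube complex with the cocompact $G_{\mf{u}}$-action, the subset $\mf{u}\cap\mf{w}$ is a hyperplane of $\mf{u}$ whose $G_{\mf{u}}$-stabiliser is precisely $H$: distinct hyperplanes of $X$ transverse to $\mf{u}$ produce distinct cross-sections with $\mf{u}$, so an element of $G_{\mf{u}}$ preserving $\mf{u}\cap\mf{w}$ must preserve $\mf{w}$. A second application of Lemma~\ref{hyp cocpt} yields that $H$ acts cocompactly on $\mf{u}\cap\mf{w}$, and hence on $B=C(\mf{u})\cap C(\mf{w})$, which is a uniform neighbourhood of $\mf{u}\cap\mf{w}$.

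In the disjoint case, fix $p\in B$ and let $Q:=\hull(Hp)\subset X$ be the cubical convex hull. Quasi-convexity of $H$ gives that $H\acts Q$ cocompactly, and $Q\subset B$ by convexity and $H$-invariance of $B$. The main obstacle is the reverse Hausdorff bound $B\subset N_R(Q)$, which simultaneously yields cocompactness of $H\acts B$ and the inclusion $\Lambda\subset\partial_{\infty}B$. If $\Lambda=\emptyset$ then $H$ is finite, forcing $\partial_{\infty}B\subset\Lambda$ empty and so $B$ compact, making the claims trivial. Otherwise $|\Lambda|\geq 2$ so the cube complex $S$ in the splitting $B=I\x S$ is unbounded and hyperbolic with boundary $\partial_{\infty}S\subset\Lambda$; by routine hyperbolic-geometry arguments each point of $B$ lies uniformly close to a bi-infinite combinatorial geodesic of $B$ whose endpoints lie in $\partial_{\infty}B\subset\Lambda$, and such geodesics are uniformly close to $Q$ by quasi-convexity of $Q$. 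This final Hausdorff comparison between $Q$ and $B$ is the delicate technical step, but once established it gives cocompactness of $H\acts B$ and $\partial_{\infty}B=\partial_{\infty}Q=\Lambda$.
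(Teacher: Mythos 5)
Your proof of properness, of the easy inclusion $\partial_\infty B\subseteq\partial_\infty\mf{u}\cap\partial_\infty\mf{w}$, and of the limit-set identity $\partial_\infty H=\partial_\infty\mf{u}\cap\partial_\infty\mf{w}$ via quasi-convexity of $H:=G_{\mf{u}}\cap G_{\mf{w}}$ are all fine, as is the transverse case (iterating Lemma~\ref{hyp cocpt}). The disjoint case, however, has a genuine gap at exactly the step you flag as ``delicate.'' The claim that ``each point of $B$ lies uniformly close to a bi-infinite combinatorial geodesic of $B$ whose endpoints lie in $\partial_\infty B$'' is not a routine hyperbolic-geometry fact, and it is false for general hyperbolic convex subcomplexes. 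For a model of the failure, let $S$ be a tree consisting of a bi-infinite line with finite ``spikes'' of strictly increasing length attached along it: then $\partial_\infty S$ has exactly two points, every bi-infinite geodesic of $S$ is the spine, and the tips of the spikes are arbitrarily far from it. In that situation $Q$ would be (coarsely) the spine and $B$ the whole tree; they have equal Gromov boundary but unbounded Hausdorff distance, so the conclusion $B\subseteq N_R(Q)$ fails. Nothing you have said rules out this kind of behaviour for the shore $S$ --- ruling it out is equivalent to the cocompactness of $H\acts S$, which is precisely what you are trying to prove. So the argument is circular at that point.

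The paper's proof avoids this by inputting cocompactness from outside the hyperbolic-geometry framework: it quotes Proposition~2.7 of Hagen--Susse to get that the stabiliser $G_S$ of the shore $S$ acts cocompactly on $S$, then shows $H$ has finite index in $G_S$ (since $G_S$ permutes the finitely many hyperplanes containing $S$ in a fixed neighbourhood), giving cocompactness of $H\acts S$ and hence $H\acts B$. With cocompactness in hand, the equality $\partial_\infty B=\partial_\infty\mf{u}\cap\partial_\infty\mf{w}$ is obtained purely combinatorially from the bridge distance formula~\eqref{distance formula}, with no appeal to quasi-convexity of subgroups at all. If you prefer to keep your quasi-convex route, you could instead first establish $H\acts B$ cocompactly via Hagen--Susse (or an equivalent statement) and only then read off $\partial_\infty B=\partial_\infty(Hp)=\Lambda$; but the Hausdorff comparison between $Q$ and $B$ cannot be obtained for free from $\partial_\infty Q=\partial_\infty B$.
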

\begin{proof}
Let $S$ denote the intersection of $B=B(\mf{u},\mf{w})$ with the carrier of $\mf{u}$ and consider its stabiliser $G_S\leq G$. The action $G_S\acts S$ is cocompact by Proposition~2.7 in \cite{Hagen-Susse}. There exists a constant $D>0$ such that $S$ is contained in the $D$--neighbourhood of both $\mf{u}$ and $\mf{w}$. The finitely many hyperplanes that contain $S$ in their $D$--neighbourhood are permuted by $G_S$ and it follows that $G_{\mf{u}}\cap G_{\mf{w}}$ sits in $G_S$ as a finite-index subgroup. Hence $G_{\mf{u}}\cap G_{\mf{w}}$ acts cocompactly on $S$ and it also acts cocompactly on the bridge $B$, which is at finite Hausdorff distance from $S$. Finally, as $B\cu X$ is a subcomplex and $G\acts X$ is proper, the action $G_{\mf{u}}\cap G_{\mf{w}}\acts B$ is also proper.

It is clear that $\partial_{\infty}B=\partial_{\infty}S\cu\partial_{\infty}\mf{u}$. The same argument applied to the other shore of $B$ shows that $\partial_{\infty}B\cu\partial_{\infty}\mf{u}\cap\partial_{\infty}\mf{w}$. Now, consider a point $\xi\in\partial_{\infty}\mf{u}\setminus\partial_{\infty}S$ and a geodesic $\g\cu C(\mf{u})$ representing $\xi$; note that the function $t\mapsto d(\g(t),S)$ must diverge with $t$. As soon as $\g(t)\not\in S$, equation~\eqref{distance formula} in Section~\ref{CCC prelims} implies that $d(\g(t),C(\mf{w}))=d(\g(t),S)+d(S,C(\mf{w}))$. In particular, the function $t\mapsto d(\g(t),C(\mf{w}))$ must also diverge and $\xi\not\in\partial_{\infty}\mf{w}$. This shows that we have $\partial_{\infty}\mf{u}\cap\partial_{\infty}\mf{w}\cu\partial_{\infty}S=\partial_{\infty}B$, concluding the proof.
\end{proof}

We now introduce a preorder on $\mscr{W}(X)$. We write $\mf{u}_1\preceq\mf{u}_2$ if $\partial_{\infty}\mf{u}_1\cu\partial_{\infty}\mf{u}_2$ and $\mf{u}_1\sim\mf{u}_2$ if $\partial_{\infty}\mf{u}_1=\partial_{\infty}\mf{u}_2$. The latter is an equivalence relation.

Recall that $\mscr{W}(\mf{w})$ denotes the set of hyperplanes transverse to $\mf{w}\in\mscr{W}(X)$.

\begin{prop}\label{on the preorder}
Let $X$ be hyperplane-essential.
\begin{enumerate}
\item We have $\mf{u}_1\preceq\mf{u}_2$ if and only if $\mscr{W}(\mf{u}_1)\cu\mscr{W}(\mf{u}_2)$. Equivalently, $\mf{u}_1$ and $\mf{u}_2$ are not transverse and $\mf{u}_1$ stays at bounded distance from $\mf{u}_2$.
\item If there exists $\mf{w}$ with $\mf{u}_1\preceq\mf{w}$ and $\mf{u}_2\preceq\mf{w}$, the hyperplanes $\mf{u}_1$ and $\mf{u}_2$ are not transverse.
\item There exists $N=N(X)\geq 0$ such that, given any hyperplane $\mf{u}$ with ${\partial_{\infty}\mf{u}\neq\emptyset}$, there exist at most $N$ hyperplanes $\mf{w}$ with $\mf{u}\preceq\mf{w}$.
\end{enumerate}
\end{prop}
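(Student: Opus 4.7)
I would prove the three parts in sequence, using~(1) as the main tool. For Part~(1), I would first establish (B)$\Leftrightarrow$(C), where (B) is the hyperplane-set containment and (C) is non-transversality together with $\mf{u}_1 \subseteq N_D(\mf{u}_2)$, by means of a closest-pair argument. Choose $p \in \mf{u}_1$, $q \in \mf{u}_2$ realising $d(\mf{u}_1, \mf{u}_2)$ (via the bridge/shore structure from Section~\ref{CCC prelims}), so that $\mscr{W}(p|q) = \mscr{W}(\mf{u}_1|\mf{u}_2)$. Assuming $\mscr{W}(\mf{u}_1) \subseteq \mscr{W}(\mf{u}_2)$: any $\mf{v} \in \mscr{W}(x|\mf{u}_2)$ with $x \in \mf{u}_1$ cannot cross $\mf{u}_1$ (else it would cross $\mf{u}_2$ by hypothesis, contradicting separation), so $\mf{v}$ separates $\mf{u}_1$ from $\mf{u}_2$ and $d(x, \mf{u}_2) \leq \#\mscr{W}(\mf{u}_1|\mf{u}_2) = d(\mf{u}_1, \mf{u}_2) < \infty$. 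If $\mf{u}_1, \mf{u}_2$ were additionally transverse, then $\mf{u}_1 \cap \mf{u}_2 \neq \emptyset$ would force $d(\mf{u}_1, \mf{u}_2) = 0$, giving $\mf{u}_1 \subseteq \mf{u}_2$ and hence $\mf{u}_1 = \mf{u}_2$ (both hyperplanes), contradicting transversality. Conversely, (C)$\Rightarrow$(B) uses hyperplane-essentiality of $X$, which propagates to each $\mf{u}_1$: if $\mf{v} \in \mscr{W}(\mf{u}_1)$, then $\mf{u}_1$ extends to infinity on both sides of $\mf{v}$, and $\mf{u}_1 \subseteq N_D(\mf{u}_2)$ together with connectedness of $\mf{u}_2$ then forces $\mf{v}$ to cross $\mf{u}_2$. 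The remaining equivalence (A)$\Leftrightarrow$(C) is standard quasi-convex geometry in the $\delta$-hyperbolic space $X$: uniform quasi-convexity of hyperplanes means that a containment of limit sets is equivalent to one hyperplane lying in a bounded neighbourhood of the other.

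Part~(2) is an immediate corollary of Part~(1): if $\mf{u}_1, \mf{u}_2$ were transverse then $\mf{u}_2 \in \mscr{W}(\mf{u}_1) \subseteq \mscr{W}(\mf{w})$ by~(1), making $\mf{u}_2$ transverse to $\mf{w}$; but~(1) applied to $\mf{u}_2 \preceq \mf{w}$ asserts non-transversality --- a contradiction.

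For Part~(3), I would upgrade the bound from~(1) to a \emph{uniform} one. Since $G$ acts cocompactly on $X$ with finitely many hyperplane orbits, all hyperplanes are uniformly $K$-quasi-convex in the $\delta$-hyperbolic $X$. Whenever $\mf{u} \preceq \mf{w}$ and $p \in \mf{u}$, pick two distinct points $\xi, \eta \in \partial_{\infty}\mf{u}$ (which exist since $|\partial_{\infty}\mf{u}| \geq 2$ whenever nonempty, as $G_{\mf{u}}$ is then an infinite subgroup of the hyperbolic group $G$); any biinfinite geodesic with these endpoints passes within $K$ of $p$ (quasi-convexity of $\mf{u}$) and within $K + \delta$ of $\mf{w}$ (since $\xi, \eta \in \partial_{\infty}\mf{w}$ and quasi-convexity of $\mf{w}$), giving $d(p, \mf{w}) \leq 2K + \delta =: D$. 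Fixing any vertex $p_0 \in \mf{u}$, every $\mf{w} \succeq \mf{u}$ meets $\overline{B(p_0, D)}$, and uniform local finiteness of $X$ (from cocompactness) bounds the number of such hyperplanes by a constant $N = N(X)$ independent of $\mf{u}$ and $p_0$.

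The most delicate step is (C)$\Rightarrow$(B) in Part~(1), namely the propagation of hyperplane-essentiality from $X$ to each hyperplane $\mf{u}_1$; this requires Lemma~\ref{hyp cocpt} together with bridge/shore arguments to show that $\mf{v} \cap \mf{u}_1$ is itself an essential hyperplane of the cube complex $\mf{u}_1$. An alternative route which avoids this issue is to prove (A)$\Rightarrow$(B) directly: one gate-projects a ray $r \subseteq \mf{u}_1$ to $\mf{u}_2$, notes that every hyperplane it crosses lies in $\mscr{W}(\mf{u}_2)$ by hypothesis, so the projected path in $\mf{u}_2$ is a geodesic of the same length staying within $d(\mf{u}_1, \mf{u}_2)$ of $r$, whence both rays represent the same point of $\partial_{\infty}X$.
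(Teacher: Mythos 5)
Your Part~(1) takes a genuinely different route from the paper. You factor the equivalence through condition (C) (not transverse, and $\mf{u}_1$ in a bounded neighbourhood of $\mf{u}_2$), proving (B)$\Leftrightarrow$(C) by a closest-pair/bridge argument plus a connectedness argument that uses essentiality of $\mf{u}_1$ (a consequence of hyperplane-essentiality of $X$). The paper goes directly from (A) to (B) by projecting the shore $S_1 = B(\mf{u}_1,\mf{u}_2)\cap C(\mf{u}_1)$ into the cube complex $\mf{u}_1$, then combining Proposition~\ref{intersections of traces} and Lemma~\ref{traces of halfspaces}(2) to force $S_1 = \mf{u}_1$. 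Both work; yours imports the Morse lemma and quasi-convexity, whereas the paper stays inside the cube-complex formalism it has already set up. One imprecision on your side: you describe (A)$\Leftrightarrow$(C) as ``standard quasi-convex geometry,'' but only the bounded-neighbourhood part of (C) is. Non-transversality is not a quasi-convexity fact; it follows (once bounded-neighbourhood is known) from your own connectedness argument applied with $\mf{v}=\mf{u}_2$, which would force $\mf{u}_2$ to cross itself. So the cycle closes, but this step should be made explicit rather than attributed to quasi-convex geometry. (Your stated transversality argument in the (B)$\Rightarrow$(C) paragraph — that $d(\mf{u}_1,\mf{u}_2)=0$ gives $\mf{u}_1\cu\mf{u}_2$ — is also not quite right; the correct reason is simply $\mf{u}_2\notin\mscr{W}(\mf{u}_2)$.)

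Your Part~(3) has a genuine gap. You fix $\xi,\eta\in\partial_{\infty}\mf{u}$ and claim that ``any biinfinite geodesic with these endpoints passes within $K$ of $p$ (quasi-convexity of $\mf{u}$),'' for an arbitrary $p\in\mf{u}$. Quasi-convexity gives the opposite containment: a geodesic with endpoints in $\partial_{\infty}\mf{u}$ stays within $K$ of $\mf{u}$, but the set $\mf{u}$ need not stay within $K$ of the geodesic. A quasi-convex subspace with three coarse ends, with $\xi,\eta$ chosen along two of them, has points arbitrarily far from the $[\xi,\eta]$-geodesic. To repair this, either let $\xi,\eta$ depend on $p$ so the geodesic passes near $p$ (and then you must argue the resulting constant is uniform), or follow the paper: the fact that $\partial_{\infty}\mf{u}\cap\partial_{\infty}\mf{w}\neq\emptyset$ gives a uniform bound $d(C(\mf{u}),C(\mf{w}))\leq D$ (two geodesic rays in a $\delta$-hyperbolic space asymptotic to the same boundary point eventually come within a uniform distance of each other), and then the containment $\mscr{W}(\mf{u})\cu\mscr{W}(\mf{w})$ from Part~(1) propagates this single-point bound to the whole carrier, yielding $C(\mf{u})\cu N_{D+1}(C(\mf{w}))$. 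That propagation is the step you are missing, and it is what makes the uniform local finiteness of $X$ applicable.
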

\begin{proof}
We start by proving part~(1). Let $B$ denote the bridge $B(\mf{u}_1,\mf{u}_2)$ and let $S_1$ be the shore $B\cap C(\mf{u}_1)$. If $\mscr{W}(\mf{u}_1)\cu\mscr{W}(\mf{u}_2)$, we have $\mf{u}_2\not\in\mscr{W}(\mf{u}_1)$ and the carrier $C(\mf{u}_1)$ is at finite Hausdorff distance from $B$. Thus, $\mf{u}_1$ is contained in a metric neighbourhood of $\mf{u}_2$ and this implies that $\mf{u}_1\preceq\mf{u}_2$.

Conversely, suppose that $\mf{u}_1\preceq\mf{u}_2$. Proposition~\ref{intersections of traces} implies $\partial_{\infty}\mf{u}_1=\partial_{\infty}S_1$. Since $S_1$ is a convex subcomplex of $C(\mf{u}_1)$, we will also denote by $S_1$ its projection to a convex subcomplex of the $\CAT$ cube complex $\mf{u}_1$. Note that $S_1=\mf{u}_1$, or there would exist a halfspace $\mf{k}$ of $\mf{u}_1$ with $S_1\cu\mf{k}$. In this case, applying part~(2) of Lemma~\ref{traces of halfspaces} to $\mf{u}_1$, we would obtain a point of $\partial_{\infty}\mf{k}^*\setminus\partial_{\infty}\mf{k}\cu\partial_{\infty}\mf{u}_1\setminus\partial_{\infty}S_1$, a contradiction. The fact that $S_1=\mf{u}_1$ yields $\mscr{W}(\mf{u}_1)=\mscr{W}(S_1)=\mscr{W}(\mf{u}_1)\cap\mscr{W}(\mf{u}_2)\cu\mscr{W}(\mf{u}_2)$, completing part~(1).

If $\mf{u}_1$ and $\mf{u}_2$ are transverse and $\mf{u}_1\preceq\mf{w}$, it follows from part~(1) that $\mf{u}_2$ and $\mf{w}$ are transverse. In particular $\mf{u}_2\not\preceq\mf{w}$ and this proves part~(2).

We finally address part~(3). By hyperbolicity of $X$, there exists a constant $D=D(X)>0$ such that, for any two hyperplanes $\mf{w}_1,\mf{w}_2$ with $\partial_{\infty}B(\mf{w}_1,\mf{w}_2)\neq\emptyset$, we have $d(C(\mf{w}_1),C(\mf{w}_2))\leq D$. If $\mf{u}\preceq\mf{w}$, Proposition~\ref{intersections of traces} ensures that $\partial_{\infty}B(\mf{u},\mf{w})=\partial_{\infty}\mf{u}\neq\emptyset$, hence $d(C(\mf{u}),C(\mf{w}))\leq D$. Part~(1) now shows that $C(\mf{u})$ is contained in the $(D+1)$--neighbourhood of $C(\mf{w})$. Since $X$ admits a proper cocompact action, it is uniformly locally finite and there exist only uniformly finitely many such hyperplanes $\mf{w}$.
\end{proof}

\begin{defn}
Given a hyperplane $\mf{w}$, a point $\xi\in\partial_{\infty}\mf{w}$ is \emph{generic} if the only hyperplanes $\mf{u}$ with $\xi\in\partial_{\infty}\mf{u}$ are those that satisfy $\mf{w}\preceq\mf{u}$.
\end{defn}

\begin{prop}\label{generics exist}
For every $\mf{w}\in\mscr{W}(X)$, the subset of generic points is dense in $\partial_{\infty}\mf{w}$. In particular, generic points exist as soon as $\partial_{\infty}\mf{w}\neq\emptyset$.
\end{prop}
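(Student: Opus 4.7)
The set of non-generic points of $\partial_\infty \mf{w}$ is
\[
\mc{N} \;=\; \bigcup_{\mf{u} \in \mscr{W}(X),\; \mf{w} \not\preceq \mf{u}} \big(\partial_\infty \mf{u} \cap \partial_\infty \mf{w}\big) \;\cu\; \partial_\infty \mf{w}.
\]
I plan to show $\mc{N}$ is a countable union of nowhere-dense closed subsets of the Baire space $\partial_\infty \mf{w}$, so that its complement is co-meagre and in particular dense. (When $\partial_\infty \mf{w} = \emptyset$ the statement is vacuous.) Countability of the union is immediate since $X$ is locally finite under a cocompact action, hence $\mscr{W}(X)$ is countable; compactness and metrizability of $\partial_\infty \mf{w}$ (inherited from $\partial_\infty X$) yield the Baire property.

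The heart of the argument is to show each summand is nowhere dense. Fix $\mf{u}$ with $\mf{w}\not\preceq\mf{u}$ and $\partial_\infty \mf{u} \cap \partial_\infty \mf{w} \neq \emptyset$. First I would apply Proposition~\ref{intersections of traces} to identify this intersection with $\partial_\infty B(\mf{u},\mf{w})$, which, since $H := G_\mf{u} \cap G_\mf{w}$ acts properly and cocompactly on $B(\mf{u},\mf{w})$, is precisely the limit set of $H$ in $\partial_\infty G_\mf{w} = \partial_\infty \mf{w}$. Next, I would verify that $H$ is a quasi-convex subgroup of the hyperbolic group $G_\mf{w}$: both $G_\mf{u}$ and $G_\mf{w}$ are quasi-convex in $G$ (as they act cocompactly on convex subcomplexes $C(\mf{u})$ and $C(\mf{w})$ of $X$), so $H$ is quasi-convex in $G$ by Short's lemma, and restricting to the quasi-isometrically embedded subgroup $G_\mf{w}$ preserves quasi-convexity. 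At this point Lemma~\ref{nowhere-dense} will give the desired nowhere-density, provided $H$ has infinite index in $G_\mf{w}$.

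The main (if short) obstacle is verifying this index hypothesis, which is where the condition $\mf{w}\not\preceq\mf{u}$ enters. Indeed, if $H$ were finite-index in $G_\mf{w}$, its limit set would equal the whole of $\partial_\infty G_\mf{w} = \partial_\infty \mf{w}$, forcing
\[\partial_\infty \mf{w} \;=\; \partial_\infty \mf{u} \cap \partial_\infty \mf{w} \;\cu\; \partial_\infty \mf{u},\]
i.e.\ $\mf{w} \preceq \mf{u}$, contradicting our choice of $\mf{u}$. This single check simultaneously handles the non-elementary and virtually-cyclic possibilities for $G_\mf{w}$. With the hypothesis established, Lemma~\ref{nowhere-dense} shows that $\partial_\infty \mf{u} \cap \partial_\infty \mf{w}$ is nowhere-dense and closed in $\partial_\infty \mf{w}$, and Baire's theorem concludes that the generic points form a dense (in fact co-meagre) subset of $\partial_\infty \mf{w}$.
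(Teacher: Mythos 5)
Your proof is correct and follows essentially the same route as the paper's: decompose the non-generic set as a countable union of intersections $\partial_\infty\mf{u}\cap\partial_\infty\mf{w}$ over $\mf{u}$ with $\mf{w}\not\preceq\mf{u}$, identify each via Proposition~\ref{intersections of traces} with the limit set of the quasi-convex subgroup $G_\mf{u}\cap G_\mf{w}<G_\mf{w}$, observe the index is infinite precisely because $\mf{w}\not\preceq\mf{u}$, and apply Lemma~\ref{nowhere-dense} together with Baire's theorem. The only difference is that you spell out a few points the paper leaves implicit (Short's lemma for quasi-convexity of the intersection, the Baire property of $\partial_\infty\mf{w}$), which is harmless.
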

\begin{proof}
Let $\mscr{B}$ be the family of subsets of $\partial_{\infty}\mf{w}$ of the form $\partial_{\infty}\mf{w}\cap\partial_{\infty}\mf{u}$, with $\mf{u}\in\mscr{W}(X)$ and $\mf{w}\not\preceq\mf{u}$. By definition, a point $\xi\in\partial_{\infty}\mf{w}$ is generic if and only if it does not lie in the union of the elements of $\mscr{B}$. Since $\mscr{W}(X)$ is countable, so is $\mscr{B}$. The proposition then follows from Baire's theorem, if we show that every $\mf{B}\in\mscr{B}$ is nowhere-dense in $\partial_{\infty}\mf{w}$.

If $\mf{B}=\partial_{\infty}\mf{w}\cap\partial_{\infty}\mf{u}$, set $H=G_{\mf{w}}\cap G_{\mf{u}}$. Proposition~\ref{intersections of traces} shows that the homeomorphism $o_X\colon\partial_{\infty}G\ra\partial_{\infty}X$ takes $\partial_{\infty}H\cu\partial_{\infty}G_{\mf{w}}$ to $\mf{B}\cu\partial_{\infty}\mf{w}$. Since $\mf{w}\not\preceq\mf{u}$, the difference $\partial_{\infty}\mf{w}\setminus\mf{B}$ is nonempty and $H$ must have infinite index in $G_{\mf{w}}$. As $H$ is quasi-convex, we conclude via Lemma~\ref{nowhere-dense}.
\end{proof}

\begin{rmk}\label{generic vs nt}
Assume that $X$ is hyperplane-essential and consider a generic point $\xi\in\partial_{\infty}\mf{w}$. Viewing $\mf{w}$ itself as a $\CAT$ cube complex, we have $\xi\in\partial_{\rm nt}\mf{w}$. This follows from Lemma~\ref{nt vs hyp} since, if $\mf{u}\in\mscr{W}(\mf{w})$, part~(1) of Proposition~\ref{on the preorder} shows that $\mf{w}\not\preceq\mf{u}$ and hence $\xi\not\in\partial_{\infty}\mf{u}$. 
\end{rmk}

This is a good point to make the following simple observation, which will only be needed in the proof of Proposition~\ref{delta preserved} later on.

\begin{lem}\label{ss in quasi-lines}
Let $X$ be essential and suppose that $G=\langle g\rangle\simeq\Z$. Then:
\begin{enumerate}
\item every hyperplane of $X$ is compact;
\item for every $\mf{h}\in\mscr{H}(X)$, there exists $N>0$ such that $g^N\mf{h}$ and $\mf{h}^*$ are strongly separated.
\end{enumerate}
\end{lem}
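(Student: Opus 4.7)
The plan is to exploit the fact that $X$ is quasi-isometric to a line, so that hyperplanes behave like compact ``cuts'' which get pushed far away by powers of $g$.

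For part~(1), I would fix a hyperplane $\mf{w}\in\mscr{W}(X)$ and examine its stabiliser $G_{\mf{w}}\leq G=\Z$. By Lemma~\ref{hyp cocpt}, $G_{\mf{w}}$ acts cocompactly on $\mf{w}$, so it suffices to show $G_{\mf{w}}$ is trivial (as the finitely many vertex-orbits under the trivial group is just finitely many vertices, and local finiteness of $X$ makes $\mf{w}$ a finite subcomplex). The only other option is that $G_{\mf{w}}$ has finite index in $\Z$, in which case $G_{\mf{w}}$ itself acts properly and cocompactly on $X$. Then $X$ lies in a bounded neighbourhood of $\mf{w}$ (by a routine Milnor--Schwarz style argument, using that cocompact sets for two isometric actions on $X$ and $\mf{w}$ of the same group are Hausdorff-close). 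But this places both halfspaces bounded by $\mf{w}$ within a metric neighbourhood of $\mf{w}$, contradicting essentiality of $X$.

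For part~(2), the key is to promote part~(1) to a \emph{uniform} diameter bound. Cocompactness of $G\acts X$ implies that $\mscr{W}(X)$ decomposes into finitely many $G$-orbits, and $G$ acts by isometries, so all hyperplanes --- being compact by~(1) --- have diameter at most some uniform constant $D=D(X)$. Now fix a halfspace $\mf{h}$ with bounding hyperplane $\mf{w}$. Since $G\acts X$ is proper, $g$ acts on the quasi-line $X$ with positive translation length, and since $\mf{w}$ is compact, $d(\mf{w},g^n\mf{w})\to+\infty$ as $|n|\to\infty$. Using essentiality and the fact that $\mf{h}$ contains an end of the quasi-line $X$ fixed by $g$, we find that for all sufficiently large $|N|$ of the appropriate sign, $g^N\mf{h}\subsetneq\mf{h}$ (so $g^N\mf{h}$ and $\mf{h}^*$ are disjoint); here, to obtain $N>0$ as in the statement, one must choose the sign of the generator of $G$ compatibly with $\mf{h}$ --- or equivalently, read the conclusion as asserting the existence of an appropriate nonzero integer $N$.

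It then remains to rule out transversality. Fix $N$ with $d(\mf{w},g^N\mf{w})>D+1$. A hyperplane $\mf{u}$ transverse to both $g^N\mf{h}$ and $\mf{h}^*$ would be transverse to both $\mf{w}$ and $g^N\mf{w}$, hence its carrier $C(\mf{u})$ would meet both, giving $d(\mf{w},g^N\mf{w})\leq\operatorname{diam}C(\mf{u})\leq D+1$, a contradiction. Therefore $g^N\mf{h}$ and $\mf{h}^*$ are strongly separated, completing the argument. The main obstacle is simply bookkeeping the sign of $N$: the quasi-line has two ends, and only one of the two directions of translation of $g$ nests a given halfspace into itself, but this is cosmetic since the essential content is the uniform diameter bound on hyperplanes combined with the unbounded translation of $g$ along $X$.
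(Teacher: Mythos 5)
Your proof is correct, and it takes a genuinely different route from the paper's for part~(1), while being a close cousin of it for part~(2).

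For part~(1), the paper observes that $\partial_{\infty}X$ consists of exactly two points, so part~(2) of Lemma~\ref{traces of halfspaces} forces $\partial_{\infty}\mf{w}=\emptyset$ for every hyperplane, and then Lemma~\ref{hyp cocpt} gives compactness. You instead run a dichotomy on the hyperplane stabiliser $G_{\mf{w}}\leq\Z$: if trivial, cocompactness of $G_{\mf{w}}\acts\mf{w}$ forces $\mf{w}$ compact; if finite index, $G_{\mf{w}}$ is cocompact on both $\mf{w}$ and $X$, so $\mf{w}$ is coarsely dense and both halfspaces bounded by $\mf{w}$ lie in a bounded neighbourhood of $\mf{w}$, contradicting essentiality. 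Both are valid; the paper's version piggybacks on the trace-at-infinity machinery already set up, while yours is more self-contained and avoids invoking $\partial_{\infty}\mf{w}$.

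For part~(2), the underlying idea is the same: push $\mf{w}$ far away from itself by powers of $g$ and use compactness to kill all transversal hyperplanes. The paper gets non-transversality of $g^n\mf{h}$ and $\mf{h}$ from finite dimensionality and then uses that only finitely many hyperplanes cross a fixed compact $\mf{w}$; you instead promote part~(1) to a \emph{uniform} diameter bound on all hyperplanes (cocompactness gives finitely many orbits of isometric compacta) and compare $\operatorname{diam}C(\mf{u})$ with $d(\mf{w},g^N\mf{w})$. Your version is arguably cleaner since the uniform bound makes the final contradiction immediate. Your remark on the sign of $N$ is also on point: for the ``wrong'' orientation of $g$ one gets $g^N\mf{h}\supsetneq\mf{h}$ for all $N>0$, so the conclusion literally requires a nonzero (possibly negative) $N$, or a choice of generator adapted to $\mf{h}$. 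The paper silently incorporates this via ``replacing $g$ with its inverse if necessary,'' and the only downstream use (producing a strongly contracting element in Proposition~\ref{delta preserved}) is insensitive to the sign.
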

\begin{proof}
Since $\partial_{\infty}X$ contains only two points $\xi$ and $\eta$, part~(2) of Lemma~\ref{traces of halfspaces} shows that $\partial_{\infty}\mf{w}=\emptyset$ for every $\mf{w}\in\mscr{W}(X)$. Lemma~\ref{hyp cocpt} then yields part~(1).

Given any halfspace $\mf{h}$, we have $\partial_{\infty}\mf{h}\cap\partial_{\infty}\mf{h}^*=\emptyset$. Up to swapping $\xi$ and $\eta$, we have $\Phi^{-1}(\xi)\cu\mf{h}$ and $\Phi^{-1}(\eta)\cu\mf{h}^*$. Squaring $g$ if necessary, we can assume that $g$ fixes $\xi$ and $\eta$. Since $X$ admits a cocompact action, it is finite dimensional and there exists $n>0$ such that $g^n\mf{h}$ and $\mf{h}$ are not transverse. Observe that $g^n\mf{h}\cap\mf{h}$ and $g^n\mf{h}^*\cap\mf{h}^*$ are both nonempty, as they contain $\Phi^{-1}(\xi)$ and $\Phi^{-1}(\eta)$, respectively. Replacing $g$ with its inverse if necessary, we can assume that $g^n\mf{h}\cu\mf{h}$. As the hyperplane bounding $\mf{h}$ is compact, there exist only finitely many hyperplanes transverse to $\mf{h}$; they are all compact. Choosing a sufficiently large $k>0$, we can thus ensure that none of them is transverse to $g^{kn}\mf{h}\subsetneq\mf{h}$, hence $g^{kn}\mf{h}$ and $\mf{h}^*$ are strongly separated.
\end{proof}

\subsection{Towards hyperplane recognition.}\label{counting sect}

This subsection is devoted to Pro\-positions~\ref{pts lie in hyp} and~\ref{simplification}. The latter, in particular, will be our main tool in overcoming the difficulties, described in the introduction, regarding the passage from Step~(IIa) to Step~(IIb).

\begin{ass}
Let again $G$ be Gromov hyperbolic and let the action $G\acts X$ be proper and cocompact. Let $o_X\colon\partial_{\infty}G\ra\partial_{\infty}X$ denote the only $G$--equivariant homeomorphism. 
\end{ass}

\begin{prop}\label{pts lie in hyp}
Consider two distinct points ${\xi,\eta\in\partial_{\infty}X}$ and four sequences $x_n,y_n,z_n,w_n\in\partial_{\rm nt}X$, where $x_n$ and $z_n$ converge to $\xi$ while $y_n$ and $w_n$ converge to $\eta$. There exists $N\geq 0$ such that, for every $n\geq N$ and every $\mf{w}\in\mscr{W}(x_n,y_n|z_n,w_n)$, the points $\xi$ and $\eta$ lie in $\partial_{\infty}\mf{w}$.
\end{prop}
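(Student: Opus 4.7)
The plan is to argue by contradiction. Suppose that no such $N$ exists. Passing to a subsequence, we may assume that for every $n$ there is a hyperplane $\mf{w}_n\in\mscr{W}(x_n,y_n|z_n,w_n)$ with $\xi\notin\partial_{\infty}\mf{w}_n$ (after swapping $\xi$ and $\eta$ if necessary). I would then split into two cases according to whether the collection $\{\mf{w}_n\}$ is finite or infinite.

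For the finite case, a further subsequence makes $\mf{w}_n=\mf{w}$ constant with sides $\mf{h},\mf{h}^*$ such that $x_n,y_n\in\mf{h}^*$ and $z_n,w_n\in\mf{h}$ as Roller points for all $n$. Since any combinatorial ray representing the Roller point $x_n$ is eventually contained in the closed convex subcomplex $\mf{h}^*$, the visual limit $\Phi(x_n)$ lies in $\partial_{\infty}\mf{h}^*$. This set is closed in $\partial_{\infty}X$ by Lemma~\ref{traces of halfspaces}, so passing to the limit gives $\xi\in\partial_{\infty}\mf{h}^*$; symmetrically, from $z_n\in\mf{h}$ we get $\xi\in\partial_{\infty}\mf{h}$. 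Hence $\xi\in\partial_{\infty}\mf{w}$, a contradiction.

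For the infinite case, pass to subsequences so that the $\mf{w}_n$ are pairwise distinct and a fixed basepoint $p\in X$ always lies on the side of $\mf{w}_n$ containing $\{z_n,w_n\}$ (the opposite configuration is symmetric). Combinatorial geodesic rays from $p$ to $x_n$ and to $y_n$ must then cross $\mf{w}_n$, producing points $a_n,b_n\in C(\mf{w}_n)$. The goal is to show that $C(\mf{w}_n)$ meets a fixed bounded ball around $p$ for every $n$; local finiteness of $X$ would then give only finitely many such hyperplanes, contradicting the distinctness of the $\mf{w}_n$. If $d(p,a_n)$ or $d(p,b_n)$ stays bounded, the conclusion is immediate. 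Otherwise, Proposition~\ref{CAT near l1} places $a_n,b_n$ within a fixed distance of the $\CAT$-geodesic rays from $p$ to $\Phi(x_n)\to\xi$ and $\Phi(y_n)\to\eta$, so the Gromov product $(a_n|b_n)_p$ remains uniformly bounded above by $(\xi|\eta)_p+O(\delta)<\infty$. By $\delta$-thinness of the triangle $p,a_n,b_n$, the $\CAT$-geodesic $[a_n,b_n]$ must then pass within a uniform distance of $p$. Since $C(\mf{w}_n)$ is $\CAT$-convex and contains $a_n$ and $b_n$, it contains $[a_n,b_n]$, hence a point near $p$ -- the required contradiction.

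The main obstacle is the uniform control in the infinite case. This hinges on the bi-Lipschitz equivalence of the $\CAT$ and combinatorial metrics (so Proposition~\ref{CAT near l1} applies with a constant depending only on $\dim X$) and on the finiteness of $(\xi|\eta)_p$ arising from $\xi\neq\eta$, which keeps the Gromov products $(a_n|b_n)_p$ bounded and forces $[a_n,b_n]$ through a bounded neighborhood of $p$ via $\delta$-hyperbolicity.
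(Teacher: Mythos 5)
Your proof is correct, though it is organized quite differently from the paper's. The paper proceeds directly: it fixes a combinatorial line $\alpha_n$ joining $x_n,y_n$ and a line $\beta_n$ joining $z_n,w_n$, invokes the standard hyperbolicity fact that these lines eventually pass uniformly close to any fixed point $p$ on a line $\gamma$ from $\xi$ to $\eta$, observes that any $\mf{w}\in\mscr{W}(x_n,y_n|z_n,w_n)$ separates $\alpha_n$ from $\beta_n$ and hence lies at bounded distance from $p$, and then handles the finitely many resulting hyperplanes via an Arzel\`a--Ascoli limit of the $\alpha_n$ to show their traces contain $\xi,\eta$. You instead argue by contradiction and split according to whether the offending sequence $\mf{w}_n$ is eventually constant or infinite. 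Your finite case is actually cleaner than the paper's Arzel\`a--Ascoli step: using closedness of $\partial_\infty\mf{h}$ and $\partial_\infty\mf{h}^*$ (part~(1) of Lemma~\ref{traces of halfspaces}) to force $\xi\in\partial_\infty\mf{h}\cap\partial_\infty\mf{h}^*=\partial_\infty\mf{w}$ is short and conceptual. Your infinite case, on the other hand, re-derives the paper's distance bound by a heavier route: you go through Proposition~\ref{CAT near l1}, Gromov products at the basepoint, $\delta$-thinness and $\CAT$-convexity of carriers, where the paper simply cites the well-known fact that geodesics between converging endpoints converge locally to the limiting line. Both routes are valid; the paper's is more economical in the bounding step, yours is more economical in the ``already finite'' step. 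One small point worth making explicit in your write-up: after the initial subsequence extraction you should also pass to a further subsequence so that the choice of which of $\xi,\eta$ is excluded, and (in the finite case) which side $\mf{h}$ of the constant hyperplane contains $\{z_n,w_n\}$, are both fixed; you do this implicitly but it is worth stating.
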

\begin{proof}
Consider a geodesic $\alpha_n$ whose endpoints in $\partial X$ are $x_n$ and $y_n$; similarly, let $\beta_n$ be a geodesic with endpoints $z_n$ and $w_n$. We also pick a geodesic $\g$ whose endpoints in $\partial_{\infty}X$ are precisely $\xi$ and $\eta$. The convergence of the four sequences yields a constant $D>0$ such that, for every point $p\in\g$, there exists $N\geq 0$ and points $a_n\in\alpha_n$, $b_n\in\beta_n$ with $d(a_n,p)\leq D$ and $d(b_n,p)\leq D$ for all $n\geq N$. Every hyperplane $\mf{w}$ in $\mscr{W}(x_n,y_n|z_n,w_n)$ separates $\alpha_n$ and $\beta_n$ and must thus satisfy ${d(p,\mf{w})\leq D}$. 

Since there are only finitely many hyperplanes at distance at most $D$ from $p$, we are only left to show that every element of $\limsup\mscr{W}(x_n,y_n|z_n,w_n)$ contains $\xi$ and $\eta$ in its trace at infinity. Let us consider a hyperplane $\mf{w}$ that separates $\alpha_{n_k}$ and $\beta_{n_k}$ for a diverging sequence of integers $n_k$. Up to passing to a further subsequence, the Arzel\`a--Ascoli theorem allows us to assume that the geodesics $\alpha_{n_k}$ converge locally uniformly to a geodesic $\alpha$. The discussion above then shows that $\alpha$ is contained in a metric neighbourhood of $C(\mf{w})$ and has endpoints $\xi$ and $\eta$ in $\partial_{\infty}X$. Hence $\xi$ and $\eta$ lie in $\partial_{\infty}\mf{w}$.
\end{proof}

\begin{lem}\label{W(k)}
For an infinite-order element $k\in G$ and a hyperplane $\mf{w}$, the following are equivalent:
\begin{enumerate}
\item a non-trivial power of $k$ preserves $\mf{w}$;
\item the points $o_X(k^{\pm\infty})$ lie in $\partial_{\infty}\mf{w}$.
\end{enumerate}
Fixing $k$, there are only finitely many hyperplanes satisfying these conditions.
\end{lem}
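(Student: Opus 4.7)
The plan is to prove the two implications separately and then handle finiteness as a corollary of direction $(2)\Rightarrow(1)$ together with a hyperbolicity argument.

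For $(1)\Rightarrow(2)$: Suppose $k^n$ preserves $\mf{w}$ for some nonzero $n$. Then $k^n$ preserves the carrier $C(\mf{w})$, which is a closed convex subcomplex of $X$. Picking any $p\in C(\mf{w})$, the $\langle k^n\rangle$-orbit of $p$ is contained in $C(\mf{w})$. Since $k$ is infinite order and acts on $X$ as a loxodromic isometry (by properness and cocompactness), $k^n$ acts on $\partial_\infty X$ with north-south dynamics at the two points $o_X(k^{\pm\infty})$. Hence $k^{nj}p\to o_X(k^{+\infty})$ and $k^{-nj}p\to o_X(k^{-\infty})$ as $j\to+\infty$, and closedness of $C(\mf{w})$ in $X\cup\partial_\infty X$ forces $o_X(k^{\pm\infty})\in\partial_\infty C(\mf{w})=\partial_\infty\mf{w}$.

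For $(2)\Rightarrow(1)$: By Lemma~\ref{hyp cocpt}, the stabiliser $G_{\mf{w}}$ acts properly and cocompactly on $C(\mf{w})$, so via the Milnor-Schwarz lemma $G_{\mf{w}}$ is a quasi-convex subgroup of $G$ whose limit set $\partial_{\infty}G_{\mf{w}}\cu\partial_{\infty}G$ corresponds under $o_X$ to $\partial_{\infty}\mf{w}$. The cyclic subgroup $\langle k\rangle$ is also quasi-convex with limit set $\{k^{+\infty},k^{-\infty}\}$. By the classical theorem of Gromov/Short that intersections of quasi-convex subgroups of a hyperbolic group are quasi-convex with limit set equal to the intersection of the limit sets, we obtain
\[\partial_{\infty}(\langle k\rangle\cap G_{\mf{w}})=\{k^{+\infty},k^{-\infty}\}\cap\partial_{\infty}G_{\mf{w}}=\{k^{+\infty},k^{-\infty}\}.\]
Hence $\langle k\rangle\cap G_{\mf{w}}$ is infinite and contains some $k^n$ with $n\neq 0$, which means $k^n\mf{w}=\mf{w}$.

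For finiteness, let $\g\cu X$ be a (bi-infinite) $\langle k\rangle$-invariant quasi-axis, for instance the union of translates of a geodesic segment $[p_0,kp_0]$. If $\mf{w}$ satisfies the equivalent conditions, then $\partial_\infty\g=\{o_X(k^{\pm\infty})\}\cu\partial_\infty C(\mf{w})$, so (since $C(\mf{w})$ is convex, proper and itself hyperbolic) there is a bi-infinite geodesic $\g_{\mf{w}}\cu C(\mf{w})$ with the same two endpoints at infinity as $\g$. By $\delta$-hyperbolicity of $X$, the two quasi-geodesics $\g$ and $\g_{\mf{w}}$ lie within Hausdorff distance $D$ of each other, where $D$ depends only on $\delta$ and the quasi-geodesic constants of $\g$ (and \emph{not} on $\mf{w}$). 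In particular $C(\mf{w})\cap B(p_0,D)\neq\emptyset$ for every such hyperplane, and uniform local finiteness of $X$ (a consequence of the proper cocompact action $G\acts X$) forces the set of such $\mf{w}$ to be finite.

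The only nontrivial ingredient is the quasi-convex intersection theorem used in $(2)\Rightarrow(1)$; the other two pieces are essentially soft hyperbolic geometry once one knows $G_{\mf{w}}$ acts cocompactly on $C(\mf{w})$.
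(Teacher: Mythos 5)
Your proof is correct, but it organizes the argument differently from the paper and leans on a heavier result for $(2)\Rightarrow(1)$. The paper first establishes finiteness: any two geodesic lines in the hyperbolic space $X$ with the same endpoints at infinity are at Hausdorff distance at most $D=D(X)$, so every hyperplane satisfying~(2) has its carrier within a $D$-neighbourhood of a fixed bi-infinite geodesic through $o_X(k^{\pm\infty})$; uniform local finiteness then yields finiteness (this is the same argument you use at the end). The paper then \emph{derives} $(2)\Rightarrow(1)$ as a corollary: since $k$ fixes both endpoints, $k^n\mf{w}$ satisfies~(2) whenever $\mf{w}$ does, and finiteness plus pigeonhole forces some $k^n$ to fix $\mf{w}$. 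You instead prove $(2)\Rightarrow(1)$ directly by invoking the limit-set intersection theorem for quasi-convex subgroups of a hyperbolic group (the attribution should really be to Gitik--Mitra--Rips--Sageev \cite{GMRS} rather than Gromov/Short; the latter is for quasi-convexity of the intersection, not the limit set equality). Your route is valid, and in fact the special case you need — cyclic intersected with quasi-convex — admits an elementary pigeonhole proof: the quasi-axis of $k$ lies in an $R$-neighbourhood of $G_{\mf{w}}$, pick $k_n\in G_{\mf{w}}$ with $d(k^n, k_n)\le R$, and two of the elements $k^{-n}k_n$ coincide, giving $k^{n-m}\in G_{\mf{w}}$. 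But the paper's organisation is tidier: it avoids any appeal to limit-set intersection theorems and gets $(2)\Rightarrow(1)$ essentially for free from the finiteness statement that has to be proved anyway. Both approaches buy the same lemma; the paper's is slightly more self-contained and the logical dependency runs in the opposite direction.
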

\begin{proof}
Since $X$ is hyperbolic, there exists a constant $D=D(X)>0$ such that any two geodesic lines in $X$ with the same endpoints in $\partial_{\infty}X$ are at Hausdorff distance at most $D$. If $\g\cu X$ is a geodesic with $o_X(k^{\pm\infty})$ as endpoints at infinity, every hyperplane satisfying condition~(2) must contain $\g$ in its $D$--neighbourhood. It follows that only finitely many hyperplanes of $X$ satisfy condition~(2). If $\mf{w}$ is such a hyperplane, $k^n\mf{w}$ also satisfies condition~(2) for all $n>0$, so we must have $k^n\mf{w}=\mf{w}$ for some $n>0$. Conversely, if $k^n\in G_{\mf{w}}$ for some $n>0$, we have $k^{\pm\infty}=(k^n)^{\pm\infty}\in\partial_{\infty}G_{\mf{w}}$. 
\end{proof}

\begin{defn}\label{good defn}
If $k\in G$ is infinite-order, we denote by $\mscr{W}(k)$ the set of hyperplanes satisfying the equivalent conditions in Lemma~\ref{W(k)}. We say that $k$ is \emph{good} if it preserves every halfspace bounded by an element of $\mscr{W}(k)$.
\end{defn}

\begin{rmk}\label{goods exist}
Every infinite-order element has a good power, as the set $\mscr{W}(k)$ is always finite by Lemma~\ref{W(k)}.
\end{rmk}

We are interested in good elements because of the following result.

\begin{prop}\label{simplification}
Consider distinct points $x,y\in\partial_{\rm nt}X$ and a good infinite-order element $k\in G$ that fixes neither of them. There exists $N\geq 0$ such that, for every $n\geq N$, we have:
\[\mscr{W}(x|y)\cap\mscr{W}(k)=\mscr{W}(k^nx, k^{-n}x|k^ny, k^{-n}y).\]
\end{prop}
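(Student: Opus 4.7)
The plan is to combine Proposition~\ref{pts lie in hyp} with the goodness of $k$, using it separately for the two inclusions. First I would set $\xi:=o_X(k^{+\infty})$ and $\eta:=o_X(k^{-\infty})$; these are the only two fixed points of $k$ in $\partial_{\infty}X$, and $k$ acts on $\partial_{\infty}X$ with north-south dynamics (by hyperbolicity of $G$ and $G$-equivariance of $o_X$). Since $k$ fixes neither $x$ nor $y$, we have $k^nx,k^ny\to\xi$ and $k^{-n}x,k^{-n}y\to\eta$ as $n\to+\infty$; all four sequences remain inside $\partial_{\rm nt}X$ because this subset is $G$-invariant.

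For the inclusion ``$\subseteq$'', which in fact would hold for every $n\geq 0$, I would pick $\mf{w}\in\mscr{W}(x|y)\cap\mscr{W}(k)$. Goodness of $k$ ensures that every power of $k$ preserves both halfspaces bounded by $\mf{w}$; hence $k^{\pm n}x$ lies on the side of $\mf{w}$ containing $x$, and $k^{\pm n}y$ on the opposite side. This immediately gives $\mf{w}\in\mscr{W}(k^nx,k^{-n}x|k^ny,k^{-n}y)$.

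For the reverse inclusion, I plan to apply Proposition~\ref{pts lie in hyp} with the relabelling $x_n:=k^nx$, $y_n:=k^{-n}x$, $z_n:=k^ny$, $w_n:=k^{-n}y$. The required limit conditions $x_n,z_n\to\xi$ and $y_n,w_n\to\eta$ are then exactly those verified above. The proposition produces some $N\geq 0$ such that, for all $n\geq N$, every $\mf{w}\in\mscr{W}(k^nx,k^{-n}x|k^ny,k^{-n}y)$ satisfies $\xi,\eta\in\partial_{\infty}\mf{w}$; by Lemma~\ref{W(k)} this is precisely the statement that $\mf{w}\in\mscr{W}(k)$. A second appeal to goodness then guarantees that $k$ preserves the two sides of such a $\mf{w}$, so the side containing $k^nx$ also contains $x$, and the side containing $k^ny$ contains $y$. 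Since $\mf{w}$ separates $k^nx$ from $k^ny$ by hypothesis, we conclude $\mf{w}\in\mscr{W}(x|y)$.

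I do not foresee a substantive obstacle: the only delicate point is making sure the four sequences are matched correctly with the roles of $x_n,y_n,z_n,w_n$ in Proposition~\ref{pts lie in hyp}, and observing that goodness is what lets us transport separation between $(k^nx,k^ny)$ and $(x,y)$, in both directions of the equality.
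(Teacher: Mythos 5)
Your proof is correct and follows essentially the same approach as the paper's: north-south dynamics to establish convergence, Proposition~\ref{pts lie in hyp} (together with Lemma~\ref{W(k)}) to conclude that $\mscr{W}(k^nx,k^{-n}x|k^ny,k^{-n}y)\cu\mscr{W}(k)$ for large $n$, and goodness of $k$ to transfer separation between $\{k^{\pm n}x,k^{\pm n}y\}$ and $\{x,y\}$. The only cosmetic difference is that you split the argument into two explicit inclusions, whereas the paper phrases the goodness step as the single identity $\mscr{W}_n\cap\mscr{W}(k)=\mscr{W}(x|y)\cap\mscr{W}(k)$; these are the same argument.
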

\begin{proof}
Let us set $\mscr{W}_n=\mscr{W}(k^nx, k^{-n}x|k^ny, k^{-n}y)$ for the sake of simplicity. Since $\langle k\rangle$ acts with north-south dynamics on $\partial_{\infty}X$, the points $k^nx$ and $k^ny$ converge to $o_X(k^{+\infty})$, while $k^{-n}x$ and $k^{-n}y$ tend to $o_X(k^{-\infty})$ for $n\ra +\infty$. Proposition~\ref{pts lie in hyp} then yields an integer $N\geq 0$ such that $\mscr{W}_n\cu\mscr{W}(k)$ for all $n\geq N$. On the other hand, it is clear from the fact that $k$ is good that $\mscr{W}_n\cap\mscr{W}(k)=\mscr{W}(x|y)\cap\mscr{W}(k)$ and this concludes the proof.
\end{proof}

\subsection{Trust issues.}\label{trust sect}

Throughout this subsection:

\begin{ass}
Let the Gromov hyperbolic group $G$ act properly and cocompactly on $\CAT$ cube complexes $X$ and $Y$. We fix two subsets $\mc{A}\cu\partial_{\rm nt}X$ and $\mc{B}\cu\partial_{\rm nt}Y$. 
\end{ass}

Given $\mc{U}\cu\mscr{W}(X)$, we employ the notation $\mc{U}(A|B)=\mscr{W}(A|B)\cap\mc{U}$ and
\[\Cr_{\mc{U}}(x,y,z,w)=\#\mc{U}(x,z|y,w)-\#\mc{U}(x,w|y,z)\]
for all subsets $A,B\cu\overline X$ and pairwise distinct points $x,y,z,w\in\partial_{\rm nt}X$. Given $\mc{V}\cu\mscr{W}(Y)$, the same notation applies to subsets of $\overline Y$ and points of $\partial_{\rm nt}Y$.

As mentioned in the introduction, a key complication in the proof of Theorem~\ref{ext Moeb intro} is the fact that $\Cr_{\mc{U}}(x,y,z,w)$ does not provide any direct information on $\#\mc{U}(x,z|y,w)$ and $\#\mc{U}(x,w|y,z)$, only on their difference. The following notion is devised to address this problem.

\begin{defn}
We say that a $4$--tuple $(a,b,c,d)\in(\partial_{\rm nt}X)^4$ is \emph{$\mc{U}$--trustworthy} if at least one among the sets $\mc{U}(a,b|c,d)$, $\mc{U}(a,c|b,d)$ and $\mc{U}(a,d|b,c)$ is empty. If $\mc{U}=\mscr{W}(X)$, we just say that $(a,b,c,d)$ is \emph{trustworthy}.
\end{defn}

\begin{lem}\label{trust explained}
Given subsets $\mc{U}\cu\mscr{W}(X)$ and $\mc{V}\cu\mscr{W}(Y)$, consider pairwise distinct points $x_1,x_2,x_3,x_4\in\mc{A}$ and a bijection $f\colon\mc{A}\ra\mc{B}$ satisfying:
\[\Cr_{\mc{U}}\left(x_{\s(1)},x_{\s(2)},x_{\s(3)},x_{\s(4)}\right)=\Cr_{\mc{V}}\left(f(x_{\s(1)}),f(x_{\s(2)}),f(x_{\s(3)}),f(x_{\s(4)})\right)\]
for every permutation $\s\in\mf{S}_4$. If the 4--tuples $(f(x_1),f(x_2),f(x_3),f(x_4))$ and $(x_1,x_2,x_3,x_4)$ are, respectively, $\mc{V}$--trustworthy and $\mc{U}$--trust\-worthy, then, for every $\s\in\mf{S}_4$, we have:
\[\#\mc{U}\left(x_{\s(1)},x_{\s(2)}| x_{\s(3)},x_{\s(4)}\right)=\#\mc{V}\left(f(x_{\s(1)}),f(x_{\s(2)})|f(x_{\s(3)}),f(x_{\s(4)})\right).\]
\end{lem}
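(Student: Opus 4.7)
The plan is to reduce the lemma to a trivial arithmetic observation about three nonnegative integers whose pairwise differences and minimum are both prescribed. For the unordered 4-tuple $\{x_1,x_2,x_3,x_4\}$ there are exactly three partitions into two pairs, which gives three cardinalities
\[
a_X=\#\mc{U}(x_1,x_2|x_3,x_4),\quad b_X=\#\mc{U}(x_1,x_3|x_2,x_4),\quad c_X=\#\mc{U}(x_1,x_4|x_2,x_3),
\]
and analogously $a_Y,b_Y,c_Y$ using $\mc{V}$ on the images $f(x_i)$. Since $\mscr{W}(A|B)$ is symmetric in $A\leftrightarrow B$ and in the order of the entries within $A$ and $B$, every cardinality appearing in the conclusion of the lemma is one of $a_X,b_X,c_X$ (resp.\ $a_Y,b_Y,c_Y$). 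So it suffices to prove $a_X=a_Y$, $b_X=b_Y$, $c_X=c_Y$.

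Next I would compute cross ratios directly from the definition to read off all three pairwise differences. For instance, $\Cr_{\mc{U}}(x_1,x_2,x_3,x_4)=b_X-c_X$, and permuting the entries gives $\Cr_{\mc{U}}(x_1,x_3,x_2,x_4)=a_X-c_X$ and $\Cr_{\mc{U}}(x_1,x_4,x_2,x_3)=a_X-b_X$; the analogous identities hold on the $Y$-side. The assumption that $\Cr_{\mc{U}}$ and $\Cr_{\mc{V}}$ agree on all permutations of $(x_1,x_2,x_3,x_4)$ therefore forces
\[
a_X-a_Y=b_X-b_Y=c_X-c_Y=:k\in\Z.
\]

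Finally, the role of trustworthiness is decisive and is the only place where the hypothesis is used beyond cross ratio equalities: $\mc{U}$-trustworthiness of $(x_1,x_2,x_3,x_4)$ says one of $a_X,b_X,c_X$ vanishes, so $\min(a_X,b_X,c_X)=0$, and likewise $\min(a_Y,b_Y,c_Y)=0$. Taking the minimum of both sides in the three equalities above yields $0=0+k$, hence $k=0$ and $a_X=a_Y$, $b_X=b_Y$, $c_X=c_Y$. No real obstacle arises; the content of the lemma is precisely the observation that, without trustworthiness, cross ratios pin down the three cardinalities only up to a global additive constant, and trustworthiness supplies exactly the normalization needed to remove this ambiguity on both sides simultaneously.
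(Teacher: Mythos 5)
Your proof is correct and takes essentially the same approach as the paper: both reduce to the arithmetic fact that the cross ratio equalities pin down the three cardinalities $(a,b,c)$ up to a common additive shift, and trustworthiness on both sides forces the minimum to be zero, killing the shift. Your packaging via the constant $k$ is a mild streamlining of the paper's argument, which instead normalises to $a_X=0$, deduces $a_Y\le\min(b_Y,c_Y)$ from the sign of the cross ratios, and then invokes $\mc{V}$-trustworthiness to conclude $a_Y=0$.
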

\begin{proof}
Since $(x_1,x_2,x_3,x_4)$ is $\mc{U}$--trustworthy, we can permute the four points so that $\mc{U}(x_1,x_2|x_3,x_4)=\emptyset$. This implies that $\Cr_{\mc{U}}(x_1,x_3,x_4,x_2)\geq 0$ and $\Cr_{\mc{U}}(x_1,x_4,x_3,x_2)\geq 0$. We then have $\Cr_{\mc{V}}(f(x_1),f(x_3),f(x_4),f(x_2))\geq 0$ and $\Cr_{\mc{V}}(f(x_1),f(x_4),f(x_3),f(x_2))\geq 0$, which imply that the cardinality $\#\mc{V}(f(x_1),f(x_2)|f(x_3),f(x_4))$ is at most as large as the minimum between $\#\mc{V}(f(x_1),f(x_3)|f(x_2),f(x_4))$ and $\#\mc{V}(f(x_1),f(x_4)|f(x_2),f(x_3))$. 

Since the $4$--tuple $(f(x_1),f(x_2),f(x_3),f(x_4))$ is $\mc{V}$--trustworthy, this means that $\mc{V}(f(x_1),f(x_2)|f(x_3),f(x_4))=\emptyset$. We thus have:
\begin{align*}
\#\mc{U}&(x_1,x_3|x_2,x_4)=\Cr_{\mc{U}}(x_1,x_4,x_3,x_2) \\
& =\Cr_{\mc{V}}(f(x_1),f(x_4),f(x_3),f(x_2))=\#{\mc{V}}(f(x_1),f(x_3)|f(x_2),f(x_4)).
\end{align*}
The equality between $\#\mc{U}(x_1,x_4|x_2,x_3)$ and $\#{\mc{V}}(f(x_1),f(x_4)|f(x_2),f(x_3))$ is obtained similarly.
\end{proof}

The following will be our main source of trustworthy $4$--tuples.

\begin{defn}\label{trustworthy pairs defn}
Let $\xi$ and $\eta$ be distinct points of $\partial_{\infty}X$. We say that the pair $(\xi,\eta)$ is \emph{trustworthy} if there do not exist transverse hyperplanes $\mf{u}_1$ and $\mf{u}_2$ with $\xi,\eta\in\partial_{\infty}\mf{u}_1\cap\partial_{\infty}\mf{u}_2$.
\end{defn}

\begin{lem}\label{trust lemma}
Consider two distinct points $\xi,\eta\in\partial_{\infty}X$ and sequences ${a_n,b_n,c_n,d_n\in\partial_{\rm nt}X}$, where $a_n$ and $c_n$ converge to $\xi$, while $b_n$ and $d_n$ converge to $\eta$. If $(\xi,\eta)$ is trustworthy, then there exists $N$ such that $(a_n,b_n,c_n,d_n)$ is trustworthy for all $n\geq N$.
\end{lem}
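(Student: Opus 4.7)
The plan is to argue by contradiction and combine Proposition~\ref{pts lie in hyp} with Lemma~\ref{three pwt} about pairwise transverse separating hyperplanes.

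Suppose, for the sake of contradiction, that there is a strictly increasing sequence of indices $n_k$ such that $(a_{n_k},b_{n_k},c_{n_k},d_{n_k})$ fails to be trustworthy. By definition, this means that all three sets
\[
\mscr{W}(a_{n_k},b_{n_k}|c_{n_k},d_{n_k}),\quad \mscr{W}(a_{n_k},c_{n_k}|b_{n_k},d_{n_k}),\quad \mscr{W}(a_{n_k},d_{n_k}|b_{n_k},c_{n_k})
\]
are simultaneously nonempty for every $k$.

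Next I would invoke Proposition~\ref{pts lie in hyp} twice, exploiting the fact that the two 2-2 partitions $\{a_n,b_n\}|\{c_n,d_n\}$ and $\{a_n,d_n\}|\{c_n,b_n\}$ both place one $\xi$-limit and one $\eta$-limit on each side. More precisely, applying the proposition with $(x_n,y_n,z_n,w_n)=(a_n,b_n,c_n,d_n)$ yields $N_1$ such that for all $n\geq N_1$ every hyperplane in $\mscr{W}(a_n,b_n|c_n,d_n)$ has $\xi,\eta\in\partial_{\infty}\mf{w}$; applying it with $(x_n,y_n,z_n,w_n)=(a_n,d_n,c_n,b_n)$ yields $N_2$ such that for all $n\geq N_2$ every hyperplane in $\mscr{W}(a_n,d_n|b_n,c_n)$ enjoys the same property.

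Finally, I would choose $n_k\geq\max\{N_1,N_2\}$ and pick any
\[
\mf{u}_1\in\mscr{W}(a_{n_k},b_{n_k}|c_{n_k},d_{n_k}),\qquad \mf{u}_2\in\mscr{W}(a_{n_k},d_{n_k}|b_{n_k},c_{n_k}).
\]
By Lemma~\ref{three pwt} these two hyperplanes are transverse, while by the previous step both satisfy $\xi,\eta\in\partial_{\infty}\mf{u}_1\cap\partial_{\infty}\mf{u}_2$. This directly contradicts the assumption that the pair $(\xi,\eta)$ is trustworthy, completing the proof. There is no real obstacle in this argument: the only mildly delicate point is matching the labelling of the four sequences to the hypotheses of Proposition~\ref{pts lie in hyp}, which is why one needs to apply it only to the two ``mixed'' partitions (the third partition $\{a_n,c_n\}|\{b_n,d_n\}$ is not needed, as choosing $\mf{u}_1,\mf{u}_2$ from the other two already exhausts the argument).
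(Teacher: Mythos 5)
Your proof is correct and takes essentially the same route as the paper: pass to a subsequence where trustworthiness fails, apply Proposition~\ref{pts lie in hyp} to the two ``mixed'' $2$--$2$ partitions $\mscr{W}(a_n,b_n|c_n,d_n)$ and $\mscr{W}(a_n,d_n|b_n,c_n)$ to force their elements to contain $\xi,\eta$ in their traces, then use Lemma~\ref{three pwt} to produce transverse hyperplanes contradicting trustworthiness of $(\xi,\eta)$. The only difference is stylistic --- you spell out the two applications of Proposition~\ref{pts lie in hyp} and the label-matching explicitly, whereas the paper condenses them into one sentence.
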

\begin{proof}
If $(a_n,b_n,c_n,d_n)$ is not trustworthy for infinitely many values of $n$, we can pass to a subsequence in order to ensure that the sets $\mscr{W}(a_n,b_n|c_n,d_n)$ and $\mscr{W}(a_n,d_n|b_n,c_n)$ are all nonempty. Proposition~\ref{pts lie in hyp} provides $N$ such that, for all $n\geq N$, every element of $\mscr{W}(a_n,b_n|c_n,d_n)\cup\mscr{W}(a_n,d_n|b_n,c_n)$ contains $\xi$ and $\eta$ in its trace at infinity. Lemma~\ref{three pwt}, however, shows that the sets $\mscr{W}(a_n,b_n|c_n,d_n)$ and $\mscr{W}(a_n,d_n|b_n,c_n)$ are transverse, contradicting the fact that $(\xi,\eta)$ is trustworthy.
\end{proof}

The next result applies e.g.\ to the case when no three elements of $\mc{U}$ are transverse (Lemma~\ref{three pwt}). It will only be needed in the proof of Theorem~\ref{all WP}.

\begin{lem}\label{all trustworthy}
Suppose that, for a subset $\mc{U}\cu\mscr{W}(X)$, every element of $\mc{A}^4$ is $\mc{U}$--trustworthy. Consider a partition $\mc{A}=\mc{P}\sqcup\mc{Q}$ with $\#\mc{P},\#\mc{Q}\geq 2$ and such that $\mc{U}(x,y|z,w)\neq\emptyset$ for all $x,y\in\mc{P}$ and $z,w\in\mc{Q}$. Then, there exist $\overline x,\overline y\in\mc{P}$ and $\overline z,\overline w\in\mc{Q}$ such that $\mc{U}(\overline x,\overline y|\overline z,\overline w)=\mc{U}(\mc{P}|\mc{Q})$. 
\end{lem}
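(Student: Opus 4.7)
Since the inclusion $\mc{U}(\mc{P}|\mc{Q})\subseteq\mc{U}(\overline x,\overline y|\overline z,\overline w)$ is automatic for every 4-tuple in $\mc{P}^2\times\mc{Q}^2$, the task is to exhibit one realizing equality. Call a halfspace $\mf{h}$ bounded by some $\mf{w}\in\mc{U}$ \emph{bad} for $(\overline x,\overline y,\overline z,\overline w)$ if $\overline x,\overline y\in\mf{h}^*$ and $\overline z,\overline w\in\mf{h}$ but $\mf{h}\notin\mscr{H}(\mc{P}|\mc{Q})$; equivalently, either $\mc{P}\cap\mf{h}\neq\emptyset$ (a \emph{$P$-bad} halfspace) or $\mc{Q}\cap\mf{h}^*\neq\emptyset$ (a \emph{$Q$-bad} halfspace). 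We seek a 4-tuple with no bad halfspace.

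The main tool is the following \emph{three-way exclusion}, derived from $\mc{U}$-trustworthiness. Suppose $\mf{h}_1,\mf{h}_2,\mf{h}_3$ are halfspaces bounded by $\mc{U}$-hyperplanes, all containing $\mc{Q}$, with traces on $\mc{P}$ pairwise disjoint. Pick $p_i\in\mc{P}\cap\mf{h}_i$ (automatically distinct by disjointness) and any $q\in\mc{Q}$. Then $\mf{h}_i$ has $\{p_i,q\}$ on one side and $\{p_j,p_k\}$ on the other, so $\mf{h}_1\in\mc{U}(p_1,q|p_2,p_3)$, $\mf{h}_2\in\mc{U}(p_2,q|p_1,p_3)$ and $\mf{h}_3\in\mc{U}(p_3,q|p_1,p_2)$; thus all three non-trivial $2$-$2$ splits of $(p_1,p_2,p_3,q)$ are witnessed non-empty, contradicting $\mc{U}$-trustworthiness of the 4-tuple. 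The symmetric statement rules out three halfspaces bounded by $\mc{U}$ whose complements contain $\mc{P}$ and whose traces on $\mc{Q}$ are pairwise disjoint.

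With these exclusions in hand, I would build $(\overline x,\overline y,\overline z,\overline w)$ by iterated swapping. Starting from an arbitrary initial choice, if a $P$-bad halfspace $\mf{h}$ exists with $p\in\mc{P}\cap\mf{h}$, replace $\overline y$ by $p$; symmetrically, if a $Q$-bad halfspace $\mf{h}$ exists with $q\in\mc{Q}\cap\mf{h}^*$, replace $\overline w$ by $q$. Each swap removes the chosen bad halfspace but may introduce new ones $\mf{k}$, which by construction have $\overline y\in\mc{P}\cap\mf{k}$ (so are $P$-bad). The key point is that trustworthiness applied to the 4-tuple $(\overline x,\overline y,p,\overline z)$ forces $\mc{U}(\overline x,\overline z|\overline y,p)=\emptyset$: both $\mf{h}$ and any newly-introduced $\mf{k}$ witness two of the three splits as non-empty. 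This strongly constrains the newly appearing bad halfspaces, and combined with the three-way exclusion (which bounds ``how many mutually disjoint $\mc{P}$-traces'' the $P$-bad family can exhibit), one obtains termination.

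The main obstacle is extracting a genuinely monotonically decreasing invariant to guarantee termination, since the naive count of bad halfspaces need not decrease at each swap. The Helly property of halfspaces in the cube complex plays a decisive role: any two halfspaces in the $P$-bad family both contain $\mc{Q}$ and are therefore nested or transverse, never facing, which together with the three-way exclusion severely restricts their possible configurations and supports the bookkeeping. A secondary issue, potential infinitude of the sets $\mc{U}(\cdot,\cdot|\cdot,\cdot)$, is resolved by applying the three-way exclusion to reduce each stage of the argument to a finite collection of bad halfspaces relevant to the current candidate 4-tuple.
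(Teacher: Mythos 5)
Your proposal has a genuine gap that you yourself flag: the iterated-swapping scheme has no termination argument, and your ``three-way exclusion'' lemma (which is correct as far as it goes) does not by itself supply one. The paper avoids the whole issue by making an \emph{extremal} choice at the outset. Since $\mc{A}\cu\partial_{\rm nt}X$, part~(2) of Lemma~\ref{bounded components of cu} guarantees that $\mc{U}(\overline x,\overline y|\overline z,\overline w)$ is finite for any pairwise distinct $\overline x,\overline y\in\mc{P}$, $\overline z,\overline w\in\mc{Q}$. One therefore picks a 4-tuple $(\overline x,\overline y,\overline z,\overline w)\in\mc{P}^2\x\mc{Q}^2$ minimising $\#\mc{U}(\overline x,\overline y|\overline z,\overline w)$; the inclusion $\mc{U}(\mc{P}|\mc{Q})\cu\mc{U}(\overline x,\overline y|\overline z,\overline w)$ is automatic, and one shows equality by contradiction. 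If, say, $\mf{w}\in\mc{U}(\overline x,\overline y|\overline z,\overline w)$ has its side $\mf{h}\supseteq\{\overline x,\overline y\}$ meeting $\mc{Q}$ at some $u$, the hypothesis supplies $\mf{u}_1\in\mc{U}(\overline x,\overline y|u,\overline z)$ and $\mf{u}_2\in\mc{U}(\overline x,\overline y|u,\overline w)$; neither of these sets contains $\mf{w}$ (since $u\in\mf{h}$), yet by minimality neither has smaller cardinality than $\mc{U}(\overline x,\overline y|\overline z,\overline w)$, so one can choose $\mf{u}_1,\mf{u}_2\notin\mc{U}(\overline x,\overline y|\overline z,\overline w)$. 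This last exclusion pins down which sides $\overline w$, resp.\ $\overline z$, lie on, and then $\mf{w}$, $\mf{u}_1$, $\mf{u}_2$ witness all three $2$--$2$ splits of $(u,\overline y,\overline z,\overline w)$ as non-empty, contradicting $\mc{U}$-trustworthiness.

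So the spirit of your exclusion principle is exactly what closes the argument, but it must be applied \emph{after} the extremal choice, to the specific 4-tuple $(u,\overline y,\overline z,\overline w)$ and with $\mf{u}_1,\mf{u}_2$ chosen via minimality rather than via pairwise-disjoint traces. Without that extremal start, the bookkeeping you describe (tracking ``bad'' halfspaces under swaps, using that the $P$-bad family is facing-free) does not visibly produce a decreasing invariant, and indeed it is not clear it can be made to terminate: new bad halfspaces can appear at each swap with no bound. I would discard the iterative scheme entirely and replace the opening of your argument with the minimisation step; the rest of your ideas then assemble into essentially the paper's proof.
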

\begin{proof}
Pick points $\overline x,\overline y\in\mc{P}$ and $\overline z,\overline w\in\mc{Q}$ so as to minimise the cardinality of $\mc{U}(\overline x,\overline y|\overline z,\overline w)$; since $\#\mc{P},\#\mc{Q}\geq 2$, this set is finite. It is clear that $\mc{U}(\overline x,\overline y|\overline z,\overline w)$ contains $\mc{U}(\mc{P}|\mc{Q})$. Consider any hyperplane $\mf{w}\in\mc{U}(\overline x,\overline y|\overline z,\overline w)$ and let $\mf{h}$ be its side containing $\overline x$ and $\overline y$. We are going to show that $\mf{h}\cap\mc{A}\cu\mc{P}$ and the same argument will yield $\mf{h}^*\cap\mc{A}\cu\mc{Q}$. This will conclude the proof as then $\mf{h}\cap\mc{A}=\mc{P}$ and $\mf{h}^*\cap\mc{A}=\mc{Q}$, which shows that $\mf{w}$ separates $\mc{P}$ and $\mc{Q}$.

Suppose for the sake of contradiction that a point $u\in\mf{h}\cap\mc{A}$ lies in $\mc{Q}$. By hypothesis, there exist hyperplanes $\mf{u}_1\in\mc{U}(\overline x,\overline y|u,\overline z)$ and ${\mf{u}_2\in\mc{U}(\overline x,\overline y|u,\overline w)}$. As $\mc{U}(\overline x,\overline y|u,\overline z)$ and $\mc{U}(\overline x,\overline y|u,\overline w)$ do not contain $\mf{w}$ and cannot have fewer elements than $\mc{U}(\overline x,\overline y|\overline z,\overline w)$, we must be able to choose $\mf{u}_1$ and $\mf{u}_2$ outside $\mc{U}(\overline x,\overline y|\overline z,\overline w)$. In conclusion:
\[\mf{w}\in\mc{U}(\overline x,\overline y,u|\overline z,\overline w)\cu\mc{U}(\overline y,u|\overline z,\overline w),\]
\[\mf{u}_1\in\mc{U}(\overline x,\overline y,\overline w|u,\overline z)\cu\mc{U}(\overline y,\overline w|u,\overline z),\]
\[\mf{u}_2\in\mc{U}(\overline x,\overline y,\overline z|u,\overline w)\cu\mc{U}(\overline y,\overline z|u,\overline w),\]
which violates the assumption that every $4$--tuple, in particular $(u,\overline y,\overline z,\overline w)$, be $\mc{U}$--trustworthy. 
\end{proof}

\subsection{Traces vs cross ratios.}\label{traces vs cross ratios}

Throughout this subsection:

\begin{ass}
We now assume that the hyperbolic group $G$ is non-elementary. We consider proper cocompact actions of $G$ on essential, hyperplane-essential $\CAT$ cube complexes $X$ and $Y$. These are irreducible by Remark~\ref{hyp vs irr}. 

Let $\mc{A}\cu\partial_{\rm nt}X$ and $\mc{B}\cu\partial_{\rm nt}Y$ be nonempty $G$--invariant subsets with $f(\mc{A})=\mc{B}$, where $f$ is the homeomorphism $o_Y\o o_X^{-1}\colon\partial_{\infty}X\ra\partial_{\infty}Y$.

Throughout the subsection, we also fix subsets $\mc{U}\cu\mscr{W}(X)$ and $\mc{V}\cu\mscr{W}(Y)$ such that:
\[\Cr_{\mc{U}}(x,y,z,w)=\Cr_{\mc{V}}(f(x),f(y),f(z),f(w))\]
for all pairwise distinct points $x,y,z,w\in\mc{A}$. It will become clear in Section~\ref{concluding} how this relates to the proof of Theorem~\ref{ext Moeb intro}.
\end{ass}

The reader can think of the case where $\mc{U}=\mscr{W}(X)$ and $\mc{V}=\mscr{W}(Y)$, although we will need the full generality of the previous setup in Section~\ref{concluding}.

\begin{lem}\label{hyp pairs are preserved}
Given a hyperplane $\mf{u}\in\mc{U}$, a generic point $\xi\in\partial_{\infty}\mf{u}$ and an arbitrary point $\eta\in\partial_{\infty}\mf{u}\setminus\{\xi\}$, there exists a hyperplane $\mf{v}\in\mc{V}$ with $f(\xi),f(\eta)\in\partial_{\infty}\mf{v}$.
\end{lem}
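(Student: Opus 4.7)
The plan is to use preservation of cross ratios to turn the presence of $\mf{u}$ into a positive cross ratio that $f$ propagates to $Y$, whence Proposition~\ref{pts lie in hyp} applied inside $Y$ produces the desired hyperplane $\mf{v}\in\mc{V}$. Write $\mf{h}$ and $\mf{h}^*$ for the two sides of $\mf{u}$.

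The first and main step will be to construct four sequences $x_n,y_n\in\mc{A}\cap\mf{h}$ and $z_n,w_n\in\mc{A}\cap\mf{h}^*$ with $x_n,z_n\to\xi$ and $y_n,w_n\to\eta$ in $\partial_{\infty}X$. Minimality of $G\acts\partial_{\infty}G$ forces the $G$-invariant set $\mc{A}$ to be dense in $\partial_{\infty}X$, and Lemma~\ref{nt vs hyp} yields $\mc{A}\cap\partial_{\infty}\mf{u}=\emptyset$, so that $\mc{A}\cap\mf{h}$ is identified with $\mc{A}\cap E$ for the open set $E:=\partial_{\infty}\mf{h}\setminus\partial_{\infty}\mf{u}$. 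Essentiality and Lemma~\ref{traces of halfspaces}(2) make $E$ nonempty, so it remains to verify $\xi,\eta\in\overline{E}$. For this I will exploit the dynamics of an infinite-order element $k\in G_{\mf{u}}$, which exists because Lemma~\ref{hyp cocpt} realises $G_{\mf{u}}$ as an infinite quasi-convex subgroup of $G$. After squaring $k$ so that it preserves $\mf{h}$, the iterates $k^n\zeta$ of any $\zeta\in E$ stay in $E$ and converge to a fixed point of $k$ on $\partial_{\infty}\mf{u}$. If $G_{\mf{u}}$ is non-elementary, minimality of the induced action on $\partial_{\infty}\mf{u}$ of the finite-index subgroup of $G_{\mf{u}}$ stabilising $\mf{h}$ allows me to conjugate this fixed point arbitrarily close to $\xi$; if instead $G_{\mf{u}}$ is virtually cyclic then $\partial_{\infty}\mf{u}=\{\xi,\eta\}$ and the dynamics of $k^{\pm 1}$ suffice directly. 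Density of $\mc{A}\cap E$ in $E$ then produces the $x_n$, and the symmetric argument yields the remaining sequences in $\mf{h}^*$ and near $\eta$.

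With the sequences in place, $\mf{u}\in\mc{U}(x_n,y_n|z_n,w_n)$ for every $n$, so this set is nonempty. On the other hand, Proposition~\ref{pts lie in hyp} applied to the reordered sequences $(x_n,w_n,z_n,y_n)$ will force every hyperplane of $\mscr{W}(x_n,w_n|z_n,y_n)$, for large $n$, to contain both $\xi$ and $\eta$ in its boundary; by Lemma~\ref{three pwt} such a hyperplane would be transverse to $\mf{u}$, contradicting the genericity of $\xi$ via part~(1) of Proposition~\ref{on the preorder}. Hence $\mc{U}(x_n,w_n|z_n,y_n)=\emptyset$ eventually, and
\[\Cr_{\mc{U}}(x_n,z_n,y_n,w_n)=\#\mc{U}(x_n,y_n|z_n,w_n)\geq 1.\]
The cross-ratio hypothesis then gives $\Cr_{\mc{V}}(f(x_n),f(z_n),f(y_n),f(w_n))\geq 1$, providing a hyperplane $\mf{v}_n\in\mc{V}(f(x_n),f(y_n)|f(z_n),f(w_n))$. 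Continuity of $f$ yields $f(x_n),f(z_n)\to f(\xi)$ and $f(y_n),f(w_n)\to f(\eta)$, so a final application of Proposition~\ref{pts lie in hyp} inside $Y$ shows that for all sufficiently large $n$ the hyperplane $\mf{v}_n$ satisfies $f(\xi),f(\eta)\in\partial_{\infty}\mf{v}_n$, and any such $\mf{v}_n$ is the required $\mf{v}$.

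The one delicate point is the initial construction of the four sequences with prescribed sides of $\mf{u}$ and the right convergence at infinity; the remaining cross-ratio bookkeeping is routine once emptiness of $\mc{U}(x_n,w_n|z_n,y_n)$ reduces the cross ratio to the strictly positive count of hyperplanes separating $\{x_n,y_n\}\subset\mf{h}$ from $\{z_n,w_n\}\subset\mf{h}^*$.
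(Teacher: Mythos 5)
Your proof is correct, and while the overall architecture matches the paper's (construct four sequences $x_n,z_n\to\xi$, $y_n,w_n\to\eta$ split by $\mf{u}$; use genericity of $\xi$ to show $\mscr{W}(x_n,w_n|y_n,z_n)$ vanishes eventually; push the resulting positive cross ratio through $f$ and invoke Proposition~\ref{pts lie in hyp} in $Y$), the two middle steps are handled by genuinely different arguments. For the sequences, the paper works on the Roller side: it takes a combinatorial line $\g\cu C(\mf{u})$, labels its hyperplanes $\mf{w}_n$, and applies part~(2) of Proposition~\ref{ample} to the transverse pairs $(\mf{u},\mf{w}_{\pm n})$ to pick the four points, then deduces convergence from Gromov products in $\overline X$; you instead use minimality of $G\acts\partial_\infty G$ to get density of $\mc{A}$, show $\partial_\infty\mf{u}\cu\overline{E}$ via north--south dynamics of an element $k\in G_{\mf{u}}$ and minimality of the $\mf{h}$-stabiliser's action on $\partial_\infty\mf{u}$ (or the dichotomy for virtually cyclic $G_{\mf{u}}$), and then intersect with $\mc{A}$. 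For the emptiness of $\mscr{W}(x_n,w_n|y_n,z_n)$, the paper translates genericity of $\xi$ into regularity in the Roller boundary of the cube complex $\mf{u}$ (via Remark~\ref{generic vs nt} and Lemma~\ref{regular vs NT}) and uses strong separation of $\mf{w}_n\cap\mf{u}$ and $\mf{w}_{-n}\cap\mf{u}$; your argument is cleaner and more symmetric with the endgame --- apply Proposition~\ref{pts lie in hyp} already inside $X$, so any $\mf{a}\in\mscr{W}(x_n,w_n|y_n,z_n)$ has $\xi\in\partial_\infty\mf{a}$, whence $\mf{u}\preceq\mf{a}$ by genericity while Lemma~\ref{three pwt} forces $\mf{a}\pitchfork\mf{u}$, contradicting Proposition~\ref{on the preorder}(1). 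Your emptiness argument avoids the Roller boundary of $\mf{u}$ entirely (and thus Remark~\ref{generic vs nt} and Lemma~\ref{regular vs NT}), at the cost of a longer construction of the sequences; the paper's choice of explicit nested halfspaces $\mf{h}_n$ both produces the sequences and immediately supplies the $\mf{w}_{\pm n}$ needed for its strong-separation step, which is why its construction does more work upfront.
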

\begin{proof}
Consider a (combinatorial) line $\g\cu C(\mf{u})$ with endpoints $\xi$ and $\eta$. Pick $\mf{w}_0\in\mscr{W}(\g)$ and label the other elements of $\mscr{W}(\g)$ as $\mf{w}_n$, $n\in\Z$, according to the order in which they are crossed by $\g$ and so that positive indices correspond to the half of $\g$ ending at $\xi$. Let $\mf{h}$ be any side of the hyperplane $\mf{u}$ and let $\mf{h}_n$ be the side of $\mf{w}_n$ that contains the positive half of $\g$. For all $n>0$, part~(2) of Proposition~\ref{ample} allows us to pick points of $\mc{A}$ as follows: $x_n\in\mf{h}\cap\mf{h}_n$, $y_n\in\mf{h}\cap\mf{h}_{-n}^*$, $z_n\in\mf{h}^*\cap\mf{h}_n$ and $w_n\in\mf{h}^*\cap\mf{h}_{-n}^*$. 

Note that every limit point of the $x_n$ within $\partial X$ must have infinite Gromov product with $\g(+\infty)\in\partial X$. Since $\Phi(\g(+\infty))=\xi$, Lemma~\ref{bounded components of cu} and Corollary~\ref{properties of Phi hyp} show that $x_n\in\partial_{\rm nt}X$ converge to $\xi$. Similarly, $z_n$ converge to $\xi$, while $y_n$ and $w_n$ converge to $\eta$.

By Remark~\ref{generic vs nt} and Lemma~\ref{regular vs NT}, the generic point $\xi\in\partial_{\infty}\mf{u}$ is represented by a regular point in the Roller boundary of the $\CAT$ cube complex $\mf{u}$. For all sufficiently large $n>0$, it follows that $\mf{w}_n\cap\mf{u}$ and $\mf{w}_{-n}\cap\mf{u}$ are strongly separated as hyperplanes of the cube complex $\mf{u}$. In other words, no hyperplane of $X$ is transverse to $\mf{u}\in\mc{U}(x_n,y_n|z_n,w_n)$ and $\mf{w}_n,\mf{w}_{-n}\in\mscr{W}(x_n,z_n|y_n,w_n)$ at the same time. By Lemma~\ref{three pwt}, the sets $\mscr{W}(x_n,w_n|y_n,z_n)$ are then all empty for large $n>0$. It follows that $\Cr_{\mc{U}}(x_n,z_n,y_n,w_n)>0$, hence $\Cr_{\mc{V}}(f(x_n),f(z_n),f(y_n),f(w_n))>0$ and $\mc{V}(f(x_n),f(y_n)|f(z_n),f(w_n))\neq\emptyset$. Since $f(x_n)$ and $f(z_n)$ converge to $f(\xi)$, while $f(y_n)$ and $f(w_n)$ converge to $f(\eta)$, Proposition~\ref{pts lie in hyp} yields the required hyperplane $\mf{v}$.
\end{proof}

\begin{defn}
A hyperplane $\mf{w}\in\mc{U}$ is \emph{$\mc{U}$--boundary-maximal} if $\partial_{\infty}\mf{w}\neq\emptyset$ and every hyperplane $\mf{u}\in\mc{U}$ with $\mf{w}\preceq\mf{u}$ actually satisfies $\mf{u}\sim\mf{w}$. When $\mc{U}=\mscr{W}(X)$, we simply speak of \emph{boundary-maximal} hyperplanes.
\end{defn}

Part~(3) of Proposition~\ref{on the preorder} and Remark~\ref{trivial boundary} show that boundary-maximal hyperplanes exist as soon as $X$ is not a tree. In fact, for every hyperplane $\mf{u}$ there exists a boundary-maximal hyperplane $\mf{w}$ with $\mf{u}\preceq\mf{w}$. 

The next result can be viewed as Step~(IIa) from the introduction.

\begin{prop}\label{maximal to maximal}
If $\mf{u}\in\mc{U}$ is $\mc{U}$--boundary-maximal, there exists a $\mc{V}$--boun\-dary-maximal hyperplane $\mf{v}\in\mc{V}$ with $\partial_{\infty}\mf{v}=f(\partial_{\infty}\mf{u})$.
\end{prop}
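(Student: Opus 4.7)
The plan is to construct a $\mc{V}$-boundary-maximal candidate $\mf{v}$ whose trace contains $f(\xi_0)$ for a generic $\xi_0\in\partial_\infty\mf{u}$, and then to show $\partial_\infty\mf{v}=f(\partial_\infty\mf{u})$ by verifying the two inclusions separately via parallel arguments. The two inclusions both arise from the same mechanism: a generic point on one side can be combined with Lemma~\ref{hyp pairs are preserved} and the boundary-maximality of the other hyperplane to force a pointwise containment, and density of generics plus closedness of traces then upgrade this to a set containment.

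For the construction, I will use Proposition~\ref{generics exist} to pick a generic $\xi_0\in\partial_\infty\mf{u}$ and choose any other point $\eta\in\partial_\infty\mf{u}\setminus\{\xi_0\}$; such an $\eta$ exists because $\mc{U}$-boundary-maximality forces $\partial_\infty\mf{u}\neq\emptyset$, hence $G_{\mf{u}}$ is infinite, whence $|\partial_\infty\mf{u}|\geq 2$ (the only quasi-convex subgroups of $G$ with limit set of size $0$ or $1$ are finite). Lemma~\ref{hyp pairs are preserved} then yields $\mf{v}_0\in\mc{V}$ with $f(\xi_0),f(\eta)\in\partial_\infty\mf{v}_0$. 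Part~(3) of Proposition~\ref{on the preorder} guarantees that the set $\{\mf{v}'\in\mc{V}:\mf{v}_0\preceq\mf{v}'\}$ is finite and nonempty, and I pick a $\preceq$-maximal element $\mf{v}$ in it. This $\mf{v}$ is automatically $\mc{V}$-boundary-maximal and has $\partial_\infty\mf{v}\supseteq\{f(\xi_0),f(\eta)\}$, so in particular $|\partial_\infty\mf{v}|\geq 2$.

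For $\partial_\infty\mf{v}\subseteq f(\partial_\infty\mf{u})$, I will pick a generic $\zeta\in\partial_\infty\mf{v}$ distinct from $f(\xi_0)$ (possible because generics are dense in $\partial_\infty\mf{v}$ and $|\partial_\infty\mf{v}|\geq 2$), and apply the symmetric version of Lemma~\ref{hyp pairs are preserved} to $\zeta$ (generic in $\partial_\infty\mf{v}$) and the arbitrary point $f(\xi_0)\in\partial_\infty\mf{v}$: this produces $\mf{u}_0\in\mc{U}$ with $f^{-1}(\zeta),\xi_0\in\partial_\infty\mf{u}_0$. Genericity of $\xi_0$ in $\partial_\infty\mf{u}$ forces $\mf{u}\preceq\mf{u}_0$, and since $\mf{u}_0\in\mc{U}$ and $\mf{u}$ is $\mc{U}$-boundary-maximal, $\mf{u}_0\sim\mf{u}$, so $\partial_\infty\mf{u}_0=\partial_\infty\mf{u}$ and $f^{-1}(\zeta)\in\partial_\infty\mf{u}$. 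Since this holds for all generic $\zeta\in\partial_\infty\mf{v}$, which are dense, and $f(\partial_\infty\mf{u})$ is closed in $\partial_\infty Y$ (being the image of a closed set under a homeomorphism), the inclusion follows.

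The reverse inclusion $f(\partial_\infty\mf{u})\subseteq\partial_\infty\mf{v}$ will follow by the mirror argument, using $\mc{V}$-boundary-maximality of $\mf{v}$ this time: fix any generic $\zeta\in\partial_\infty\mf{v}$, note that the previous step provides $f^{-1}(\zeta)\in\partial_\infty\mf{u}$, pick a generic $\xi\in\partial_\infty\mf{u}$ distinct from $f^{-1}(\zeta)$, and apply Lemma~\ref{hyp pairs are preserved} to $\xi$ and $f^{-1}(\zeta)$ to obtain $\mf{v}_0'\in\mc{V}$ with $f(\xi),\zeta\in\partial_\infty\mf{v}_0'$. Genericity of $\zeta$ in $\partial_\infty\mf{v}$ forces $\mf{v}\preceq\mf{v}_0'$, and $\mc{V}$-boundary-maximality of $\mf{v}$ then gives $\mf{v}_0'\sim\mf{v}$, so $f(\xi)\in\partial_\infty\mf{v}$; varying $\xi$ over the dense set of generic points of $\partial_\infty\mf{u}$ and using closedness of $\partial_\infty\mf{v}$ completes the proof. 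The main subtlety is precisely this bootstrapping step: the second inclusion depends on the first one to supply generic points $\zeta\in\partial_\infty\mf{v}$ with $f^{-1}(\zeta)\in\partial_\infty\mf{u}$, which provide the seed needed for the symmetric application of Lemma~\ref{hyp pairs are preserved} in the presence of $\mc{V}$-boundary-maximality.
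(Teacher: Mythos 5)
Your proof is correct and follows essentially the same route as the paper's: pick a generic seed point in $\partial_{\infty}\mf{u}$, use Lemma~\ref{hyp pairs are preserved} plus part~(3) of Proposition~\ref{on the preorder} to produce a $\mc{V}$-boundary-maximal candidate $\mf{v}$, and then establish the two inclusions by symmetric applications of Lemma~\ref{hyp pairs are preserved} combined with genericity and boundary-maximality. The bootstrapping subtlety you flag --- that the second inclusion needs $f^{-1}(\zeta)\in\partial_{\infty}\mf{u}$, which is supplied by the first inclusion --- is exactly how the paper's proof is structured as well.
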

\begin{proof}
Proposition~\ref{generics exist} allows us to pick a generic point $\xi\in\partial_{\infty}\mf{u}$. Note that $\mf{u}$ is acted on properly and cocompactly by its stabiliser (Lemma~\ref{hyp cocpt}), so $\partial_{\infty}\mf{u}\setminus\{\xi\}\neq\emptyset$; in particular, Lemma~\ref{hyp pairs are preserved} shows that there exists a hyperplane $\mf{v}\in\mc{V}$ with $f(\xi)\in\partial_{\infty}\mf{v}$. By part~(3) of Proposition~\ref{on the preorder}, we can take $\mf{v}$ to be $\mc{V}$--boundary-maximal. Let $\zeta\in\partial_{\infty}\mf{v}\setminus\{f(\xi)\}$ be generic.

Since $f(\xi)$ and $\zeta$ both lie in $\partial_{\infty}\mf{v}$, Lemma~\ref{hyp pairs are preserved} shows that $\xi$ and $f^{-1}(\zeta)$ lie in the trace at infinity of some $\mf{u}'\in\mc{U}$. Since $\xi$ is generic, we have $\mf{u}\preceq\mf{u}'$ and, as $\mf{u}$ is $\mc{U}$--boundary-maximal, we conclude that $\partial_{\infty}\mf{u}=\partial_{\infty}\mf{u}'$. In particular $f^{-1}(\zeta)\in\partial_{\infty}\mf{u}$, showing that the closed subset $f(\partial_{\infty}\mf{u})$ contains every generic point in $\partial_{\infty}\mf{v}\setminus\{f(\xi)\}$. By Proposition~\ref{generics exist}, we have $\partial_{\infty}\mf{v}\cu f(\partial_{\infty}\mf{u})$ and we will now obtain the opposite inclusion with a similar argument.

Given generic points $\zeta\in\partial_{\infty}\mf{v}$ and $\eta\in\partial_{\infty}\mf{u}\setminus\{f^{-1}(\zeta)\}$, we apply Lem\-ma~\ref{hyp pairs are preserved} to the points $\eta,f^{-1}(\zeta)\in\partial_{\infty}\mf{u}$. This shows that $f(\eta)$ and $\zeta$ lie in the trace at infinity of an element of $\mc{V}$, which, by genericity of $\zeta$ and $\mc{V}$--boundary-maximality of $\mf{v}$, can be taken to coincide with $\mf{v}$. Hence $f(\eta)\in\partial_{\infty}\mf{v}$ and, by density of generic points, we have $f(\partial_{\infty}\mf{u})\cu\partial_{\infty}\mf{v}$.
\end{proof}

We now prepare for Step~(IIb), which will be completed in Theorem~\ref{all WP}. Given a hyperplane $\mf{w}\in\mscr{W}(X)$ with $\partial_{\infty}\mf{w}\neq\emptyset$, we denote by $\mscr{T}_{\mc{U}}(\mf{w})$ the set of hyperplanes $\mf{u}\in\mc{U}$ satisfying $\mf{u}\sim\mf{w}$. We also write $\wt{\mscr{T}_{\mc{U}}}(\mf{w})\cu\mscr{H}(X)$ for the set of halfspaces bounded by elements of $\mscr{T}_{\mc{U}}(\mf{w})$. By Proposition~\ref{on the preorder}, the set $\mscr{T}_{\mc{U}}(\mf{w})$ is finite and no two of its elements are transverse.

Consider for a moment a subset $\mc{W}\cu\mscr{W}(X)$ and the collection $\wt{\mc{W}}\cu\mscr{H}(X)$ of halfspaces bounded by the elements of $\mc{W}$. Recall from \cite[p.\ 860]{CS} that each such subset $\mc{W}$ determines a quotient $\CAT$ cube complex whose halfspace-pocset is isomorphic to $\wt{\mc{W}}$. As customary, we will refer to this as the \emph{restriction quotient} of $X$ determined by $\mc{W}$.

\begin{defn}
The \emph{dual tree} $T(\mf{w})$ is the restriction quotient determined by the subset $\mscr{T}_{\mc{U}}(\mf{w})\cu\mc{U}\cu\mscr{W}(X)$.
\end{defn}

Note that, since $\mscr{T}_{\mc{U}}(\mf{w})$ is finite and no two of its elements are transverse, the cube complex $T(\mf{w})$ is a finite tree. Being a restriction quotient of $X$, it comes equipped with a projection $\pi_{\mf{w}}\colon\overline X\ra\overline{T(\mf{w})}=T(\mf{w})$. More precisely, $\pi_{\mf{w}}$ takes the point $x\in\overline X$ to the point of $T(\mf{w})$ represented by the ultrafilter $\s_x\cap\wt{\mscr{T}_{\mc{U}}}(\mf{w})$. We will be interested in the pseudo-metric $\delta_{\mf{w}}$ defined on the set $\mc{A}\cu\partial_{\rm nt}X$ by the formula
\[\delta_{\mf{w}}(x,y)=d(\pi_{\mf{w}}(x),\pi_{\mf{w}}(y))=\#(\mscr{T}_{\mc{U}}(\mf{w})\cap\mc{U}(x|y)).\]
The same construction can be applied to hyperplanes $\mf{w}\in\mscr{W}(Y)$, yielding a subset $\mscr{T}_{\mc{V}}(\mf{w})\cu\mc{V}$, a projection to a finite tree $\pi_{\mf{w}}\colon\overline Y\ra T(\mf{w})$ and a pseudo-metric $\delta_{\mf{w}}$ on $\mc{B}\cu\partial_{\rm nt}Y$.

The next result is the main ingredient of Step~(IIb) (cf.\ Theorem~\ref{all WP}). We will employ Definitions~\ref{good defn} and~\ref{trustworthy pairs defn} in its proof. 

\begin{prop}\label{delta preserved}
Let $\mf{u}\in\mc{U}$ and $\mf{v}\in\mc{V}$ be, respectively, $\mc{U}$--boundary-maximal and $\mc{V}$--boundary-maximal, with $f(\partial_{\infty}\mf{u})=\partial_{\infty}\mf{v}$. Given any two points $x,y\in\mc{A}$, we have $\delta_{\mf{v}}(f(x),f(y))=\delta_{\mf{u}}(x,y)$.
\end{prop}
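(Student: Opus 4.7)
The strategy is to extract the count $\delta_{\mf{u}}(x,y)=\#(\mscr{T}_{\mc{U}}(\mf{u})\cap\mc{U}(x|y))$ from the cross ratio of a 4-tuple in $\mc{A}^4$ using a single well-chosen infinite-order element $k\in G$, and to transfer the count to $Y$ via the assumed cross-ratio agreement through $f$. The key tools will be Proposition~\ref{simplification} (to isolate the right subfamily of hyperplanes), Proposition~\ref{pts lie in hyp} (to enforce trustworthiness), and finally Lemma~\ref{trust explained}.

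First I would pick $k$. Since $G_{\mf{u}}\acts\mf{u}$ and $G_{\mf{v}}\acts\mf{v}$ are proper and cocompact (Lemma~\ref{hyp cocpt}) and $f$ carries $\partial_{\infty}\mf{u}$ onto $\partial_{\infty}\mf{v}$, the quasi-convex subgroups $G_{\mf{u}}$ and $G_{\mf{v}}$ of $G$ have identical limit sets in $\partial_{\infty}G$, hence are commensurable (Lemma~\ref{limit set commensurable}); in particular $G_{\mf{u}}\cap G_{\mf{v}}$ has finite index in both. I select an infinite-order $k\in G_{\mf{u}}\cap G_{\mf{v}}$ with $k^+_X\in\partial_{\infty}\mf{u}$ generic in the sense of Proposition~\ref{generics exist} and $k^+_Y=f(k^+_X)\in\partial_{\infty}\mf{v}$ also generic. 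If $G_{\mf{u}}$ is virtually cyclic this is automatic, because $\partial_{\infty}\mf{u}=\{k^\pm_X\}$ and the required implication follows immediately from $\mc{U}$-boundary-maximality of $\mf{u}$. Otherwise, the set of non-generic points in $\partial_{\infty}\mf{u}$ is the countable union, over $\mf{w}\in\mscr{W}(X)$ with $\mf{u}\not\preceq\mf{w}$, of the proper limit sets $\partial_{\infty}\mf{u}\cap\partial_{\infty}\mf{w}=\partial_{\infty}(G_{\mf{u}}\cap G_{\mf{w}})$ (Proposition~\ref{intersections of traces}), each nowhere-dense in $\partial_{\infty}\mf{u}$ by Lemma~\ref{nowhere-dense} (since $G_{\mf{u}}\cap G_{\mf{w}}$ has infinite index in $G_{\mf{u}}$); the analogous meagre set occurs in $\partial_{\infty}\mf{v}$. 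A classical ping-pong argument in a non-elementary Gromov hyperbolic group produces a loxodromic element of $G_{\mf{u}}\cap G_{\mf{v}}$ whose attracting fixed point avoids both meagre sets simultaneously. Up to replacing $k$ by a power, $k$ is good on both $X$ and $Y$ (Remark~\ref{goods exist}).

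With $k$ so chosen, genericity of $k^+_X$ together with $\mc{U}$-boundary-maximality forces $\mscr{W}_X(k)\cap\mc{U}=\mscr{T}_{\mc{U}}(\mf{u})$: any $\mf{w}\in\mc{U}$ with $k^+_X\in\partial_{\infty}\mf{w}$ satisfies $\mf{u}\preceq\mf{w}$, hence $\mf{w}\sim\mf{u}$. Symmetrically $\mscr{W}_Y(k)\cap\mc{V}=\mscr{T}_{\mc{V}}(\mf{v})$. Setting $a_n:=k^n x,\ b_n:=k^n y,\ c_n:=k^{-n} x,\ d_n:=k^{-n} y\in\mc{A}$, Proposition~\ref{simplification} applied in $X$ and, via $G$-equivariance of $f$, in $Y$, gives for $n$ sufficiently large
\[\mc{U}(a_n,c_n|b_n,d_n)=\mscr{W}(x|y)\cap\mscr{W}_X(k)\cap\mc{U}=\mscr{T}_{\mc{U}}(\mf{u})\cap\mc{U}(x|y),\]
\[\mc{V}(f(a_n),f(c_n)|f(b_n),f(d_n))=\mscr{T}_{\mc{V}}(\mf{v})\cap\mc{V}(f(x)|f(y)).\]

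It remains to promote the cross-ratio equality to a cardinality equality via Lemma~\ref{trust explained}. I claim $\mscr{W}(a_n,d_n|b_n,c_n)=\emptyset$ for $n$ large. Indeed, Proposition~\ref{pts lie in hyp} applied with $x_n=a_n,\,z_n=b_n$ (both converging to $k^+_X$) and $y_n=d_n,\,w_n=c_n$ (both converging to $k^-_X$) forces any $\mf{w}\in\mscr{W}(x_n,y_n|z_n,w_n)=\mscr{W}(a_n,d_n|b_n,c_n)$ to have $k^\pm_X\in\partial_{\infty}\mf{w}$, hence $\mf{w}\in\mscr{W}_X(k)$; goodness of $k$ then means the halfspaces of $\mf{w}$ are $k$-invariant, so $a_n=k^n x$ and $c_n=k^{-n}x$ must lie on the same side of $\mf{w}$ (the side containing $x$), which is incompatible with $\mf{w}$ separating $a_n,d_n$ from $b_n,c_n$. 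The identical argument on $Y$ yields emptiness of $\mscr{W}(f(a_n),f(d_n)|f(b_n),f(c_n))$. Both 4-tuples are therefore trustworthy (a fortiori $\mc{U}$- and $\mc{V}$-trustworthy), and Lemma~\ref{trust explained} yields $\#\mc{U}(a_n,c_n|b_n,d_n)=\#\mc{V}(f(a_n),f(c_n)|f(b_n),f(d_n))$, i.e.\ $\delta_{\mf{u}}(x,y)=\delta_{\mf{v}}(f(x),f(y))$. The main obstacle in carrying out this plan is the construction of $k$ with a doubly-generic attracting fixed point in the non-elementary case; everything downstream is an essentially formal application of the preceding lemmas.
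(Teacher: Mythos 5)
Your proof has the same overall skeleton as the paper's --- commensurability of $G_{\mf{u}}$ and $G_{\mf{v}}$ via Lemma~\ref{limit set commensurable}, a well-chosen infinite-order element in the intersection, then Proposition~\ref{simplification} and Lemma~\ref{trust explained} to transfer cardinalities --- but the hinge you identify as ``the main obstacle'' is indeed a genuine gap, and it is precisely the point where the paper proceeds differently.

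You ask for a loxodromic $k\in G_{\mf{u}}\cap G_{\mf{v}}$ whose attracting fixed point is simultaneously \emph{generic} in $\partial_{\infty}\mf{u}$ and in $\partial_{\infty}\mf{v}$, i.e.\ avoids the countable union $\bigcup_{\mf{w}:\mf{u}\not\preceq\mf{w}}(\partial_{\infty}\mf{u}\cap\partial_{\infty}\mf{w})$ (and its analogue for $\mf{v}$). This union is merely \emph{meagre} --- a countable union of nowhere-dense sets --- and ``classical ping-pong'' will not produce such a $k$: ping-pong (or the density of the orbit $H\cdot h^{+}$ in $\partial_{\infty}H$) only lets you dodge a finite union of nowhere-dense sets, because density does not pass to a meagre complement. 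Worse, the set of non-generic points in $\partial_{\infty}\mf{u}$ is $G_{\mf{u}}$-invariant (if $h^{+}\in\partial_{\infty}\mf{w}_0$ with $\mf{u}\not\preceq\mf{w}_0$ and $t\in G_{\mf{u}}$, then $th^{+}\in\partial_{\infty}(t\mf{w}_0)$ with $\mf{u}=t\mf{u}\not\preceq t\mf{w}_0$), so you cannot escape it by conjugating: either some conjugate orbit is already generic or none of them is, and there is no a priori reason for the former. The set of loxodromic fixed points is itself meagre, and two meagre sets can perfectly well be nested.

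The paper's proof fixes the pair $(x,y)$ \emph{before} choosing the element. Because $x,y$ are regular, among the hyperplanes separating them only finitely many have nonempty boundary intersection with $\partial_{\infty}\mf{u}$; of these, only finitely many, say $\mf{u}_1,\dots,\mf{u}_r$, lie in $\mc{U}\setminus\mscr{T}_{\mc{U}}(\mf{u})$ (and similarly $\mf{v}_1,\dots,\mf{v}_s$ on the $Y$-side). The bad set $K$ to avoid is then a \emph{finite} union of nowhere-dense sets, hence itself nowhere-dense, and one can pick a conjugate $h_0$ of any strongly contracting $h$ with $h_0^{+\infty}\not\in K$ simply by density of the orbit. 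The resulting weaker conclusion --- $\mscr{T}_{\mc{U}}(\mf{u})\cap\mc{U}(x|y)=\mc{U}(h_0)\cap\mc{U}(x|y)$ rather than $\mscr{T}_{\mc{U}}(\mf{u})=\mscr{W}_X(h_0)\cap\mc{U}$ --- is exactly what your downstream argument actually uses, so the fix is to aim for the weaker statement from the start. One small further remark: your trustworthiness argument via goodness of $k$ (directly showing $\mscr{W}(a_n,d_n|b_n,c_n)=\emptyset$) is correct and a little slicker than the paper's route through strongly contracting automorphisms and Lemma~\ref{nt vs hyp}, and it does not depend on genericity --- so it survives the fix unchanged.
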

\begin{proof}
By Lemma~\ref{limit set commensurable}, the subgroups $G_{\mf{u}}$ and $G_{\mf{v}}$ are commensurable. Let $H$ be a common finite-index subgroup; in particular, $H$ acts properly and cocompactly on both $\mf{u}$ and $\mf{v}$. Given an infinite-order element $h\in H$, we write $\mc{U}(h)$ and $\mc{V}(h)$ for the subsets of $\mc{U}$ and $\mc{V}$, respectively, corresponding to hyperplanes that contain $o_X(h^{\pm\infty})$ or $o_Y(h^{\pm\infty})$ in their trace at infinity. Note that we have $\mscr{T}_{\mc{U}}(\mf{u})\cu\mc{U}(h)$ and $\mscr{T}_{\mc{V}}(\mf{v})\cu\mc{V}(h)$ by Lemma~\ref{W(k)}.

\smallskip
{\bf Claim.} \emph{There exists an infinite-order element $h_0\in H$ that is good in both $X$ and $Y$, for which the pairs $(o_X(h_0^{-\infty}),o_X(h_0^{+\infty}))$ and $(o_Y(h_0^{-\infty}),o_Y(h_0^{+\infty}))$ are trustworthy, and for which we have $\mscr{T}_{\mc{U}}(\mf{u})\cap\mc{U}(x|y)=\mc{U}(h_0)\cap\mc{U}(x|y)$ and $\mscr{T}_{\mc{V}}(\mf{v})\cap\mc{V}(f(x)|f(y))=\mc{V}(h_0)\cap\mc{V}(f(x)|f(y))$.}

\smallskip
The claim concludes the proof as follows. Lemma~\ref{trust lemma} shows that the $4$--tuples $(h_0^nx,h_0^{-n}x|h_0^ny,h_0^{-n}y)$ are trustworthy for large $n>0$; the same is true of $(h_0^nf(x),h_0^{-n}f(x)|h_0^nf(y),h_0^{-n}f(y))$. Equivariance of $f$ and Lem\-ma~\ref{trust explained} then imply that the set $\mc{U}(h_0^nx,h_0^{-n}x|h_0^ny,h_0^{-n}y)$ has the same cardinality as $\mc{V}(h_0^nf(x),h_0^{-n}f(x)|h_0^nf(y),h_0^{-n}f(y))$ for all large $n>0$. 

Since $x,y\in\partial_{\rm nt}X$, Lemma~\ref{nt vs hyp} shows that $x,y\not\in\partial_{\infty}\mf{u}$. On the other hand $h_0$ preserves $\mf{u}$ and its two fixed points in $\partial_{\infty}X$ lie within $\partial_{\infty}\mf{u}$ by Lemma~\ref{W(k)}. We conclude that $h_0$ does not fix $x,y$ and Proposition~\ref{simplification} finally yields $\#(\mc{U}(h_0)\cap\mc{U}(x|y))=\#(\mc{V}(h_0)\cap\mc{V}(f(x)|f(y)))$. By our choice of $h_0$, these two cardinalities are precisely $\delta_{\mf{u}}(x,y)$ and $\delta_{\mf{v}}(f(x),f(y))$.

\smallskip
\emph{Proof of claim.}
The points $x,y$ are regular by Lemma~\ref{regular vs NT}. It follows that, among hyperplanes of $X$ separating $x$ and $y$, only finitely many are not strongly separated from $\mf{u}$. In light of Proposition~\ref{intersections of traces}, there are only finitely many hyperplanes $\mf{u}'$ that separate $x$ and $y$ and satisfy $\partial_{\infty}\mf{u}'\cap\partial_{\infty}\mf{u}\neq\emptyset$; let us denote by $\mf{u}_1,...,\mf{u}_r$ those that lie in $\mc{U}\setminus\mscr{T}_{\mc{U}}(\mf{u})$. Similarly let $\mf{v}_1,...,\mf{v}_s$ be the elements of $\mc{V}\setminus\mscr{T}_{\mc{V}}(\mf{v})$ that separate $f(x)$ and $f(y)$ and satisfy $\partial_{\infty}\mf{v}_i\cap\partial_{\infty}\mf{v}\neq\emptyset$. 

By boundary-maximality of $\mf{u}$ and $\mf{v}$, we have $\partial_{\infty}\mf{u}_i\cap\partial_{\infty}\mf{u}\subsetneq\partial_{\infty}\mf{u}$ and $\partial_{\infty}\mf{v}_j\cap\partial_{\infty}\mf{v}\subsetneq\partial_{\infty}\mf{v}$ for all $i$ and $j$. Lemma~\ref{nowhere-dense} and Proposition~\ref{intersections of traces} then show that each of these subsets is nowhere-dense. Hence, the union $K\cu\partial_{\infty}H$ of all sets $o_X^{-1}(\partial_{\infty}\mf{u}_i\cap\partial_{\infty}\mf{u})$ and $o_Y^{-1}(\partial_{\infty}\mf{v}_j\cap\partial_{\infty}\mf{v})$ is nowhere-dense. 

Observe that there exists $h\in H$ that acts as a neatly contracting automorphism on both $\mf{u}$ and $\mf{v}$. This follows from Proposition~\ref{sc exist} if $H$ is not virtually cyclic and from Lemma~\ref{ss in quasi-lines} otherwise. Since $K$ is nowhere-dense, there exists a conjugate $h_0$ of $h$ in $H$ such that $h_0^{+\infty}\not\in K$.

Now, we have $o_X(h_0^{+\infty})\not\in\partial_{\infty}\mf{u}_i$ and $o_Y(h_0^{+\infty})\not\in\partial_{\infty}\mf{v}_j$ for all $i$ and $j$, which implies that the inclusions $\mscr{T}_{\mc{U}}(\mf{u})\cap\mc{U}(x|y)\cu\mc{U}(h_0)\cap\mc{U}(x|y)$ and $\mscr{T}_{\mc{V}}(\mf{v})\cap\mc{V}(f(x)|f(y))\cu\mc{V}(h_0)\cap\mc{V}(f(x)|f(y))$ are equalities. 

Since $h_0$ acts as a neatly contracting automorphism on $\mf{u}$, we have $o_X(h_0^{\pm\infty})\in\partial_{\rm nt}\mf{u}$. Lemma~\ref{nt vs hyp} then shows that the pair $(o_X(h_0^{-\infty}),o_X(h_0^{+\infty}))$ is trustworthy. Similarly, we see that $(o_Y(h_0^{-\infty}),o_Y(h_0^{+\infty}))$ is trustworthy. Finally, as $h_0$ has infinite order, Remark~\ref{goods exist} ensures that a power of $h_0$ is good. This concludes the proof of the claim and of the proposition.
\end{proof}

In the setting of Proposition~\ref{delta preserved}, we get the commutative diagram:
\[ \begin{tikzcd}
\mc{A} \arrow[twoheadrightarrow]{r}{\pi_{\mf{u}}} \arrow{d}{f} & \pi_{\mf{u}}(\mc{A}) \arrow{d}{\psi} \arrow[r, hook] & T(\mf{u}) \arrow[dashed]{d}{\Psi} \\
\mc{B} \arrow[twoheadrightarrow]{r}{\pi_{\mf{v}}} & \pi_{\mf{v}}(\mc{B}) \arrow[r, hook] & T(\mf{v}). 
\end{tikzcd}
\]
Here $\pi_{\mf{u}}(\mc{A})$ and $\pi_{\mf{v}}(\mc{B})$ are precisely the quotient metric spaces associated to the pseudo-metric spaces $(\mc{A},\delta_{\mf{u}})$ and $(\mc{B},\delta_{\mf{v}})$. The map $\psi$ is then provided by Proposition~\ref{delta preserved} and it is an isometry.

By part~(1) of Proposition~\ref{ample}, we see that $\pi_{\mf{u}}(\mc{A})$ and $\pi_{\mf{v}}(\mc{B})$ contain all vertices of degree one in $T(\mf{u})$ and $T(\mf{v})$, respectively. The dashed arrow $\Psi$ is then obtained through the following general fact about trees.

\begin{lem}\label{fact on finite trees}
Let $T_1$ and $T_2$ be finite trees with all edges of length one\footnote{When proving Theorem~\ref{ext Moeb intro} for \emph{cuboid} complexes, one should allow edges of arbitrary length in $T_1$ and $T_2$. The proof of the lemma does not change.}. Let $V_i\cu T_i$ be sets of vertices containing all degree-one vertices of $T_i$. Every distance preserving bijection $\psi\colon V_1\ra V_2$ uniquely extends to an isometry $\Psi\colon T_1\ra T_2$. 
\end{lem}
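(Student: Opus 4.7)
The plan is to construct $\Psi$ pointwise using $\psi$ on $V_1$, and then verify well-definedness and the isometry property via the rigidity of finite metric subsets of $\R$-trees.

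Uniqueness will be immediate: since $V_1$ contains every leaf of the finite tree $T_1$, the convex hull of $V_1$ in $T_1$ equals all of $T_1$. Thus any isometry extending $\psi$ is determined on each geodesic segment $[u,v]$ with $u,v\in V_1$, and therefore on all of $T_1$. This observation also dictates the construction of $\Psi$: given $p\in T_1$, I would pick $u,v\in V_1$ with $p\in[u,v]$, and set $\Psi(p)$ to be the unique point of $[\psi(u),\psi(v)]\cu T_2$ at distance $d_{T_1}(u,p)$ from $\psi(u)$.

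I would then verify the two nontrivial properties using the standard fact that the isometry type of a finite subset of an $\R$-tree is fully determined by its matrix of pairwise distances (via the four-point condition of Buneman). For well-definedness: if $p\in[u,v]\cap[u',v']$, the two candidate values for $\Psi(p)$ give two isometric embeddings of the abstract five-point metric space $\{u,v,u',v',p\}\cu T_1$ into $T_2$ agreeing on $\{u,v,u',v'\}$, and rigidity forces the two candidates to coincide. For the isometry property, given $p,q\in T_1$ with witnesses $u_p,v_p,u_q,v_q\in V_1$ as above, the same principle applied to the six-point set $\{u_p,v_p,u_q,v_q,p,q\}$ gives $d_{T_2}(\Psi(p),\Psi(q))=d_{T_1}(p,q)$. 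Surjectivity of $\Psi$ then follows by running the construction on the distance-preserving bijection $\psi^{-1}\colon V_2\ra V_1$ and invoking uniqueness.

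The only technical point is the rigidity lemma for finite metric subsets of $\R$-trees; this is completely standard, but if one prefers to avoid citing it, it can be handled by an explicit finite case analysis on the combinatorial shape of the Steiner tree spanned by four or six points in a tree.
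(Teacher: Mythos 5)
Your proof is correct and takes a genuinely different route from the paper's. The paper extends every leaf of $T_i$ to a ray, obtaining geodesically complete trees $\mc{T}_i$ whose boundaries $\partial T_i$ are in bijection with the degree-one vertices $V_i'$; it then checks that $\psi$ induces a cross-ratio-preserving boundary map and invokes Theorem~4.3 of \cite{Beyrer-Schroeder} to produce the isometry $\Psi\colon\mc{T}_1\ra\mc{T}_2$, after which a short Gromov-product computation shows $\Psi=\psi$ on $V_1'$ and then on all of $V_1$. You instead construct $\Psi$ pointwise along geodesics between points of $V_1$ and appeal to Buneman-type rigidity for finite metric subsets of trees, which is more elementary and avoids the detour through boundaries and the external citation; the paper's version has the virtue of staying within the cross-ratio framework of the article. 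One expository caution: the sentence ``the two candidate values for $\Psi(p)$ give two isometric embeddings of $\{u,v,u',v',p\}$ into $T_2$'' asserts something that is not immediate from the construction --- the point $\Psi(p)$ is defined to have the correct distances to $\psi(u),\psi(v)$ only, and the claim that it also has the correct distances to $\psi(u'),\psi(v')$ is itself the content of the rigidity statement. The precise fact you want is that an isometric embedding of a finite metric space $F$ satisfying the four-point condition into a tree $T$ extends \emph{uniquely} to an isometric embedding of $\hull(F)$ into $T$ (uniqueness because the leaves of $\hull(F)$ lie in $F$, existence by Buneman); once this is stated, $\Psi(p)$ is forced to equal the value of that unique extension at $p$, and both well-definedness and the isometry property follow cleanly. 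With that rephrasing the argument is complete.
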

\begin{proof}
Let $V_i'\cu V_i$ denote the subsets of vertices of degree one. If $\#V_i'\leq 2$, the tree $T_i$ is a segment and the lemma is clear. Let us therefore assume that $V_1'$ and $V_2'$ both contain at least three elements. Observe that a point $x\in V_i$ lies outside $V_i'$ precisely when we can find $y,z\in V_i\setminus\{x\}$ with $x=m(x,y,z)$. Since $\psi$ preserves distances, it follows that $\psi(V_1')=V_2'$.

Extending every leaf of $T_i$ to a ray, we embed $T_i$ in a geodesically complete tree $\mc{T}_i$ with a natural bijection $\phi_i\colon V_i'\ra\partial T_i$. Observing that the maps $\phi_i$ preserve cross ratios and $\psi$ is an isometry, we conclude that the bijection $\phi_2\psi\phi_1^{-1}\colon\partial T_1\ra\partial T_2$ preserves cross ratios. It then follows from Theorem~4.3 in \cite{Beyrer-Schroeder} that $\phi_2\psi\phi_1^{-1}$ admits a unique extension to an isometry $\Psi\colon\mc{T}_1\ra \mc{T}_2$.

Note that $m(x,y,z)=m(\phi_i(x),\phi_i(y),\phi_i(z))$ for all pairwise distinct points $x,y,z\in V_i'$. It follows that, given pairwise distinct points $x,y,z\in V_1'$, the map $\Psi$ takes $m_1:=m(x,y,z)$ to $m_2:=m(\psi(x),\psi(y),\psi(z))$. Moreover, $\Psi$ takes the ray from $m_1$ to $\phi_1(x)$ to the ray from $m_2$ to $\phi_2(\psi(x))$. Since 
\begin{align*}
d(m_2,&\Psi(x))=d(m_1,x)=\tfrac{1}{2}\cdot[d(x,y)+d(x,z)-d(y,z)] \\
&=\tfrac{1}{2}\cdot[d(\psi(x),\psi(y))+d(\psi(x),\psi(z))-d(\psi(y),\psi(z))]=d(m_2,\psi(x)),
\end{align*} 
we conclude that $\Psi(x)=\psi(x)$. Thus $\Psi$ and $\psi$ coincide on $V_1'$. For every other point $w\in V_1$, we can find $x,y\in V_1'$ such that $d(x,y)=d(x,w)+d(w,y)$. It is then clear that $\Psi$ and $\psi$ coincide on the entire $V_1$. 
\end{proof}

\begin{cor}\label{key corollary}
Let $\mf{u}$ and $\mf{v}$ be as in Proposition~\ref{delta preserved}. For all ${\mf{h}\in\mscr{H}(X)}$:
\[\#\{\mf{j}\in\wt{\mscr{T}_{\mc{U}}}(\mf{u})\mid\mf{j}\cap\mc{A}=\mf{h}\cap\mc{A}\}=\#\{\mf{m}\in\wt{\mscr{T}_{\mc{V}}}(\mf{v})\mid\mf{m}\cap\mc{B}=f(\mf{h}\cap\mc{A})\}.\]
\end{cor}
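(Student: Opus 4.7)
The plan is to show that the isometry $\Psi\colon T(\mf{u})\to T(\mf{v})$ extending $\psi$ (given by Lemma~\ref{fact on finite trees}) induces an explicit bijection between the two sets whose cardinalities we wish to compare. First I would record that, by the general theory of restriction quotients of $\CAT$ cube complexes, every $\mf{j}\in\wt{\mscr{T}_{\mc{U}}}(\mf{u})$ is the $\pi_{\mf{u}}$-preimage of a unique halfspace $\bar{\mf{j}}\subseteq T(\mf{u})$, and $\mf{j}\mapsto\bar{\mf{j}}$ is a bijection; an analogous statement holds on the $Y$ side. Because $\Psi$ is an isometry of finite trees taking vertices to vertices, it is a cubical isomorphism, so it induces a bijection on halfspaces, which, composed with the above identifications, yields a bijection $\Psi_*\colon\wt{\mscr{T}_{\mc{U}}}(\mf{u})\to\wt{\mscr{T}_{\mc{V}}}(\mf{v})$.

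The heart of the argument is then the identity
\[ f(\mf{j}\cap\mc{A}) = \Psi_*(\mf{j})\cap\mc{B} \qquad \text{for every } \mf{j}\in\wt{\mscr{T}_{\mc{U}}}(\mf{u}), \]
which I would establish by chasing the commutative diagram displayed immediately before Lemma~\ref{fact on finite trees}. For $x\in\mc{A}$, the condition $x\in\mf{j}$ is equivalent to $\pi_{\mf{u}}(x)\in\bar{\mf{j}}$; applying the isomorphism $\Psi$ this becomes $\Psi(\pi_{\mf{u}}(x))\in\overline{\Psi_*(\mf{j})}$; and commutativity $\Psi\circ\pi_{\mf{u}}|_{\mc{A}}=\pi_{\mf{v}}\circ f|_{\mc{A}}$ (which holds because $\Psi$ extends $\psi$) rewrites this as $\pi_{\mf{v}}(f(x))\in\overline{\Psi_*(\mf{j})}$, i.e.\ $f(x)\in\Psi_*(\mf{j})$. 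Since $f(\mc{A})=\mc{B}$, the claimed set-theoretic identity follows at once.

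From this identity the corollary is immediate: $\Psi_*$ restricts to a bijection between $\{\mf{j}\in\wt{\mscr{T}_{\mc{U}}}(\mf{u})\mid\mf{j}\cap\mc{A}=\mf{h}\cap\mc{A}\}$ and $\{\mf{m}\in\wt{\mscr{T}_{\mc{V}}}(\mf{v})\mid\mf{m}\cap\mc{B}=f(\mf{h}\cap\mc{A})\}$, so the two finite sets have the same cardinality. The proof is essentially a diagram chase once Proposition~\ref{delta preserved} and Lemma~\ref{fact on finite trees} are in hand, and I do not expect any serious obstacle; the only minor points requiring care are to note that $\Psi$, as a vertex-preserving isometry of trees, is genuinely a cubical isomorphism that permutes halfspaces, and that the commutative square relating $\Psi$, $f$ and the two projections need only hold pointwise on $\mc{A}$, which is exactly what Lemma~\ref{fact on finite trees} provides.
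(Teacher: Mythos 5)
Your proposal is correct, and it takes a slightly different (and arguably cleaner) route than the paper. The paper argues via a reduction to one inequality and a symmetry argument (if the right-hand side vanishes one is done; otherwise one may replace $\mf{h}$ by some $\mf{j}'\in\wt{\mscr{T}_{\mc{U}}}(\mf{u})$ and apply the same argument to $f^{-1}$), and then proves the inequality geometrically: it describes the relevant halfspaces as the edges of an open arc $\alpha\subseteq T(\mf{u})$ lying between the convex hulls of $\pi_{\mf{u}}(\mf{h}\cap\mc{A})$ and $\pi_{\mf{u}}(\mf{h}^*\cap\mc{A})$, and then applies $\Psi$ to $\alpha$. You instead observe directly that $\Psi$, being a vertex-preserving isometry of finite trees with unit edges, is a simplicial isomorphism and so induces a bijection $\Psi_*$ on the sets of halfspaces; combining the standard description of restriction-quotient halfspaces (namely $x\in\mf{j}\iff\pi_{\mf{u}}(x)\in\bar{\mf{j}}$) with the commutativity $\Psi\circ\pi_{\mf{u}}|_{\mc{A}}=\pi_{\mf{v}}\circ f|_{\mc{A}}$ gives the key identity $f(\mf{j}\cap\mc{A})=\Psi_*(\mf{j})\cap\mc{B}$, from which the corollary is immediate because $\Psi_*$ restricts to a bijection between the two sets in question. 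Your version avoids the inequality reduction and the case split on whether $\mf{h}$ already lies in $\wt{\mscr{T}_{\mc{U}}}(\mf{u})$, handling all $\mf{h}\in\mscr{H}(X)$ uniformly; the paper's argument is more pictorial (it makes visible the arc of collapsed edges) but is otherwise equivalent in content. Both rely on the same inputs, namely Proposition~\ref{delta preserved} and Lemma~\ref{fact on finite trees}.
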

\begin{proof}
We begin by observing that it suffices to prove the inequality
\[\tag{$\ast$}\label{inequality} \#\{\mf{j}\in\wt{\mscr{T}_{\mc{U}}}(\mf{u})\mid\mf{j}\cap\mc{A}=\mf{h}\cap\mc{A}\}\leq\#\{\mf{m}\in\wt{\mscr{T}_{\mc{V}}}(\mf{v})\mid\mf{m}\cap\mc{B}=f(\mf{h}\cap\mc{A})\}.\]
This already yields an equality if the right-hand side vanishes. Otherwise, there exists $\mf{m}'\in\wt{\mscr{T}_{\mc{V}}}(\mf{v})$ with $\mf{m}'\cap\mc{B}=f(\mf{h}\cap\mc{A})$ and we can apply the same argument to $f^{-1}$ and $\mf{m}'$ to obtain the opposite inequality.
 
Now, if the left-hand side of~\eqref{inequality} vanishes, there is nothing to prove. Otherwise, there exists a halfspace $\mf{j}'\in\wt{\mscr{T}_{\mc{U}}}(\mf{u})$ with $\mf{j}'\cap\mc{A}=\mf{h}\cap\mc{A}$ and our setup is not affected if we replace $\mf{h}$ with $\mf{j}'$. We can thus assume that $\mf{h}\in\wt{\mscr{T}_{\mc{U}}}(\mf{u})$.

The projection $\pi_{\mf{u}}(\mf{h})$ is a halfspace of the tree $T(\mf{u})$, with complement $\pi_{\mf{u}}(\mf{h}^*)$. Let $C$ and $C^*$ be the convex hulls, respectively, of $\pi_{\mf{u}}(\mf{h}\cap\mc{A})$ and $\pi_{\mf{u}}(\mf{h}^*\cap\mc{A})$. These are disjoint subtrees of $T(\mf{u})$ and the union $C\sqcup C^*$ contains all degree-one vertices of $T(\mf{u})$. The complement of $C\sqcup C^*$ must be an open arc $\alpha\cu T(\mf{u})$ such that every vertex in $\alpha$ has degree two and lies outside $\pi_{\mf{u}}(\mc{A})$. Thus, the hyperplanes associated to the set $\{\mf{j}\in\wt{\mscr{T}_{\mc{U}}}(\mf{u})\mid\mf{j}\cap\mc{A}=\mf{h}\cap\mc{A}\}$ are precisely the elements of $\mscr{T}_{\mc{U}}(\mf{u})$ that are dual to edges of $\alpha$. 

Observing that $\Psi\o\pi_{\mf{u}}=\pi_{\mf{v}}\o f$, we see that the sets $\Psi(C)$ and $\Psi(C^*)$ are the convex hulls, respectively, of $\pi_{\mf{v}}(f(\mf{h}\cap\mc{A}))$ and $\pi_{\mf{v}}(f(\mf{h}^*\cap\mc{A}))$ in $T(\mf{v})$. Moreover, the arc $\Psi(\alpha)$ has the same length of $\alpha$ and its edges correspond to pairwise distinct elements of the set $\{\mf{m}\in\wt{\mscr{T}_{\mc{V}}}(\mf{v})\mid\mf{m}\cap\mc{B}=f(\mf{h}\cap\mc{A})\}$. This yields the desired inequality.
\end{proof}

\subsection{Concluding the proof.}\label{concluding}

In this subsection:

\begin{ass}
Let the group $G$, the cube complexes $X$ and $Y$ and the map $f$ be as in the statement of Theorem~\ref{ext Moeb intro}. We set $\mc{A}=\Om$ and $\mc{B}=f(\Om)$, so that we are in the general setup of Section~\ref{traces vs cross ratios}. Here, however, we do not fix sets $\mc{U}$ and $\mc{V}$. Instead, we observe that, by the hypotheses of Theorem~\ref{ext Moeb intro}, we have $\Cr_X(x,y,z,w)=\Cr_Y(f(x),f(y),f(z),f(w))$ for all pairwise distinct points $x,y,z,w\in\mc{A}$.
\end{ass}

Given subsets $\mc{U}\cu\mscr{W}(X)$ and $\mc{V}\cu\mscr{W}(Y)$, we denote by $\wt{\mc{U}}\cu\mscr{H}(X)$ and $\wt{\mc{V}}\cu\mscr{H}(Y)$ the collections of halfspaces bounded, respectively, by the elements of $\mc{U}$ and $\mc{V}$.

\begin{defn}\label{WP defn}
We say that subsets $\mc{U}\cu\mscr{W}(X)$ and $\mc{V}\cu\mscr{W}(Y)$ are \emph{well-paired} if, for every $\mf{h}\in\mscr{H}(X)$ and $\mf{k}\in\mscr{H}(Y)$, we have:
\[\#\{\mf{j}\in\wt{\mc{U}}\mid\mf{j}\cap\mc{A}=\mf{h}\cap\mc{A}\}=\#\{\mf{m}\in\wt{\mc{V}}\mid\mf{m}\cap\mc{B}=f(\mf{h}\cap\mc{A})\},\]
\[\#\{\mf{m}\in\wt{\mc{V}}\mid\mf{m}\cap\mc{B}=\mf{k}\cap\mc{B}\}=\#\{\mf{j}\in\wt{\mc{U}}\mid\mf{j}\cap\mc{A}=f^{-1}(\mf{k}\cap\mc{B})\}.\] 
In particular, if $\mf{h}\in\wt{\mc{U}}$ and $\mf{k}\in\wt{\mc{V}}$, the right-hand sides must be nonempty.
\end{defn}

The following observation is immediate from definitions.

\begin{lem}\label{WP lemma}
If the sets $\mc{U}\cu\mscr{W}(X)$ and $\mc{V}\cu\mscr{W}(Y)$ are well-paired and the points $x,y,z,w\in\mc{A}$ are pairwise distinct, we have:
\[\#\mc{U}(x,y|z,w)=\#\mc{V}(f(x),f(y)|f(z),f(w)),\] 
\[\Cr_{\mc{U}}(x,y,z,w)=\Cr_{\mc{V}}(f(x),f(y),f(z),f(w)).\]
\end{lem}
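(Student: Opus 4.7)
The plan is to establish the first (counting) identity and then derive the cross-ratio identity by linearity. The entire argument is bookkeeping: I will organize halfspaces in $\wt{\mc{U}}$ and $\wt{\mc{V}}$ by their traces on $\mc{A}$ and $\mc{B}$, and invoke well-pairing term by term.

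First, I will unfold the notation. By the definitions of $\mscr{H}(A|B)$ and $\mscr{W}(A|B)$ recalled in Section~\ref{CCC prelims}, each $\mf{u}\in\mc{U}(x,y|z,w)$ has a unique side lying in $\mscr{H}(\{x,y\}|\{z,w\})$, so
\[\#\mc{U}(x,y|z,w)=\#\bigl\{\mf{j}\in\wt{\mc{U}}\mid\{z,w\}\cu\mf{j},\ \{x,y\}\cu\mf{j}^*\bigr\}.\]
Since $x,y,z,w\in\mc{A}$, whether a halfspace $\mf{j}$ satisfies these two containments depends only on its trace $\mf{j}\cap\mc{A}$. Writing
\[N_X(T):=\#\{\mf{j}\in\wt{\mc{U}}\mid\mf{j}\cap\mc{A}=T\},\qquad N_Y(T'):=\#\{\mf{m}\in\wt{\mc{V}}\mid\mf{m}\cap\mc{B}=T'\},\]
I will rewrite
\[\#\mc{U}(x,y|z,w)=\sum_{T}N_X(T),\qquad \#\mc{V}(f(x),f(y)|f(z),f(w))=\sum_{T'}N_Y(T'),\]
where the first sum ranges over $T\cu\mc{A}$ with $\{z,w\}\cu T$ and $\{x,y\}\cap T=\emptyset$, and the second over $T'\cu\mc{B}$ satisfying the analogous conditions with $f(z),f(w),f(x),f(y)$.

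Since $f\colon\mc{A}\ra\mc{B}$ is a bijection, the substitution $T'=f(T)$ puts the two index sets in bijection. The two equations of Definition~\ref{WP defn} say precisely that $N_X(T)=N_Y(f(T))$ whenever $T=\mf{h}\cap\mc{A}$ for some $\mf{h}\in\mscr{H}(X)$, and that $N_Y(T')=N_X(f^{-1}(T'))$ whenever $T'=\mf{k}\cap\mc{B}$ for some $\mf{k}\in\mscr{H}(Y)$. For any $T\cu\mc{A}$ not of the first form, $N_X(T)=0$ trivially (as $\wt{\mc{U}}\cu\mscr{H}(X)$); if moreover $N_Y(f(T))>0$, then $f(T)$ is realised as $\mf{k}\cap\mc{B}$ for some $\mf{k}\in\wt{\mc{V}}$ and the second equation forces $N_X(T)=N_Y(f(T))>0$, contradiction. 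Hence $N_X(T)=N_Y(f(T))$ for every $T$, and the two sums agree term by term, giving the first identity.

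The second, cross-ratio identity will then be immediate from the first and the definition $\Cr_{\mc{U}}(x,y,z,w)=\#\mc{U}(x,z|y,w)-\#\mc{U}(x,w|y,z)$ (analogously for $\mc{V}$). No substantive obstacle arises; the only subtlety is that when indexing the sums by arbitrary $T\cu\mc{A}$ one must verify that traces not realised by any halfspace of $\mscr{H}(X)$ contribute zero on both sides, which is exactly what the two-sided formulation of well-pairing provides.
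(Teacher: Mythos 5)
Your proof is correct, and it is exactly the unpacking the paper has in mind when it declares the lemma ``immediate from definitions'': organize $\wt{\mc{U}}$ and $\wt{\mc{V}}$ by their traces on $\mc{A}$ and $\mc{B}$, note the separation condition depends only on the trace, and match counts via the two equations of Definition~\ref{WP defn} (using the second equation, as you do, to rule out phantom traces on the $\mc{V}$ side). The cross-ratio identity then follows by subtracting two instances of the first.
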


The next result provides the correct formulation of Step~(IIb) from the introduction, as traces at infinity need to be counted ``with multiplicity''.

\begin{thm}\label{all WP}
The sets $\mscr{W}(X)$ and $\mscr{W}(Y)$ are well-paired.
\end{thm}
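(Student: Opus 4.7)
The plan is to build a finite increasing chain of well-paired pairs $(\mc{U}_n, \mc{V}_n)$ with $\mc{U}_n \cu \mscr{W}(X)$ and $\mc{V}_n \cu \mscr{W}(Y)$, starting from $(\mc{U}_0, \mc{V}_0) = (\emptyset, \emptyset)$ (vacuously well-paired) and reaching $(\mscr{W}(X), \mscr{W}(Y))$ after finitely many steps. Each inductive step applies Proposition~\ref{maximal to maximal} and Corollary~\ref{key corollary} not to $\mc{U}_n$ and $\mc{V}_n$ themselves, but to the \emph{complementary} sets $\mc{U}_n^c := \mscr{W}(X)\setminus\mc{U}_n$ and $\mc{V}_n^c := \mscr{W}(Y)\setminus\mc{V}_n$.

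Assuming $(\mc{U}_n, \mc{V}_n)$ is well-paired, Lemma~\ref{WP lemma} yields $\Cr_{\mc{U}_n}(x,y,z,w) = \Cr_{\mc{V}_n}(f(x),f(y),f(z),f(w))$ for all distinct $x,y,z,w\in\mc{A}$. Combining this with the standing hypothesis that $\Cr_X$ and $\Cr_Y$ agree via $f$ on $\mc{A}^4$, and using additivity of cross ratios over the disjoint decomposition $\mscr{W}(X)=\mc{U}_n\sqcup\mc{U}_n^c$ (and similarly for $Y$), I obtain $\Cr_{\mc{U}_n^c} = \Cr_{\mc{V}_n^c}$ via $f$ on $\mc{A}^4$. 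This places $(\mc{U}_n^c, \mc{V}_n^c)$ in the standing setup of Section~\ref{traces vs cross ratios}. Applying Proposition~\ref{maximal to maximal} in both directions produces a canonical bijection between $\sim$-equivalence classes of $\mc{U}_n^c$-boundary-maximal hyperplanes of $X$ and of $\mc{V}_n^c$-boundary-maximal hyperplanes of $Y$, matching $[\mf{u}]$ with the unique $[\mf{v}]$ such that $f(\partial_{\infty}\mf{u})=\partial_{\infty}\mf{v}$. I then set
\[\mc{U}_{n+1}=\mc{U}_n\sqcup\bigsqcup_{[\mf{u}]}\mscr{T}_{\mc{U}_n^c}(\mf{u}),\qquad \mc{V}_{n+1}=\mc{V}_n\sqcup\bigsqcup_{[\mf{v}]}\mscr{T}_{\mc{V}_n^c}(\mf{v}),\]
the unions ranging over matched pairs of $\sim$-classes; distinct classes yield disjoint $\mscr{T}$-sets, so these unions are automatically disjoint. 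Well-pairing of $(\mc{U}_{n+1}, \mc{V}_{n+1})$ follows by splitting each count in Definition~\ref{WP defn} into the contribution from $\mc{U}_n$ (matched inductively) and the contribution from the newly added $\mscr{T}$-sets, each of which is matched termwise by Corollary~\ref{key corollary}.

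Termination uses part~(3) of Proposition~\ref{on the preorder}: the $\preceq$-preorder restricted to hyperplanes with nonempty trace at infinity has chain length bounded by a constant $N(X)$. Since the $\mc{U}_n^c$-boundary-maximal hyperplanes are exactly the $\preceq$-maximal elements of $\mc{U}_n^c$ (among those with nonempty trace), each inductive step strictly decreases the maximum $\preceq$-chain length in $\mc{U}_n^c$. After at most $N(X)$ steps every hyperplane with nonempty trace at infinity has been absorbed, simultaneously on the $X$- and $Y$-sides.

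The hard part will be handling hyperplanes with \emph{empty} trace at infinity: Proposition~\ref{maximal to maximal} does not see them, yet they must still be paired up. By Remark~\ref{trivial boundary} together with hyperplane-essentiality, every such hyperplane is a single point dual to a disconnecting edge of $X$ (or $Y$). I expect to address this case by a separate connectedness/splitting argument: each such hyperplane yields a clopen partition $\partial_{\infty}X=\partial_{\infty}\mf{h}\sqcup\partial_{\infty}\mf{h}^*$ (using part~(2) of Lemma~\ref{traces of halfspaces}), and $f$ transports this to a clopen partition of $\partial_{\infty}Y\cong\partial_{\infty}G$. Essentiality and hyperplane-essentiality of $Y$ force this partition to be realized by a corresponding collection of disconnecting edges of $Y$, with matching multiplicity computed directly from the number of parallel disconnecting edges separating the two sides in each complex. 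Putting the two halves together yields the full well-pairing claimed in Theorem~\ref{all WP}.
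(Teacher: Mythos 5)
The first three paragraphs of your proposal follow essentially the same route as the paper: an inductive chain of well-paired pairs, driven by Lemma~\ref{WP lemma}, additivity of cross ratios over a partition of the hyperplane set, Proposition~\ref{maximal to maximal} and Corollary~\ref{key corollary}, with termination guaranteed by part~(3) of Proposition~\ref{on the preorder}. The paper runs the same induction with $\mc{U}_0 = \mscr{W}(X)$ shrinking and $\mc{U}_k^c$ growing, so your indexing is just the complementary convention; the content is identical.

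The gap is in your final paragraph, which you yourself flag as ``the hard part''. The induction only strips off hyperplanes with nonempty trace at infinity, and you are right that the residual collections $\overline{\mc{U}} \cu \mscr{W}(X)$ and $\overline{\mc{V}} \cu \mscr{W}(Y)$ of empty-trace hyperplanes still need to be shown well-paired. Your sketch of a ``connectedness/splitting argument'' does not get there. Pushing $\partial_\infty X = \partial_\infty\mf{h} \sqcup \partial_\infty\mf{h}^*$ through $f$ gives a clopen partition of $\partial_\infty Y$, but nothing in the hypotheses tells you that an arbitrary clopen partition of $\partial_\infty Y$ is realized by a disconnecting edge of $Y$ — this is precisely something that must be \emph{proved}, not read off from essentiality and hyperplane-essentiality. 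Worse, even granting such a realization, the assertion that the multiplicity (the number of parallel disconnecting edges inducing that exact partition) matches on the two sides has no argument at all; your phrase ``computed directly from the number of parallel disconnecting edges separating the two sides'' is circular.

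The paper instead handles $\overline{\mc{U}}, \overline{\mc{V}}$ with the cross ratio: the induction has already established $\Cr_{\overline{\mc{U}}} = \Cr_{\overline{\mc{V}}}$ via $f$, Remark~\ref{trivial boundary} shows no two hyperplanes in $\overline{\mc{U}}$ (or $\overline{\mc{V}}$) are transverse, so by Lemma~\ref{three pwt} every 4-tuple is $\overline{\mc{U}}$-trustworthy (resp.\ $\overline{\mc{V}}$-trustworthy). Lemma~\ref{trust explained} then converts cross ratio equality into equality of individual counts $\#\overline{\mc{U}}(x,y|z,w) = \#\overline{\mc{V}}(f(x),f(y)|f(z),f(w))$, and Lemma~\ref{all trustworthy} promotes this to an equality of $\#\overline{\mc{U}}(\mf{h}\cap\mc{A} \mid \mf{h}^*\cap\mc{A})$ with its image. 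That is exactly the multiplicity statement you need, obtained from the cross ratio machinery rather than from topology. You should replace your last paragraph with this argument; note that Lemma~\ref{all trustworthy} — which you never invoke — is the key device.
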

\begin{proof}
We set $\mc{U}_0=\mscr{W}(X)$ and define inductively $\mc{U}_{k+1}\subsetneq\mc{U}_k$ as the subset of hyperplanes that are not $\mc{U}_k$--boundary-maximal. Let $\mc{U}_k^c=\mscr{W}(X)\setminus\mc{U}_k$ and similarly define the sets $\mc{V}_k$ and $\mc{V}_k^c$ starting from $\mc{V}_0=\mscr{W}(Y)$. 

We will now show by induction on $k$ that the sets $\mc{U}_k^c$ and $\mc{V}_k^c$ are well-paired. The base case $k=0$ is trivial, as $\mc{U}_0^c$ and $\mc{V}_0^c$ are empty. Assuming that $\mc{U}_k^c$ and $\mc{V}_k^c$ are well-paired for some $k\geq 0$, Lemma~\ref{WP lemma} yields
\[\Cr_{\mc{U}_k^c}\left(x,y,z,w\right)=\Cr_{\mc{V}_k^c}\left(f(x),f(y),f(z),f(w)\right),\]
whenever $x,y,z,w\in\mc{A}$ are pairwise distinct. On the other hand, observing that $\Cr_X=\Cr_{\mc{U}_k}+\Cr_{\mc{U}_k^c}$ and $\Cr_Y=\Cr_{\mc{V}_k}+\Cr_{\mc{V}_k^c}$, the fact that $f$ takes $\Cr_X$ to $\Cr_Y$ implies that
\[\Cr_{\mc{U}_k}\left(x,y,z,w\right)=\Cr_{\mc{V}_k}\left(f(x),f(y),f(z),f(w)\right).\]
We are now in the setting of Section~\ref{traces vs cross ratios} and can apply Proposition~\ref{maximal to maximal}. It follows that, for each $\mf{u}\in\mc{U}_k\setminus\mc{U}_{k+1}$, there exists $\mf{v}\in\mc{V}_k\setminus\mc{V}_{k+1}$ with $\partial_{\infty}\mf{v}=f(\partial_{\infty}\mf{u})$. Applying Corollary~\ref{key corollary} to both $f$ and $f^{-1}$, we see that the sets $\mscr{T}_{\mc{U}_k}(\mf{u})$ and $\mscr{T}_{\mc{V}_k}(\mf{v})$ are well-paired. Letting $\mf{u}$ vary, these sets partition $\mc{U}_k\setminus\mc{U}_{k+1}$ and $\mc{V}_k\setminus\mc{V}_{k+1}$; we conclude that the latter are also well-paired. Observing that $\mc{U}_{k+1}^c=\mc{U}_k^c\sqcup(\mc{U}_k\setminus\mc{U}_{k+1})$ and $\mc{V}_{k+1}^c=\mc{V}_k^c\sqcup(\mc{V}_k\setminus\mc{V}_{k+1})$, we have finally shown that the sets $\mc{U}_{k+1}^c$ and $\mc{V}_{k+1}^c$ are well-paired, completing the proof of the inductive step.

Part~(3) of Proposition~\ref{on the preorder} shows that, for sufficiently large values of $k$, the sets $\mc{U}_k$ and $\mc{V}_k$ are reduced to the subsets $\overline{\mc{U}}\cu\mscr{W}(X)$ and $\overline{\mc{V}}\cu\mscr{W}(Y)$ of hyperplanes with empty trace at infinity. We conclude the proof by showing that $\overline{\mc{U}}$ and $\overline{\mc{V}}$ are well-paired. Note that the arguments above already yield 
\[\Cr_{\overline{\mc{U}}}\left(x,y,z,w\right)=\Cr_{\overline{\mc{V}}}\left(f(x),f(y),f(z),f(w)\right)\]
for all pairwise distinct points $x,y,z,w\in\mc{A}$. By Remark~\ref{trivial boundary}, no two elements of $\overline{\mc{U}}$ or $\overline{\mc{V}}$ are transverse; in particular, Lemma~\ref{three pwt} shows that every element of $\mc{A}^4$ is $\overline{\mc{U}}$--trustworthy and every element of $\mc{B}^4$ is $\overline{\mc{V}}$--trustworthy. Lem\-ma~\ref{trust explained} guarantees that
\[\#\overline{\mc{U}}\left(x,y|z,w\right)=\#\overline{\mc{V}}\left(f(x),f(y)|f(z),f(w)\right).\] 
Thus, if $\mf{h}\in\mscr{H}(X)$ is bounded by an element of $\overline{\mc{U}}$, the set $\overline{\mc{V}}(x',y'|z',w')$ is nonempty for all $x',y'\in f(\mf{h}\cap\mc{A})$ and $z',w'\in f(\mf{h}^*\cap\mc{A})$. Lemma~\ref{all trustworthy} then provides $\overline x,\overline y\in f(\mf{h}\cap\mc{A})$ and $\overline z,\overline w\in f(\mf{h}^*\cap\mc{A})$ with
\begin{align*}
&\#\overline{\mc{V}}\big(f(\mf{h}\cap\mc{A})\big|f(\mf{h}^*\cap\mc{A})\big)=\#\overline{\mc{V}}\big(\overline x,\overline y\big|\overline z,\overline w\big) \\ 
=&\#\overline{\mc{U}}\big(f^{-1}(\overline x),f^{-1}(\overline y)\big|f^{-1}(\overline z),f^{-1}(\overline w)\big)\geq\#\overline{\mc{U}}\big(\mf{h}\cap\mc{A}\big|\mf{h}^*\cap\mc{A}\big)>0.
\end{align*}
The opposite inequality is obtained with a similar argument and we conclude that $\#\overline{\mc{U}}\left(\mf{h}\cap\mc{A}|\mf{h}^*\cap\mc{A}\right)=\#\overline{\mc{V}}\left(f(\mf{h}\cap\mc{A})|f(\mf{h}^*\cap\mc{A})\right)$. This shows that $\overline{\mc{U}}$ and $\overline{\mc{V}}$ are well-paired, thus completing the proof of the theorem.
\end{proof}

\begin{proof}[Proof of Theorem~\ref{ext Moeb intro}]
We are going to show that $f\colon\partial_{\infty}X\ra\partial_{\infty}Y$ induces a $G$--equivariant pocset isomorphism $f_*\colon(\mscr{H}(X),\cu,\ast)\ra(\mscr{H}(Y),\cu,\ast)$. This will then yield a $G$--equivariant cubical isomorphism $F\colon X\ra Y$ by Roller duality; see e.g.\ \cite{Nica,Chatterji-Niblo,Sageev-notes}. Uniqueness and the fact that $F$ and $f$ coincide on $\Omega$ will be clear from the construction.

We start by observing that the set $\mc{H}(\mf{h}):=\{\mf{j}\in\mscr{H}(X)\mid\mf{j}\cap\mc{A}=\mf{h}\cap\mc{A}\}$ is totally ordered by inclusion for each $\mf{h}\in\mscr{H}(X)$. Indeed, given $\mf{j}_1,\mf{j}_2\in\mc{H}(\mf{h})$, the intersections $\mf{j}_1\cap\mf{j}_2\supseteq\mf{h}\cap\mc{A}$ and $\mf{j}_1^*\cap\mf{j}_2^*\supseteq\mf{h}^*\cap\mc{A}$ are nonempty by part~(1) of Proposition~\ref{ample}. Moreover, $\mf{j}_1$ and $\mf{j}_2$ cannot be transverse, or part~(2) of Proposition~\ref{ample} would yield $\mf{h}\cap\mf{h}^*\supseteq\mf{j}_1\cap\mf{j}_2^*\cap\mc{A}\neq\emptyset$. Hence $\mf{j}_1\cu\mf{j}_2$ or $\mf{j}_2\cu\mf{j}_1$.

Note that $\mc{H}(\mf{h})$ is finite as, by part~(2) of Lemma~\ref{bounded components of cu}, the set $\mscr{W}(x,y|z,w)$ is finite for all distinct points $x,y\in\mf{h}\cap\mc{A}$, $z,w\in\mf{h}^*\cap\mc{A}$. By Theorem~\ref{all WP}, the set $f_*\mc{H}(\mf{h}):=\{\mf{m}\in\mscr{H}(Y)\mid\mf{m}\cap\mc{B}=f(\mf{h}\cap\mc{A})\}$ is a chain of the same length as $\mc{H}(\mf{h})$. Thus, there exists a unique order-preserving bijection between $\mc{H}(\mf{h})$ and $f_*\mc{H}(\mf{h})$ and this is exactly how we define $f_*$ on $\mc{H}(\mf{h})$.

Since the sets $\mc{H}(\mf{h})$ partition $\mscr{H}(X)$, we have actually defined a map $f_*\colon\mscr{H}(X)\ra\mscr{H}(Y)$. This is a bijection, an inverse being provided by the same construction applied to $f^{-1}$. It is also clear that $f_*(\mf{h}^*)=f_*(\mf{h})^*$, as $\mc{H}(\mf{h}^*)$ is exactly the set of complements of the elements of $\mc{H}(\mf{h})$. We are thus only left to show that $f_*$ preserves inclusions.

Consider $\mf{h},\mf{k}\in\mscr{H}(X)$ with $\mf{h}\cu\mf{k}$. We can assume that $\mf{k}\not\in\mc{H}(\mf{h})$, as we already know that $f$ is order-preserving on $\mc{H}(\mf{h})$. Hence $\mf{h}\cap\mc{A}\subsetneq\mf{k}\cap\mc{A}$ and, by construction, $f_*(\mf{h})\cap\mc{B}=f(\mf{h}\cap\mc{A})\subsetneq f(\mf{k}\cap\mc{A})=f_*(\mf{k})\cap\mc{B}$. Part~(3) of Proposition~\ref{ample} finally yields $f_*(\mf{h})\subsetneq f_*(\mf{k})$, concluding the proof.
\end{proof}

\begin{rmk}
When dealing with \emph{cuboid} complexes $\mbb{X}$ and $\mbb{Y}$, the proof of Theorem~\ref{ext Moeb intro} needs to be slightly adapted. In general, the sets $\mc{H}(\mf{h})$ and $f_*\mc{H}(\mf{h})$ will have the same \emph{weight}, but not the same \emph{cardinality}. This prevents us from defining an isomorphism $f_*$ between the halfspace pocsets of $\mbb{X},\mbb{Y}$.

One should instead observe that $\CAT$ cuboid complexes are \emph{median spaces} and, thus, naturally endowed with a structure of \emph{space with measured walls} \cite{CDH}. The map $f\colon\partial_{\infty}\mbb{X}\ra\partial_{\infty}\mbb{Y}$ then induces a $G$--equivariant isomorphism of their \emph{measured halfspace pocsets} (see Sections~2.2 and~3.1 in \cite{Fio1}). In this context, an analogue of Roller duality is provided by Corollaries~3.12 and~3.13 in \cite{Fio1} and we obtain a $G$--equivariant isometry $F\colon\mbb{X}\ra\mbb{Y}$. In general, $F$ will not take vertices of $\mbb{X}$ to vertices of $\mbb{Y}$.
\end{rmk}

\section{Epilogue.}\label{main section}

In this section we apply Theorem~\ref{ext Moeb intro} to obtain Theorem~\ref{hyp CR} and Corollary~\ref{hyp MLSR}. Relying on \cite{BF1}, we also prove Corollary~\ref{non-hyp CR}.

\subsection{Cross ratios on contracting boundaries.}\label{epilogue 1}

By Remark~\ref{hyp vs irr}, the following are common hypotheses to Theorem~\ref{hyp CR} and Corollary~\ref{non-hyp CR}.

\begin{ass}
Let $G$ be a non-virtually-cyclic group. We consider proper cocompact actions of $G$ on irreducible, essential $\CAT$ cube complexes $X$ and $Y$. 
\end{ass}

In Section~\ref{cnt sect}, we defined a $G$--equivariant bijection $o_X\colon\partial_cG\ra\partial_cX$ arising from orbit maps. We now want to exploit the map $\Phi$ introduced in Section~\ref{d_c vs d_R} to transfer the cross ratio on $\partial X$ to a $G$--invariant cross ratio on $\partial_cX$ and $\partial_cG$. To this end, we will need the following notion. 

\begin{defn}
A subset $\mc{A}\cu\partial_{\rm cu}X$ is a \emph{section} (of the map $\Phi$) if $\mc{A}$ intersects each fibre of $\Phi$ at exactly one point. In particular, $\partial_{\rm cnt}X\cu\mc{A}$.
\end{defn}

\begin{rmk}\label{finite Gromov product}
If $\mc{A}$ is a section of $\Phi$ and $x,y\in\mc{A}$ are distinct points, Lemma~\ref{bounded components of cu} shows that $x$ and $y$ have finite Gromov product. In particular, if ${x,y,z,w\in\mc{A}}$ are pairwise distinct, the median $m(x,y,z)$ lies in $X$ and the cross ratio $\Cr(x,y,z,w)$ is well-defined and finite (cf.\ Lemma~\ref{infinite Gromov product}).
\end{rmk}

Note that it is always possible to find a $G$--invariant section $\mc{A}\cu\partial_{\rm cu}X$, up to subdividing $X$. Indeed, since every component of $\partial_{\rm cu}X$ is finite by Remark~\ref{components are finite}, we can consider the first cubical subdivision $X'$ and pick the median barycentre (cf.\ Section~\ref{median barycentres}) of each component of $\partial_{\rm cu}X'$. The resulting subset $\mc{A}\cu\partial_{\rm cu}X'$ is a $G$--invariant section of $\Phi'\colon\partial_{\rm cu}X'\ra\partial_cX'\simeq\partial_cX$.

We stress that we assign length $1$ to every edge of $X'$. In particular, the inclusion $X\hookrightarrow X'$ is a homothety doubling distances.

Restricting the cross ratio on $\partial X'$ to the set $\mc{A}$ and identifying $\mc{A}\simeq\partial_cG$ via the composition $o_X^{-1}\o\Phi'$, we obtain an invariant cross ratio 
\[\Cr_X\colon\partial_cG^{(4)}\longrightarrow\Z.\]
By Remark~\ref{finite Gromov product}, the value $\Cr_X(x,y,z,w)$ is well-defined and finite as soon as $x,y,z,w\in\partial_cG$ are pairwise distinct. We refer to $\Cr_X$ as the \emph{cubical cross ratio} on $\partial_cG$ associated to the cubulation $G\acts X$.

We can extend $\Cr_X$ to $4$--tuples $(x,y,z,w)$ with $\#\{x,y,z,w\}\leq 3$ as long as no three of the four points coincide. If $x\neq y$ and $x\neq z$, we set $\Cr_X(x,x,y,z)=0$ and $\Cr_X(x,y,x,z)=+\infty$, while the other values of $\Cr_X$ can be recovered using its symmetries.

We can endow $\partial_cG$ with the pull-back of the topology of $\partial_c^{\rm vis}X$, or with one of the topologies of \cite{Cordes,Cashen-Mackay}, but $\Cr_X$ will rarely be continuous on its entire domain. Indeed, $\Cr_X$ takes integer values, whereas $\partial_cG$ can be connected (for instance when $G$ is a one-ended hyperbolic group). 

Nevertheless, we have the following:

\begin{prop}\label{continuity of cr_X}
Endowing $\partial_cG$ with any of the three topologies above, $\Cr_X$ is continuous at every $4$--tuple with coordinates in $\partial_{\rm cnt}X\cu\partial_cX$.
\end{prop}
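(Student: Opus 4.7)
The plan is to reduce to the visual topology, lift the convergent 4-tuples to the section $\mc{A}\cu\partial_{\rm cu}X'$, and apply continuity of the cubical cross ratio on the Roller compactification.

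First, since the identity maps from the Cashen--Mackay and Cordes topologies on $\partial_cG$ to the pulled-back visual topology $\partial_c^{\rm vis}G$ are continuous (cf.\ Section~7 in \cite{Cashen-Mackay} and the discussion preceding Lemma~\ref{north south}), it suffices to prove continuity of $\Cr_X$ with respect to the visual topology. Fix a 4-tuple $(\xi_1,\xi_2,\xi_3,\xi_4)$ with each $o_X(\xi_i)\in\partial_{\rm cnt}X$ and no three coordinates equal, and let $(\xi_1^n,\xi_2^n,\xi_3^n,\xi_4^n)\ra(\xi_1,\xi_2,\xi_3,\xi_4)$ in the visual topology. Write $\tilde\xi_i\in\mc{A}$ and $\tilde\xi_i^n\in\mc{A}$ for the $\mc{A}$-lifts of $o_X(\xi_i)$ and $o_X(\xi_i^n)$ respectively; since $\Phi'$ is injective on $\partial_{\rm cnt}X'$, we in fact have $\tilde\xi_i=o_X(\xi_i)$ as a point of $\partial_{\rm cu}X'$. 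By construction, $\Cr_X(\xi_1,\xi_2,\xi_3,\xi_4)=\Cr(\tilde\xi_1,\tilde\xi_2,\tilde\xi_3,\tilde\xi_4)$ and the analogous identity holds along the sequence.

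Next, I would promote visual convergence to Roller convergence. Since $\tilde\xi_i\in\partial_{\rm cnt}X'$, its Roller component $Z(\tilde\xi_i)$ is a singleton, so Remark~\ref{topologies on cnt 1} applied to $X'$ upgrades $\Phi'(\tilde\xi_i^n)\ra\Phi'(\tilde\xi_i)$ to Roller convergence $\tilde\xi_i^n\ra\tilde\xi_i$ in $\partial X'$. Every pair of distinct points of $\mc{A}$ has finite Gromov product (Remark~\ref{finite Gromov product}), so the 4-tuples $(\tilde\xi_1,\tilde\xi_2,\tilde\xi_3,\tilde\xi_4)$ and eventually $(\tilde\xi_1^n,\tilde\xi_2^n,\tilde\xi_3^n,\tilde\xi_4^n)$ lie in the domain $\mscr{A}$ of the cubical cross ratio — also in the degenerate case of some coincident coordinates, as no three of them coincide and hence at most one of the three Gromov-product sums can be infinite. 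Proposition~\ref{cr continuous}, applied to the uniformly locally finite complex $X'$, then yields $\Cr(\tilde\xi_1^n,\tilde\xi_2^n,\tilde\xi_3^n,\tilde\xi_4^n)\ra\Cr(\tilde\xi_1,\tilde\xi_2,\tilde\xi_3,\tilde\xi_4)$, which is the desired convergence.

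The main obstacle is the topological mismatch between $\partial_cG$ (endowed with the visual or finer topology) and the totally disconnected Roller boundary $\partial X'$ where $\Cr$ naturally lives; Remark~\ref{topologies on cnt 1} bridges the two precisely when the limit points have trivial Roller component, which is exactly why the statement restricts to 4-tuples with coordinates in $\partial_{\rm cnt}X$. Without this non-terminating hypothesis, the lifts $\tilde\xi_i^n$ could accumulate on any point of the non-trivial fibre $(\Phi')^{-1}(\Phi'(\tilde\xi_i))$, destroying continuity of the pulled-back cross ratio.
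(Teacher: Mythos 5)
Your proposal is correct and takes essentially the same approach as the paper's own (one-line) proof: reduce to the visual topology, relate it to the Roller topology at points of $\partial_{\rm cnt}X$, and apply Proposition~\ref{cr continuous}. The paper simply cites Remark~\ref{topologies on cnt 2}; your more detailed version correctly reaches for the underlying Remark~\ref{topologies on cnt 1}, which is what is actually needed since the approaching 4-tuples may have coordinates outside $\partial_{\rm cnt}X$, and you also spell out the degenerate coincident cases that the paper leaves implicit.
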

\begin{proof} 
It suffices to consider $\partial_cG$ endowed with the pull-back of the topology of $\partial_c^{\rm vis}X$, as this is the coarsest of the three (see Section~3.1 in \cite{Charney-Sultan} and Section~7 in \cite{Cashen-Mackay}). In this case, the result follows from Proposition~\ref{cr continuous} and Remark~\ref{topologies on cnt 2}. 
\end{proof}

\begin{proof}[Proof of Theorem~\ref{hyp CR}]
Since $X$ is essential and $G$ acts cocompactly, every hyperplane-stabiliser has infinite index in $G$. Lemmas~\ref{nowhere-dense} and~\ref{hyp cocpt} thus imply that the union of the Gromov boundaries of the hyperplanes of $X$ is meagre in $\partial_{\infty}X$. By Lemma~\ref{nt vs hyp}, this means that $\mc{C}:=\partial_{\rm nt}X$ is co-meagre in $\partial_{\infty}X$. Part~(1) now follows from Proposition~\ref{continuity of cr_X}. 

Regarding part~(2), suppose in addition that $X$ is hyperplane-essential. Let ${G\acts Y}$ be another proper cocompact action on an essential, hyperplane-essential $\CAT$ cube complex and consider the $G$--equi\-var\-i\-ant homeomorphism $f=o_Y\o o_X^{-1}\colon\partial_{\infty}X\ra\partial_{\infty}Y$. Suppose that $\Cr_X$ and $\Cr_Y$ coincide on $\mc{D}^{(4)}\cu\partial_{\infty}G^{(4)}$, for a co-meagre subset $\mc{D}\cu\partial_{\infty}G$. The set $\mc{D}\cap\partial_{\rm nt}X\cap f^{-1}(\partial_{\rm nt}Y)$ is co-meagre and so is the intersection of all its $G$--translates, which we denote by $\Om$. By Baire's theorem, $\Om$ is nonempty and we conclude by applying Theorem~\ref{ext Moeb intro}.
\end{proof}

\begin{proof}[Proof of Corollary~\ref{non-hyp CR}]
Part~(1) is immediate from the previous discussion, as cube complexes with no free faces are essential. 

Let now $G\acts X$ and $G\acts Y$ be two cubulations satisfying the hypotheses of the theorem and inducing the same cubical cross ratio on $\partial_cG$. Proposition~\ref{sc exist} yields an element $g\in G$ that acts as a neatly contracting automorphism on both $X$ and $Y$. The $G$--equivariant bijection $f=o_Y\o o_X^{-1}\colon\partial_cX\ra\partial_cY$ takes $g^+_X\in\partial_{\rm cnt}X$ to $g^+_Y\in\partial_{\rm cnt}Y$ by Lemma~\ref{north south}. Setting $\mc{A}=G\cdot g^+_X\cu\partial_{\rm cnt}X$, we have $f(\mc{A})=G\cdot g^+_Y\cu\partial_{\rm cnt}Y$ and cross ratios of these points are preserved. The sets $\mc{A}\cu\partial_{\rm cnt}X$ and $f(\mc{A})\cu\partial_{\rm cnt}Y$ consist of regular points by Lemma~\ref{regular vs NT}. We conclude by Theorem~E in \cite{BF1}, observing that the actions of $G$ on $X$ and $Y$ are non-elementary (in the sense of \emph{op.\ cit.}) by Lemma~2.9 in \emph{op.\ cit.}.
\end{proof}

\subsection{Marked length-spectrum rigidity.}

\begin{proof}[Proof of Corollary~\ref{hyp MLSR}]
If $G$ is non-elementary, Theorem~D in \cite{BF1} provides a $G$--equivariant, cross-ratio preserving bijection $f\colon\mc{A}\ra\mc{B}$, where ${\mc{A}\cu\partial_{\rm reg}X}$ and $\mc{B}\cu\partial_{\rm reg}Y$ are nonempty $G$--invariant subsets. The reader will not have trouble realising that this map is a restriction of the unique $G$--equivariant homeomorphism $f\colon\partial_{\infty}X\ra\partial_{\infty}Y$ (see Section~4.2 in \cite{BF1} for details). Regular points are non-terminating and we conclude by Theorem~\ref{ext Moeb intro}.

We are left to consider the case when $G$ is virtually cyclic. If $G$ is finite, $X$ and $Y$ must be single points, by essentiality; so let us assume that $G$ is virtually isomorphic to $\Z$. By part~(1) of Lemma~\ref{ss in quasi-lines}, every hyperplane of $X$ is compact and, since now $X$ is hyperplane-essential, we must have $X\simeq\R$. The action $G\acts X$ factors through a faithful action of either $\Z$ or $D_{\infty}$. In the former case, the only $g\in G$ with $\ell_X(g)=0$ are those in the (finite) kernel of the action $G\acts X$. In the latter case, we have infinitely many $g\in G$ with $\ell_X(g)=0$, for instance all reflections. 

Since $\ell_X=\ell_Y$, the actions $G\acts X$ and $G\acts Y$ either both factor through a faithful action of $\Z$ or both factor through a faithful action of $D_{\infty}$. In the former case, the two actions must coincide, as both the kernel and the $\Z$--action can be described in terms of length functions. In the latter, the two actions are $G$--equivariantly isomorphic, since actions $D_{\infty}\acts\R$ are determined, up to conjugacy, by the restriction to the maximal $\Z$ subgroup.
\end{proof}

As the next two examples demonstrate, there is no way of removing the essentiality and hyperplane-essentiality requirements from Theorem~\ref{hyp CR} and Corollary~\ref{hyp MLSR}.

\begin{figure}
\centering
\includegraphics[width=3in]{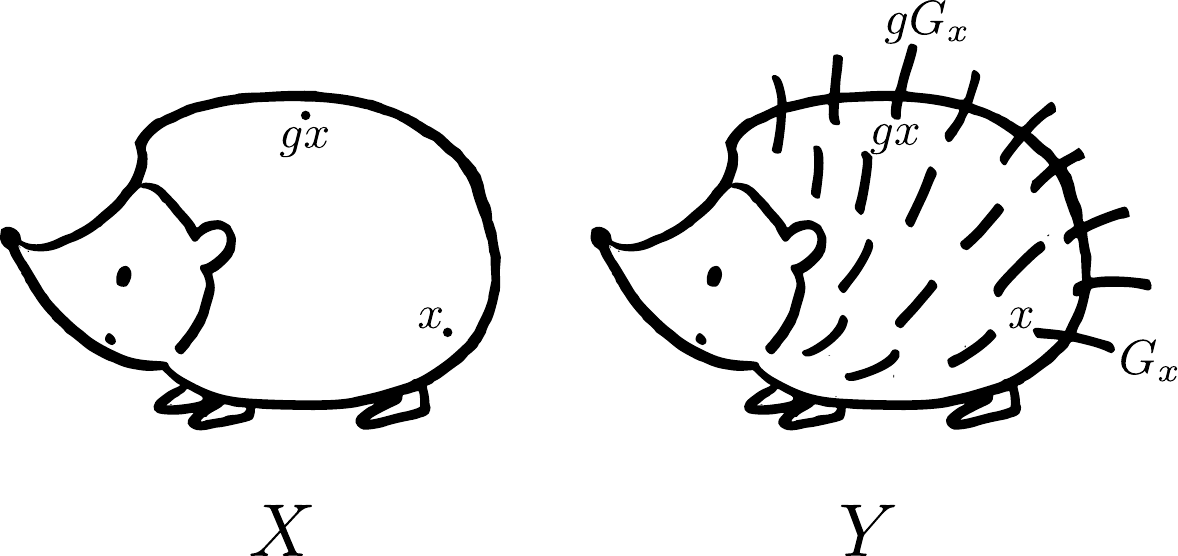}
\caption{Giving a hedgehog back its spines.}
\label{hedgehog}
\end{figure}

\begin{ex}\label{need essential}
Let $G\acts X$ be any proper cocompact action on a $\CAT$ cube complex. Fix a basepoint $x\in X$ and let $G_x\leq G$ denote its stabiliser. The disjoint union $Y=X\sqcup G/G_x$ is endowed with a natural $G$--action and we give it a structure of $\CAT$ cube complex by adding edges connecting $gG_x$ and $gx$ for every $g\in G$. The procedure is depicted in Figure~\ref{hedgehog} in a more general context. The $\CAT$ cube complex $Y$ is irreducible and the action $G\acts Y$ is proper and cocompact. Note that $Y$ is hyperplane-essential if and only if $X$ is, but $Y$ is never going to be essential, as all points of $G/G_x$ are vertices of degree $1$. It is easy to see that $\ell_X=\ell_Y$ and $\Cr_X=\Cr_Y$.
\end{ex}

We now describe a general procedure that takes any finite dimensional $\CAT$ cube complex $X$ as input and gives out another $\CAT$ cube complex $S(X)$ as output. We will refer to $S(X)$ as the \emph{squarisation} of $X$. One can already get a good idea of the definition by looking at Figure~\ref{tree of squares}.

Let $\mscr{H}'$ denote the disjoint union of two copies of the pocset $(\mscr{H}(X),\cu,\ast)$, labelling by $\mf{h}_1$ and $\mf{h}_2$ the two elements arising from $\mf{h}\in\mscr{H}(X)$. We turn $\mscr{H}'$ into a pocset by declaring that, given $\mf{h},\mf{k}\in\mscr{H}(X)$ with $\mf{h}\not\in\{\mf{k},\mf{k}^*\}$, we have $\mf{h}_i\cu\mf{k}_j$ if and only if $\mf{h}\cu\mf{k}$, no matter what the indices $i$ and $j$ are. On the other hand, the halfspaces $\mf{h}_1$ and $\mf{h}_2$ are transverse for all $\mf{h}\in\mscr{H}(X)$. 

Now, $S(X)$ is obtained by applying Sageev's construction \cite{Sageev,Sageev-notes} to the pocset $(\mscr{H}',\cu,\ast)$. Note that, if we had instead declared that $\mf{h}_1$ and $\mf{h}_2$ are nested, we would have obtained the first cubical subdivision $X'$.

\begin{figure}
\centering
\includegraphics[width=4in]{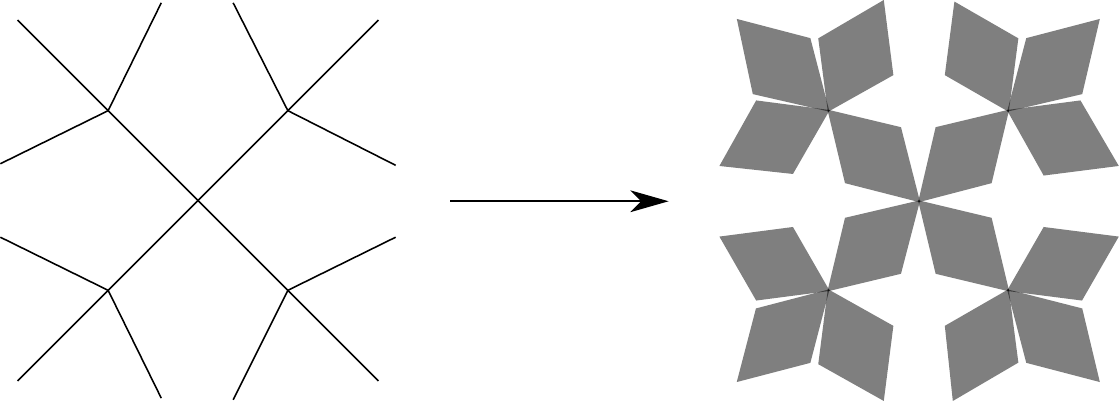}
\caption{Part of a $4$--regular tree $T$ and its squarisation $S(T)$.}
\label{tree of squares}
\end{figure}

\begin{ex}\label{need hyperplane-essential}
Let $G\acts X$ be any proper cocompact action on an essential, irreducible $\CAT$ cube complex. The cube complexes $X'$ and $S(X)$ are both essential, irreducible and naturally endowed with proper cocompact actions of $G$. It is easy to see that the actions $G\acts X'$ and $G\acts S(X)$ determine the same length function, namely the double of the length function associated to $G\acts X$. Moreover, the (co-meagre) subset of $\partial_{\infty}G$ arising from non-terminating ultrafilters is the same for $X$, $X'$ and $S(X)$ and there we have $\Cr_{S(X)}=\Cr_{X'}=2\cdot\Cr_X$. 

The failure of Theorem~\ref{hyp CR} and Corollary~\ref{hyp MLSR} is to be traced back to hy\-per\-plane-essentiality. Indeed, $S(X)$ is never hyperplane-essential. All its hyperplanes split as $S(\mf{w})\x[0,1]$, where $\mf{w}$ is the corresponding hyperplane of $X$. 
\end{ex}

\bibliography{mybib}
\bibliographystyle{alpha}

\end{document}